\newcommand{\CC}{\D{C}}
\newcommand{\R}{\mathbb R}
\newcommand{\Lip}{\D{Lip}}
\newcommand{\N}{\mathbb N}
\newcommand{\parts}{\mathscr{P}}
\newcommand{\T}{\mathbb T}
\newcommand{\Z}{\mathbb Z}
\newcommand{\sol}{\mathfrak{S}}
\newcommand{\comp}{\mbox{\scriptsize  $\circ$}}
\newcommand{\eps}{\varepsilon}
\newcommand{\ucv}{\rightrightarrows}
\newcommand{\MM}{{\R}}
\newcommand{\TM}{{\R\times\R}}
\newcommand{\TTorus}{{\T^1\times\R}}
\newcommand{\omegasetG}{\omega\big(\{u^\lambda_G\}\big)}
\newcommand{\gammaxlambda}{\gamma_x^\lambda}
\newcommand{\1}{\mathbbm1_{0}}
\newcommand{\supp}{\operatorname{supp}}
\newcommand{\A}{\mathcal{A}}
\newcommand{\Bor}{\mathscr{B}}
\newcommand{\e}{\textrm{\rm e}}
\newcommand{\E}{\mathcal{E}}
\newcommand{\Mis}{\mathfrak{M}}
\newcommand{\tagliato}{$\kern-5 mm -$}
\newcommand{\tagliat}{$\kern-4 mm -$}
\newcommand{\cchi}{\mbox{\large $\chi$}}
\newcommand{\abra}[1]{(\ref{#1})}
\newcommand{\D}[1]{\mbox{\rm #1}}
\newcommand{\dd}{\D{d}}
\newcommand{\diam}[1]{\operatorname{diam}(#1)}
\newcommand{\weakcv}{\rightharpoonup}
\newtheorem{teorema}{Theorem}
\newtheorem{prop}[teorema]{Proposition}
\newtheorem{lemma}[teorema]{Lemma}
\newtheorem{definition}[teorema]{Definition}
\newtheorem{cor}[teorema]{Corollary}
\newtheorem{guess}[teorema]{Remark}
\newtheorem{example}[teorema]{Example}
\newenvironment{oss}{\begin{guess} \begin{rm}}{\end{rm} \end{guess}}
\newenvironment{definizione}{\begin{definition} \begin{rm}}{\end{rm}
\end{definition}}
\begin{document}

\title{On the vanishing discount approximation\\ for compactly supported perturbations\\
of periodic Hamiltonians: the 1d case}\footnote{\rm}

\author{Italo Capuzzo Dolcetta \and Andrea Davini 
}
\address{Dip. di Matematica, {Sapienza} Universit\`a di Roma,
P.le Aldo Moro 2, 00185 Roma, Italy}
\email{capuzzo@mat.uniroma1.it}
\email{davini@mat.uniroma1.it}
\keywords{Hamilton-Jacobi equations, vanishing discount problem, viscosity solutions, weak KAM Theory.}
\subjclass[2010]{35F21, 35B40, 35B10, 35B41.}
\date{February 15, 2023}
\begin{abstract} 
We study the asymptotic behavior of the viscosity solutions $u^\lambda_G$ of 
the Hamilton-Jacobi (HJ) equation
\begin{equation*}
\lambda u(x)+G(x,u')=c(G)\qquad\hbox{in $\MM$}
\end{equation*}
as the positive discount factor $\lambda$ tends to 0, where $G(x,p):=H(x,p)-V(x)$ is the 
perturbation of a 
Hamiltonian $H\in\D C(\TM)$, 
$\Z$--periodic in the space variable and convex and coercive 
in the momentum, by a compactly supported potential $V\in\D{C}_c(\R)$. 
The constant $c(G)$ appearing above is defined as the 
infimum of values $a\in\R$ for which the 
HJ equation $G(x,u')=a$ in $\MM$ admits bounded viscosity subsolutions.
We prove that the functions $u^\lambda_G$ locally 
uniformly converge, for $\lambda\rightarrow 0^+$, to a specific solution 
$u_G^0$ of the critical equation
\begin{equation}\label{abs}\tag{*}
G(x,u')=c(G)\qquad\hbox{in $\MM$}.
\end{equation}
We identify $u^0_G$ in terms of projected Mather measures for $G$ and 
of the limit $u^0_H$ to the unperturbed periodic problem. This can be regarded as an extension to a noncompact setting of 
the main results in \cite{DFIZ1}. Our work also includes a qualitative analysis of 
\eqref{abs} with a weak KAM theoretic flavor. 
\end{abstract}

\maketitle
\section*{Introduction}
In this paper we study the asymptotic behavior  
of the viscosity solutions $u^\lambda_G$ of discounted Hamilton-Jacobi (HJ) 
equations of the form
\begin{equation}\label{intro eq G discounted}
\lambda u(x)+G(x,d_x u)=c(G)\qquad\hbox{in $M$}
\end{equation}
as the discount factor $\lambda$ tends to 0. Here we consider the case where $M$ is the 1-dimensional 
Euclidean space $\R$, and $G:=H-V$ is the perturbation of a 
$\Z$--periodic (in the space variable) Hamiltonian $H\in\D C(\TM)$ via a 
compactly supported 
potential $V\in\D{C}_c(\R)$. The Hamiltonian $H$ is furthermore assumed convex 
and coercive 
in the 
momentum variable $d_xu$. The constant $c(G)$ appearing above is defined as the 
infimum of values $a\in\R$ for which the 
HJ equation $G(x,d_x u)=a$ in $M$ admits {\em bounded} viscosity subsolutions.

It turns out that the functions $u^\lambda_G$ are equi--Lipschitz and locally equi--bounded 
in $\MM$,  
hence, by the 
Ascoli-Arzel\`a Theorem and the stability of the notion of viscosity solution, they 
converge, {\em along subsequences} as $\lambda$ goes to $0$, to 
viscosity solutions of the {\em critical equation}
\begin{equation}\label{intro eq G critical}
G(x,d_x u)=c(G)\qquad\hbox{in $M$.}
\end{equation}
Since the above critical equation has infinitely many solutions, even up to additive constants in general, 
it is not clear at this level that the limit of the $u^\lambda_G$ along different 
subsequences is the same. 
The main theorem we prove is the following.

\begin{teorema}\label{teo1 intro}
Let $M:=\MM$ and let $G(x,p)=H(x,p)-V(x)$ be as above. For every $\lambda>0$, let us denote by 
$u^\lambda_G$ the unique bounded viscosity 
solution of the discounted Hamilton-Jacobi equation \eqref{intro eq G discounted}.   
Then the whole family $\{u^\lambda_G\}_{\lambda>0}$ converges, locally uniformly in $\MM$ as $\lambda\to 0^+$, to a 
distinguished solution $u^0_G$ of the critical equation \eqref{intro eq G critical}.
\end{teorema}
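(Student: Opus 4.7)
The equi-Lipschitz and local equi-boundedness of the family $\{u^\lambda_G\}$ already mentioned in the introduction give, via the Ascoli-Arzelà theorem and the stability of viscosity solutions, that along every sequence $\lambda_n\to 0^+$ a subsequence of $u^{\lambda_n}_G$ converges locally uniformly to a bounded solution $v$ of \eqref{intro eq G critical}. The theorem therefore reduces to showing that every such cluster point $v$ coincides with a single, explicit solution $u^0_G$. I propose to identify $v$ in weak KAM / Mather-measure terms, adapting the strategy of \cite{DFIZ1} to the present non-compact setting; the decisive simplification is that $V$ is compactly supported, so far from $\supp V$ the equation \eqref{intro eq G critical} coincides with the periodic critical equation for $H$, whose vanishing-discount limit $u^0_H$ is known.

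\textbf{Main steps.} Denote by $L_G:=L_H+V$ the Lagrangian conjugate to $G$. By dynamic programming,
\[
u^\lambda_G(x)\;=\;\inf_{\gamma(0)=x}\int_{-\infty}^{0}e^{\lambda s}\bigl(L_G(\gamma(s),\dot\gamma(s))+c(G)\bigr)\,ds.
\]
For each $x$ pick an almost-minimizing curve $\gamma^\lambda$ and associate to it the discounted occupation measure
\[
\tilde\mu^\lambda_x\;:=\;\lambda\int_{-\infty}^{0}e^{\lambda s}\,\delta_{(\gamma^\lambda(s),\dot\gamma^\lambda(s))}\,ds
\]
on $\R\times\R$. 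Any weak-$\ast$ cluster point $\tilde\mu_x$ is a closed probability measure annihilating $L_G+c(G)$. In dimension one, with $V$ compactly supported, the mass of $\tilde\mu_x$ decomposes into a piece supported on the projected Mather set $\M_G\subset\R$ of $G$ and two pieces whose mass escapes monotonically to $\pm\infty$; by the periodicity of $H$ outside $\supp V$, the escaping pieces are governed by $u^0_H$ and by Mather measures of the unperturbed periodic Hamiltonian. Testing the viscosity inequalities for $v$ along $\gamma^\lambda$, passing to the limit with Fatou and the uniform Lipschitz bound, one obtains a sharp lower bound for $v(x)$ expressed through the Peierls barrier $h_G(\cdot,x)$ integrated against the Mather part of $\tilde\mu_x$, plus an explicit boundary term built from $u^0_H$ and the asymptotic $\pm\infty$-mass. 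A matching upper bound follows by inserting a near-optimal such representation back into the variational definition of $u^\lambda_G$. Since the resulting expression depends only on Mather-type data of $G$ and on $u^0_H$, and not on the chosen subsequence, it is the distinguished $u^0_G$ of the statement.

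\textbf{Where the difficulty lies.} The heart of the argument is the classification of weak-$\ast$ limits $\tilde\mu_x$. In the compact periodic setting of \cite{DFIZ1} all of $\tilde\mu_x$ is automatically supported on a compact Mather set; in our non-compact setting one must control how much mass of $\tilde\mu^\lambda_x$ escapes to infinity and show that in the limit this escaping mass is faithfully described by the periodic vanishing-discount limit for $H$. Making this quantitative requires the $1d$ monotonicity of optimal trajectories outside $\supp V$, together with the qualitative analysis of \eqref{abs} announced at the end of the abstract. A secondary delicate point is to characterize the Mather part of $\tilde\mu_x$ as a projected Mather measure for $G$ on a non-compact base and to turn this, together with the boundary contribution at $\pm\infty$, into a closed-form variational expression pinning $v=u^0_G$.
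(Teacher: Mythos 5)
Your high-level strategy is the right one — occupation measures as in \cite{DFIZ1}, a decomposition separating a compactly supported piece from mass that escapes to infinity, and the $1d$ monotonicity of optimal curves to control the escaping part — but the proposal has genuine gaps that would prevent it from closing. The decisive omission is the trichotomy on which the paper's proof hinges. The escape-of-mass picture you describe only occurs when $c(G)=c(H)$; when $c(G)>c(H)$ the paper constructs a \emph{bounded} subsolution to the critical equation which is uniformly strict outside a compact set containing $\supp(V)$, and this yields tightness of $\tilde\mu^\lambda_x$ (no escape at all). So a single unified decomposition of the sort you sketch is either superfluous or fails to notice that the Mather-measure constraint becomes vacuous when $c(G)>c_f(G)$. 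Further, the escaping part for a single optimal monotone curve goes to only one of $\pm\infty$, not both: the decomposition is into exactly two pieces, one supported inside a ball containing $\supp(V)$ and one outside, and the outside piece is pushed to $\T^1\times\R$ to land on a Mather measure for $H$.

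Two more specific points. First, the paper never uses the Peierls barrier $h_G$ on the noncompact base (that object plays no role here); instead $u^0_G$ is pinned down as a supremum of a family ${\sol_b}_-(G)$ of bounded subsolutions satisfying $\int v\,d\mu\leqslant \1(c(G)-c_f(G))$ for all projected Mather measures $\mu$ of $G$, together — in cases (II) and (III) — with a constraint $v\leqslant u^0_H$ on one side of $\supp(V)$. The upper bound $u\leqslant u^0_G$ comes from showing $\int u^\lambda_G\,d\mu\leqslant 0$ via Fenchel on the discounted equation, not by re-inserting a representation into the variational formula. Second, you do not flag that in the case $c(G)=c(H)=c_f(H)$ the asymptotics are only proved under the extra hypothesis \eqref{condition u_0} on $u^0_H$ (automatic when $H$ is Tonelli, but not in general — the appendix gives a merely continuous $H$ where it fails). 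A proof of Theorem \ref{teo1 intro} that does not engage with this hypothesis is incomplete as stated, since the paper itself does not claim the result unconditionally in that regime.
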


%

Since solutions to equations \eqref{intro eq G discounted} and \eqref{intro eq G 
critical} are equi-Lipschitz, the function $H$ can be possibly modified for $p$ 
outside a large ball without affecting the analysis. 
We shall therefore assume, without any loss in generality, the Hamiltonians to 
be superlinear in $p$.   
In that case, we can introduce the conjugated Lagrangians via  Fenchel's 
formula 
and make use of tools issued from weak KAM Theory, suitably adapted to the case 
at issue, to 
characterize the limit function $u^0_G$. Our output can be summarized as 
follows.  
Let us denote by $c_f(G)$ the {\em free critical value}, defined as the minimum 
value $a\in\R$ 
for which the Hamilton--Jacobi equation $G(x,d_x u)=a$ in $M$ admits {\em possibly unbounded }viscosity 
subsolutions, cf.  \cite{fatmad}.  
Let ${\sol_b}_-(G)$ be the family of bounded  subsolutions  $v:\MM\to 
\R$ of the critical equation \eqref{intro eq G critical} such that 
\begin{equation}\label{intro condition u0}
\int_\TM v(y)\,d\tilde\mu(y,q)\leqslant \1\big(c(G)-c_f(G)\big) \qquad\text{for every 
$\mu\in\tilde\Mis(G)$}, 
\end{equation}
where $\tilde\Mis(G)$ denotes the set of  Mather measures for $G$ and $\1$ 
the indicator function of the set $\{0\}$ in the sense of convex analysis, i.e. 
$\1(t)=0$ if $t=0$ and $\1(t)=+\infty$ otherwise. 
The constraint \eqref{intro condition u0} is meant to be empty whenever $c(G)>c_f(G)$.
%
%
%
%

In what follows, we will denote by $u^0_H$ the uniform limit of the functions 
$u^\lambda_H$ on $\MM$ as $\lambda\to 0^+$, which is well-defined according to the results in \cite{DFIZ1}, and by   
\[
p^-_H(x):=\min\{p\in\R\,:\, H(x,p)\leqslant c(H)\,\},
\qquad
p^+_H(x):=\max\{p\in\R\,:\, H(x,p)\leqslant c(H)\,\}.
\]
We have the following characterization. 

\begin{teorema}\label{teo2 intro} The limit $u^0_G$ of the discounted solutions $u^\lambda_G$ is characterized as follows. 
\begin{itemize}
\item[\bf(I)] If $c(G)>c(H)$, then $c(G)=c_f(G)$ and 
\[
u^0_G(x):=\sup_{v\in{\sol_b}_-(G) }v(x),\qquad{x\in\R}.
\]
Furthermore, $u^0_G$ is coercive on $\MM$.\smallskip\\
\item[\bf(II)] if $c(G)=c(H)>c_f(H)$, then:\smallskip
\begin{itemize}
\item[\bf(A)] if $\int_0^1 p^+_H(x) dx=0$, then for every $x\in\MM$ 
\begin{equation*}
u^0_G(x):=\sup\{v(x)\,:\,v\in{\sol_b}_-(G),\ v\leqslant u^0_H\quad 
 \hbox{in $(-\infty,\underline y_V)$\,}\},
  \end{equation*}
where $\underline y_V:=\min\left(\supp(V)\right)$\smallskip;
 \item[\bf(B)]if $\int_0^1 p^-_H(x) dx=0$, then for every $x\in\MM$ 
\begin{equation*}
 u^0_G(x):=\sup\{v(x)\,:\,v\in{\sol_b}_-(G),\ v\leqslant u^0_H\quad 
 \hbox{in $(\overline y_V,+\infty)$\,}\},
 \end{equation*}
where $\overline y_V:=\max\left(\supp(V)\right)$.\medskip
\end{itemize}
In either case, $u^0_G$ is bounded on $\MM$.\smallskip\\
\item[\bf(III)] if $c(G)=c(H)=c_f(H)$, then $c(G)=c_f(G)$ and 
\[
u^0_G(x):=\sup_{v\in{\sol_{b}}_-(G) }v(x).
\]
Furthermore, $u^0_G$ is bounded on $\MM$.
\end{itemize}
\end{teorema}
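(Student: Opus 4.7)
The plan is to first show that the limit $u^0_G$ from Theorem \ref{teo1 intro} lies in the admissible class on the right-hand side of each of (I)-(III), and then to establish the matching upper bound by comparing any admissible $v$ with the discounted solutions $u^\lambda_G$ themselves. Case (II) will be the main source of difficulty.

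\textbf{Membership.} By stability of viscosity solutions, $u^0_G$ is a bounded viscosity solution of \eqref{intro eq G critical}, hence a bounded subsolution. To verify the Mather constraint \eqref{intro condition u0}, I would integrate the discounted equation against a projected Mather measure $\tilde\mu\in\tilde\Mis(G)$: using Fenchel's inequality $G(x,p)+L(x,q)\geq pq$ (with equality on the Mather support) together with the identity $\int L\,d\tilde\mu=-c_f(G)$, this yields $\lambda\int u^\lambda_G\,d\tilde\mu\leq c(G)-c_f(G)$. Passing to $\lambda\to 0^+$ then gives $\int u^0_G\,d\tilde\mu\leq\1(c(G)-c_f(G))$, which is the nontrivial inequality $\int u^0_G\,d\tilde\mu\leq 0$ when $c(G)=c_f(G)$ and is automatically satisfied otherwise. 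In case (II) I would additionally check $u^0_G\leq u^0_H$ on the relevant half-line: outside $\supp V$ the equations for $u^\lambda_G$ and $u^\lambda_H$ coincide, and the one-sided bound is transferred through the 1d Lax-Oleinik representation, the direction being selected by the sign of the average of $p^\pm_H$.

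\textbf{Critical value, coercivity, and maximality.} In case (I), the compact support of $V$ forces any bounded subsolution of $G(x,u')=a$ to restrict, outside $\supp V$, to a bounded subsolution of the periodic equation $H(x,u')=a$; combining with $c(G)>c(H)$ then yields $c(G)=c_f(G)$, and a one-sided growth argument at $\pm\infty$ together with superlinearity in $p$ produces coercivity of $u^0_G$. Case (III) proceeds analogously. For the upper bound, given any admissible $v$, I would exploit the duality between $v$ and $u^\lambda_G$ along Mather measures: convexity of $G$ and the discounted equation give $\int(u^\lambda_G-v)\,d\tilde\mu\geq 0$, which combined with the first step forces agreement with $v$ on the Mather set; this equality is then propagated to the pointwise bound $v\leq u^0_G$ via viscosity comparison, using the auxiliary half-line constraint in case (II) to substitute for the absent Mather information.

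\textbf{Main obstacle.} The delicate case will be (II). Since $c_f(G)<c(G)$ here, the Mather constraint in \eqref{intro condition u0} is vacuous, and the substantive restriction is the pointwise bound $v\leq u^0_H$ on the half-line selected by the sign of $\int_0^1 p^\pm_H\,dx$. The difficulty is to show that the discounted solutions $u^\lambda_G$ inherit their asymptotic behavior on the free end from those of the unperturbed problem $u^\lambda_H$, and to explain why the opposite side carries no such constraint. This will require a careful analysis of the 1d calibrated curves of $G$ escaping $\supp V$, combined with the explicit description of $u^0_H$ from \cite{DFIZ1}; this is where the bulk of the technical work lies and where the distinction between sub-cases (A) and (B) will emerge naturally from the two possible signs of the rotation number.
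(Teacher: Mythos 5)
Your overall strategy — showing that the limit from Theorem \ref{teo1 intro} belongs to the admissible class, then that every admissible $v$ is dominated by the limit — is indeed the structure the paper follows. The "membership" step is essentially right: the estimate $\lambda\int u^\lambda_G\,d\mu\leqslant c(G)-c_f(G)$ from Fenchel duality against Mather measures is Proposition \ref{ineq lim}/\ref{prop step1}, and the half-line comparison in (II) is Corollary \ref{cor2 u^lambda_G=u^lambda_H}, obtained exactly as you suggest by observing that optimal curves for $u^\lambda_H$ starting on the free side never enter $\supp(V)$. However, your "maximality" step conceals a genuine gap. The argument "$\int(u^\lambda_G-v)\,d\tilde\mu\geqslant 0$ against Mather measures, then propagate by viscosity comparison" does not go through in cases (II)--(III). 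When $c(G)=c(H)$, the paper shows (Theorem \ref{teo no bounded critical sol}(ii)) that there is no bounded subsolution uniformly strict outside a compact set; therefore the family of discounted occupation measures $\tilde\mu^\lambda_x$ is \emph{not} tight, mass genuinely escapes to infinity, and the Aubry set sits at infinity, so there is no classical weak-KAM comparison principle to "propagate" with. The missing idea is the decomposition $\tilde\mu^\lambda_x=\theta^\lambda_x\,\tilde\mu^\lambda_{x,1}+(1-\theta^\lambda_x)\,\tilde\mu^\lambda_{x,2}$ of \eqref{def2 discounted measure1}--\eqref{def2 discounted measure2}: the inner part is compactly supported and its limits are $G$-Mather measures, while the outer part, pushed forward to $\TTorus$, converges to an $H$-Mather measure (Propositions \ref{prop2 precompactness}, \ref{prop2 limit measures}). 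It is exactly here that the 1d monotonicity of optimal curves (Proposition \ref{prop optimal curves}) is indispensable, since it forces the outer portion of the curve to stay in a single half-line and never revisit $\supp(V)$, which is what makes the outer limit closed and makes the half-line constraint $v\leqslant u^0_H$ the correct replacement for the missing tightness. Without this decomposition, Step 2 has no mechanism in cases (II) and (III).

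Two further factual slips. First, it is not true that $c_f(G)<c(G)$ throughout case (II); one only knows $c_f(H)\leqslant c_f(G)\leqslant c(G)=c(H)$, and the equality $c_f(G)=c(G)$ can occur, so the Mather constraint \eqref{intro condition u0} may be active — the paper's Proposition \ref{prop step1} handles both alternatives uniformly, and your write-up would break if you assume vacuity. Second, case (III) is not "analogous to case (I)": since $c(G)=c(H)$ the tightness mechanism of case (I) fails, one must again use the inner/outer decomposition, and moreover the paper only proves the convergence under the additional structural hypothesis \eqref{condition u_0} on $u^0_H$ (automatic for Tonelli $H$, but false for general continuous $H$, cf.\ the Appendix). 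Your proposal omits this hypothesis entirely, so the asserted characterization in case (III) would be unproven in the generality you claim.
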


The model example we have in mind is   
$H(x,p):=|\theta+p|-U(x)$, where $\theta\in\R$ and $U$ is a 1-periodic 
function. Here, $c_f(H)=-\min_\MM U$, $c_f(G)=-\min_\MM (U+V)$ and clearly 
$c_f(G)\geqslant c_f(H)$ (although this is a general fact, see Proposition \ref{prop c(G)>c(H)}). Furthermore, 
there exists $\theta^-\leqslant 0 \leqslant \theta^+$ such that 
\[
c(G)=c(H)\quad\hbox{if $\theta\in\R\setminus (\theta^-,\theta^+)$,\qquad and\qquad} 
c(G)> c(H)\quad\hbox{if $\theta\in (\theta^-,\theta^+)$.}
\]
When $c_f(G)=c_f(H)$, then $\theta^-=\theta^+=0$ and only cases 
(II) and (III) above occur. This happens, for instance, when the potential $V$ is nonnegative. 
When $c_f(G)>c_f(H)$, then 
$(\theta^-,\theta^+)$ is nonempty and case (I) occurs when $\theta\in (\theta^-,\theta^+)$, 
while case (II) occurs when $\theta\in\R\setminus (\theta^-,\theta^+)$.\smallskip

The interest to analyze the asymptotic behavior as $\lambda\to 0^+$ of solutions to equation \eqref{intro eq G discounted} 
has roots, as it is well-known, in homogenization theory, see \cite{LPV } and also \cite{CDI01, Ev92}, 
as well as in ergodic optimal control, see for example \cite{AL98, bardi, C89, G00}. 

The present work can be seen as an extension to a noncompact setting of the 
study carried out in 
\cite{DFIZ1}, where Theorem \ref{teo1 intro} is proved in the case $G=H$ and 
$M:=\R^d$ for 
any $d\geqslant 1$. 
In this instance, the constant $c(H)$, as defined above, agrees with
the one considered in \cite{DFIZ1}, known as 
{\em ergodic constant} in the PDE literature, or {\em Ma\~ne critical value} in 
the context of weak KAM Theory. It can be characterized by the property of being 
the unique 
constant $a\in\R$ for which the equation \ \ $H(x,d_x u)=a$\ \  admits 
$\Z^d$-periodic viscosity solutions on $\R^d$, or, equivalently, viscosity 
solutions 
on the torus $\T^d$. 
The analysis in \cite{DFIZ1} is carried out by making use of tools from weak KAM 
Theory in the 
spirit of \cite{IS11}. 
The breakthrough brought in by \cite{DFIZ1} consists in pointing out the crucial role 
played in the asymptotic study by Mather measures and by a distinguished family 
of probability 
measures associated with the discounted equations. 
These kinds of results have been subsequently generalized in many different 
directions, 
such as, for instance,  HJ equations with a possibly degenerate second-order term 
\cite{MiTra17, IMTa17,IMTb17}, weakly 
coupled systems of HJ 
equations \cite{DZ21, IsLi20, Is21}, mean field games \cite{CaPo19}, 
HJ equations with vanishing negative discount 
factor \cite{DaLi21}, contact-type HJ equations \cite{CCIsZh19, Chen21, Z20}.  
In all these papers, the base manifold $M$ is assumed compact (typically,  
the $d$-dimensional flat torus $\T^d$), which is a crucial assumption in order 
to have 
pre-compactness of the family of measures introduced for the study. 

Few results are available when $M$ is noncompact. In this direction, the first convergence result 
appeared in literature is, to the best of our knowledge, the work  \cite{CDMe88}. 
Here, $M:=\R^d$ and $G(x,p):=g(x)\cdot p-f(x)$, 
where $f$ is a bounded, uniformly continuous function and $g$ is a 
Lipschitz, bounded and strongly monotone vector field on $\R^d$. This latter condition 
yields that the dynamical system induced by $g$ has a unique fixed point $x_0$, which is 
an attractor for the dynamics. This means that the associated critical equation 
\eqref{intro eq G critical} has a unique solution up to additive constants. 
With the language of weak KAM Theory, which was formalized and introduced by A. Fathi 
around ten years later, we may say that $\{x_0\}$ is the Aubry set for 
\eqref{intro eq G critical} in this setting.

The vanishing discount problem for HJ equations in the whole $d$-dimensional Euclidean space 
has been recently considered in \cite{IsSic20}, where the authors study the asymptotic of 
solutions to an equation of the form 
\eqref{intro eq G discounted} with $M:=\R^d$ and where 
$c(G)$ is replaced by $c_f(G)$.  
The Hamiltonian $G$ is a continuous function of general form, which is only assumed 
convex and coercive in the momentum variable $p$, locally uniformly in $x$. 
The asymptotic convergence is proved under a condition (see (A3) or its weaker version 
(A3$'$) in \cite{IsSic20}) that implies the existence of a bounded subsolution to 
\eqref{intro eq G critical} which is uniformly strict outside some compact set of $\R^d$.  
In the basic example  
$G(x,p):=|p|-f(x)$,\  these 
conditions boil down to requiring that the infimum of $f$ 
is not attained at infinity. As in \cite{DFIZ1}, the asymptotic behavior of discounted solutions is derived from 
weak convergence of suitably associated measures, but the way these are introduced is rather different. 
Condition (A3)/(A3$'$) in \cite{IsSic20} is used to prove tightness of these families of measures, 
thus recovering the necessary compactness which is no longer guaranteed by the ambient space. 

Even though the setting and the point of view is not the same, 
this study is similar in spirit with case (I) here (cf. Theorem \ref{teo2 intro}). 
Indeed, case (I) is characterized by the existence of a bounded subsolution to 
\eqref{intro eq G critical} which is uniformly strict outside some compact set 
containing the support of $V$. This implies tightness of the families of discounted probability measures 
introduced for the asymptotic analysis, 
which are defined, in accordance with \cite{DFIZ1}, as occupational measures on optimal curves for the discounted solutions. 
We are again in a situation where,  even though the ambient space is 
noncompact, there is no dispersion of mass at infinity, so the strategy followed in  
\cite{DFIZ1} can be implemented.
It is worth pointing out that the dimension 1 of the ambient space plays no role 
for this part of the work. 
Indeed, the proofs of Theorem 1 and Theorem 2 in case (I) can be easily adapted to the case 
$M:=\R^d$ for any $d\geqslant 1$. 
In this regard, we point out that the definitions of the objects we work with, 
most notably the critical values $c(G)$ and $c_f(G)$, the associated intrinsic semidistances 
and the free Aubry set, are given in dimension 1 for uniformity of notation, but 
they make sense in $\R^d$ for any $d\geqslant 1$. We have also taken care of writing 
proofs of the facts needed in this part of the analysis in such a way to be easily 
adapted to any space dimension. 

We also remark that case (I) herein considered has some intersection with 
the work \cite{IsSic20}. 
For instance, when $H(x,p):=|p|-U(x)$, we have $c(H)=c_f(H)=-\min_\MM U$ and 
$c(G)=c_f(G)=-\min_{\MM}(U+V)$, so case (I) occurs if and only if 
$\min_{\MM}(U+V)<\min_\MM U$, namely if and only if 
$G$ satisfies condition (A3)/(A3$'$) in \cite{IsSic20}.

The novelty of our analysis consists in dealing with cases where a bounded subsolution 
to 
\eqref{intro eq G critical}, uniformly strict outside some compact set, does not 
exist, cf. 
case (II) and (III) in Theorem \ref{teo2 intro} above and 
Theorem \ref{teo no bounded critical sol}.  In this situation, the probability measures 
$\tilde\mu^\lambda_x$ associated with the discounted equations may and actually 
do present dispersion of mass at infinity as $\lambda\to 0^+$. We deal with this lack of compactness 
by showing that any such measure 
$\tilde\mu^\lambda_x$ can be written as a convex combination of a probability 
measure $\tilde\mu^\lambda_{x,1}$ whose support is contained in a ball large 
enough so to contain the support of $V$, and a probability measure 
$\tilde\mu^\lambda_{x,2}$ having support contained in the complementary set. By 
standard compactness, the measures  $\tilde\mu^\lambda_{x,1}$ possibly converge 
to a Mather measure $\tilde\mu_{x,1}$ associated 
with $G$ as $\lambda\to 0^+$, and this can happen only when $c(G)=c_f(G)$. On the 
other hand, the measures 
$\tilde\mu^\lambda_{x,2}$ do not see the perturbation of $H$ through the 
potential $V$, so we can exploit the compactness hidden in the model by showing 
that their projections on the tangent bundle of the 1-dimensional torus $\T^1$ 
converge, as $\lambda\to 0^+$, to Mather measures $\tilde\mu_{x,2}$ for $H$. 
{ Even though we believe 
this phenomenon should hold in any space dimension, we are able to prove it 
only in dimension 1. The crucial point consists in showing that the limit measures 
$\tilde\mu_{x,1}$ and $\tilde\mu_{x,2}$ are closed. For this, we use a purely 1-dimensional 
property, namely the fact that the optimal curves for the discounted solutions 
can be taken to be monotone, see Theorem \ref{teo optimal curves}. 
This implies 
that any such optimal curve cannot visit again a ball once it has left it. 
%
Dimension 1 comes also crucially into play in other parts of our analysis for cases (II) and (III) and hence 
it cannot be generalized as it is to higher dimensions. }
For the sake of completeness, we must specify that Theorems \ref{teo1 intro} and 
\ref{teo2 intro} are proved in case (III) under a condition on the asymptotic 
limit $u^0_H$ of the periodic problem,  see condition \eqref{condition u_0} in 
Section \ref{sez asymptotics III}, which is known to hold when $H$ is Tonelli. We have some 
partial results showing that condition \eqref{condition u_0} keeps holding when $H$ is merely 
continuous, but this is not a general fact, cf. Proposition \ref{prop3 u_0} and  Remark \ref{oss3 
u_0}. 

Our work also includes a qualitative study of the critical equation 
\eqref{intro eq G critical} with a weak KAM theoretic flavor, 
which serves as a preliminary step for the asymptotic analysis. 
The definition of the constant 
$c(G)$ is essentially borrowed from the literature on homogenization of HJ equations, 
but its use for the analysis herein addressed seems new. Indeed, 
equations of the form $G(x,d_xu)=c$ in $M$ with $M$ noncompact are usually 
studied in the weak KAM theoretic literature for $c=c_f(G)$. Notice that the value of 
$c_f(G)$ is not affected by shifts of $G$ in the momentum variable, while that of  
$c(G)$ is. Indeed, when homogenization occurs, for instance when $G=H$, the constant 
$c(G)$ is equal to the effective 
Hamiltonian associated with $G$ evaluated at $p=0$. 

When $c(G)>c(H)$, i.e. in case (I), our analysis reveals that 
equation \eqref{intro eq G critical} does not admit 
bounded solutions, see Theorem \ref{teo no bounded critical sol}. 
Furthermore, we show that there exists a coercive solution to  
\eqref{intro eq G critical}, see Corollary \ref{cor existence u_G}. 
This case is also characterized by the existence of a bounded 
subsolution, which is uniformly strict outside a compact set, cf. Theorems  
\ref{teo no bounded critical sol} and \ref{teo strict v_G}.  
This can be interpreted by saying that there is no Aubry set at infinity. 
These results are not specific 
of dimension 1, in fact they can be extended to any dimension with essentially 
same proofs. We point out that, in this case, the existence of Mather measures 
for $G$ in any space dimension can be derived through Proposition 
\ref{prop discounted measures} (rather than via Theorem \ref{teo G Mather measures}, whose 
statement and proof are specifically 1d).

The case $c(G)=c(H)$, corresponding to cases (II) and (III), is characterized 
by the nonexistence of a bounded subsolution such to be uniformly strict outside a 
compact set, cf. Theorem \ref{teo no bounded critical sol}. This feature is not specific of dimension 1 and can be interpreted by saying that there is an Aubry set hidden at infinity.  Our study reveals that this Aubry set at infinity is intervening in the asymptotic problem.  
We believe it would be interesting {\em per se} extending this qualitative analysis 
to higher dimensions and better understanding the role played by the Aubry set at 
infinity, cf. \cite{C01,IsMi07}.\smallskip   

When this work was about to be posted on the ArXiv e-print repository, the authors learnt of a recent pre-print by M. Bardi and H. Kouhkouh \cite{BK22} where the authors prove, among other things, Theorem \ref{teo1 intro} in the case $M:=\R^d$, $G=|p|^2/2-f(x)$ and $c(G)=-\inf_{\R^d} f$, where $f$ is a bounded, Lipschitz and semi-concave function on $\R^d$ which attains its infimum on $\R^d$. Differently from \cite{IsSic20}, the set of minimizers of $f$ does not need to be compact. 
This study has some intersection with case (III) herein considered, even though it relies on completely different techniques.\smallskip

The paper is organized as follows. Section \ref{sez preliminaries} is devoted to fix the main notation and 
assumptions adopted in the paper and to collect some preliminary facts about critical and discounted HJ equations for a general convex and superlinear Hamiltonian $G\in\D C(\TM)$. In Section \ref{sez critical values}  
we introduce the notion of critical value $c(G)$ and compare it with the ergodic constant $c(H)$ when 
$G$ is the perturbation of a periodic Hamiltonian $H$ by a compactly supported potential. Section \ref{sez Mather measures} contains some 
known facts about Mather measures on a compact manifold $M$, together with an extension of these tools to the 
1-dimensional noncompact case $M:=\MM$. Section \ref{sez discounted measures}  is devoted to the study of 
discounted measures, defined following \cite{DFIZ1}: we have collected here the new ideas and facts needed to 
study their asymptotic behavior in the noncompact setting at issue.  The analysis of cases (I), (II) and (III) from 
Theorem \ref{teo2 intro} above is presented in Sections \ref{sez case I}, \ref{sez case II} and \ref{sez case III}, respectively. Each of these sections is divided in two subsections: the first one is devoted to a preliminary analysis of the critical equations for $G$ and $H$, the second one contains the 
asymptotic analysis  for the discounted equations. More precisely, Theorems \ref{teo1 intro} and \ref{teo2 intro} in cases (I), (II) and (III) 
correspond, respectively, to Theorem \ref{theo main}, Theorem \ref{theo main2} and Theorem \ref{theo main3}.\\

\indent{\textsc{Acknowledgements. $-$}}
This research originates from discussions between the authors and  
Charles Bertucci while the latter was visiting the Department of Mathematics 
at Sapienza University of Rome in October 2017. 
The authors wish to thank Charles Bertucci for the 
time and energy dedicated to our early attempts to attack the problem in general space dimensions. 
This research is partially based upon work supported by the National Science Foundation under Grant No. DMS-1440140 while the second author was in residence at the Mathematical Sciences Research Institute in Berkeley, California, during the Fall 2018 semester. 
During this stay, he benefited from discussions with Gonzalo Contreras, Albert Fathi, Hitoshi 
Ishii and Antonio Siconolfi. The authors are also grateful to Antonio Siconolfi for providing the proof of Proposition 
\ref{prop Sez2 effective ham}, and to Maxime Zavidovique for providing the counterexample contained in the Appendix.
%
%

\numberwithin{teorema}{section}
\numberwithin{equation}{section}

\section{Preliminaries}\label{sez preliminaries}

\subsection{A few notations}\label{sez notation}
A function $g:\R_+\to\R$
will be termed {\em coercive} if $g(h)\to +\infty$ as $h\to +\infty$; 
it will be termed {\em superlinear} if $g(h)/|h|$ is coercive. 

Given a metric space $X$, we will write
$\varphi_n\ucv\varphi$ on $X$ to mean that the sequence of
functions $(\varphi_n)_n$ uniformly converges to $\varphi$ on
compact subsets of $X$. Furthermore, we will denote by
$\D{Lip}(X)$ the family of Lipschitz--continuous real functions
defined on $X$.\smallskip\par

We will denote by $B_r(x_0)$ and $B_r$ the open balls in $\MM$ of
radius $R$ centered at $x_0$ and $0$, respectively. The symbol $|\cdot|$
stands for the Euclidean norm.

With the term {\em curve}, without any further specification, we
refer to an absolutely continuous function from some given
interval $[a,b]$ to $\MM$. 
%
%
\subsection{Viscosity solutions} 
We will consider Hamilton--Jacobi equations of the general form
\begin{equation}\label{general HJ equation}
 F(u(x),x,u')=0\qquad\hbox{in $\MM$},
\end{equation}
where $F\in\CC(\R\times \TM)$. 
The notion of {\em solution, subsolution} and {\em supersolution} of 
\eqref{general HJ equation} adopted in this paper is the one in the {\em 
viscosity sense}, see  \cite{bardi,barles,FathiSurvey}. Solutions, subsolutions 
and supersolutions will be implicitly assumed continuous and the adjective {\em 
viscosity} will be often omitted, with no further specification.

Let us recall that a Lipschitz--continuous subsolution $u$ of \eqref{general HJ equation} is also 
an almost everywhere 
subsolution, i.e. \ $F(u(x),x,u'(x))\leqslant 0$ \ for a.e. $x\in \MM$. The converse is true when $F$ is convex in $p$, see 
\cite{bardi,barles,FathiSurvey,Sic}. 

We will also use the following standard results, see for instance \cite{bardi,barles, 
FathiSurvey, Sic}:

\begin{prop}\label{prop when G convex}
Assume $F\in\CC(\TM)$ is independent of $u$ and such that $F(x,\cdot)$ is convex 
in $\R$ for every $x\in \MM$. Let $u\in\D{C}(\MM)$. The following properties hold:
\begin{itemize}
\item[(i)] if $u$ is the pointwise supremum (respectively, infimum) of a family 
of subsolutions (resp., supersolutions) to \eqref{general HJ equation}, then $u$ 
is a subsolution (resp., supersolution) of \eqref{general HJ 
equation};\smallskip
\item[(ii)] if $u$ is the pointwise infimum of a family of equi--Lipschitz 
subsolutions to \eqref{general HJ equation}, then $u$ is a Lipschitz subsolution 
 of \eqref{general HJ equation};\smallskip
\item[(iii)] if $u$ is a convex combination of a family of equi--Lipschitz 
subsolutions to \eqref{general HJ equation}, then $u$ is a Lipschitz subsolution 
of \eqref{general HJ equation}.
\end{itemize}
\end{prop}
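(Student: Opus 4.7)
My plan is to handle the three items separately. The main simplifying tool is the equivalence, stated just before the proposition, between viscosity and a.e.\ subsolutions when $F$ is convex in $p$ and $u$ is Lipschitz. Item (i) I would treat directly by a penalization argument, without invoking convexity; items (ii) and (iii) I would reduce to the a.e.\ characterization. The hardest step, which I expect to be the main obstacle, is proving (ii): infima of subsolutions are generically only supersolutions, and convexity must enter in an essential way.

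For (i), given a $C^1$ supertangent $\phi$ to $u:=\sup_\alpha u_\alpha$ at $x_0$, I would replace $\phi$ by $\phi+\varepsilon|x-x_0|^2$ to make it strict, so that $\phi-u\geqslant\eta>0$ on $\partial B_r(x_0)$ for some $r,\eta>0$. Choosing $\alpha_n$ with $u_{\alpha_n}(x_0)\to u(x_0)$, the continuous function $u_{\alpha_n}-\phi$ is nonpositive on $\overline{B_r(x_0)}$ (since $u_{\alpha_n}\leqslant u\leqslant\phi$), at most $-\eta$ on $\partial B_r(x_0)$, and tends to $0$ at $x_0$. Its maximum $M_n$ on $\overline{B_r(x_0)}$ satisfies $M_n\to 0$, so for $n$ large it is attained at an interior point $y_n$; moreover $y_n\to x_0$, because any other accumulation point $y$ would yield $u(y)-\phi(y)=0$, contradicting the strict property of $\phi$. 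At $y_n$ the function $\phi$ is, up to an additive constant, a supertangent to the subsolution $u_{\alpha_n}$, hence $F(y_n,\phi'(y_n))\leqslant 0$; passing to the limit and using continuity of $F$ and $\phi'$ yields $F(x_0,\phi'(x_0))\leqslant 0$. The infimum-of-supersolutions statement follows dually.

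For (iii), let $u=\sum_{i=1}^N\lambda_i u_i$ with $\lambda_i\geqslant 0$, $\sum_i\lambda_i=1$, and $u_i$ equi-Lipschitz subsolutions. Then $u$ is Lipschitz, and at a.e.\ $x$ all the $u_i$ (and $u$) are differentiable with $u'(x)=\sum_i\lambda_i u_i'(x)$; convexity of $F(x,\cdot)$ gives
\begin{equation*}
F(x,u'(x))\leqslant\sum_{i=1}^N\lambda_i F(x,u_i'(x))\leqslant 0\qquad\text{a.e.\ in $\MM$,}
\end{equation*}
and the convex-$F$ equivalence promotes this to the viscosity subsolution property.

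For (ii), my plan has four steps. \emph{Step 1.} Show that $\min(u_1,u_2)$ of two equi-Lipschitz subsolutions is a subsolution. At a.e.\ $x$ both $u_1$ and $u_2$ are differentiable; on the open set $\{u_1<u_2\}$ the minimum coincides locally with $u_1$ and hence has derivative $u_1'$ a.e., and symmetrically on $\{u_1>u_2\}$; on the coincidence set $\{u_1=u_2\}$ the standard lemma that two Lipschitz functions agreeing on a set have equal derivatives at its density points forces $u_1'=u_2'$ a.e. Hence $\min(u_1,u_2)'(x)\in\{u_1'(x),u_2'(x)\}$ a.e., yielding $F(x,\min(u_1,u_2)'(x))\leqslant 0$ a.e., and the convex-$F$ equivalence concludes. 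By induction, every finite minimum $v_N:=\min(u_{\alpha_1},\dots,u_{\alpha_N})$ is a subsolution. \emph{Step 2.} Reduce to a countable family: using the common Lipschitz constant $L$ and a countable dense set $\{q_k\}\subset\MM$, pick $\alpha_{k,n}$ with $u_{\alpha_{k,n}}(q_k)<u(q_k)+1/n$; a three-$\varepsilon$ argument based on $u_{\alpha_{k,n}}(x)\leqslant u(q_k)+1/n+L|x-q_k|$ then gives $\inf_{k,n}u_{\alpha_{k,n}}(x)=u(x)$ for every $x\in\MM$. \emph{Step 3.} Enumerate this countable subfamily as $\{u_{\alpha_m}\}_{m\in\N}$; the sequence $v_N$ is equi-Lipschitz and decreases pointwise to the continuous function $u$, so Dini's theorem on compacta upgrades the convergence to locally uniform. \emph{Step 4.} Invoke the stability of viscosity subsolutions under locally uniform limits to conclude that $u$ is a subsolution.
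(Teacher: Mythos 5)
The paper does not prove this proposition: it is stated with the attribution ``see for instance \cite{barles, FathiSurvey, Sic}'' and invoked as a known fact. Your proof is a correct, self-contained reconstruction of the standard arguments that those references contain, so in that sense there is nothing in the paper to compare against; the verdict is simply that your argument is sound.

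A few remarks on the details. Part (i) is the classical penalization argument, and you correctly note it does not use convexity; this matches the paper's own comment immediately after the statement. Part (iii) is a clean Rademacher-plus-Jensen argument; note only that ``convex combination of a family'' could be read as an integral average $\int u_\alpha\,d\nu(\alpha)$ over a probability measure $\nu$, not just a finite sum. Your argument extends verbatim to that case (differentiate under the integral sign using the equi-Lipschitz bound and Fubini to get a full-measure set where $u'(x)=\int u_\alpha'(x)\,d\nu$, then apply Jensen), but as written you only treat finite combinations. For part (ii), your Step~1 is the genuine crux and is handled correctly: the density-point lemma gives $\min(u_1,u_2)'(x)\in\{u_1'(x),u_2'(x)\}$ a.e., so convexity enters only through the a.e.-to-viscosity upgrade. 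Steps 2--4 (countable reduction, Dini or Ascoli--Arzel\`a, stability under locally uniform limits) are routine. An alternative, shorter route for (ii) that is also standard in the convex setting, and closer in spirit to the paper's machinery, is to use the characterization of subsolutions via the intrinsic semidistance $S_a$ of Proposition~\ref{prop S}: a Lipschitz $v$ is a subsolution iff $v(x)-v(y)\leqslant S_a(y,x)$ for all $x,y$, and this inequality is manifestly stable under infima. This avoids the density-point lemma and the limiting argument entirely, at the cost of first setting up $S_a$.
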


Note that items (ii) and (iii) above require the convexity of $F$ in the 
momentum, while item (i) is actually independent of this condition.

We conclude this section by a standard approximation result, see  \cite{fatmad}, Theorem 8.1, that we shall 
repeatedly use in our analysis. 
\begin{lemma}\label{EncoreUneVariante}
Assume $F\in\CC(\TM)$ such that $F(x,\cdot)$ is convex in $\MM$ for every 
$x\in \MM$, and let $u$ be a Lipschitz subsolution of equation \eqref{general HJ 
equation}. Then, for all $\eps >0$, there exists a smooth function $u_\eps : 
\MM\to \R$ such that 
\[
\|u-u_\eps\|_\infty \leqslant \eps\quad  and \quad F(x,u_\eps'(x))\leqslant \eps\qquad\hbox{for all $x\in \MM$}.
\]
\end{lemma}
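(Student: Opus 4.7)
The plan is to implement the mollification-with-partition-of-unity strategy alluded to in the statement. Since $u$ is Lipschitz, Rademacher's theorem combined with convexity of $F(x,\cdot)$ implies that $u$ is an almost everywhere subsolution: $F(y,u'(y))\leqslant 0$ for a.e. $y\in\R$, with $\|u'\|_\infty\leqslant L:=\mathrm{Lip}(u)$. This is the basic input.

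The first step is a local mollification estimate. Fix a compact interval $I\subset\R$ and a slightly larger compact interval $I'$ containing $I$ in its interior. Since $F$ is uniformly continuous on $I'\times[-L,L]$, there is a modulus $\omega_{I'}$ with $|F(x,p)-F(y,p)|\leqslant \omega_{I'}(|x-y|)$ for all $x,y\in I'$ and $|p|\leqslant L$. For $\delta>0$ smaller than the distance from $I$ to $\R\setminus I'$, set $u^\delta:=u*\rho_\delta$ with $\rho_\delta$ a standard mollifier supported in $[-\delta,\delta]$. Writing $(u^\delta)'(x)=\int u'(y)\rho_\delta(x-y)\,dy$ and applying Jensen's inequality to the convex map $p\mapsto F(x,p)$,
\[
F\bigl(x,(u^\delta)'(x)\bigr)\leqslant \int F(x,u'(y))\rho_\delta(x-y)\,dy\leqslant \int \bigl(F(y,u'(y))+\omega_{I'}(\delta)\bigr)\rho_\delta(x-y)\,dy\leqslant \omega_{I'}(\delta),
\]
since $F(y,u'(y))\leqslant 0$ a.e. Combined with the estimate $\|u-u^\delta\|_{L^\infty(I)}\leqslant \omega_u(\delta)$ coming from uniform continuity of $u$, this yields a local smooth $\eps$-subsolution on $I$ for all sufficiently small $\delta$.

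To globalize, pick a locally finite smooth partition of unity $\{\phi_i\}_{i\in\Z}$ on $\R$ subordinate to a cover by bounded open intervals $\{U_i\}$, and set $A_i:=\|\phi_i'\|_\infty<\infty$. For each $i$, using the first step on $\overline{U_i}$, select $\delta_i>0$ so small that $u_i:=u*\rho_{\delta_i}$ satisfies $\|u-u_i\|_{L^\infty(U_i)}\leqslant \eta_i$ and $F(x,u_i'(x))\leqslant \eta_i$ for every $x\in U_i$, where $\eta_i\in(0,\eps)$ is to be fixed. Define $u_\eps:=\sum_i\phi_i\,u_i$. This is a locally finite sum of smooth functions, hence smooth, and trivially $\|u-u_\eps\|_\infty\leqslant \sup_i\eta_i$. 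Using $\sum_i\phi_i'\equiv 0$, one rewrites
\[
u_\eps'(x)=\sum_i \phi_i(x)\,u_i'(x)+\sum_i \phi_i'(x)\bigl(u_i(x)-u(x)\bigr).
\]
The first sum is a convex combination of points $u_i'(x)$ at which $F(x,u_i'(x))\leqslant \eta_i$ (for those $i$ with $\phi_i(x)>0$), so by convexity of $F(x,\cdot)$ one obtains $F\bigl(x,\sum_i\phi_i(x)u_i'(x)\bigr)\leqslant \max_i\eta_i$. The second sum is bounded in absolute value by $\sum_{i:\,x\in\supp\phi_i'}A_i\eta_i$. Choosing the tolerances $\eta_i$ small enough (depending on the $A_i$, on the local finiteness of the cover, and on a local modulus of continuity of $F$ in $p$ on bounded momentum sets) makes both $\|u-u_\eps\|_\infty\leqslant \eps$ and $F(x,u_\eps'(x))\leqslant \eps$ hold for every $x\in\R$.

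The main obstacle is the absence of any global modulus of continuity for $F$ on $\TM$: both the $x$-continuity used in the Jensen step and the $p$-continuity used when passing from $\sum_i\phi_i u_i'$ to $u_\eps'$ are purely local. This is precisely what forces the local-to-global patching, and it makes the choice of the parameters $(\delta_i,\eta_i)$ delicate: they must be tuned against the locally varying constants $A_i$ and $\omega_{I'}$ so that the correction term $\sum_i\phi_i'(u_i-u)$ stays uniformly negligible at every point of $\R$.
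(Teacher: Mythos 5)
Your proposal is correct and is precisely the mollification–plus–partition-of-unity argument that the paper itself cites as one of the two standard routes (the other being a direct application of Theorem~8.1 of \cite{fatmad}) and then omits. The three ingredients you use — the a.e.\ subsolution property for Lipschitz functions, Jensen's inequality for the convexity of $F(x,\cdot)$ applied to the mollified gradient, and the patching via a locally finite partition of unity with the identity $\sum_i\phi_i'\equiv 0$ to isolate the correction term $\sum_i\phi_i'(u_i-u)$ — are exactly the ones needed, and your local choice of the tolerances $\eta_i$ (diagonalized against local finiteness and a local modulus of continuity of $F$) correctly handles the absence of a global modulus for $F$.
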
 

\subsection{Hamilton--Jacobi equations and the free Aubry set}\label{sez 
HJ equations}

Throughout the paper, we will call {\em Hamiltonian} a continuous function 
$G:\TM\to\R$. If not otherwise stated, we will always assume that $G$ 
satisfies the following assumptions:\medskip 
\begin{itemize}
\item[(G1)] (convexity)  for every  $x\in \MM$, the map $p\mapsto G(x,p)$ is  
convex 
on $\R$.
\item[(G2)] (superlinearity) there exist two superlinear functions $\alpha,\beta:\R_+\to\R$ such
     that
     \[
     \alpha\left(|p|\right)\leqslant G(x,p)\leqslant \beta\left(|p|\right)\qquad\hbox{for all
     $(x,p)\in \TM$.}
     \]
\item[(G3)] (uniform continuity) $G\in \D{UC}(\R\times B_R)$  for all $R>0$.\smallskip
\end{itemize}
%

The Hamiltonian $G$ will be termed {\em Tonelli} if it is furthermore of class of $C^2$ on $\TM$ and satisfies 
${\partial^2 G}/{\partial p^2}(x,p)>0$ in $\TM$

\begin{oss}\label{oss G1-G2}
Condition (G2) is equivalent to saying that $G$ is superlinear and
locally bounded in $p$, uniformly with respect to $x$. We
deduce from (G1)
\begin{equation}\label{lippo}
  | G(x,p_1)-G(x,p_2)| \leqslant M_R |p-q| \quad\text{for all
  $x\in\MM$, and $p_1$, $p_2$ in $B_R$},
\end{equation}
where $M_R=\sup\{\, |G(x,p)|\,:\,x \in \R, \,|p|\leqslant R+2\,\},$ which is finite thanks to (G2). 
\end{oss}

To any such Hamiltonian, we can associate a Lagrangian function $L:\TM\to\R$ via  the {\em Fenchel transform} of $G$:
\begin{equation}\label{def L}
L(x,q)=G^*(x,q):=\sup_{p\in\R}\left\{\langle p, q\rangle -
G(x,p)\right\}.
\end{equation}
Such a function $L$ satisfies  (G1), (G2) and (G3) as well, for a suitable pair of superlinear functions $\alpha_*,\beta_*$ in place of $\alpha,\beta$. 
In the sequel we shall often use the so-called {\em Fenchel inequality,} namely
\[
L(x,q) \geqslant \langle p, q\rangle -G(x,p)
\qquad
\hbox {for all $x\in\MM$ and $p,q\in\R$.}
\]
\smallskip    
Next, we recall some preliminary facts about stationary Hamilton--Jacobi 
equations of the form
%
 \begin{equation}\label{eq hja}
G(x,u')=a\qquad\hbox{in $\MM$}
\end{equation}
where $a \in \R$. Note that any given {\rm C}$^1$ function $u:\MM\to \R$ is a subsolution 
(resp.\ supersolution) of $G(x,u')=a$ provided $a \geqslant \max\limits_{x\in 
M}G(x,u')$
\big(resp.\ $a\leqslant \min\limits_{x\in M}G(x,u')$\big). Moreover, the 
coercivity and convexity of $G$
entails the following characterization of viscosity subsolutions of 
\eqref{eq hja}, 
see for instance \cite{barles,FathiSurvey}. 

\begin{prop}\label{COMPBASIQUE} Let $G\in\D{C}(\TM)$ satisfy (G1)-(G2)  
 and $a\in \R$. A continuous function $u$ is a viscosity subsolution $u$ of 
\eqref{eq hja} if and only if it is Lipschitz continuous and satisfies 
\[
G(x,u'(x))\leqslant a\qquad\hbox{for a.e. $x\in \MM$.}
\]
Moreover, the set of viscosity subsolutions of  \eqref{eq hja} is 
equi--Lipschitz,  with a common Lipschitz constant $\kappa_a$ given by
\begin{equation}\label{def kappa}
\kappa_a=\sup\{|p|\,:\, G(x,p)\leqslant a\}.
\end{equation}
\end{prop}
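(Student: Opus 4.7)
The plan is to prove this as a standard consequence of superlinearity and convexity, in three steps. First, I would check that $\kappa_a$ defined in \eqref{def kappa} is finite: by (G2), if $G(x,p)\leqslant a$ then $\alpha(|p|)\leqslant a$, and since $\alpha$ is superlinear this gives $|p|\leqslant C_a$ for some constant depending only on $a$. In particular, for every $\kappa>\kappa_a$ one has $G(x,\kappa)>a$ and $G(x,-\kappa)>a$ for every $x\in\MM$.

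Next I would show that any continuous viscosity subsolution $u$ of \eqref{eq hja} is $\kappa_a$-Lipschitz. Argue by contradiction: if there exist $x_1<x_2$ with $u(x_2)-u(x_1)>\kappa_a(x_2-x_1)$, pick $\kappa>\kappa_a$ with $\kappa(x_2-x_1)<u(x_2)-u(x_1)$, and consider
\[
\phi_\eta(x):=u(x_1)+\kappa(x-x_1)-\eta(x-x_1)(x_2-x),\qquad \eta>0.
\]
For $\eta$ small enough, $\phi_\eta(x_1)=u(x_1)$ and $\phi_\eta(x_2)<u(x_2)$, and the maximum of $u-\phi_\eta$ on $[x_1,x_2]$ is attained at an interior point $x^*\in(x_1,x_2)$ (otherwise $u(x_2)-\phi_\eta(x_2)\leqslant 0$). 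Then $\phi_\eta$ is a $C^1$ supertangent of $u$ at $x^*$ with $\phi_\eta'(x^*)=\kappa-\eta(x_2+x_1-2x^*)$, which can be made arbitrarily close to $\kappa$ by shrinking $\eta$. The viscosity subsolution condition gives $G\big(x^*,\phi_\eta'(x^*)\big)\leqslant a$, contradicting the continuity of $G$ and $G(x^*,\kappa)>a$. The symmetric argument yields the opposite inequality, so $u$ is $\kappa_a$-Lipschitz.

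Once the Lipschitz bound is established, Rademacher's theorem gives differentiability a.e., and at each differentiability point $x_0$, $D^+u(x_0)=\{u'(x_0)\}$ is nonempty; using any $C^1$ supertangent at $x_0$ (which exists because $u'(x_0)$ belongs to $D^+u(x_0)$) and the subsolution property yields $G(x_0,u'(x_0))\leqslant a$. Hence $u$ solves the equation almost everywhere. Conversely, if $u$ is Lipschitz and $G(x,u'(x))\leqslant a$ a.e., the standard fact that $u$ is a viscosity subsolution follows from the convexity assumption (G1): mollify $u$ by $u_\delta:=u\ast\rho_\delta$, apply Jensen's inequality in the $p$-variable together with the continuity of $G$ in $x$ to get $G(x,u_\delta'(x))\leqslant a+\omega(\delta)$ for a modulus $\omega$, and then pass to the limit $\delta\to 0^+$ by stability of viscosity subsolutions. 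This is exactly the content of Lemma \ref{EncoreUneVariante} read the other way around; alternatively one may appeal directly to \cite{bardi,barles,FathiSurvey,Sic}.

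The main technical obstacle is the first implication, namely extracting the sharp Lipschitz constant $\kappa_a$ from the viscosity subsolution property; the need for the quadratic penalty $-\eta(x-x_1)(x_2-x)$ (or an equivalent device) to push the maximum into the interior is the delicate point. The rest reduces to routine viscosity-theoretic manipulations plus the convexity of $G$ in $p$.
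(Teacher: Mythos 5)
The paper does not prove this proposition---it is quoted as standard with a pointer to the literature---so your attempt has to stand on its own. Steps 1 and 3 of your plan are fine: the finiteness of $\kappa_a$ follows from (G2) exactly as you say, and once \emph{some} Lipschitz bound is known, the a.e.\ inequality at points of differentiability together with the mollification/Jensen argument for the converse is the standard route. The problem is in Step 2, which is the only step carrying real content. Your claim that ``the maximum of $u-\phi_\eta$ on $[x_1,x_2]$ is attained at an interior point $x^*$ (otherwise $u(x_2)-\phi_\eta(x_2)\leqslant 0$)'' is not justified: the parenthetical only excludes the endpoint $x_1$ (where $u-\phi_\eta=0$), not the endpoint $x_2$, where $u-\phi_\eta>0$ by construction. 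The quadratic penalty $-\eta(x-x_1)(x_2-x)$ vanishes at $x_2$, so nothing prevents the maximum from sitting exactly at $x_2$ --- which is precisely what happens if $u$ rises very steeply just to the left of $x_2$, i.e.\ in the situation you have not yet ruled out. At $x=x_2$ you have no supertangent and hence no contradiction, so the argument as written does not close.

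Two standard repairs are available. (a) Replace the penalty by a barrier that blows up at $x_2$: take $\phi_\eta(x):=u(x_1)+\kappa(x-x_1)+\eta/(x_2-x)$ on $[x_1,x_2)$. For $\eta$ small one still finds a point where $u-\phi_\eta>0$, while $u-\phi_\eta<0$ at $x_1$ and $u-\phi_\eta\to-\infty$ as $x\to x_2^-$, so the maximum is forced into the open interval; there $\phi_\eta'(x^*)=\kappa+\eta/(x_2-x^*)^2>\kappa>\kappa_a$, giving the contradiction and even the sharp constant directly. (b) Alternatively, run the usual two-step scheme: first obtain a crude local Lipschitz bound by comparing $u$ on a ball $\overline B_r(y)$ with the cone $u(y)+C|x-y|$, choosing $C$ so large that the cone dominates $u$ on $\partial B_r(y)$ (this uses only local boundedness of $u$ and excludes the boundary, which is exactly what your construction fails to do); then the sharp constant $\kappa_a$ is recovered for free from the a.e.\ bound $|u'(x)|\leqslant\kappa_a$ that your Step 3 already establishes. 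With either repair the proof is complete and consistent with the references the paper cites.
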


We define the {\em free critical value} $c_f(G)$ as
\begin{equation}\label{ManeCrit}
c_f(G)=\inf\{a\in\R\,:\, \text{equation \eqref{eq hja} admits  viscosity
subsolutions}\}.
\end{equation}
The term {\em free} refers to the fact that we are not prescribing a priori any constraint on the 
kind of growth of subsolutions at infinity. Of course, the subsolutions we are considering have at most 
linear growth due to their Lipschitz character.  
The existence of almost everywhere subsolutions for 
\eqref{eq hja} readily implies that the set 
$Z(x,a):=\{p\in\R\,:\,G(x,p)\leqslant a\,\}$ is nonempty for every $x\in\MM$. In 
particular, $c_f(G)$ is greater than or equal to the level of equilibria \ 
$\sup_x\min_p G(x,p)$. 
By the 
Ascoli--Arzel\`a Theorem and the stability of the notion of viscosity 
subsolution, it can be easily proved that the infimum in \eqref{ManeCrit} is attained, 
meaning 
that there are subsolutions also at the free critical level. 

We will  refer to 
\begin{equation}\label{eq free critical}
G(x,u')=c_f(G)\qquad\hbox{in $\MM$}
\end{equation}
as the {\em free critical equation}. 


Following \cite{FS05}, we carry out the study of properties of
subsolutions of \abra{eq hja}, for $a \geqslant c_f(G)$, by means of
the semidistances $S_a$ defined on $\MM\times \MM$ as follows:
    \begin{equation}\label{eq S}
    S_a(x,y)=\inf_{\gamma}\left\{\int_0^1 \sigma_a\big(\gamma(s),\dot\gamma(s)\big)\,\dd
    s\, \right\},
    \end{equation}
where the infimum is taken over all absolutely continuous curves $\gamma:[0,1]\to\MM$ with 
$\gamma(0)=x$ and $\gamma(1)=y$, and $\sigma_a(x,q)$ is the support function of the 
$a$--sublevel
$Z(x,a)$ of $G$, namely
    \begin{equation}\label{sigma}
    \sigma_a(x,q):=\sup\left\{\langle q,p\rangle\,:\,p\in Z(x,a)
    \,\right\}.
    \end{equation}

\noindent There exists a positive constant $\kappa_a$ such that the function $S_a$ satisfies the
following properties, for all $x,y,z\in \MM$:
\begin{eqnarray*}
S_a(x,y)\leqslant S_a(x,z)+S_a(z,y),\qquad
S_a(x,y)\leqslant \kappa_a|x-y|
\end{eqnarray*}
Further properties of  $S_a$ which be useful in the sequel are summarized in the statement below, see \cite{FS05}.

\begin{prop}\label{prop S} Let $ a \geqslant c_f(G)$.
\begin{itemize}
    \item[\em(i)] A function $\phi$ is a viscosity subsolution of \abra{eq hja} if and
    only if
\[
\phi(x)-\phi(y)\leqslant S_a(y,x)\qquad\hbox{for all $x,y\in \MM$.}
\]
In particular, all viscosity subsolutions of \eqref{eq hja} are
$\kappa_a$--Lipschitz continuous.\smallskip
    \item[\em(ii)] For any $y\in \MM$, the functions $S_a(y,\cdot)$
and
    $-S_a(\cdot,y)$ are both subsolutions of \abra{eq hja}.\smallskip
    \item[\em(iii)] For any $y\in \MM$
\[
S_a(y,x)=\sup\{v(x)\,:\,\hbox{$v$ is a subsolution to \eqref{eq
hja} with $v(y)=0$}\,\}.
\]
In particular, by maximality, $S_a(y,\cdot)$ is a viscosity
solution of \eqref{eq hja} in $\MM\setminus\{y\}$.
\end{itemize}
\end{prop}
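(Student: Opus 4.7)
The plan is to exploit the convex-analytic duality between the sublevel sets $Z(x,a)$ and their support functions $\sigma_a(x,\cdot)$. Since $Z(x,a)$ is closed and convex by (G1), one has
\[
p\in Z(x,a)\quad\Longleftrightarrow\quad \langle q,p\rangle\leqslant \sigma_a(x,q)\ \text{for all $q\in\R$},
\]
and Proposition \ref{COMPBASIQUE} tells me that viscosity subsolutions of \eqref{eq hja} coincide with Lipschitz a.e.\ subsolutions, all sharing the constant $\kappa_a$. These two ingredients reduce the whole statement to manipulations of $\sigma_a$ along curves.

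For the forward implication in (i), given a subsolution $\phi$, the chain rule (valid since $\phi$ is Lipschitz) yields, for any curve $\gamma:[0,1]\to\MM$ joining $y$ to $x$,
\[
\phi(x)-\phi(y)=\int_0^1 \phi'(\gamma(s))\cdot\dot\gamma(s)\,\dd s\leqslant \int_0^1 \sigma_a\bigl(\gamma(s),\dot\gamma(s)\bigr)\,\dd s,
\]
and taking the infimum over $\gamma$ gives $\phi(x)-\phi(y)\leqslant S_a(y,x)$. For the reverse, the estimate $\sigma_a(x,q)\leqslant \kappa_a|q|$ forces $\phi$ to be $\kappa_a$-Lipschitz; at a differentiability point $x_0$, testing with the reparametrized segment $\gamma(s)=x_0+stq$ on $[0,1]$, using positive $1$-homogeneity of $\sigma_a(x,\cdot)$, and letting $t\to 0^+$, I obtain $\langle \phi'(x_0),q\rangle\leqslant \sigma_a(x_0,q)$ for every $q\in\R$, so $\phi'(x_0)\in Z(x_0,a)$ a.e.\ by the duality above; Proposition \ref{COMPBASIQUE} then delivers the viscosity subsolution property.

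Part (ii) then follows immediately: the triangle inequality $S_a(y,z)\leqslant S_a(y,x)+S_a(x,z)$ rewrites as $S_a(y,z)-S_a(y,x)\leqslant S_a(x,z)$, so applying (i) to $\phi:=S_a(y,\cdot)$ yields a subsolution, and the other case is symmetric. For (iii), let $u$ denote the supremum on the right-hand side: by (i), any admissible $v$ satisfies $v(x)=v(x)-v(y)\leqslant S_a(y,x)$, whence $u\leqslant S_a(y,\cdot)$, and the reverse inequality follows because $S_a(y,\cdot)$ is itself admissible by (ii) together with $S_a(y,y)=0$. To upgrade $S_a(y,\cdot)$ to a viscosity solution on $\MM\setminus\{y\}$, I argue by contradiction via a standard Perron-type bump: if $S_a(y,\cdot)$ failed to be a supersolution at some $x_0\neq y$, there would exist a $C^1$ supertangent $\phi$ with $G(x_0,\phi'(x_0))<a$; by continuity of $G$ this strict inequality persists on a small ball $B\subset\MM\setminus\{y\}$ around $x_0$, and for a well-chosen small $\eps>0$ the function $\max\{S_a(y,\cdot),\,\phi+\eps\}$ is a subsolution on $\MM$ (by Proposition \ref{prop when G convex}(i) applied locally) equal to $S_a(y,\cdot)$ outside $B$, still vanishing at $y$, but strictly greater than $S_a(y,x_0)$ at $x_0$, contradicting the maximality just proved. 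The main obstacle is precisely this bump/gluing construction, where one must match $S_a(y,\cdot)$ on $\partial B$ while keeping the pointwise inequality $G\leqslant a$ across the gluing; everything else is a routine manipulation of the support function $\sigma_a$.
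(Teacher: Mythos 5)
Your architecture matches the standard argument from \cite{FS05}, to which the paper defers for the proof, and the reduction of all three items to estimates on $\sigma_a$ along curves is exactly right. There is, however, a genuine gap at the very first step of (i). The identity $\phi(x)-\phi(y)=\int_0^1\phi'(\gamma(s))\cdot\dot\gamma(s)\,\dd s$ is not available for a merely Lipschitz $\phi$: Rademacher's theorem gives differentiability of $\phi$ a.e.\ in $\MM$, but an absolutely continuous curve $\gamma$ can spend a positive fraction of its parameter interval inside the non-differentiability set of $\phi$ (take $\phi(x_1,x_2)=|x_1|$ and $\gamma(s)=(0,s)$; recall the paper's remark after Theorem~\ref{teo strict subsolution} that this proposition is meant to hold in any $\R^d$). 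The clean fix is the one the paper hands you in Lemma~\ref{EncoreUneVariante}: pick smooth $\phi_\eps$ with $\|\phi-\phi_\eps\|_\infty\leqslant\eps$ and $G(x,\phi_\eps'(x))\leqslant a+\eps$ everywhere, so that $\phi_\eps'(x)\in Z(x,a+\eps)$; for a smooth function the chain rule is exact, giving $\phi_\eps(x)-\phi_\eps(y)\leqslant\int_0^1\sigma_{a+\eps}(\gamma,\dot\gamma)\,\dd s$, and then let $\eps\to 0$ using the joint continuity of $(x,a,q)\mapsto\sigma_a(x,q)$ from Lemma~\ref{lemma properties sigma_a}. (One could instead argue with the Clarke generalized gradient of $\phi$ along $\gamma$, but there is no reason to avoid the smoothing lemma the paper already sets up for precisely this purpose.)

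The rest is essentially correct; two touch-ups in (iii). You wrote ``supertangent'' where a \emph{subtangent} is meant: failure of the supersolution test at $x_0$ produces a $C^1$ test function $\phi$ touching $S_a(y,\cdot)$ from \emph{below} at $x_0$ with $G(x_0,\phi'(x_0))<a$, which is what your $\max$-construction actually requires. Second, before gluing you should replace $\phi$ by $\phi-\delta|x-x_0|^2$ (same value and gradient at $x_0$, still satisfying $G(x,\phi'(x))<a$ on a small ball $B\ni x_0$ for $\delta$ small by continuity) so that the touching is \emph{strict} away from $x_0$; only then is $\min_{\partial B}\bigl(S_a(y,\cdot)-\phi\bigr)>0$, so you may choose $\eps>0$ with $\phi+\eps<S_a(y,\cdot)$ on $\partial B$, and $w:=\max\{S_a(y,\cdot),\phi+\eps\}$ coincides with $S_a(y,\cdot)$ outside $B$, is a subsolution inside $B$ by Proposition~\ref{prop when G convex}(i), vanishes at $y$, and exceeds $S_a(y,x_0)$ at $x_0$, contradicting the maximality you established just before.
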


It is also well known that 
equation \eqref{eq hja} admits solutions for every $a\geqslant c_f(G)$, thanks to the noncompact character of the ambient space 
$\R$.\footnote{Pick a diverging sequence $(y_n)_n$. The functions $v_n(\cdot):=S_a(y_n,\cdot) -S_a(y_n,0)$ form a relatively compact sequence in $\D C(\R)$ by the Ascoli-Arzel\`a Theorem. Any cluster point $u$ of $(v_n)_n$ in $\D C(\R)$ solves \eqref{eq hja} in any fixed open ball in $\R$ by property (iii) above and by stability of the notion of viscosity solution.}
\smallskip

Let us now go back to the free critical equation \eqref{eq free critical}. 
We define the {\em free Aubry set} $\A_f(G)$ as
\[
\A_f(G):=\{y\in M\,:\,S_{c_f(G)}(y,\cdot)\ \hbox{is a solution to \eqref{eq free critical}}\,\}.
\]
The free Aubry set is (possibly empty) closed subset of $\R$ characterized by the 
following property.

\begin{prop}\label{prop rigidity Af}
A point $y\in\MM$ belongs to $\A_f(G)$ if and only if  every subsolution 
$v$ to \eqref{eq free critical} satisfies the supersolution test at $y$, i.e.
\begin{equation}\label{eq rigidity Af}
G(y,p)\geqslant c_f(G)\qquad\hbox{for every $p\in D^- v(y)$,}
\end{equation}
where $D^-v(y)$ denotes the set of subdifferentials of $v$ at $y$.
\end{prop}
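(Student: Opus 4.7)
The plan is to prove both implications by exploiting three properties of the semidistance $S_c$ (with $c:=c_f(G)$) already recorded in Proposition~\ref{prop S}: that $S_c(y,\cdot)$ is a subsolution everywhere, that it is a solution on $\MM\setminus\{y\}$, and that $v(x)-v(y)\leqslant S_c(y,x)$ for every subsolution $v$. A preliminary observation I will use is $S_c(y,y)=0$: the constant curve gives $S_c(y,y)\leqslant\sigma_c(y,0)=0$ (since $Z(y,c)\neq\emptyset$), while the opposite inequality comes from applying the subsolution inequality of Proposition~\ref{prop S}(i) to, say, $v=-S_c(\cdot,y)$, which is a subsolution by Proposition~\ref{prop S}(ii).

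For the direction $y\in\A_f(G)\Rightarrow$ rigidity, let $v$ be any subsolution of \eqref{eq free critical} and fix $p\in D^-v(y)$, realized by a $C^1$ subtangent $\phi$ with $\phi(y)=v(y)$, $\phi\leqslant v$, and $\phi'(y)=p$. I will show that the shifted function
\[
\psi(x):=\phi(x)-v(y)
\]
is a $C^1$ subtangent of $S_c(y,\cdot)$ at $y$. Indeed, $\psi(y)=0=S_c(y,y)$ and, by Proposition~\ref{prop S}(i),
\[
\psi(x)=\phi(x)-v(y)\leqslant v(x)-v(y)\leqslant S_c(y,x)\qquad\text{for all }x\in\MM.
\]
Hence $p=\psi'(y)\in D^-[S_c(y,\cdot)](y)$. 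Since $y\in\A_f(G)$ means that $S_c(y,\cdot)$ is a viscosity solution of \eqref{eq free critical} on all of $\MM$, including at $y$, the supersolution test gives $G(y,p)\geqslant c_f(G)$, which is \eqref{eq rigidity Af}.

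For the converse, assume every subsolution $v$ of \eqref{eq free critical} satisfies \eqref{eq rigidity Af} at $y$. By Proposition~\ref{prop S}(ii)--(iii), the function $S_c(y,\cdot)$ is a subsolution on $\MM$ and a solution on $\MM\setminus\{y\}$, so to conclude $y\in\A_f(G)$ it suffices to verify the supersolution inequality at the single point $y$. Take any $p\in D^-[S_c(y,\cdot)](y)$. Applying the hypothesis to the specific subsolution $v:=S_c(y,\cdot)$ yields $G(y,p)\geqslant c_f(G)$, which is exactly what is needed.

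The whole argument is essentially a soft transfer of supertangents/subtangents between $v$ and $S_c(y,\cdot)$, so I do not anticipate a genuine obstacle; the only mildly delicate point is the identity $S_c(y,y)=0$, which must be invoked to upgrade $\phi$ to a true subtangent of $S_c(y,\cdot)$ at $y$, and to ensure that $S_c(y,\cdot)$ indeed belongs to the class of competitors in the reverse direction.
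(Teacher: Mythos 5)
Your proof is correct. The paper does not actually supply a proof of Proposition~\ref{prop rigidity Af} (it is stated as a known fact, implicitly referencing \cite{FSC1,FS05}), so there is no in-text argument to compare against; but your derivation is the canonical one, and it is self-contained given only the three facts collected in Proposition~\ref{prop S}. The two ingredients you isolate --- the identity $S_{c_f(G)}(y,y)=0$ and the transfer of subtangents via the inequality $v(\cdot)-v(y)\leqslant S_{c_f(G)}(y,\cdot)$ --- are precisely what makes the definition of $\A_f(G)$ through the single function $S_{c_f(G)}(y,\cdot)$ equivalent to the rigidity condition over all subsolutions, and the converse direction is correctly reduced to testing the hypothesis on the particular subsolution $S_{c_f(G)}(y,\cdot)$ itself.
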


The free Aubry set $\A_f(G)$ can be also characterized in terms of strict subsolutions. 
We give a definition first.

\begin{definizione}\label{def v strict}
A subsolution $v\in\Lip(\MM)$ to \eqref{eq hja} is said to be {\em strict} in an open set  
$U\subset\R$ if for every open set $V$ compactly contained 
in $U$ there exist a constant $\delta=\delta(V)>0$ such that 
\begin{equation*}
G(x,v'(x))<a-\delta\qquad\hbox{for a.e. $x\in V$.}
\end{equation*}
We will say that $v$ is {\em uniformly strict} in $U$ if the constant 
$\delta$ can be chosen independently of $V$. 
\end{definizione}
The existence of a subsolution to 
\eqref{eq free critical} which is strict in an open set $U$ implies 
$U\cap \A_f(G)=\emptyset$, in view of Proposition \ref{prop rigidity Af}. 
The converse implication is guaranteed by the following Theorem, cf. \cite{FSC1,FS05}. 

\begin{teorema}\label{teo strict subsolution}
There exist a subsolution $v$ to the free critical equation \eqref{eq free critical} which is 
strict and of class $C^\infty$ in $\MM\setminus \A_f(G)$. 
\end{teorema}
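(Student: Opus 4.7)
The plan is to follow the classical Fathi--Siconolfi three-step strategy: construct local strict subsolutions off the Aubry set, patch them into a single global Lipschitz strict subsolution via a convex combination, and then smooth. Throughout I work at the critical level $a=c_f(G)$ and write $c:=c_f(G)$ for brevity.

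First I would establish the local ingredient: for every $y\in\MM\setminus\A_f(G)$ there exist a neighbourhood $U_y$, a constant $\delta_y>0$ and a Lipschitz subsolution $v_y$ of the free critical equation with $G(x,v_y'(x))\leqslant c-\delta_y$ a.e.\ in $U_y$. By Proposition~\ref{prop S}(iii), the function $S_c(y,\cdot)$ is a subsolution on $\R$ and a solution on $\R\setminus\{y\}$. Since $y\notin\A_f(G)$, the supersolution property must fail, necessarily at $y$ itself; combining this with the continuity and convexity of $G$ in $p$, one modifies $S_c(y,\cdot)$ near $y$ by a small bump whose slope stays in the interior of the sublevel $\{G(x,p)<c\}$ on a neighbourhood of $y$. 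Proposition~\ref{prop rigidity Af} provides exactly the nondegeneracy needed for this construction to succeed.

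The second step globalizes: cover the open set $\MM\setminus\A_f(G)$ by countably many of the neighbourhoods $U_{y_n}$, normalize the $v_{y_n}$ so the series $v:=\sum_n \alpha_n v_{y_n}$ with $\alpha_n>0$, $\sum\alpha_n=1$ converges locally uniformly to a Lipschitz function, and invoke Proposition~\ref{prop when G convex}(iii) to see that $v$ is a Lipschitz subsolution of \eqref{eq free critical}. On any compact $K\subset\MM\setminus\A_f(G)$ one has $K\subset U_{y_{n_1}}\cup\cdots\cup U_{y_{n_k}}$, and convexity of $G$ in $p$ yields, a.e.\ on $U_{y_{n_j}}$,
\[
G(x,v'(x))\leqslant \sum_i\alpha_i\,G(x,v_{y_i}'(x))\leqslant c-\alpha_{n_j}\delta_{n_j},
\]
so $v$ is strict on $K$; hence $v$ is strict on $\MM\setminus\A_f(G)$.

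Finally I would regularize $v$ to a $C^\infty$ function on $\Omega:=\MM\setminus\A_f(G)$ while keeping it a global Lipschitz subsolution. Take a locally finite open covering of $\Omega$ by sets $V_j\Subset\Omega$ with subordinate smooth partition of unity $\{\psi_j\}$, mollify $v$ separately on each $V_j$ at a scale $\eps_j$ small enough that the equi-Lipschitz bound from Proposition~\ref{COMPBASIQUE}, together with the continuity estimate \eqref{lippo} and Jensen's inequality, yields $G(x,(v\ast\rho_{\eps_j})'(x))\leqslant c-\delta_j/2$ on a neighbourhood of $\supp\psi_j$, and glue using $\{\psi_j\}$. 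Outside a thickening of $\A_f(G)$ the resulting function is the desired $C^\infty$ strict subsolution; near $\A_f(G)$ one smoothly interpolates back to $v$ itself, which is already a subsolution. The main obstacle is precisely this last patching: one must choose the mollification scales $\eps_j$ so small that the error from Jensen's inequality across the overlaps $\supp\psi_j\cap\supp\psi_{j'}$ stays strictly below the local slack $\min(\delta_j,\delta_{j'})$, and one must control the behaviour as one approaches $\A_f(G)$ so that global Lipschitz continuity is preserved. The quantitative strictness $\delta_j$ on compact subsets of $\Omega$, guaranteed by Step~2, is exactly what makes this absorption possible.
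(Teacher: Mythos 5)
The paper provides no proof of this statement at all---it refers to \cite{FSC1,FS05}---and your sketch reproduces the Fathi--Siconolfi construction from those references, so the overall strategy (local strict subsolutions via failure of the supersolution test off $\A_f(G)$, a countable convex combination to globalize, then mollification and gluing) is the right one. Two points deserve sharpening. First, in the smoothing step the error coming from the partition-of-unity gluing is \emph{not} a Jensen error: writing $w_j:=v\ast\rho_{\eps_j}$, the derivative of $w:=\sum_j\psi_j w_j$ equals
\[
w' \;=\; \sum_j \psi_j\,w_j' \;+\; \sum_j \psi_j'\,(w_j - v),
\]
where one uses $\sum_j\psi_j'\equiv 0$ on the set where $\sum_j\psi_j\equiv 1$. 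The first sum is controlled by convexity of $G$ together with the Jensen/modulus-of-continuity bound on each mollification $w_j$; the second is an additive perturbation of size $\leqslant\kappa\sum_j|\psi_j'|\,\eps_j$, which is absorbed via the local Lipschitz estimate \eqref{lippo} by choosing each $\eps_j$ small in proportion to $1/\sup|\psi_j'|$, a quantity that diverges as $V_j$ accumulates on $\A_f(G)$. These are two distinct error sources and conflating them obscures why both constraints on $\eps_j$ are needed. Second, the phrase ``outside a thickening of $\A_f(G)$\ldots near $\A_f(G)$ one interpolates back to $v$'' undersells what the construction delivers: with a locally finite cover $\{V_j\}$ of $\MM\setminus\A_f(G)$ itself (not of a neighbourhood of its closure) and $\eps_j\to 0$ as $V_j$ approaches $\A_f(G)$, the glued function is $C^\infty$ on the entire open set $\MM\setminus\A_f(G)$, extends continuously to $\A_f(G)$ by $v$ (since $\|w_j-v\|_\infty\leqslant\kappa\eps_j\to 0$), and remains globally Lipschitz; there is no residual region where one merely reverts to the non-smooth $v$. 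With these two adjustments your sketch matches the argument in the cited references.
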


Proposition \ref{prop rigidity Af} and Theorem \ref{teo strict subsolution} yield altogether the following characterization: $\A_f(G)$ is 
the minimal closed subset of $\MM$ for which there exists a strict subsolution outside of it. Otherwise stated, the free Aubry set $\A_f(G)$ 
is the set where the obstruction to the existence of strict subsolutions to \eqref{eq free critical} concentrates.  

All the results presented above are actually true in $\R^d$ for any $d\geqslant 1$. The next 
results are instead specific of dimension 1. 

Let us begin by pointing out some properties of the support function $\sigma_{a}$ that we 
shall need in the sequel. Let us denote by $\D{dom}(Z):=\{(x,a)\in\MM\times\R\,:\,Z(x,a)\not=\emptyset\,\}$. 
This set is closed, as it can be easily checked. The following result holds.

\begin{lemma}\label{lemma properties sigma_a}
The function $(x,a,q)\mapsto \sigma_a(x,q)$ is continuous from 
$\D{dom}(Z)\times\R$ to  $\R$. Furthermore, the map 
$a\mapsto\sigma_a(x,q)$ is strictly increasing and the map 
$q\mapsto\sigma_a(x,q)$ is convex 
and positively homogeneus. 
\end{lemma}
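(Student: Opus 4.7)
The plan is to treat the three assertions separately, as they are of different character.

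Convexity and positive homogeneity in $q$ are immediate from the definition of $\sigma_a(x,\cdot)$ as the support function of the set $Z(x,a)$: it is the supremum over $p\in Z(x,a)$ of the linear forms $q\mapsto\langle q,p\rangle$, hence convex (as a supremum of affine functions) and positively $1$-homogeneous from $\sup_p\langle\lambda q,p\rangle=\lambda\,\sigma_a(x,q)$ for every $\lambda\geqslant 0$.

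For strict monotonicity in $a$, I exploit the $1$-dimensional structure. For any $(x,a)\in\D{dom}(Z)$ the set $Z(x,a)$ is a compact interval $[p^-(x,a),p^+(x,a)]$ by convexity and coercivity of $G(x,\cdot)$, so $\sigma_a(x,q)=q\,p^+(x,a)$ for $q>0$ and $\sigma_a(x,q)=q\,p^-(x,a)$ for $q<0$. It therefore suffices to show that $a\mapsto p^+(x,a)$ is strictly increasing (and $a\mapsto p^-(x,a)$ strictly decreasing) on the slice $\{a:Z(x,a)\neq\emptyset\}$. I would argue by contradiction: if $p^+(x,a_1)=p^+(x,a_2)=:P$ for some $a_1<a_2$, then on the one hand $G(x,P)\leqslant a_1<a_2$, while on the other $G(x,p)>a_2$ for every $p>P$; this contradicts continuity of $G(x,\cdot)$ at $P$, since points slightly to the right of $P$ have $G(x,\cdot)$ close to $G(x,P)<a_2$ and so should belong to $Z(x,a_2)$.

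For the joint continuity of $(x,a,q)\mapsto\sigma_a(x,q)$ I would proceed via Fenchel--Lagrangian duality. As long as $a>m(x):=\min_p G(x,p)$ Slater's condition for the constraint $G(x,p)\leqslant a$ is satisfied, and Lagrangian duality then yields the representation
\[
\sigma_a(x,q)=\inf_{\mu>0}\mu\bigl(a+L(x,q/\mu)\bigr),\qquad q\neq 0,
\]
with $L$ the Lagrangian defined in \eqref{def L}. The right-hand side depends continuously on $(x,a,q)$: $L$ is continuous and superlinear on $\TM$, and the infimum in $\mu$ is attained on a compact range varying continuously in $(x,a,q)$ by coercivity of $G$ and $L$, so a standard parametric argument \`a la Berge delivers continuity. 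For $q=0$ one has $\sigma_a(x,0)\equiv 0$ trivially, and the cases are patched by passing to the limit.

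The main obstacle I foresee is the behaviour on the relative boundary of $\D{dom}(Z)$, i.e.\ at points with $a=m(x)$, where Slater fails and the dual formula above is not directly applicable; at such points $Z(x,a)$ collapses to $\arg\min_p G(x,\cdot)$, which in general need only be outer semicontinuous in $(x,a)$. To handle this, I would combine the cheap upper semicontinuity $\limsup\sigma_{a_n}(x_n,q_n)\leqslant\sigma_a(x,q)$ (extracted from a bounded maximizing sequence together with continuity of $G$) with a diagonal approximation in which $a$ is perturbed slightly above $m(x)$, continuity is transferred from the interior via the dual formula, and then the perturbation is sent to zero using the strict monotonicity in $a$ just established to control the error.
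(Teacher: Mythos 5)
Your arguments for convexity, positive homogeneity, and strict monotonicity in $a$ are correct (the paper merely dismisses these as immediate), modulo the trivial caveat that $\sigma_a(x,0)\equiv 0$, so strict monotonicity must be read for $q\neq 0$. For the joint continuity your route is genuinely different from the paper's: you invoke the Fenchel--Lagrange dual representation $\sigma_a(x,q)=\inf_{\mu>0}\mu\bigl(a+L(x,q/\mu)\bigr)$, valid for $q\neq 0$ and $a>\min_p G(x,p)$, and appeal to a Berge-type argument; the paper instead picks a maximizer $p_n\in Z(x_n,a_n)$ with $\sigma_{a_n}(x_n,q_n)=p_nq_n$ and $G(x_n,p_n)=a_n$, passes to a convergent subsequence for $\limsup\leqslant\sigma_{\hat a}(\hat x,\hat q)$, and gets $\liminf$ by moving a near-maximizer strictly inside the sublevel set. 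Both methods work on the interior of $\D{dom}(Z)$.

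The boundary case you flag, however, is not a fixable technicality, and the diagonal scheme you sketch cannot close it: sending the perturbation of $a$ to zero and invoking strict monotonicity would require a modulus of continuity for $a\mapsto\sigma_a(x,q)$ uniform in $x$ near $\hat x$, and pointwise strict monotonicity provides none. In fact the lemma fails on the boundary of $\D{dom}(Z)$: take
\[
G(x,p):=\max\{0,|p|-1\}+\dist(x,\Z)\,\max\{p,0\}+\max\{0,|p|-10\}^2,
\]
which is continuous, convex and superlinear in $p$ uniformly in $x$, with $\min_p G(x,p)\equiv 0$ so that $\D{dom}(Z)=\MM\times[0,+\infty)$. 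Then $Z(0,0)=[-1,1]$ gives $\sigma_0(0,1)=1$, while for $x_n=1/(2n)$ and $a_n=1/(4n^2)$ the right endpoint of $Z(x_n,a_n)$ equals $1/(2n)$, so $\sigma_{a_n}(x_n,1)=1/(2n)\to 0\neq 1$ as $(x_n,a_n,1)\to(0,0,1)$. The paper's own proof of lower semicontinuity breaks at exactly the same spot: it asserts that when $Z(\hat x,\hat a)$ is not a singleton there is $p_\eps$ with $G(\hat x,p_\eps)<\hat a-\eps$, which is impossible when $\hat a=\min_p G(\hat x,\cdot)$ and the set of minimizers is a nondegenerate interval, as at $(\hat x,\hat a)=(0,0)$ above. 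So the gap you perceive is a defect of the statement itself, not of your strategy; it disappears under an extra hypothesis ruling out flat bottoms of $G(x,\cdot)$, for instance strict convexity of $p\mapsto G(x,p)$.
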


\begin{proof}
To show that $(x,a,q)\mapsto \sigma_a(x,q)$ is continuous on $\D{dom}(Z)\times\R$, it suffices to notice that, for 
$(x,a)\in\D{dom}(Z)$, the set $Z(x,a)$ has either nonempty interior or is a singleton in view of (G1). The other assertions easily follows from the definition of $\sigma_a$ and from properties (G1)-(G2).
\end{proof}


\begin{oss}\label{oss properties sigma_a}
For every $(x,a)\in\D{dom}(Z)$,  the set $Z(x,a)$ is an interval of the form 
$Z(x,a)=[p^-(x,a),p^+(x,a)]$.  The functions $p^\pm(x,a)$ are jointly continuous in $\D{dom}(Z)$. 
This is a consequence of the previous Lemma 
since $p^{-}(x,a)=-\sigma_a(x,-1)$ and  $p^{+}(x,a)=\sigma_a(x,1)$. Also notice that $Z(x,a)\not=\emptyset$ 
for every $a\geqslant c_f(G)$ and $x\in\MM$, i.e. $\MM\times [c_f(G),+\infty)\subseteq \D{dom}(Z)$.
\end{oss}

In dimension 1, we have the following explicit characterization of the free critical value and of the free Aubry set associated with $G$. 

\begin{teorema}\label{teo equilibria}
The following holds:
\begin{equation}\label{claim free critical value}
c_f(G)=\sup_{x\in\MM}\min_{p\in\R} G(x,p).
\end{equation}
Furthermore, there exists $v\in\D C^1(\MM)$ satisfying
\[
G(x,v'(x))\leqslant c_f(G)\quad\hbox{for all $x\in\MM$}
\]
which is strict outside the set $\E(G)$ of equilibria of $G$ defined as follows:
\[
\E(G):=\{y\in\MM\,:\,\min_{p\in\R} G(y,p)=c_f(G)\,\}.
\]
In particular,  $\A_f(G)=\E(G)$.
\end{teorema}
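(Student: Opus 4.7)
The plan proceeds in three movements: establish the formula for $c_f(G)$, identify $\A_f(G)$ with $\E(G)$ via the rigidity criterion of Proposition \ref{prop rigidity Af}, and then harvest the desired $C^1$ strict subsolution from Theorem \ref{teo strict subsolution}. Throughout, I exploit the fact that in dimension one the sublevel $Z(x,a)$ is an interval $[p^-(x,a), p^+(x,a)]$ with $p^\pm$ continuous (Remark \ref{oss properties sigma_a}).

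For the formula $c_f(G) = \sup_{x\in\MM}\min_p G(x,p)$, write $c$ for the right-hand side. The inequality $c_f(G)\geqslant c$ was already noted after \eqref{ManeCrit}; to make it rigorous, any subsolution $v$ at level $a$ is Lipschitz by Proposition \ref{COMPBASIQUE}, so $\min_p G(x,p)\leqslant G(x,v'(x))\leqslant a$ for a.e.\ $x$, and hence at every $x\in\MM$ by continuity of $x\mapsto \min_p G(x,p)$ (a standard consequence of (G1)--(G2)). For the reverse inequality I build an explicit subsolution at every level $a\geqslant c$. Since $a\geqslant c$ forces $Z(x,a)\neq\emptyset$ for every $x$, Remark \ref{oss properties sigma_a} gives a continuous selection $x\mapsto p^+(x,a)$ with $G(x,p^+(x,a))=a$. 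Its primitive $v_a(x):=\int_0^x p^+(t,a)\,dt$ is $C^1$ and satisfies $G(x,v_a'(x))=a$ pointwise, hence is a subsolution at level $a$. Therefore $c_f(G)\leqslant a$ for every $a\geqslant c$, and the equality $c_f(G)=c$ follows.

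The inclusion $\E(G)\subseteq\A_f(G)$ is immediate from Proposition \ref{prop rigidity Af}: if $\min_p G(y,p)=c_f(G)$, then $G(y,p)\geqslant c_f(G)$ for \emph{every} $p\in\R$, so the supersolution test at $y$ holds trivially against any subdifferential of any subsolution. For $\A_f(G)\subseteq\E(G)$, pick $y\notin\E(G)$ and a minimizer $p^*$ of $G(y,\cdot)$, so $G(y,p^*)=m(y)<c_f(G)$. By continuity of $G$ there is an open neighborhood $V$ of $y$ on which $G(\cdot,p^*)<c_f(G)$. Define a bounded measurable selection $p_0:\MM\to\R$ by $p_0\equiv p^*$ on $V$ and $p_0:=p^+(\cdot,c_f(G))$ on $\MM\setminus V$; this takes values in $Z(\cdot,c_f(G))$, so its primitive $v(x):=\int_0^x p_0(t)\,dt$ is Lipschitz and satisfies $G(x,v'(x))\leqslant c_f(G)$ a.e., hence is a viscosity subsolution of the free critical equation by Proposition \ref{COMPBASIQUE}. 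Since $p_0$ is constant in a neighborhood of $y$, $v$ is differentiable at $y$ with $v'(y)=p^*$, and thus $p^*\in D^-v(y)$ while $G(y,p^*)<c_f(G)$; this violates \eqref{eq rigidity Af}, so $y\notin\A_f(G)$. Consequently $\A_f(G)=\E(G)$.

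Once the equality $\A_f(G)=\E(G)$ is in hand, Theorem \ref{teo strict subsolution} directly supplies a $C^\infty$ subsolution of the free critical equation, strict on $\MM\setminus\A_f(G)=\MM\setminus\E(G)$, which a fortiori gives the $C^1$ function required by the statement. The only delicate point in the whole argument is the second step: one needs the constructed subsolution to be honestly differentiable at the chosen point $y$ with the prescribed gradient $p^*$, so that the rigidity violation takes effect. Freezing $p_0$ to be the constant $p^*$ on a neighborhood of $y$ disposes of this cleanly, using only continuity of $G$ to keep $p^*$ admissible there; the pasting on the complement can use any bounded measurable selection of $Z(\cdot,c_f(G))$ (for instance the continuous one $p^+(\cdot,c_f(G))$ produced in the first step).
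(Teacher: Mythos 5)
Your derivation of the value formula $c_f(G)=\sup_x\min_p G(x,p)$ and of the identity $\A_f(G)=\E(G)$ is correct, and the patched-selection argument for $\A_f(G)\subseteq\E(G)$ (freezing $p_0\equiv p^*$ near $y$ so that $v$ is differentiable there with $v'(y)=p^*$) is a perfectly legitimate alternative to what the paper does. However, the final step has a genuine gap: Theorem \ref{teo strict subsolution} only asserts that its subsolution is strict \emph{and} of class $C^\infty$ on $\MM\setminus\A_f(G)$; it makes no regularity claim at points of $\A_f(G)$. So it does not ``a fortiori'' produce the $v\in C^1(\MM)$ demanded by the statement, and your own construction $v_a$ with derivative $p^+(\cdot,a)$ is useless here because $G(x,v_a'(x))=a$ identically, i.e.\ $v_a$ is nowhere strict.

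The paper avoids this entirely with a single device you did not use: take the \emph{midpoint} selection. Since $p^\pm(\cdot)=p^\pm(\cdot,c_0)$ are continuous (Remark \ref{oss properties sigma_a}), the function
\[
v(x):=\int_0^x\frac{p^-(z)+p^+(z)}{2}\,dz
\]
is $C^1$ on all of $\MM$, its derivative lies in $Z(x,c_0)$ everywhere, and it lies in the \emph{interior} of $Z(x,c_0)$ precisely when $p^-(x)<p^+(x)$, i.e.\ when $x\notin\E(G)$; by convexity this gives $G(x,v'(x))<c_0$ there. This one object simultaneously proves $c_f(G)\leqslant c_0$, shows $\A_f(G)\subseteq\E(G)$ via Proposition \ref{prop rigidity Af}, and furnishes the globally $C^1$ strict subsolution claimed in the theorem. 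You should replace your appeal to Theorem \ref{teo strict subsolution} with this midpoint construction; the rest of your argument (the a.e.\ lower bound for $c_f(G)$, and the inclusion $\E(G)\subseteq\A_f(G)$ from triviality of the supersolution test at equilibria) can stand as written, though it becomes redundant once the midpoint selection is in place.
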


We note that the set of equilibria is empty if and only if the supremum in 
\eqref{claim free critical value} is not attained.

\begin{proof}
Let us temporarily denote by $c_0$ the right-hand side term in 
\eqref{claim free critical value}. We already know that $c_f(G)\geqslant c_0$.  
Let us write \ $Z(x,c_0)=[p^-(x),p^+(x)]$.  
In view of Remark \ref{oss properties sigma_a}, we know that  
the functions 
$x\mapsto p^\pm(x)$ are continuous. Furthermore, they satisfy 
$p^-(x)\leqslant p^+(x)$ for all $x\in\MM$, 
with equality holding if and only if $x\in\E(G)$. Let us set 
\[
 v(x):=\int_0^x\dfrac{p^-(z)+p^+(z)}{2}\, dz
\]
It is easily seen that $v$ is a $C^1$ subsolution of equation \eqref{eq hja} with $a=c_0$,  
which is furthermore strict in $\MM\setminus\E(G)$. This implies $c_f(G)=c_0$ and 
$\A_f(G)\subseteq\E(G)$ in view of Proposition \ref{prop rigidity Af}. 
The converse inclusion $\E(G)\subseteq\A_f(G)$ follows from the fact that 
\eqref{eq rigidity Af} is authomatically satisfied at points $y\in\E(G)$. 
\end{proof}

\subsection{Discounted Hamilton-Jacobi equations}\label{Sec1 discounted HJ}
We will be also interested in {\em discounted Hamilton\,--Jacobi equations} of the form   
\begin{equation}\label{eq discounted}
 \lambda u(x)+G(x,u')=a\quad\hbox{in $\MM$},
\end{equation}
where $a\in\R$ and $\lambda>0$ is the discount factor. The following holds.

\begin{prop}\label{prop a.e. subsolution}
Let $G\in\CC(\TM)$ be a Hamiltonian satisfying (G1)-(G2) and 
$\lambda> 0$. Then any subsolution $w$ of \eqref{eq discounted} is 
locally Lipschitz--continuous and satisfies 
\begin{equation}\label{ineq a.e. subsolution}
\lambda w(x)+G(x,w'(x))\leqslant a\quad \text{ for a.e. $x\in \MM$.} 
\end{equation}
Furthermore, if $w$ is bounded from below on $\MM$, then $w$ is Lipschitz--continuous in $\MM$. 
\end{prop}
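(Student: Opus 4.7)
The strategy is to pass in order: (i) local Lipschitz continuity from continuity of $w$ and coercivity of $G$; (ii) the almost-everywhere inequality \eqref{ineq a.e. subsolution} at differentiability points via Rademacher's theorem; (iii) a global Lipschitz bound under the additional lower-bound hypothesis.

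For step (i), fix a compact set $K \subset \MM$ and, by continuity of $w$, set $M_K := \max_K |w| < \infty$. At any $x_0$ in the interior of $K$ and for any $p \in D^+ w(x_0)$, the viscosity subsolution property gives
\[
G(x_0, p) \leqslant a - \lambda w(x_0) \leqslant a + \lambda M_K,
\]
so the superlinear coercivity (G2) forces $|p| \leqslant R_K$ for some $R_K$ depending only on $K$. The main delicate point of the whole argument is then the classical fact that a continuous function whose viscosity superdifferentials on an open set are uniformly bounded is Lipschitz there; this is a standard consequence of sup-convolution regularization (see for instance \cite{barles,FathiSurvey}) and yields that $w$ is locally Lipschitz on $\MM$. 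Alternatively, one can observe that on the interior of $K$ the function $w$ is a viscosity subsolution of the autonomous equation $G(x, u') = a + \lambda M_K$ and invoke a local version of Proposition \ref{COMPBASIQUE}.

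For step (ii), Rademacher's theorem gives differentiability of $w$ at a.e.\ $x \in \MM$. At such a point $x_0$ the superdifferential reduces to $\{w'(x_0)\}$, and the viscosity subsolution inequality applied to this unique superdifferential yields \eqref{ineq a.e. subsolution}.

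For step (iii), assume $w \geqslant -C$ on $\MM$. Inserting this bound into \eqref{ineq a.e. subsolution} gives $G(x, w'(x)) \leqslant a + \lambda C$ for a.e.\ $x \in \MM$, and the superlinear lower bound $\alpha(|p|) \leqslant G(x, p)$ coming from (G2) produces a constant $R > 0$, independent of $x$, with $|w'(x)| \leqslant R$ a.e. Since $\MM = \R$ is convex, integration of $w'$ along segments then gives the global estimate $|w(x) - w(y)| \leqslant R|x - y|$, concluding the proof.
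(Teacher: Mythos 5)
Your proof is correct and takes essentially the same route as the paper's, which observes that on any bounded open $U$ the function $w$ is a viscosity subsolution of the stationary equation $G(x,u')=\|(a-\lambda w)^+\|_{L^\infty(U)}$, cites the standard coercivity-implies-Lipschitz result, then applies Rademacher's theorem and, for the global bound, takes $U=\R$. Your primary step (i), phrased in terms of a pointwise bound on superdifferentials together with the equivalence between subsolutions of $|Du|\leqslant R$ and $R$-Lipschitz functions, is a cosmetic rephrasing of the same idea, and the alternative you mention is literally the paper's argument.
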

\begin{proof}
A subsolution $w$ of \eqref{eq discounted} satisfies 
\[
 G(x,w')
 \leqslant a-\lambda w(x) 
 \leqslant 
 \|(a-\lambda w \big)^+\|_{L^\infty(U)}\qquad\hbox{in $U$}
\]
in the viscosity sense for every open and bounded subset $U$ of $\MM$, where we have denoted by 
$(a-\lambda w \big)^+$ the positive part of the function $a-\lambda w$. We infer that 
$w$ is locally Lipschitz continuous by the coercivity of 
$G$, see \cite{barles}. In particular, it satisfies the inequality \eqref{ineq 
a.e. subsolution} at every differentiability point, i.e. almost everywhere by 
Rademacher's theorem. If $w$ is bounded from below, we can take $U=\R$ above and derive that $w$ is globally Lipschitz on $\MM$,  
with Lipschitz constant given by \eqref{def kappa} with $\|(a-\lambda w \big)^+\|_\infty$ in place of $a$. 
\end{proof}

The crucial difference between equation  \eqref{eq hja} and the 
discounted equation \eqref{eq discounted} with $\lambda>0$ is that the latter 
satisfies a strong comparison principle, see for instance 
\cite[Th\'eor\`eme 4.3]{barles}.

\begin{teorema}\label{teo discounted comparison}
Let $G\in\CC(\TM)$ be a Hamiltonian satisfying (G1)-(G3) and $a\in\R$. Let $w,v\in\D C(\R)$ be a bounded super and   
subsolution to  \eqref{eq discounted}, respectively.  
Then $w\geqslant v$ in $\R$.
\end{teorema}

This comparison principle and a standard application of Perron's method yields the following result, see for instance 
\cite[Th\'eor\`eme 2.12]{barles}.

\begin{teorema}\label{teo discounted sol}
Let $G\in\CC(\TM)$ be a Hamiltonian satisfying (G1)-(G3) and $a\in\R$. Then there exists a unique solution $u^\lambda$ to \eqref{eq discounted} in the class of bounded continuous functions on $\R$. 
Furthermore, $u^\lambda$ is Lipschitz continuous. More precisely, $\|u^\lambda\|_\infty\leqslant (M+|a|)/\lambda$ \ with 
$M:=\max\{|\alpha(0)|,|\beta(0)|\}$, and\ $u^\lambda$ is $\kappa_{M}$--Lipschitz continuous, with $\kappa_{M}$ given by \eqref{def kappa}.
\end{teorema}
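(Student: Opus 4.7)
My plan divides into three parts: constant barriers plus existence, uniqueness, and Lipschitz regularity.

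The starting observation is that (G2) at $p=0$ gives $\alpha(0)\leqslant G(x,0)\leqslant\beta(0)$, so $|G(x,0)|\leqslant M$ for every $x\in\MM$. A direct check then shows that the constants $c_-:=(a-M)/\lambda$ and $c_+:=(a+M)/\lambda$ are, respectively, a classical subsolution and supersolution of \eqref{eq discounted}, and both satisfy $|c_\pm|\leqslant (|a|+M)/\lambda$. I would then invoke Perron's method on the class of viscosity subsolutions $v$ of \eqref{eq discounted} with $c_-\leqslant v\leqslant c_+$, setting
\[
u^\lambda(x):=\sup\{v(x)\,:\,v\ \text{subsolution of \eqref{eq discounted}},\ c_-\leqslant v\leqslant c_+\}.
\]
This family is nonempty (it contains $c_-$), and the standard Ishii--Perron construction, which applies because the equation is proper (strictly monotone in $u$ since $\lambda>0$), produces a continuous viscosity solution $u^\lambda$ with $c_-\leqslant u^\lambda\leqslant c_+$. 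This directly gives the $L^\infty$ bound $\|u^\lambda\|_\infty\leqslant(M+|a|)/\lambda$.

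For uniqueness within the sublinear class, I would apply Proposition \ref{prop discounted comparison} twice: any sublinear continuous solution $u$ is simultaneously a sub- and a supersolution, and its sublinearity at infinity forces both $\liminf_{|x|\to\infty}u(x)/|x|\geqslant 0$ and $\limsup_{|x|\to\infty}u(x)/|x|\leqslant 0$; comparing $u$ against $u^\lambda$ in both directions yields $u=u^\lambda$.

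Finally, for the Lipschitz estimate, I would use the lower barrier together with Proposition \ref{prop a.e. subsolution}. Since $u^\lambda\geqslant c_-=(a-M)/\lambda$, we have $a-\lambda u^\lambda(x)\leqslant M$ for every $x\in\MM$. As $u^\lambda$ is a bounded subsolution of \eqref{eq discounted}, Proposition \ref{prop a.e. subsolution} ensures that it is Lipschitz and satisfies $G(x,(u^\lambda)'(x))\leqslant a-\lambda u^\lambda(x)\leqslant M$ almost everywhere, hence $|(u^\lambda)'(x)|\leqslant\kappa_M$ a.e.\ by the definition \eqref{def kappa}, which is the claimed $\kappa_M$--Lipschitz bound. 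The only delicate point is running Perron's method cleanly in the noncompact setting with the $\lambda u$ term, since Proposition \ref{prop when G convex} is formulated only for Hamiltonians independent of $u$; this is handled by the classical Ishii argument applied with the barriers $c_\pm$, which keep the supremum locally bounded and allow the semicontinuous envelopes to be manipulated in the standard way.
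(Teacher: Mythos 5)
Your proposal matches the paper's proof essentially step by step: the same constant barriers $(a\pm M)/\lambda$, Perron's method for existence, the comparison principle of Proposition \ref{prop discounted comparison} for uniqueness and the $L^\infty$-bound, and the observation that $a-\lambda u^\lambda\leqslant M$ combined with (the argument of) Proposition \ref{prop a.e. subsolution} for the $\kappa_M$-Lipschitz bound. Your closing remark on running Perron's method carefully in the noncompact setting is a reasonable elaboration of what the paper compresses into ``a standard application of Perron's method.''
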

%

By exploiting the fact that the solution $u^\lambda$ is globally Lipschitz on $\MM$ and  
by arguing as in the proof of \cite[Th\'eor\`eme 4.3]{barles}, we derive the following result, cf. proof of 
\cite[Proposition 2.6]{DFIZ1}.

\begin{prop}\label{prop application comparison}
Let $G\in\CC(\TM)$ be a Hamiltonian satisfying (G1)-(G3) and $a\in\R$. Let $v\leqslant 0$, $w\geqslant 0$ be a sub and a supersolution to \eqref{eq hja}, respectively. Then $v\leqslant u^\lambda \leqslant w$ in $\MM$ for every $\lambda>0$, where $u^\lambda$ is the unique bounded solution to \eqref{eq discounted}. In particular, the family of functions $\{u^\lambda\,:\,\lambda>0\}$ is equi--Lipschitz and locally equi--bounded in $\MM$, hence pre--compact in 
$\CC(\MM)$. 
\end{prop}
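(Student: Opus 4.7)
The plan is to first upgrade the hypotheses on $v$ and $w$ from subsolutions/supersolutions of the stationary equation \eqref{eq hja} to subsolutions/supersolutions of the discounted equation \eqref{eq discounted}, and then invoke the comparison principle stated in Proposition \ref{prop discounted comparison}.

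First, recall that $v$ being a viscosity subsolution of $G(x,u')=a$ means that $G(x,p)\leqslant a$ for every $x$ and every $p\in D^+v(x)$. Since $v\leqslant 0$ in $\MM$, we immediately obtain $\lambda v(x)+G(x,p)\leqslant a$ for every such $p$, so $v$ is a viscosity subsolution of \eqref{eq discounted} for every $\lambda>0$. Symmetrically, $w\geqslant 0$ and supersolution to \eqref{eq hja} yields that $w$ is a supersolution of \eqref{eq discounted}. Observe that, by Proposition \ref{COMPBASIQUE}, both $v$ and $w$ are Lipschitz continuous, hence of at most linear growth at infinity; but the sign conditions $v\leqslant 0\leqslant w$ give the stronger one-sided asymptotic bounds
\[
\limsup_{|x|\to+\infty}\frac{v(x)}{|x|}\leqslant 0,\qquad \liminf_{|x|\to+\infty}\frac{w(x)}{|x|}\geqslant 0.
\]
Since $u^\lambda$ is bounded on $\MM$ by Theorem \ref{teo discounted sol}, it is sublinear in both signs and we are in a position to apply Proposition \ref{prop discounted comparison} twice: once with the pair $(u^\lambda,v)$ to get $v\leqslant u^\lambda$, and once with the pair $(w,u^\lambda)$ to get $u^\lambda\leqslant w$ in $\MM$.

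For the pre-compactness assertion, the inequality $v\leqslant u^\lambda\leqslant w$ just proved gives that the family $\{u^\lambda\}_{\lambda>0}$ is locally equi-bounded on $\MM$ (both $v$ and $w$ being continuous). By Theorem \ref{teo discounted sol}, each $u^\lambda$ is $\kappa_M$-Lipschitz continuous, with $\kappa_M$ depending only on $M=\max\{|\alpha(0)|,|\beta(0)|\}$ and in particular independent of $\lambda$; hence the family is equi-Lipschitz in $\MM$. The conclusion then follows from the Ascoli--Arzel\`a theorem applied on a countable exhaustion of $\MM$ by compact sets, together with a standard diagonal extraction.

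There is no real obstacle in this argument: the only point that might deserve a line of justification is the upgrade of $v$ and $w$ to sub/supersolutions of the discounted equation, which requires nothing beyond the sign of $v$ and $w$ and the pointwise inequality defining the viscosity notion. Everything else is a direct citation of Theorem \ref{teo discounted sol}, Proposition \ref{prop discounted comparison}, and Ascoli--Arzel\`a.
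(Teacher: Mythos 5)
Your proof is correct and follows exactly the route the paper sketches (upgrade $v,w$ to sub/supersolutions of the discounted equation using the sign conditions, apply Proposition~\ref{prop discounted comparison} twice, then combine the resulting local equi-boundedness with the $\lambda$-uniform Lipschitz bound from Theorem~\ref{teo discounted sol} and Ascoli--Arzel\`a). One small inaccuracy in an aside: Proposition~\ref{COMPBASIQUE} gives Lipschitz continuity only for \emph{subsolutions}, so it does not apply to the supersolution $w$ (supersolutions of a coercive HJ equation need not be Lipschitz); this does not matter, since, as you yourself note, the sign conditions alone yield the one-sided growth bounds needed for the comparison principle.
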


In our analysis, we will exploit the following representation formula for the solution 
$u^\lambda$ of the discounted equation \eqref{eq discounted}: 
\begin{equation}\label{representation formula discounted}
 u^\lambda(x)=\inf_{\gamma}\int_{-\infty}^0 \e^{\lambda s}\big[ 
L \big(\gamma(s),\dot\gamma(s)\big)+a\big]\,d s\qquad\hbox{for every $x\in 
\MM$},
\end{equation}
where the infimum is taken over all absolutely continuous curves 
$\gamma:(-\infty,0]\to \MM$,
with $\gamma(0)=x$, see \cite{bardi} for 
more details. Moreover, we will  need the following property concerning 
minimizing curves, see \cite[Appendix 2]{DFIZ1} for a proof: 

\begin{prop}\label{prop calibrating main}
Let $G\in\CC(\TM)$ be a Hamiltonian satisfying (G1)-(G3). 
Let $\lambda>0$ and $x\in \MM$. Then there exists a Lipschitz curve 
$\gamma^\lambda_x:(-\infty,0]\to \MM$ with $\gamma^\lambda_x(0)=x$ such that 
\begin{equation*}
u^\lambda(x)=\e^{-\lambda t} u^\lambda(\gamma^\lambda_x(-t))+\int_{-t}^0 
\e^{\lambda s} 
\big[L \big(\gamma^\lambda_x(s),\dot\gamma^\lambda_x(s)\big)+a\big]\,d 
s\qquad\hbox{for every $t>0$.}
\end{equation*}
In particular
\begin{equation}\label{eq minimizing curve}
u^\lambda(x)=\int_{-\infty}^0 \e^{\lambda s} 
\big[L \big(\gamma^\lambda_x(s),\dot\gamma^\lambda_x(s)\big)+a\big]\,d s.
\end{equation}
Furthermore, there exists a constant $\tilde\kappa>0$, independent of $\lambda$ and $x$, 
such that 
$\|\dot\gamma^\lambda_x\|_\infty\leqslant \tilde\kappa$.

\end{prop}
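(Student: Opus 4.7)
The plan is to combine the dynamic-programming principle for $u^\lambda$ with Tonelli's direct method on finite time intervals, then push the horizon to infinity by a diagonal/compactness argument. The key preliminary step is to show that candidate minimizers can be taken equi-Lipschitz with a bound depending only on the universal Lipschitz constant $\kappa_M$ of $u^\lambda$ provided by Theorem \ref{teo discounted sol}.

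First, I would establish the finite-horizon version. The representation formula \eqref{representation formula discounted} combined with the Markov/Bellman structure of the discounted cost gives
\[
u^\lambda(x)=\inf_{\gamma}\left\{\e^{-\lambda T}u^\lambda(\gamma(-T))+\int_{-T}^0\e^{\lambda s}\big[L(\gamma(s),\dot\gamma(s))+a\big]\,ds\right\},
\]
where the infimum is over absolutely continuous $\gamma:[-T,0]\to\MM$ with $\gamma(0)=x$. Since $L$ is continuous, convex and superlinear in $\dot\gamma$ by Fenchel duality, and $u^\lambda$ is continuous, the classical Tonelli existence theorem yields a minimizer $\gamma_T$ for each $T>0$. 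Restriction of an optimal curve to any subinterval of $[-T,0]$ is again optimal with respect to the appropriately discounted endpoint cost.

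Next, I would derive a velocity bound on $\gamma_T$ that is uniform in both $T$ and $\lambda$. By Theorem \ref{teo discounted sol} the function $u^\lambda$ is $\kappa_M$--Lipschitz with $\kappa_M$ independent of $\lambda$. Using Lemma \ref{EncoreUneVariante} to smooth $u^\lambda$ into $u^\lambda_\eps$ with gradient bounded by roughly $\kappa_M$ and with $\lambda u^\lambda_\eps+G(x,(u^\lambda_\eps)')\leqslant a+\eps$, the Fenchel inequality gives
\[
L\big(\gamma_T(s),\dot\gamma_T(s)\big)\geqslant (u^\lambda_\eps)'(\gamma_T(s))\cdot\dot\gamma_T(s)-G\big(\gamma_T(s),(u^\lambda_\eps)'(\gamma_T(s))\big).
\]
Combining this with the optimality/DPP identity above and using that $G(y,p)$ is uniformly bounded for $|p|\leqslant \kappa_M$ (Remark \ref{oss G1-G2}), a short computation produces a pointwise control of the form $L(\gamma_T(s),\dot\gamma_T(s))\leqslant C_0+\kappa_M|\dot\gamma_T(s)|$ almost everywhere, with $C_0$ independent of $T,\lambda,x$. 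The superlinearity of $L$ then forces $\|\dot\gamma_T\|_\infty\leqslant \tilde\kappa$ for a constant $\tilde\kappa$ depending only on $\kappa_M$ and the superlinear minorant $\alpha_*$ of $L$.

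Finally, I would pass to the infinite horizon. Extending $\gamma_T$ to $(-\infty,0]$ (say, constantly equal to $\gamma_T(-T)$ on $(-\infty,-T]$) and applying Ascoli--Arzel\`a together with a diagonal extraction along a sequence $T_n\to+\infty$, I obtain a $\tilde\kappa$--Lipschitz limit curve $\gamma^\lambda_x:(-\infty,0]\to\MM$ with $\gamma^\lambda_x(0)=x$. Passing to the limit in the DPP identity for $\gamma_{T_n}$ with $t$ fixed, using continuity of $u^\lambda$ and the lower semicontinuity of the Lagrangian action along uniformly converging, equi-Lipschitz curves (plus the uniform $L^\infty$ bound $\|u^\lambda\|_\infty\leqslant (M+|a|)/\lambda$ to control the endpoint term $\e^{-\lambda t}u^\lambda(\gamma_{T_n}(-t))$), yields the claimed equality for every $t>0$. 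Letting finally $t\to+\infty$ and invoking the boundedness of $u^\lambda$ to kill the prefactor $\e^{-\lambda t}u^\lambda(\gamma^\lambda_x(-t))$ delivers \eqref{eq minimizing curve}. The main obstacle is the $\lambda$-uniform Lipschitz bound on $\dot\gamma^\lambda_x$: without the mollification step (or an equivalent viscosity super/subdifferential argument) one only gets bounds degenerating as $\lambda\to 0^+$, which would be useless for the forthcoming asymptotic analysis of Sections \ref{sez case I}--\ref{sez case III}.
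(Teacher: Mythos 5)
The paper itself does not prove this proposition; it simply refers the reader to \cite[Appendix 2]{DFIZ1}, so your blind proof is an attempt to fill in a cited argument rather than to reproduce one. The overall architecture you propose (finite-horizon dynamic programming, Tonelli existence of a minimizer $\gamma_T$ on $[-T,0]$, an a priori velocity bound uniform in $T,\lambda,x$, then a diagonal passage $T_n\to+\infty$ plus lower semicontinuity of the action to obtain the infinite-horizon calibrating curve) is indeed the standard and correct route, and matches what \cite{DFIZ1} does in the compact case.

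The one step that needs repair is the derivation of the pointwise velocity bound. As you wrote it, you invoke Lemma \ref{EncoreUneVariante} to mollify $u^\lambda$ into $u^\lambda_\eps$, apply Fenchel to get the \emph{lower} bound $L(\gamma_T,\dot\gamma_T)\geqslant (u^\lambda_\eps)'(\gamma_T)\dot\gamma_T-G(\gamma_T,(u^\lambda_\eps)'(\gamma_T))$, and then say that combining this with the DPP identity yields the \emph{upper} bound $L\leqslant C_0+\kappa_M|\dot\gamma_T|$ a.e. That implication does not go through as stated: integrating the Fenchel lower bound against the DPP equality only shows that the (nonnegative) Fenchel defect along $\gamma_T$ is small in $L^1$ (of order $\eps$), and smallness in $L^1$ does not give a.e.\ pointwise control. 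Moreover the mollified gradient $(u^\lambda_\eps)'$ changes with $\eps$, so one cannot simply send $\eps\to 0$ in a pointwise manner without a further (and delicate) diagonal extraction.

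The clean fix, which also shows the mollification is unnecessary for this step, is to differentiate the calibration identity directly. Since the restriction of $\gamma_T$ to any subinterval $[t_1,t_2]\subset[-T,0]$ is again optimal, one has $\e^{\lambda t_2}u^\lambda(\gamma_T(t_2))-\e^{\lambda t_1}u^\lambda(\gamma_T(t_1))=\int_{t_1}^{t_2}\e^{\lambda s}[L(\gamma_T,\dot\gamma_T)+a]\,ds$ for all such $t_1<t_2$. The map $s\mapsto\e^{\lambda s}u^\lambda(\gamma_T(s))$ is absolutely continuous (it is the product of a smooth function and the composition of the $\kappa_M$--Lipschitz function $u^\lambda$ with the absolutely continuous curve $\gamma_T$), so the identity may be differentiated a.e., giving
\[
\e^{\lambda s}\big[L(\gamma_T(s),\dot\gamma_T(s))+a\big]=\frac{d}{ds}\big[\e^{\lambda s}u^\lambda(\gamma_T(s))\big]\leqslant \e^{\lambda s}\big(\lambda\|u^\lambda\|_\infty+\kappa_M|\dot\gamma_T(s)|\big)\quad\text{a.e.}
\]
Using the bound $\lambda\|u^\lambda\|_\infty\leqslant M+|a|$ from Theorem \ref{teo discounted sol}, this gives $L(\gamma_T(s),\dot\gamma_T(s))\leqslant M+2|a|+\kappa_M|\dot\gamma_T(s)|$ a.e., with constants independent of $T$, $\lambda$, $x$; the superlinearity of $L$ then forces $\|\dot\gamma_T\|_\infty\leqslant\tilde\kappa$ as desired. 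The rest of your argument (Ascoli--Arzel\`a, lower semicontinuity of the action, $t\to+\infty$) is fine.
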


A curve $\gamma:(-\infty,0]\to \MM$  will 
be termed {\em minimizing} or {\em optimal} for $u^\lambda(x)$ if $\gamma(0)=x$ and it satisfies
\eqref{eq minimizing curve}.

\begin{oss}\label{oss calibrating curves}
The constant $\tilde\kappa>0$ in Proposition \ref{prop calibrating main} actually depends on the 
functions $\alpha,\,\beta$ appearing in condition (G2) only, cf. with the proof of Proposition 6.2 in \cite{DFIZ1} 
and with Theorem \ref{teo discounted sol}. If $\gammaxlambda$ is differentiable at $t\leqslant 0$ 
and $u^\lambda$ if differentiable at $\gammaxlambda(t)$, we furthermore have 
\begin{equation}\label{eq derivative gammaxlambda}
{\dot\gamma}_x^\lambda(t)\in\partial_p G\big(\gammaxlambda(t),(u^\lambda)'(\gammaxlambda(t))\big)
\qquad
\hbox{and}
\qquad
(u^\lambda)'(\gammaxlambda(t))\in\partial_q L\big(\gammaxlambda(t),{\dot\gamma}_x^\lambda(t)\big),
\end{equation}
see for instance the proof of Proposition 5 in \cite{MS17}. In \eqref{eq derivative gammaxlambda}, the symbols 
$\partial_p G(y,p)$ and $\partial_q L(y,q)$ stand for the subdifferentials of the functions $G(y,\cdot)$ and 
$L(y,\cdot)$ in the sense of convex analysis, respectively. The above differential inclusion holds 
with an equality for every $t<0$ 
whenever $G$ is Tonelli, see for instance Proposition  6 in \cite{MS17}.
\end{oss}

We now proceed by proving a monotonicity property of minimizing curves for $u^\lambda$ 
that will play a crucial role in the proofs of the aymptotic result. 

\begin{teorema}\label{teo optimal curves}
Let $G\in\CC(\TM)$ be a Hamiltonian satisfying (G1)-(G3) and let $u^\lambda$ be the unique bounded solution to \eqref{eq discounted} with $\lambda>0$. For every $x\in\MM$, there 
exists a minimizing curve $\gamma:(-\infty,0]\to \MM$ for $u^\lambda(x)$ such that  
$t\mapsto \gamma(t)$ is monotone. 
\end{teorema}

\begin{proof}
We shall prove the assertion in two steps. 
\medskip\\
\noindent{\bf Step 1.} Let us assume $G$ to be Tonelli. Pick a minimizing curve $\gamma:(-\infty,0]\to \MM$ for $u^\lambda(x)$. Clearly, we can assume $\gamma$ non-constant. Hence, there exists $B <0$ such that 
$\gamma(B )\not=\gamma(0)$. Let us assume for definiteness 
$\gamma(B )<\gamma(0)$. 
We aim to show that $t\mapsto \gamma(t)$ is non-decreasing. 
\smallskip\\
{\bf\underline{Claim I}:}\quad $\gamma(t)\leqslant \gamma(B)$\quad for all $t\leqslant B$.
\medskip\\
Indeed, if this were not the case, there would exists $A<B$ such that 
$\gamma(A)>\gamma(B)$. Since $\gamma$ is moving from the point $\gamma(A)$ to the 
point $\gamma(B)$ in the interval $[A,B]$, there must exist a point 
$t_0\in (A,B)$ such that 
\begin{equation}\label{eq2 calibrating}
\gamma(B)<z:=\gamma(t_0)<\min\{\gamma(A),\gamma(0)\}
\qquad
\hbox{and}
\qquad 
\dot\gamma(t_0)<0
\end{equation}
On the other hand, since the curve $\gamma$ is moving from $\gamma(B)$ to $\gamma(0)$ in 
the interval $[B,0]$, there must exist a point $t_1\in (B,0)$ such that 
$z=\gamma(t_1)$ and $\dot\gamma(t_1)\geqslant 0$. 
Now we use the fact that $\gamma$ is optimal for $u^\lambda$, $u^\lambda$ is differentiable at  
$z\in\gamma((-\infty,0))$ and 
\[
 \dot\gamma(t_0)=\partial_p G(z,u'(z))=\dot\gamma(t_1)
\]
to get a contradiction.\medskip\\ 
{\bf\underline{Claim II}:}\quad $\gamma(a)\leqslant \gamma(b)$\quad for all $a<b\leqslant 0$.
\medskip\\
If this were not the case, there would exists $a<b\leqslant 0$ such that \ $\gamma(b)<\gamma(a)$. 
If $\gamma(b)<\gamma(0)$, we can use the same argument used in Claim I with $b$ in place of $B$ 
and $a$ in place of $A$ to get a contradiction. 
Hence, in view of Claim I, we have 
\[
\gamma(a)>\gamma(b)\geqslant \gamma(0)>\gamma(B)
\qquad
\hbox{and}
\qquad
B<a<b.
\]
Then there exists a point $t_0$ of $\gamma$ in $(a,b)$ such that 
\begin{equation*} 
\gamma(b)<z:=\gamma(t_0)<\gamma(a)
\qquad
\hbox{and}
\qquad 
\dot\gamma(t_0)<0.
\end{equation*}
On the other hand, since the curve $\gamma$ is moving from $\gamma(B)$ to $\gamma(a)$ in 
the interval $[B,a]$, there must exist a point $t_1\in (B,a)$ such that 
$z=\gamma(t_1)$ and $\dot\gamma(t_1)\geqslant 0$. 
Now we use the fact that $\gamma$ is optimal for $u^\lambda$, $u^\lambda$ is differentiable at  
$z\in\gamma((-\infty,0)$ and 
\[
 \dot\gamma(t_0)=\partial_p G(z,u'(z))=\dot\gamma(t_1)
\]
to get a contradiction. 
\medskip\\
\noindent{\bf Step 2.} Let us now go back to the case of a Hamiltonian $G$ of general type. Let 
$(\rho_n)_n$  a sequence of standard regularizing kernels on $\TM$ and set 
$G_n(x,p):=(\rho_n*G)(x,p)+|p|^2/n$. The Hamiltonians $G_n$ are Tonelli and satisfy (G2) for a common pair of functions $\alpha,\,\beta$. Let us denote by $u_n^\lambda$ the solutions of the discounted equation \eqref{eq discounted} with $G:=G_n$. The functions $u^\lambda_n$ are equi-bounded and equi-Lipschitz, in view of Theorem \ref{teo discounted sol}, hence they converge, up to extracting a subsequence (not relabeled), to a 
limit function $u^\lambda$. 
Since $G_n\ucv G$ in $\TM$, by the stability of the notion of viscosity solution we derive that $u^\lambda$ is 
the solution of the discounted equation \eqref{eq discounted}. Let us fix $x\in\MM$ and denote by 
$\gamma_n:(-\infty,0]\to\MM$ a minimizing curve for $u_n^\lambda(x)$.  In view of Step 1, these curves are 
monotone. Up to extracting a subsequence, that again we shall not relabel to ease notation, 
we can furthermore assume that these curve all have the same kind of monotonicity, let us say they are all nondecreasing, to fix ideas. In view of Proposition \ref{prop calibrating main} and Remark \ref{oss calibrating curves}, these curves 
$\gamma_n$ are $\tilde\kappa$-Lipschitz for some common constant $\tilde\kappa>0$, hence they converge, up to extracting a subsequence (not relabeled), to a limit curve $\gamma:(-\infty,0]\to\MM$ with $\gamma(0)=x$.  
Clearly, the curve $\gamma$ is nondecreasing. Let us show it is optimal for $u^\lambda(x)$. 
The Lagrangians $L_n$ associated with $G_n$ via \eqref{def L} uniformly converge to the Lagrangian $L$ associated with $G$ on compact subsets of $\TM$. By standard semicontinuity results in the Calculus of Variations, see \cite[Theorem 3.6]{BGH}, for every fixed $t>0$ we get 
\begin{eqnarray*}
u^\lambda(x)-\e^{-\lambda t} u^\lambda(\gamma(-t))
&=&
\lim_{n\to +\infty} u_n^\lambda(x)-\e^{-\lambda t} u_n^\lambda(\gamma_n(-t))
=
\lim_{n\to +\infty} 
\int_{-t}^0 \e^{\lambda s} 
\big[L_n \big(\gamma_n,\dot\gamma_n\big)+a\big]\,d s\\
&\geqslant& 
\int_{-t}^0 \e^{\lambda s} 
\big[L \big(\gamma,\dot\gamma\big)+a\big]\,d s,
\end{eqnarray*}
thus proving the minimality of $\gamma$.
\end{proof}
\begin{oss}
Please note that, when $G$ is a Tonelli Hamiltonian, we have actually shown that monotonicity holds for any minimizing curve 
$\gamma:(-\infty,0]\to \MM$ for $u^\lambda(x)$. 
\end{oss}

\section{Critical values}\label{sez critical values}
Let $G:\TM\to\R$ be a continuous Hamiltonian satisfying (G1)-(G2), and let us 
consider a family of Hamilton--Jacobi equations of the form 
\begin{equation}\label{eq Sez2 HJa}
G(x,u')=a\qquad\hbox{in $\R$,}
\end{equation}
with $a\in\R$. We define the following {\em critical value} associated with 
$G$ as follows:
\begin{equation}\label{Sez2 critical value}
c(G):=\inf\{ a\in\R\,:\, \hbox{there exists a bounded subsolution of \eqref{eq 
Sez2 HJa}}\}.
\end{equation}
We also recall the definition of free critical value associated with  $G$, cf. \cite{FSC1, fatmad}:
\[
c_f(G)=\min\{ a\in\R\,:\, \hbox{there exists a subsolution of \eqref{eq 
Sez2 HJa}}\}.
\]

The following holds.

\begin{prop}\label{prop when a=c(G)}
Assume there exists a bounded supersolution $w\in\Lip(\R)$ to  \eqref{eq Sez2 HJa}. 
Then $a\leqslant c(G)$. In particular, if $w$ is a solution of \eqref{eq Sez2 HJa}, then 
$a=c(G)$.
\end{prop}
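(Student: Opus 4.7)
The plan is to argue by contradiction. Assume $a>c(G)$; by definition of the infimum, one can pick $a'\in(c(G),a)$ together with a bounded subsolution $v$ of $G(x,u')=a'$, and the strategy is then to derive a contradiction from two competing bounds on the unique bounded solution $u^\lambda$ of the discounted equation $\lambda u+G(x,u')=a$, whose existence is guaranteed by Theorem \ref{teo discounted sol}.

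First I would establish a uniform upper bound on $u^\lambda$ using $w$. Shifting by a large constant, $\tilde w:=w+K$ is still a bounded supersolution of $G(x,u')=a$, and once $K\geqslant\|w\|_\infty$ (so that $\tilde w\geqslant 0$), a direct check of the viscosity definition shows that $\tilde w$ is also a supersolution of $\lambda u+G(x,u')=a$. Proposition \ref{prop discounted comparison} then yields $u^\lambda\leqslant\tilde w$ on $\R$, uniformly in $\lambda>0$.

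Next I would produce a lower bound on $u^\lambda(x)$ which diverges as $\lambda\to 0^+$, through the representation formula \eqref{representation formula discounted} and Fenchel's inequality applied to $v$. To bypass differentiability issues for the merely Lipschitz function $v$, I would first approximate via Lemma \ref{EncoreUneVariante} by smooth functions $v_\eps$ with $\|v-v_\eps\|_\infty\leqslant\eps$ and $G(x,v_\eps'(x))\leqslant a'+\eps$ on $\R$. For any absolutely continuous curve $\gamma:(-\infty,0]\to\R$ with $\gamma(0)=x$, Fenchel gives $L(\gamma,\dot\gamma)\geqslant\tfrac{d}{ds}v_\eps(\gamma)-(a'+\eps)$ a.e., and integrating by parts in $s$ (legitimate thanks to the exponential weight and boundedness of $v_\eps$) transforms the representation formula into
\[
u^\lambda(x)\geqslant v_\eps(x)-\lambda\int_{-\infty}^0\e^{\lambda s}v_\eps(\gamma(s))\,ds+\frac{a-a'-\eps}{\lambda}\geqslant v(x)-\|v\|_\infty-2\eps+\frac{a-a'-\eps}{\lambda}.
\]
Fixing $\eps\in(0,(a-a')/2)$ and sending $\lambda\to 0^+$ forces $u^\lambda(x)\to+\infty$, contradicting the uniform upper bound from $w$.

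The main technical point is justifying the chain-rule/Fenchel computation when $v$ is only Lipschitz; the smooth approximation handles this cleanly. For the second assertion, if $w$ is in fact a solution then $w$ is also a bounded subsolution of $G(x,u')=a$, giving $c(G)\leqslant a$ by definition; combined with the inequality just proved, this yields $a=c(G)$.
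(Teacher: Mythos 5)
Your proof is correct, but it takes a genuinely different route from the paper's. The paper argues statically: assuming $a>c(G)$, it picks a bounded strict subsolution, subtracts $\eps\sqrt{1+|x|^2}$ to make it coercively negative so that $w-u$ attains a finite minimum at some $x_0$, and then invokes a Clarke-gradient comparison result from \cite{CamSic99} together with the convex a.e.-subsolution characterization to obtain the contradiction $a\leqslant H(x_0,p_0)\leqslant a-\delta$. Your proof is dynamic: you use $w$ (shifted to be nonnegative) as a supersolution of the discounted equation to pin $u^\lambda$ from above uniformly in $\lambda$ via Proposition \ref{prop discounted comparison}, and then the Fenchel/integration-by-parts computation on the representation formula \eqref{representation formula discounted}, applied to a smoothed strict subsolution $v_\eps$ at level $a'<a$, to force $u^\lambda(x)\to+\infty$. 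Both are sound; each has a distinct flavor. The paper's argument is self-contained at the level of critical equations and does not require the discounted problem at all, which keeps the logical dependencies of Section 2 lighter; your argument instead leans on the discounted machinery (Theorem \ref{teo discounted sol}, Proposition \ref{prop discounted comparison}, Lemma \ref{EncoreUneVariante}, the representation formula), but all of these are established earlier in Section 1, so there is no circularity, and the computation you run is exactly the one used repeatedly later in the asymptotic analysis, so it is a natural fit for the paper's toolkit. Two small checks you carried out implicitly and correctly: adding a nonnegative constant to $w$ preserves both the stationary and discounted supersolution property, and the growth hypotheses of Proposition \ref{prop discounted comparison} are trivially met since both $u^\lambda$ and $\tilde w$ are bounded. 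The treatment of the second assertion (a bounded solution is also a bounded subsolution, hence $a\geqslant c(G)$) coincides with the paper's.
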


\begin{proof}
Assume by contradiction that $a>c(G)$. Then there exists a bounded and Lipschitz function 
$\tilde u$ such that 
\[
H(x,D\tilde u)\leqslant a-2\delta\qquad\hbox{in $\R$.}
\]
Set $u(x):=\tilde u(x)-\eps\sqrt{1+|x|^2}$. For $\eps>0$ small enough 
\[
 H(x,u')<a-\delta\qquad\hbox{in $\R$}.
\]
Furthermore, since 
\[
\lim_{|x|\to +\infty} \frac{w(x)-u(x)}{|x|}=\eps,
\]
we infer that $\lim_{|x|\to +\infty} \big(w(x)-u(x)\big)=+\infty$. In 
particular, there 
exists $x_0\in\R$ such that 
\[
 (w-u)(x_0)=\inf_{\R} (w-u),
\]
i.e. $u$ is a subtangent to $w$ at $x_0$. Being $w$ a supersolution of 
$H\geqslant a$,  by Proposition 4.3 in \cite{CamSic99}, there exists 
$p_0$ belonging to the Clarke generalized gradient $\partial u(x_0)$ of $u$ at $x_0$ 
such that \ $H(x_0,p_0)\geqslant a$. 
On the other hand, $H(x_0,p_0)\leqslant a-\delta$ being $u$ a subsolution of 
$H< a-\delta$, cf.  \cite[Proposition 3]{Sic}, yielding a contradiction. When $w$ is a solution, the converse 
inequality 
$a\geqslant c(G)$ holds as well by definition of $c(G)$.
\end{proof}

In the periodic case, the following characterization of the critical value 
holds:

\begin{prop}\label{prop Sez2 effective ham} Let $H\in\D C(\TM)$ be Hamiltonian satisfying 
(G1)-(G2) which is
$\Z$--periodic in $x$. Then  
\begin{equation}\label{eq effective ham}
c(H)=\min\{ a\in\R\,:\, \hbox{there exist periodic 
subsolutions of \eqref{eq Sez2 HJa}}\}.
\end{equation}
\end{prop}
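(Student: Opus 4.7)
The plan is to prove the two inequalities $c(H)\leqslant c_{\mathrm{per}}(H)$ and $c(H)\geqslant c_{\mathrm{per}}(H)$, where $c_{\mathrm{per}}(H)$ denotes the right-hand side of \eqref{eq effective ham}, and then to show that the infimum defining $c_{\mathrm{per}}(H)$ is actually attained.

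The easy direction is $c(H)\leqslant c_{\mathrm{per}}(H)$: any $\Z$--periodic viscosity subsolution is automatically bounded, so if \eqref{eq Sez2 HJa} admits a periodic subsolution at level $a$, it admits a bounded one, whence $a\geqslant c(H)$.

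For the reverse inequality, I would argue that for every $a>c(H)$ there exists a periodic subsolution of \eqref{eq Sez2 HJa} at level $a$, which gives $c_{\mathrm{per}}(H)\leqslant a$ for every such $a$. Fix $a>c(H)$ and pick a bounded subsolution $u$; by Proposition~\ref{COMPBASIQUE} it is $\kappa_a$--Lipschitz. Using the $\Z$--periodicity of $H$, each translate $u(\cdot+k)$, $k\in\Z$, is again a subsolution of $H(x,u')=a$ with the same Lipschitz constant. Then set
\[
v_n(x):=\frac{1}{2n+1}\sum_{k=-n}^{n} u(x+k).
\]
By Proposition~\ref{prop when G convex}(iii), each $v_n$ is a Lipschitz subsolution of $H(x,v')=a$; moreover $\|v_n\|_\infty\leqslant\|u\|_\infty$ and $v_n$ is $\kappa_a$--Lipschitz. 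The key telescoping observation is
\[
v_n(x+1)-v_n(x)=\frac{u(x+n+1)-u(x-n)}{2n+1}\;\xrightarrow[n\to\infty]{}\;0
\]
uniformly in $x$, since $u$ is bounded. By Ascoli--Arzel\`a a subsequence $v_{n_j}$ converges locally uniformly to some $v$, which is a subsolution of $H(x,v')=a$ by stability of viscosity subsolutions, and which satisfies $v(x+1)=v(x)$, i.e.\ is $\Z$--periodic. Hence $c_{\mathrm{per}}(H)\leqslant a$ for every $a>c(H)$, yielding $c_{\mathrm{per}}(H)\leqslant c(H)$.

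It remains to show that the infimum in \eqref{eq effective ham} is attained at $c(H)$. Take $a_n\searrow c(H)$ and, for each $n$, a periodic subsolution $v_n$ of $H(x,u')=a_n$ normalized by $v_n(0)=0$. By Proposition~\ref{COMPBASIQUE} these are $\kappa_{a_n}$--Lipschitz, with $\kappa_{a_n}$ bounded in $n$ by coercivity; periodicity together with the normalization then gives equi-bounded, equi-Lipschitz functions. A further Ascoli--Arzel\`a extraction and stability of subsolutions deliver a $\Z$--periodic subsolution at level $c(H)$. The main (mild) obstacle is ensuring the compactness in the averaging step and in the diagonal extraction; both are handled by the uniform Lipschitz bounds from Proposition~\ref{COMPBASIQUE} and by the a~priori boundedness inherited from either $u$ or the normalization.
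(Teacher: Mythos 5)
Your proof is correct, but the mechanism you use to manufacture a periodic subsolution from a bounded one is genuinely different from the paper's. The paper sets
\[
v(x):=\inf_{z\in\Z}\,u(x+z),
\]
observes that this infimum is well defined and $\Z$--periodic because $u$ is bounded, and then applies Proposition~\ref{prop when G convex}(ii) (infimum of an equi--Lipschitz family of subsolutions) to conclude that $v$ is already a subsolution of $H(x,v')=a$; no limit or compactness step is needed. You instead form Birkhoff-type averages
\[
v_n(x)=\frac{1}{2n+1}\sum_{k=-n}^{n}u(x+k),
\]
use Proposition~\ref{prop when G convex}(iii), exploit the telescoping estimate $v_n(x+1)-v_n(x)=\bigl(u(x+n+1)-u(x-n)\bigr)/(2n+1)\to 0$, and then extract a locally uniform limit by Ascoli--Arzel\`a, which is periodic by passage to the limit and a subsolution by stability. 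Both constructions hinge on convexity of $H$ in $p$. The paper's infimum trick is one step shorter and avoids the compactness argument; your averaging argument is the standard homogenization route (\`a la Lions--Papanicolaou--Varadhan) and has the mild advantage of being the natural one when one merely needs asymptotic sublinearity rather than exact periodicity. Your closing paragraph on attainment of the infimum (Ascoli--Arzel\`a plus stability) supplies what the paper delegates to the references \cite{LPV,FS05}, and is fine as stated.
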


\begin{proof}
Let us call $c_p(H)$ the infimum of the set appearing at the right-hand side term of 
\eqref{eq effective ham}. It is well known that this infimum is attained, i.e. 
periodic subsolutions exist at level $a=c_p(H)$ as well, see \cite{LPV, FS05}. 
Clearly, $c(H)\leqslant c_p(H)$. 
Let $a>c(H)$ and pick a bounded subsolution $u\in\D{Lip}(\R)$ of \eqref{eq Sez2 HJa} with $G:=H$. Set 
\[
 v(x):=\inf_{z\in\Z} u(x+z),\qquad x\in\R.
\]
It is easily seen that the function $v$ is well defined, Lipschitz continuous 
and $\Z$--periodic.    
By $\Z$--periodicity of $H$ in the state variable, the map $x\mapsto u(x+z)$ 
is a subsolution of \eqref{eq Sez2 HJa} with $G:=H$ for each $z\in\Z$. Since $H$ is convex in 
$p$, we infer from Proposition \ref{prop when G convex}  that $v$ is itself a subsolution of \eqref{eq Sez2 HJa} as an infimum of subsolutions. 
Hence $a>c_p(H)$ and the claim follows since $a>c(H)$ was arbitrarily chosen. 
\end{proof}

Let $H$ be as in the statement of Proposition \ref{prop Sez2 effective ham}. For any fixed 
$\theta\in\R$, let $H_\theta$ be the Hamiltonian defined as 
$H_\theta(x,p):=H(x,\theta+p)$ for all $(x,p)\in\TM$. The {\em effective Hamiltonian} 
$\overline H$ associated with  $H$ 
(also called $\alpha$-function in the context of weak KAM Theory) is the 
function defined as $\overline H(\theta):=c(H_\theta)$ for every $\theta\in\R$. It is 
well known that $\overline H$ satisfies (G1)-(G2), i.e. is convex and superlinear. 
Furthermore, the following holds, see for instance \cite[Sections 6 and 7]{DS09}.
\begin{teorema}\label{teo Sez2 effective ham}
For every $a\geqslant c_f(H)$, let 
\[
 Z^H(x,a):=\{p\in\R\,:\, H(x,p)\leqslant a\,\}=[p^-_H(x,a), p^+_H(x,a)]
 \qquad
 \hbox{for all $x\in\R$.}
\]
Then 
\[
 \overline H(\theta)=\inf\left\{a\geqslant c_f(H)\,:\,\theta\in 
 \Big[\hbox{$\int_0^1 p_H^-(x,a)dx,\int_0^1 p_H^+(x,a)dx$}\Big]\right\}
 \quad\hbox{for all $\theta\in\R$}.
\]
Furthermore, \ $\displaystyle c_f(H)=\min_{\R} \overline H$. 
\end{teorema}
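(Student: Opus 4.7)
I would start from Proposition \ref{prop Sez2 effective ham} applied to the $\Z$--periodic Hamiltonian $H_\theta(x,p):=H(x,\theta+p)$ (which inherits (G1)--(G2)): since $\overline H(\theta)=c(H_\theta)$, it equals the minimum of those $a\in\R$ for which the equation $H_\theta(x,u')=a$, i.e.\ $H(x,\theta+u'(x))\leqslant a$ a.e., admits a $\Z$--periodic Lipschitz subsolution (the equivalence between a.e.\ and viscosity subsolutions being guaranteed by Proposition \ref{COMPBASIQUE}). Note first that any such $a$ must satisfy $a\geqslant c_f(H_\theta)=\sup_x\min_p H(x,\theta+p)=c_f(H)$ by Theorem \ref{teo equilibria}, so I can restrict to $a\geqslant c_f(H)$. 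For these $a$, writing $Z^H(x,a)=[p^-_H(x,a),p^+_H(x,a)]$ with endpoints jointly continuous in $(x,a)$ by Remark \ref{oss properties sigma_a}, a $\Z$--periodic Lipschitz $u$ is a subsolution if and only if
\[
u'(x)\in [p^-_H(x,a)-\theta,\ p^+_H(x,a)-\theta]\quad\text{a.e.}\qquad\text{and}\qquad \int_0^1 u'(x)\,dx=0,
\]
where the integral condition encodes $\Z$--periodicity.

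The key step is then showing that this system admits a solution if and only if
\[
\int_0^1 p^-_H(x,a)\,dx\leqslant\theta\leqslant \int_0^1 p^+_H(x,a)\,dx.
\]
Necessity follows by integrating the pointwise bounds against $dx$. For sufficiency, given $\theta$ in this interval, I would set
\[
\lambda:=\frac{\int_0^1 p^+_H(x,a)\,dx-\theta}{\int_0^1\bigl(p^+_H(x,a)-p^-_H(x,a)\bigr)\,dx}\in[0,1]
\]
(with the convention that if the denominator vanishes then $p^-_H=p^+_H$ a.e., the constraint forces $\theta=\int_0^1 p^\pm_H\,dx$, and one picks $q:=p^\pm_H-\theta$), and define the measurable function $q(x):=\lambda p^-_H(x,a)+(1-\lambda)p^+_H(x,a)-\theta$. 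Then $q$ takes values in the required pointwise interval and has zero mean, so $u(x):=\int_0^x q(y)\,dy$ is the desired $\Z$--periodic a.e.\ subsolution of $H_\theta(x,u')=a$, which is a genuine viscosity subsolution by the convexity of $H$. Since the admissible interval $[\int_0^1 p^-_H(x,a)dx,\int_0^1 p^+_H(x,a)dx]$ is nondecreasing in $a$ (as $Z^H(x,a)$ grows with $a$) and varies continuously in $a$, the set of admissible $a$'s is a closed half-line, the infimum is attained, and taking the infimum yields the announced formula for $\overline H(\theta)$.

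For the equality $c_f(H)=\min_\R \overline H$: on the one hand, any $\Z$--periodic subsolution is in particular an ordinary subsolution, so $\overline H(\theta)\geqslant c_f(H_\theta)=c_f(H)$ for every $\theta\in\R$. On the other hand, at $a=c_f(H)$ the interval $[\int_0^1 p^-_H(x,c_f(H))\,dx,\int_0^1 p^+_H(x,c_f(H))\,dx]$ is nonempty (as $p^-_H\leqslant p^+_H$ pointwise), and for any $\theta$ in it the formula just established produces a $\Z$--periodic subsolution at level $a=c_f(H)$, so $\overline H(\theta)\leqslant c_f(H)$, and equality follows for such $\theta$. The main obstacle in the plan is the sufficiency direction in the measurable selection step, where one has to guard against the pathological case of a degenerate pointwise interval of admissible derivatives; once this degenerate case is dispatched as above, everything else follows from a convex combination argument and the structural properties of the sublevel sets $Z^H(x,a)$ recorded in Section \ref{sez preliminaries}.
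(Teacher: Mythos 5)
Your argument is correct. The paper itself does not prove Theorem~\ref{teo Sez2 effective ham}; it cites \cite[Sections~6 and~7]{DS09}. So there is no in-paper proof to compare against, but your self-contained derivation is sound and proceeds along natural lines: reduce to Proposition~\ref{prop Sez2 effective ham} for $H_\theta$, translate periodic a.e.\ subsolutions into the pointwise inclusion $u'(x)\in[p^-_H(x,a)-\theta,p^+_H(x,a)-\theta]$ together with the zero-mean constraint $\int_0^1 u'=0$, and solve this by the explicit convex-combination selection. A couple of minor points worth being explicit about, though none affects correctness: (a) the nonemptiness of $Z^H(x,a)$ for every $x$ and every $a\geqslant c_f(H)$ (and hence the availability of $p^\pm_H(x,a)$ down to $a=c_f(H)$) follows from Theorem~\ref{teo equilibria}, which gives $c_f(H)=\sup_x\min_p H(x,p)$; this is recorded in Remark~\ref{oss properties sigma_a}, which you cite, but it is worth flagging since you use the endpoints at $a=c_f(H)$. (b) When you conclude that the admissible set of levels $a$ is a nonempty closed half-line, the nonemptiness requires the superlinearity of $H$ in $p$ (so that for large $a$ the interval $[\int p^-_H(\cdot,a),\int p^+_H(\cdot,a)]$ swallows any fixed $\theta$); this is immediate from (G2) but should be said. (c) Your $q$ in the sufficiency step is bounded because $p^\pm_H(\cdot,a)$ are continuous and periodic, hence bounded; this guarantees that $u(x):=\int_0^x q$ is genuinely Lipschitz, so Proposition~\ref{COMPBASIQUE} applies. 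With these small clarifications, the proof is complete.
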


Given $H$ as in the statement of Proposition \ref{prop Sez2 effective ham} 
and $V\in\D{C}_c(\R)$, we define  
\[
G(x,p):=H(x,p)-V(x)\qquad\hbox{for every $(x,p)\in\R\times\R$.}
\]
The following holds.

\begin{prop}\label{prop c(G)>c(H)}
Let $H$ and $G$ be as above. Then $c(G)\geqslant c(H)$ and  $c_f(G)\geqslant c_f(H)$. 
\end{prop}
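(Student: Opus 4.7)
The plan is to exploit the $\Z$--periodicity of $H$ together with the compactness of $\supp(V)$ through a translation argument. Since $G$ and $H$ coincide outside $\supp(V)$, translating a subsolution of $G(x,u')=a$ far to the right produces, near the origin, a subsolution of a shifted equation that by $\Z$--periodicity is literally $H(x,\cdot)=a$; a compactness extraction should then yield a subsolution of $H(x,u')=a$ on the whole line, inheriting the boundedness or Lipschitz character of the starting object.

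More precisely, for the inequality $c_f(G)\geqslant c_f(H)$ I will pick $a>c_f(G)$ and a subsolution $u$ of $G(x,u')=a$ in $\R$, which is automatically Lipschitz by Proposition \ref{COMPBASIQUE}. Choosing $R>0$ with $\supp(V)\subseteq[-R,R]$ and setting $u_n(x):=u(x+n)-u(n)$ for $n\in\N$, the $\Z$--periodicity of $H$ together with the identity $V(\cdot+n)\equiv 0$ on $(-(n-R),n-R)$ will ensure that $u_n$ is a viscosity subsolution of $H(x,u_n')=a$ on the interval $(-(n-R),n-R)$. The family $\{u_n\}_n$ is equi--Lipschitz and vanishes at the origin, so Ascoli--Arzel\`a produces a locally uniformly convergent subsequence whose limit $u_\infty$ is, by the standard stability of viscosity subsolutions, a subsolution of $H(x,u_\infty')=a$ on $\R$. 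Hence $a\geqslant c_f(H)$, and letting $a$ decrease to $c_f(G)$ gives the claim.

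For the inequality $c(G)\geqslant c(H)$ I will repeat the same construction starting from a \emph{bounded} subsolution $u$ of $G(x,u')=a$ with $a>c(G)$, but now taking the translates $u_n(x):=u(x+n)$ with no normalisation: these are equi--bounded by $\|u\|_\infty$ and still equi--Lipschitz, so the limit $u_\infty$ is a bounded subsolution of $H(x,u_\infty')=a$ on $\R$, yielding $a\geqslant c(H)$.

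I do not foresee any genuine obstacle. The only point to be slightly careful with is the stability step, but since on every fixed bounded interval the translated equation reduces literally to $H(x,u_n')\leqslant a$ as soon as $n$ is sufficiently large (because $V(\cdot+n)$ vanishes identically there), this is nothing more than the usual stability of viscosity subsolutions under local uniform convergence.
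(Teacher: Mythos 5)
Your proof is correct and takes essentially the same route as the paper's: translate the subsolution by $n$, exploit that the translated Hamiltonian agrees with $H$ where $V(\cdot+n)$ vanishes, and pass to a local uniform limit by Ascoli--Arzel\`a and stability. The paper phrases the last step slightly more abstractly (it sets $G_n(\cdot,\cdot):=G(z_n+\cdot,\cdot)$ for a diverging sequence $(z_n)_n$ in $\Z$ and invokes $G_n\rightrightarrows H$ locally), whereas you note that $G(\cdot+n,\cdot)$ literally equals $H$ on ever-larger intervals; this is a cosmetic simplification of the same argument, not a different approach.
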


\begin{proof}
Let us prove $c(G)\geqslant c(H)$. Pick $a>c(G)$ and let 
$u\in\D{Lip}(\R)$ be a bounded subsolution to \eqref{eq Sez2 HJa}. Let $(z_n)_n$ 
be a diverging sequence 
in $\Z$, i.e. $\lim_n |z_n|=+\infty$, and set $u_n(\cdot):=u(z_n+\cdot)-u(z_n)$. 
Each function $u_n$ is a subsolution 
of 
\[
G_n(x,u_n')\leqslant a\qquad\hbox{in $\R$,}
\]
with $G_n(\cdot,\cdot):=G(z_n+\cdot,\cdot)$.  
Since $u$ is bounded and Lipschitz on $\R$, the family of functions 
$(u_n)_{n}$ is equi--bounded and equi--Lipschitz, 
in particular there exists a bounded function $v\in\Lip(\MM)$ such that, 
up to subsequences, $u_n\ucv v$ in $\R$. Now, since 
$G_n\ucv H$ in $\R\times\R$, by the stability of the notion of viscosity 
subsolution we conclude that $v$ satisfies 
\[
 H(x,v')\leqslant a\qquad\hbox{in $\R$.}
\]
This proves that $a\geqslant c(H)$ for every $a>c(G)$, hence $c(G)\geqslant 
c(H)$. 

The proof of the inequality $c_f(G)\geqslant c_f(H)$ goes along the same lines, the only 
difference being that the functions $(u_n)_{n}$ are {\em locally} equi--bounded 
in $\R$ since they are equi--Lipschitz and satisfy $u_n(0)=0$ for all $n\in\N$. 
\end{proof}

The fact that the critical value $c(G)$ is either equal or not equal to $c(H)$ carries the following interesting information.

\begin{teorema}\label{teo no bounded critical sol}
Let us consider the critical equation
\begin{equation}\label{eq no bounded critical sol}
G(x,u')=c(G)\qquad\hbox{in $\MM$}.
\end{equation}
\begin{itemize}
\item[(i)] If $c(G)>c(H)$, equation \eqref{eq no bounded critical sol} does not admit bounded solutions.\smallskip
\item[(ii)] If $c(G)=c(H)$, equation \eqref{eq no bounded critical sol} does not admit a bounded subsolution which 
is uniformly strict outside some compact set $K$.
\end{itemize}
\end{teorema}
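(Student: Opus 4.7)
My plan is to prove both items by a common \emph{translation argument} exploiting the $\Z$--periodicity of $H$ and the compact support of $V$. Given any sequence $(z_n)_n\subset\Z$ with $|z_n|\to+\infty$ and any candidate function $u$, I would consider the normalized translates $u_n(x):=u(z_n+x)-u(z_n)$. The key observation is that, since $V\in\D{C}_c(\R)$ and $H(\cdot,p)$ is $\Z$--periodic, for every compact set $K'\subset\R$ and every $n$ sufficiently large one has $V(z_n+x)=0$ and $H(z_n+x,p)=H(x,p)$ for $(x,p)\in K'\times\R$, so that $G(z_n+x,p)=H(x,p)$ on $K'\times\R$. Hence any viscosity sub/super/solution property transfers from $G$ in the shifted variable to $H$ in the limit.

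For item (i), I would argue by contradiction, assuming $u$ is a bounded viscosity solution of \eqref{eq no bounded critical sol}. Then $u$ is $\kappa_{c(G)}$--Lipschitz by Proposition \ref{COMPBASIQUE}, so the translates $u_n$ are equibounded (by $2\|u\|_\infty$) and equi-Lipschitz. By Ascoli--Arzel\`a, up to a subsequence, $u_n\ucv v$ on $\R$, with $v$ bounded and Lipschitz. By the stability of viscosity solutions and the observation above, $v$ is a viscosity solution of $H(x,v')=c(G)$ in $\R$. In particular, $v$ is a bounded supersolution of this equation, so applying Proposition \ref{prop when a=c(G)} to $H$ gives $c(G)\leqslant c(H)$, contradicting the assumption $c(G)>c(H)$.

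For item (ii), I would again argue by contradiction, assuming $v\in\Lip(\R)$ is a bounded subsolution of \eqref{eq no bounded critical sol} and that there exist $\delta>0$ and a compact $K\subset\R$, which after enlarging we take to contain $\supp(V)$, such that $G(x,v'(x))\leqslant c(G)-\delta$ for a.e.\ $x\in\R\setminus K$. The translates $v_n(x):=v(z_n+x)-v(z_n)$ are equibounded and equi-Lipschitz by Proposition \ref{COMPBASIQUE}, and for every compact $K'\subset\R$ and every $n$ large enough $z_n+K'\subset\R\setminus K$, so
\[
H(x,v_n'(x))=G(z_n+x,v_n'(x))\leqslant c(G)-\delta\quad\text{for a.e.\ }x\in K'.
\]
Extracting a further subsequence so that $v_n\ucv w$ on $\R$ and letting $K'$ exhaust $\R$, the limit $w$ is bounded, Lipschitz, and satisfies $H(x,w'(x))\leqslant c(G)-\delta$ a.e.\ on $\R$; by Proposition \ref{COMPBASIQUE} this makes $w$ a bounded viscosity subsolution of $H(x,w')=c(G)-\delta$ in $\R$. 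The very definition of $c(H)$ then forces $c(H)\leqslant c(G)-\delta$, contradicting $c(G)=c(H)$.

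The main obstacle I anticipate is ensuring that the translation limits inherit the correct inequality together with boundedness. In (i) it is essential that $u$ be a \emph{solution} of the critical equation rather than merely a subsolution: only then does $v$ retain the supersolution property needed to invoke Proposition \ref{prop when a=c(G)} and compare with $c(H)$. In (ii) it is the \emph{uniformity} of the strictness constant $\delta$ outside the compact set $K$ that is crucial, because only a $\delta$ independent of the chosen compact subset can survive the limit as $|z_n|\to+\infty$ and yield a bounded strict subsolution of the periodic problem. These two requirements are precisely what breaks down in the remaining cases (II)--(III) of Theorem \ref{teo2 intro}, which is consistent with the existence results the paper proves there.
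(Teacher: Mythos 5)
Your proof is correct and follows essentially the same route as the paper's: both prove (i) and (ii) by the same translation argument (normalized shifts $u(z_n+\cdot)-u(z_n)$ along a diverging sequence of integers, Ascoli--Arzel\`a, stability of viscosity solutions, and the fact that $G(z_n+\cdot,\cdot)\ucv H$), the only difference being that the paper abbreviates by pointing to the proof of Proposition \ref{prop c(G)>c(H)} where this argument is carried out, while you spell it out explicitly.
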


\begin{proof}
Let us prove (i). Assume by contradiction that there exists a bounded solution $u$ of 
\eqref{eq no bounded critical sol}. The same argument used in the proof of Proposition 
\ref{prop c(G)>c(H)} shows that there exists a bounded solution $v$ of 
$H(x,v')=c(G)$\ in $\MM$. From Proposition \ref{prop when a=c(G)} we infer that $c(H)=c(G)$, 
reaching a contradiction.

Let us prove (ii). Assume by contradiction that there exists a bounded subsolution $u$  of 
\eqref{eq no bounded critical sol} and a constant $\delta>0$ such that \ $G(x,u'(x))<c(G)-\delta$\ for a.e. $x\in\MM\setminus K$. The same argument used in the proof of Proposition 
\ref{prop c(G)>c(H)} shows that there exists a bounded subsolution $v$ of 
$H(x,v')<c(G)-\delta$\ in $\MM$, contradicting the definition of $c(H)$ since $c(H)=c(G)$.
\end{proof}

\section{Minimizing measures}\label{sez minimizing measures}

\subsection{Mather measures}\label{sez Mather measures}
Let $X$ be a metric separable space. A {\em probability measure} on $X$ is a nonnegative, countably additive set function $\mu$ defined on the $\sigma$--algebra $\Bor(X)$ of Borel subsets of $X$ such that $\mu(X)=1$. 
In this paper, we deal with probability measures defined either on $M$, with $M=\T^1$ or 
$M=\R$, or on its tangent bundle $TM$. A measure on $TM$ is denoted by $\tilde \mu$, where 
the tilde on the top is to keep track of the fact that the measure is on the space $TM$. 
We say that a sequence $(\tilde\mu_n)_{n}$ of probability measures on $TM$ (narrowly) converges 
to a probability measure $\tilde\mu$ on $TM$, in symbols $\tilde\mu_n\weakcv\tilde\mu$, if 
\begin{eqnarray}\label{def weak cv}
\lim_{n\to +\infty}\int_{TM} f(x,q)\,\dd\tilde\mu_n(x,q)
=
\int_{TM} f(x,q)\,\dd\tilde\mu(x,q)\quad\hbox{for every $f\in\D{C}_b(TM)$}, 
\end{eqnarray}
where $\D{C}_b(TM)$ denotes the family of continuous and bounded real functions 
on $TM$. 
If $\tilde\mu$ is a probability measure on $TM$, we denote by $\mu$ its projection 
${\pi_1}_\#\tilde\mu$ on $M$, i.e. the probability measure on $M$ defined as  
\[
 {\pi_1}_\#\tilde\mu(B):=\tilde\mu\big(\pi_1^{-1}(B)\big)\qquad\hbox{for every $B\in\Bor(M)$}.
\]
Note that 
\[
 \int_M f(x)\,{\pi_1}_\#\tilde\mu(x) 
 = 
 \int_{TM} \left( f\comp\pi_1 \right)(x,q)\,\dd \tilde\mu(x,q)
 \qquad\hbox{for every $f\in\D{C}_b(M)$}, 
\]
where $\CC_b(M)$ denotes the family of bounded continuous functions on $M$.

Let $H:\TM\to\R$ be a continuous Hamiltonian satisfying (G1)-(G2) 
and such that $H$ is $\Z$-periodic in $x$. 
Let us denote by $L_H$ the Lagrangian associated with  
 $H$ via the Fenchel transform \eqref{def L}.
Mather theory states that the constant  $-c(H)$, where $c(H)$
 is the critical value, can be also obtained by minimizing the integral of the 
Lagrangian over $TM$ with respect to closed probability measures  
on $TM$. The definition of closed measure is the following:

\begin{definition}\label{def closed measure}
A probability measure $\tilde\mu\in\parts(TM)$ is termed {\em 
closed} if it satisfies the following properties:
\begin{itemize}
\item[(i)] \quad $\displaystyle \int_{TM} |q|\, d\tilde\mu(x,q) 
<+\infty;$\smallskip
\item[(ii)] \quad for every $\varphi\in C_b^1(M)\cap\Lip(M)$
\[
\int_{TM} \varphi'(x)\,q \  d\tilde\mu(x,q)=0.
\]
\end{itemize}
\end{definition}

A way to construct a closed measure is the following: if $\gamma: [a,b]\to M$ is
an absolutely continuous curve, define the probability measure $\tilde \mu_\gamma$ on $TM$, by
\begin{equation}\label{Birkhoff}
\int _{TM}f(x,q) \,d\tilde \mu_\gamma(x,q):=\frac1{b-a}\int_a^b f\big(\gamma(t),\dot\gamma(t)\big)\,dt,
\end{equation}
for every $f\in\D{C}_{b}(TM)$. 
It is easily seen that $\tilde\mu_\gamma$ is a closed measure whenever $\gamma$ is a loop. \smallskip 

The following holds, see for instance \cite[Appendix 1]{DFIZ1} for a proof. 

\begin{teorema}\label{teo H Mather measures}
Let $H:\TM\to\R$ be a continuous Hamiltonian satisfying (G1)-(G2) 
and $\Z$-periodic in $x$. Let us denote by $L_H$ the Lagrangian associated 
with $H$ via the Fenchel transform \eqref{def L}. Then 
\begin{equation}\label{problem minimizing L_H}
 \min_{\tilde{\mu}} \int_{\TTorus} L_H(x,v)\, d \tilde{\mu}(x,v)=-c(H)
\end{equation}
where $\tilde\mu$ varies in the set of closed measures on $\TTorus$.
\end{teorema}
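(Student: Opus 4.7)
The plan is to establish the two inequalities
$\min_{\tilde\mu} \int_{\TTorus} L_H\,d\tilde\mu \geqslant -c(H)$
(which is the inequality that must hold for every closed $\tilde\mu$) and
$\min_{\tilde\mu} \int_{\TTorus} L_H\,d\tilde\mu \leqslant -c(H)$
(which requires producing a closed measure realizing the value), separately. The second inequality, which demands a compactness argument, will be the main obstacle.

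For the lower bound, I would invoke Proposition \ref{prop Sez2 effective ham} to obtain, for each $\eps>0$, a $\Z$-periodic Lipschitz subsolution of $H(x,u')\leqslant c(H)+\eps$ on $\R$; viewing it as a function on $\T^1$ and smoothing via Lemma \ref{EncoreUneVariante} with a periodic mollifier (so that $\Z$-periodicity is preserved), I obtain $u_\eps\in C^1(\T^1)$ with $H(x,u_\eps'(x))\leqslant c(H)+2\eps$ everywhere. The Fenchel inequality then gives
\[
L_H(x,q)\;\geqslant\;u_\eps'(x)\,q-H(x,u_\eps'(x))\;\geqslant\;u_\eps'(x)\,q-c(H)-2\eps.
\]
Integrating against any closed $\tilde\mu$ and using Definition \ref{def closed measure}(ii) to kill the linear term $\int u_\eps'(x)\,q\,d\tilde\mu=0$ yields $\int L_H\,d\tilde\mu\geqslant -c(H)-2\eps$, and sending $\eps\to 0$ delivers the bound.

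For the upper bound I would construct a sequence of closed probability measures $\tilde\mu_n$ on $\TTorus$ with $\int L_H\,d\tilde\mu_n\to -c(H)$ and extract a weak limit. A convenient choice is to take absolutely continuous curves $\gamma_n:[0,T_n]\to \T^1$ with $T_n\to +\infty$ satisfying
\[
\frac{1}{T_n}\int_0^{T_n} L_H(\gamma_n,\dot\gamma_n)\,dt\;\longrightarrow\;-c(H),
\]
for instance calibrated orbits for a periodic solution of the critical equation (whose existence follows from Proposition \ref{prop Sez2 effective ham} by a standard weak KAM argument on $\T^1$), and to set $\tilde\mu_n:=\tilde\mu_{\gamma_n}$ via \eqref{Birkhoff}. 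These measures need not be exactly closed, but for any $\varphi\in C^1(\T^1)$ one has
\[
\int_{\TTorus}\varphi'(x)\,q\,d\tilde\mu_n(x,q)\;=\;\frac{\varphi(\gamma_n(T_n))-\varphi(\gamma_n(0))}{T_n}\;\longrightarrow\;0,
\]
so the defect in the closedness condition vanishes asymptotically.

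The compactness step leans on superlinearity. From (G2) applied to $L_H$ one has $L_H(x,q)\geqslant \alpha_*(|q|)-C_0$ with $\alpha_*$ nonnegative and superlinear, so $\sup_n \int \alpha_*(|q|)\,d\tilde\mu_n<+\infty$. Combined with compactness of $\T^1$, de la Vall\'ee Poussin gives tightness of $\{\tilde\mu_n\}$ on $\TTorus$, and Prokhorov yields $\tilde\mu_n\weakcv \tilde\mu_*$ along a subsequence. The same moment bound forces uniform integrability of $|q|$, which allows us to pass to the limit in the unbounded-growth test function $\varphi'(x)q$ and conclude that $\tilde\mu_*$ is closed. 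Lower semicontinuity of narrow convergence applied to the nonnegative continuous function $L_H+C_0$ then gives $\int L_H\,d\tilde\mu_*\leqslant \liminf_n \int L_H\,d\tilde\mu_n=-c(H)$, which combined with the first half yields equality and shows that the infimum is attained at $\tilde\mu_*$. The main obstacle is precisely handling the unbounded test function $\varphi'(x)q$ in the passage to the limit and in the verification of closedness; I would resolve it by truncating $q$ at level $R$, applying narrow convergence to the bounded truncation, and controlling the tail via the $\alpha_*$-moment bound.
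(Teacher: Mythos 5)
Your argument is correct, and it is essentially the standard proof: the paper itself does not prove Theorem \ref{teo H Mather measures} but defers to \cite[Appendix 1]{DFIZ1}, where the same two-sided scheme is used (smooth periodic approximate subsolutions plus the Fenchel inequality for the bound $\geqslant -c(H)$, and weak-$*$ limits of Birkhoff-type measures along long calibrated curves for the attainment). The only points requiring the care you already flag are the preservation of periodicity under mollification, the existence of calibrated rays for a merely continuous convex superlinear $H$ (standard on $\T^1$ via the direct method), and the passage to the limit in the unbounded test function $\varphi'(x)q$, which your superlinear moment bound and truncation handle correctly.
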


A closed probability measure on $\TTorus$ that solves the minimization problem 
\eqref{problem minimizing L_H} is called {\em Mather measure} for $H$. 
The set of Mather measures for $H$ will be denoted by $\tilde\Mis (H)$.
A {\em projected Mather measure} is a Borel probability measure in $\mu$ on $\T^1$ of the form
$\mu={\pi_1}_\#\tilde\mu$ with $\tilde\mu\in \tilde\Mis (H)$.  
The set of projected Mather measures will be denoted by $\Mis (H)$.

Let us now consider the Hamiltonian $G:\TM\to\R$ obtained by perturbing the periodic 
Hamiltonian $H$ via a potential $V\in \D C_c(\MM$) as follows:
\[
G(x,p):=H(x,p)-V(x)\qquad\hbox{for all $(x,p)\in\TM$.}
\]
Let us denote by $L_G$ the Lagrangian associated with  
$G$ via the Fenchel transform \eqref{def L}. The following holds. 

\begin{teorema}\label{teo G Mather measures}
Let $G(x,p):=H(x,p)-V(x)$ for all $(x,p)\in\TM$, where $V\in\D C_c(\MM)$ and 
$H:\TM\to\R$ is a continuous Hamiltonian satisfying (G1)-(G2) 
and  $\Z$-periodic in $x$. Let us denote by $L_G$ the Lagrangian associated with  
 $G$ via the Fenchel transform \eqref{def L}. Then 
\begin{equation}\label{problem minimizing L_G}
 \min_{\tilde{\mu}} \int_{\TM} L_G(x,q)\, d \tilde{\mu}(x,q)=-c_f(G)
\end{equation}
where $\tilde\mu$ varies in the set of closed measures on $\TM$. 
\end{teorema}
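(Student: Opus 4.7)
The plan is to prove the two inequalities separately. For the upper bound, I invoke Theorem \ref{teo equilibria}, which gives $c_f(G)=\sup_{x\in\R}\min_{p\in\R} G(x,p)$; since $V$ is compactly supported, the continuous function $x\mapsto\min_p G(x,p)$ coincides outside $\supp V$ with the $\Z$-periodic map $x\mapsto\min_p H(x,p)$, so its supremum on $\R$ is attained at some $y_0\in\E(G)$. The Dirac mass $\tilde\mu_0:=\delta_{(y_0,0)}$ trivially satisfies conditions (i)--(ii) of Definition \ref{def closed measure} (because $q=0$ at its support), and $\int L_G\,d\tilde\mu_0=L_G(y_0,0)=-\min_p G(y_0,p)=-c_f(G)$. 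This shows the infimum is $\leqslant -c_f(G)$ and will be attained once the matching lower bound is proved.

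For the lower bound, let $\tilde\mu$ be any closed probability measure on $\TM$, set $\mu:=(\pi_1)_\#\tilde\mu$, disintegrate $\tilde\mu=\mu\otimes\tilde\mu_x$, and define $\bar q(x):=\int q\,d\tilde\mu_x(q)$, which lies in $L^1(\mu)$ by Fubini. The crucial step is to show $\bar q=0$ $\mu$-a.e. For any $g\in C_b(\R)$ and any smooth cutoff $\chi_R\in C^\infty_c(\R)$ with $\chi_R\equiv 1$ on $[-R,R]$, the primitive $\varphi_R(x):=\int_0^x g(z)\chi_R(z)\,dz$ belongs to $C^1_b(\R)\cap\Lip(\R)$; testing the closedness condition (ii) against $\varphi_R$ yields $\int g\,\chi_R\,\bar q\,d\mu=0$, and dominated convergence as $R\to\infty$ (with dominant $\|g\|_\infty|\bar q|\in L^1(\mu)$) gives $\int g\,\bar q\,d\mu=0$ for every $g\in C_b(\R)$, hence $\bar q=0$ $\mu$-a.e.

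Once $\bar q\equiv 0$ is in hand, Jensen's inequality applied to the convex function $q\mapsto L_G(x,q)$ gives, for $\mu$-a.e.\ $x$,
\[
\int L_G(x,q)\,d\tilde\mu_x(q)\geqslant L_G(x,\bar q(x))=L_G(x,0)=-\min_p G(x,p)\geqslant -c_f(G),
\]
the last bound being again Theorem \ref{teo equilibria}. Integrating against $\mu$ yields $\int_{\TM} L_G\,d\tilde\mu\geqslant -c_f(G)$, which combined with the upper bound closes the argument and shows the infimum is attained.

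The main obstacle is the closedness upgrade carried out in the second paragraph. In the compact setting of Theorem \ref{teo H Mather measures} the test-function class $C^1_b(\T^1)\cap\Lip(\T^1)=C^1(\T^1)$ is insensitive to constant drifts, so $\bar q$ need not vanish and rotational measures of the form $\mathcal{L}\otimes\delta_\alpha$ contribute, which is why the periodic minimum is the strictly smaller $-c(H)$. On $\R$ the cutoff trick is genuinely needed to rule out such measures, and it is exactly this phenomenon that explains why the value here drops all the way down to $-c_f(G)$ and is realised by a Dirac at an equilibrium point of $G$.
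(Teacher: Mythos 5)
Your proof is correct, and it takes a genuinely different route from the paper's. The paper proves the lower bound by invoking the $C^1$ subsolution $v$ with $G(x,v'(x))\leqslant c_f(G)$ supplied by Theorem \ref{teo equilibria}, writing $\int L_G\,d\tilde\mu\geqslant\int v'(x)q\,d\tilde\mu-\int G(x,v'(x))\,d\tilde\mu$ by Fenchel and then killing the first term by the closedness of $\tilde\mu$. You instead isolate the structural fact that for a closed probability measure on the noncompact tangent bundle $\R\times\R$ the fibrewise barycenter $\bar q$ vanishes $\mu$-a.e., and then conclude by Jensen together with $L_G(x,0)=-\min_p G(x,p)\geqslant -c_f(G)$. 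Your cutoff argument for the vanishing of $\bar q$ is valid: since $\bar q\in L^1(\mu)$ (Fubini plus condition (i) of Definition \ref{def closed measure}), testing against $\varphi_R(x)=\int_0^x g\chi_R$ for $g\in C_b(\R)$ and $0\leqslant\chi_R\leqslant 1$ and passing to the limit by dominated convergence shows the finite signed measure $\bar q\,d\mu$ annihilates all of $C_b(\R)$, hence is zero. Both proofs lean on Theorem \ref{teo equilibria} for $c_f(G)=\sup_x\min_p G(x,p)$ and for the Dirac masses at equilibria that saturate the bound (and your observation that $\E(G)\neq\emptyset$ because $\min_p G(x,\cdot)$ agrees with a periodic function outside the compact $\supp V$ is the right justification). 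What your route buys is that it avoids any appeal to the strict subsolution and uses only the barycenter lemma, which is arguably cleaner; it also sidesteps a mild technical wrinkle in the paper's argument, namely that the $v$ of Theorem \ref{teo equilibria} is constructed as a primitive and need not be bounded, whereas the test-function class in Definition \ref{def closed measure} is $C^1_b\cap\Lip$ — your barycenter lemma justifies $\int v'(x)q\,d\tilde\mu=\int v'\bar q\,d\mu=0$ even for such unbounded Lipschitz $v$. What the paper's route buys is that it delivers, with no extra work, the localization of Mather measures on $\E(G)$ used in Corollary \ref{cor Mather set=equilibria} (equality in the Fenchel chain forces $G(x,v'(x))=c_f(G)$ $\mu$-a.e.), whereas Jensen alone does not immediately give the support information. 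One small caveat on your closing remark: the periodic value $-c(H)$ is $\leqslant -c_f(H)$ but need not be \emph{strictly} smaller — equality occurs precisely in case (III) of the paper.
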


\begin{proof}
By Theorem \ref{teo equilibria}, 
there exists a $C^1$ subsolution $v$ of the equation $G(x,v')=c_f(G)$ in $\R$ which is 
strict outside $\E(G)$. For any closed measure $\tilde\mu$ on $\TM$ we have 
\begin{eqnarray}\label{eqG closed measure}
 \int_{\TM} L_G(x,q)\, d \tilde{\mu}(x,q)
 \geqslant
 \int_{\TM} v'(x)q \, d \tilde{\mu}(x,q) 
 - 
 \int_{\TM} G(x,v'(x)) \, d \tilde{\mu}(x,q)
 \geqslant
 -c_f(G),
\end{eqnarray}
where we have used Fenchel inequality and the fact that the measure $\tilde\mu$ is 
closed. On the other hand, it is easily seen that the set $\E(G)$ of equilibria 
is nonempty and that any measure of the form 
$\tilde\mu:=\delta_{(y,0)}$ with $y\in\E(G)$ is a closed measure on $\TM$ that solves 
the minimization problem \eqref{problem minimizing L_G}. 
\end{proof}

In analogy with the periodic case, we shall call {\em Mather measure} for $G$ a 
closed probability measure on $\TM$ that solves the minimization problem 
\eqref{problem minimizing L_G}. We will denote by $\tilde\Mis (G)$ the set of 
Mather measures for $G$. A {\em projected Mather measure} for $G$ is a Borel probability 
measure 
$\mu$ on $\MM$ of the form
$\mu={\pi_1}_\#\tilde\mu$ with $\tilde\mu\in \tilde\Mis (G)$.  
The set of projected Mather measures will be denoted by $\Mis (G)$.

As a byproduct of the previous computation, we easily derive the following information. 

\begin{cor}\label{cor Mather set=equilibria}
The set $\Mis(G)$ of projected Mather measures for $G$ coincides 
with the closed convex hull of $\{\delta_y\,:\,y\in\E(G)\,\}$, i.e. the set of delta 
Dirac measures concentrated at points of $\E(G)$.
\end{cor}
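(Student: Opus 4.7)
The plan is to read off both inclusions directly from the cases of equality in the chain \eqref{eqG closed measure} used to prove Theorem \ref{teo G Mather measures}. The key ingredient is the $C^1$ subsolution $v$ supplied by Theorem \ref{teo equilibria}, whose essential feature is that the continuous function $x \mapsto G(x, v'(x))$ satisfies $G(x, v'(x)) \leqslant c_f(G)$ for all $x \in \R$, with equality precisely on $\E(G)$ and strict inequality on its complement.

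For the forward inclusion, let $\tilde\mu \in \tilde\Mis(G)$. Since the right-hand side of \eqref{eqG closed measure} is exactly $-c_f(G)$, equality must hold throughout, and in particular
\[
\int_{\TM} G(x, v'(x)) \, d\tilde\mu(x,q) = c_f(G).
\]
Because the integrand is continuous and pointwise bounded above by $c_f(G)$, this forces $G(x, v'(x)) = c_f(G)$ for $\tilde\mu$-a.e.\ $(x,q)$; by the defining property of $v$, the projection $\mu := {\pi_1}_{\#} \tilde\mu$ is therefore supported on $\E(G)$.

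For the reverse inclusion, given any Borel probability $\mu$ concentrated on $\E(G)$, I would define $\tilde\mu$ as the push-forward of $\mu$ under the map $x \mapsto (x, 0)$. Then $\int |q|\,d\tilde\mu = 0$ and $\int \varphi'(x)\, q \, d\tilde\mu = 0$ for every $\varphi \in C_b^1(M) \cap \Lip(M)$, so $\tilde\mu$ is closed in the sense of Definition \ref{def closed measure}. Moreover, at every $y \in \E(G)$, the Fenchel formula yields $L_G(y, 0) = \sup_p(-G(y,p)) = -\min_p G(y, p) = -c_f(G)$, hence $\int L_G \, d\tilde\mu = -c_f(G)$ and $\tilde\mu$ is a Mather measure with projection $\mu$. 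The final reformulation as the narrow closure of the convex hull of $\{\delta_y : y \in \E(G)\}$ is a standard fact about probability measures supported on a closed set. I do not foresee any serious obstacle beyond verifying that the continuity of $x \mapsto G(x, v'(x))$ genuinely pins down the support of $\mu$ from the equality step, which is immediate from the $C^1$ regularity of $v$.
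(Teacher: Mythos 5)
Your proof is correct and follows essentially the same route as the paper: both inclusions are read off from the cases of equality in the chain \eqref{eqG closed measure} using the $C^1$ subsolution $v$ from Theorem \ref{teo equilibria}, which is strict precisely off $\E(G)$. The only cosmetic difference is that you make the reverse inclusion fully explicit by pushing an arbitrary $\mu$ concentrated on $\E(G)$ forward under $x\mapsto(x,0)$ and checking $L_G(y,0)=-c_f(G)$ there, whereas the paper does this only for Dirac masses inside the proof of Theorem \ref{teo G Mather measures} and tacitly invokes the identification of probability measures on $\E(G)$ with the closed convex hull; your version is a touch more self-contained but not a different argument.
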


\begin{proof}
It is enough to show that any projected Mather measure for $G$ has support contained in 
$\E(G)$. 
Let $\mu={\pi_1}_\#\tilde\mu$ for some Mather measure $\tilde\mu$. Since $\tilde\mu$ is 
minimizing, all inequalities in \eqref{eqG closed measure} are equalities. 
This means that 
$L_G(x,q)=v'(x)q-G(x,v'(x))$\ and $G(x,v'(x))=c_f(G)$\ for $\tilde\mu$--a.e. 
$(x,q)\in\supp(\tilde\mu)$, 
namely
\[
x\in\E(G)
\quad
\hbox{and}
\quad 
q\in\partial_p G(x,v'(x))
\qquad\hbox{for $\tilde\mu$--a.e. $(x,q)\in\supp(\tilde\mu)$}.
\]
We conclude that $\supp(\mu)\subseteq \E(G)$. When $G(x,\cdot)$ is differentiable at $v'(x)$, 
we also get $q=0$, since $G(x,v'(x))=\min_p G(x,p)$.
\end{proof}

\subsection{Discounted minimizing measures}\label{sez discounted measures}
In this section we introduce and study the properties of a class of measures associated with  the 
discounted equation \eqref{eq discounted}. We start by considering the case of a general 
continuous Hamiltonian $G$ satisfying (G1)-(G3). For every $\lambda>0$, we 
denote by $u^\lambda$ the solution of equation \eqref{eq discounted}. 
Let us fix $x\in \MM$ and choose a minimizer $\gamma^{\lambda}_x:(-\infty,0]\to \MM$ for 
$u^\lambda(x)$, i.e. a curve satisfying \eqref{eq minimizing curve} with $\gamma_x^\lambda(0)=x$, 
see Proposition \ref{prop calibrating main}. We define a measure 
$\tilde \mu_x^\lambda=\tilde\mu_{\gamma^\lambda_x}^\lambda$ on $\TM$ by setting
\begin{equation}\label{def discounted measure}
\int_{\TM} f(y,q)\, d \tilde{\mu}^{\lambda}_x 
:=
\int_{-\infty}^0 (\e^{\lambda s})' 
f\big(\gamma_x^\lambda(s),\dot\gamma_x^\lambda(s)\big)\, d s
\qquad\hbox{for every $f\in\D{C}_b(\TM)$.}
\end{equation}
Clearly, the measures 
$\{\tilde\mu_x^\lambda\,:\,\lambda>0, x\in\MM\}$ are probability 
measures whose supports are contained in $\MM\times \overline B_{\tilde\kappa}$ 
for some constant $\tilde\kappa>0$, according to Proposition \ref{prop calibrating main}.  
The projection of $\tilde{\mu}^{\lambda}_x$ on 
the base manifold $\MM$ via the map ${\pi_1}:\TM\ni(x,p)\mapsto x\in\MM$ 
is the measure $\mu_x^\lambda$ defined as follows:
\begin{equation}\label{def projected discounted measure}
\int_{\TM} f(x)\, d {\mu}^{\lambda}_x 
=
\int_{-\infty}^0 (\e^{\lambda s})' 
f\big(\gamma_x^\lambda(s))\, d s
\qquad\hbox{for every $f\in\CC_b(\MM)$.}
\end{equation}
%

We record for later use the following result. 

\begin{prop}\label{prop tightness}
Let $v\in\Lip (\R)$ be a subsolution of \eqref{eq hja} such that $v\leqslant 0$ in $\MM$ and 
\[
G(x,v'(x))< a-\delta \qquad \hbox{for a.e. $x\in U$,}
\]
where $U$ is an open subset of $\MM$ and $\delta>0$ is a constant. Let $\mu_x^\lambda$ be the probability measure defined in 
\eqref{def projected discounted measure}. Then 
\[
\mu^\lambda_x(U)\leqslant \frac{\lambda}{\delta}\left(u^\lambda(x)-v(x)\right). 
\]
\end{prop}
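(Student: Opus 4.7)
The plan is to use $\gamma:=\gamma_x^\lambda$ as an optimal curve for $u^\lambda(x)$, exploit Fenchel's inequality to compare $L(\gamma,\dot\gamma)$ with $v$, and then recognize the resulting time-integral over $\{s\leqslant 0:\gamma(s)\in U\}$ as $\mu^\lambda_x(U)/\lambda$ by definition of the projected discounted measure. Concretely, I would start from the representation formula \eqref{eq minimizing curve}:
\[
u^\lambda(x)=\int_{-\infty}^0 \e^{\lambda s}\bigl[L(\gamma(s),\dot\gamma(s))+a\bigr]\,ds,
\]
and then, by integration by parts applied to $s\mapsto \e^{\lambda s} v(\gamma(s))$ (whose derivative is $\lambda\e^{\lambda s}v(\gamma)+\e^{\lambda s}v'(\gamma)\dot\gamma$ a.e.\ by the chain rule for Lipschitz compositions; the boundary term at $-\infty$ vanishes since $|v(\gamma(s))|$ grows at most linearly in $|s|$ while $\e^{\lambda s}$ decays exponentially), obtain
\[
v(x)=\int_{-\infty}^0 \lambda \e^{\lambda s}v(\gamma(s))\,ds+\int_{-\infty}^0 \e^{\lambda s}v'(\gamma(s))\dot\gamma(s)\,ds.
\]

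Subtracting the two identities gives
\[
u^\lambda(x)-v(x)=\int_{-\infty}^0 \e^{\lambda s}\bigl[L(\gamma,\dot\gamma)-v'(\gamma)\dot\gamma+a\bigr]\,ds-\int_{-\infty}^0 \lambda\e^{\lambda s}v(\gamma)\,ds.
\]
The Fenchel inequality $L(y,q)\geqslant v'(y)q-G(y,v'(y))$ implies $L(\gamma,\dot\gamma)-v'(\gamma)\dot\gamma+a\geqslant a-G(\gamma,v'(\gamma))$, which is nonnegative a.e.\ since $v$ is a subsolution of \eqref{eq hja}, and is $\geqslant \delta$ whenever $\gamma(s)\in U$; the second integrand $-\lambda\e^{\lambda s}v(\gamma)$ is also nonnegative since $v\leqslant 0$. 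Dropping the nonnegative contributions outside $U$ and the one coming from $-v$, we conclude
\[
u^\lambda(x)-v(x)\geqslant \delta\int_{\{s\leqslant 0:\,\gamma(s)\in U\}}\e^{\lambda s}\,ds.
\]

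Finally, by definition \eqref{def projected discounted measure} of $\mu^\lambda_x$ and by approximating the indicator of the open set $U$ from below by continuous functions with compact support (monotone convergence), we have
\[
\mu_x^\lambda(U)=\int_{-\infty}^0 \lambda\e^{\lambda s}\mathbbm{1}_U(\gamma(s))\,ds=\lambda\int_{\{s\leqslant 0:\,\gamma(s)\in U\}}\e^{\lambda s}\,ds,
\]
which combined with the previous inequality yields the claim. The only delicate point is the chain rule identification $(v\circ\gamma)'(s)=v'(\gamma(s))\dot\gamma(s)$ a.e.; since $v$ is Lipschitz and $\gamma$ is Lipschitz (with $\|\dot\gamma\|_\infty\leqslant\tilde\kappa$ by Proposition \ref{prop calibrating main}), this is a standard fact, but if one prefers to avoid it, one can first approximate $v$ by the smooth subsolutions $v_\eps$ provided by Lemma \ref{EncoreUneVariante} applied on a neighborhood of $U$ (with the strict inequality preserved up to shrinking $\delta$ by $\eps$), run the argument with $v_\eps$, and pass to the limit as $\eps\to 0^+$.
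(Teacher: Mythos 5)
Your primary argument is correct and takes a genuinely different route from the paper. The paper never works directly with the Lipschitz $v$ inside the Fenchel step: it first builds a continuous function $\psi$ interpolating between $a$ and $a-\delta$ (via Urysohn's lemma applied to the shrunk set $U_r$), applies Lemma~\ref{EncoreUneVariante} to $F:=G-\psi$ to get a $C^\infty$ approximate subsolution $v_\eps$ that retains the quantitative strictness $G(x,v_\eps')\leqslant a-\delta+\eps$ on $U_r$, and only then runs Fenchel, integration by parts, and passes to the limits $\eps\to 0$, $r\to 0$. You instead use the a.e.\ chain rule $(v\circ\gamma)'(s)=v'(\gamma(s))\dot\gamma(s)$ for the Lipschitz composition and apply Fenchel with $p=v'(\gamma(s))$ directly, which avoids the Urysohn/mollification machinery entirely. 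This is legitimate: by the $1$d area formula, for a.e.\ $s$ with $\dot\gamma(s)\neq 0$ the point $\gamma(s)$ avoids both the non-differentiability set of $v$ and the null set where the a.e.\ inequalities $G(\cdot,v')\leqslant a$ (resp.\ $<a-\delta$ on $U$) fail, while for $\dot\gamma(s)=0$ one has $(v\circ\gamma)'(s)=0$ and Fenchel with any $p\in Z(\gamma(s),a)$ (resp.\ $Z(\gamma(s),a-\delta)$, nonempty by continuity of $G$) still yields the required lower bound. The trade-off is that your direct route leans on this chain-rule bookkeeping, which is cheap in $1$d but more delicate in $\R^d$ -- and the authors explicitly want the discounted-measure arguments to transfer to any $d$, which is one reason to prefer the smoothing route. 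One small imprecision: your fallback ``apply Lemma~\ref{EncoreUneVariante} on a neighborhood of $U$'' glosses over how strictness is preserved; a naive application of the lemma with $F=G-a$ only gives $G(x,v_\eps')\leqslant a+\eps$ globally and loses the $-\delta$ on $U$. You would need precisely the paper's trick of replacing the constant $a$ by a continuous $\psi$ equal to $a-\delta$ on $U_r$ and $a$ off $U$, and then shrink $r$ at the end.
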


\begin{proof}
Let us pick $r>0$ small enough such that the set $U_r:=\{x\in U\,:\, \D{dist}(x,\partial U)>r\,\}$ is nonempty.  
By Urysohn's Lemma, there exists a continuous function $\psi:\MM\to [a-\delta,a]$ such that 
\ \  $\psi\equiv a-\delta$\ \  on\ \  $U_r$\ \ \ and\ \  $\psi\equiv a$\ \  on\ \  $\MM\setminus U_r$.
In particular, the subsolution $v$ satisfies\ $G(x,v'(x))\leqslant \psi(x)\ \hbox{for a.e. $x\in\MM$.}$ 
Fix $\eps>0$ and apply Lemma \ref{EncoreUneVariante} with $u:=v$ and $F(x,p):=G(x,p)-\psi(x)$ to obtain a function $v_\eps\in\Lip(\R)\cap\CC^\infty(\R)$ such that \ $G(x,v'_\eps(x))\leqslant \psi(x)+\eps$\ for every $x\in\MM$.  In particular,
\begin{eqnarray}\label{ineq strict1}
G(x,v'_\eps(x))\leqslant a+\eps\quad\hbox{in $\MM\setminus U_r$},
\qquad
G(x,v'_\eps(x))\leqslant a-\delta+\eps\quad\hbox{in $U_r$.}
\end{eqnarray}
By exploiting Fenchel's inequality and by taking into account \eqref{ineq strict1} we have 
\begin{eqnarray*}
u^\lambda(x)&=&
\int_{-\infty}^0 
\e^{\lambda s} \big[L \big(\gamma^\lambda_x(s),\dot\gamma^\lambda_x(s)\big)+a \big]\,d s\\
&\geqslant&
\int_{-\infty}^0 \e^{\lambda s}\big[ v'_\eps(\gamma^\lambda_x(s))\,\dot\gamma^\lambda_x(s) 
-G(\gamma^\lambda_x(s), v'_\eps(\gamma^\lambda_x(s)) +a \big] \, ds\\
&\geqslant& 
\int_{-\infty}^0 \e^{\lambda s}(v_\eps(\gamma^\lambda_x(s))'\, ds 
+(\delta-\eps) \int_{-\infty}^0 \e^{\lambda s}\cchi_{U_r}(\gamma^\lambda_x(s))\, ds-\frac{\eps}{\lambda}\\
&\geqslant& 
v_\eps(x)-\int_{-\infty}^0 \big(\e^{\lambda s}\big)' v_\eps(\gamma^\lambda_x(s))\,ds 
+\frac{(\delta-\eps)}{\lambda}\,\mu^\lambda_x\big(U_r)-\frac{\eps}{\lambda},
\end{eqnarray*}
where the last inequality follows by an integration by parts and by taking into account that $v\leqslant 0$ in $\MM$. 
By multiplying both sides by $\lambda>0$, sending $\eps\to 0$ and exploiting the fact that 
$v\leqslant 0$ we get
\[
\mu^\lambda_x(U_r)
\leqslant 
\frac\lambda\delta\left( u^\lambda(x)-v(x)+\int_{\MM} v(y)\, d\mu^\lambda_x(y)\right)
\leqslant
\frac\lambda\delta\big(u^\lambda(x)-v(x)\big).
\]
The assertion follows by sending $r\to 0^+$.
\end{proof}

%

Let us now consider the case $G(x,p):=H(x,p)-V(x)$, where $V\in\D C_c(\MM)$ and  
$H$ is a Hamiltonian satisfying (G1)-(G2) which is  $\Z$-periodic in $x$. Let us assume that 
$c(G)=c(H)$ and denote by $c$ this common critical constant.  Let us denote by $u^\lambda_G$ and 
$u^\lambda_H$ the solutions of the discounted equation \eqref{eq discounted} with $a:=c$, associated with  
with Hamiltonians $G$ and $H$, respectively. 
For every fixed  $x\in \MM$ and $\lambda>0$, let us denote by
$\gamma^{\lambda}_x:(-\infty,0]\to \MM$ a 
{\em monotone} minimizer for the variational 
formula related to $u^\lambda_G(x)$, see \eqref{representation formula discounted}, which exists in force of 
Proposition \ref{prop calibrating main} and Theorem \ref{teo optimal curves}. 
Fix $r>0$ big enough so that 
$\{x\}\cup [\underline y_V,\overline y_V]\subseteq B_r$, where 
$\underline y_V:=\min\big(\supp(V)\big)$ and $\overline y_V:=\max\big(\supp(V)\big)$. 
All objects we are about to 
introduce depend on $r$,  yet explicit dependence on $r$ will be omitted to ease notation. 
Set 
\begin{equation}\label{def T_x}
T^\lambda_x:=\sup\left\{t>0\,:\,\gamma^\lambda_x(-t)\in \overline B_r\right\}. 
\end{equation}
Since 
$t\mapsto\gamma^\lambda_x(t)$ is monotone, we have 
$
\gamma_x^\lambda\left((-T^\lambda_x,0]\right)\subseteq \overline B_r.
$ 
When $T^\lambda_x<+\infty$, we furthermore have\ 
$\gammaxlambda(-T^\lambda_x)\in\partial B_r$,\ 
$\gammaxlambda\big((-\infty,-T^\lambda_x)\big)\subseteq\MM\setminus\overline B_r$. 
Set $\theta^\lambda_x:=1-e^{-\lambda T^\lambda_x}\in (0,1]$ and define two probability measures 
$\tilde\mu_{x,1}^\lambda,\,\tilde\mu_{x,2}^\lambda$ on $\TM$ by setting
\begin{equation}
\label{def2 discounted measure1}
\int_{\TM} f(y,q)\, d \tilde{\mu}^{\lambda}_{x,1}(y,q) 
:=
\frac{1}{\theta^\lambda_x}
\int_{-T^\lambda_x}^0 (\e^{\lambda s})' 
f\big(\gamma_x^\lambda(s),\dot\gamma_x^\lambda(s)\big)\, d s,
\end{equation}
and, when $T^\lambda_x<+\infty$, 
\begin{equation}\label{def2 discounted measure2}
\int_{\TM} f(y,q)\, d \tilde{\mu}^{\lambda}_{x,2}(y,q) 
:=
\frac{1}{1-\theta^\lambda_x}
\int^{-T^\lambda_x}_{-\infty} (\e^{\lambda s})' 
f\big(\gamma_x^\lambda(s),\dot\gamma_x^\lambda(s)\big)\, d s
\end{equation}
for all $f\in\D{C}_b(\TM)$. In order to simplify some statements in what follows, 
it is convenient to assume the probability measure 
$\tilde\mu^\lambda_{x,2}$ to be defined also when 
$T_x^\lambda=+\infty$ (and $\theta^\lambda_x=1$). 
Any choice will serve the purpose.  
Hence, we set $\tilde\mu^\lambda_{x,2}:=\delta_{(x+r,0)}$ 
(the Dirac measure concentrated in the 
point $(x+r,0)$) whenever $T_x^\lambda=+\infty$. 

Let us regard $\R$ as the universal covering of $\T^1$ and denote by 
$\pi:\MM\to\T^1$ the projection map. Given a probability measure on $\TM$, 
we define $\tilde\pi_\#\tilde\mu$ as the push--forward of $\tilde\mu$ via the map 
$\tilde\pi:\TM\to\T^1\times\R$ defined as $\tilde\pi(x,p)=(\pi(x),p)$. 
Such a measure 
$\tilde\pi_\#\tilde\mu$ can be equivalently defined as follows:
\begin{equation*}
\int_{\TTorus} f(x,q)\, d(\tilde\pi_\#\tilde\mu)(x,q):=\int_\TM f(x,q)\, d\tilde\mu(x,q)\qquad\hbox{for all $\CC_b(\TTorus)$,}
\end{equation*}
where $\CC_b(\TTorus)$ is also interpreted as the space of functions $f\in\CC_b(\TM)$ that are 
$\Z$--periodic in $x$. Any such measure belongs to the 
space $\parts(\TTorus)$ of probability measures on $\TTorus$.

The following holds:

\begin{prop}\label{prop2 precompactness} 
Let $ \tilde{\mu}^{\lambda}_{x,1},\,  \tilde{\mu}^{\lambda}_{x,2}$ be the probability 
measures defined above for $x\in\MM$ and $\lambda>0$.  
\begin{itemize}
 \item[(i)] The family of probability measures 
 $\{\tilde{\mu}^{\lambda}_{x,1}\,:\,\lambda>0\,\}$ 
 is pre-compact in $\parts(\TM)$.
 \item[(ii)] The family of Borel measures 
 $\{\tilde\pi_\#\tilde\mu^\lambda_{x,2}\,:\,\lambda>0\}$ 
 is pre--compact in \hbox{$\parts(\TTorus)$}.
\end{itemize}
\end{prop}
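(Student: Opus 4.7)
The plan is to reduce both statements to Prokhorov's theorem by showing that in each case the relevant family of probability measures is supported in a common compact subset of the ambient space, so that the family is automatically tight.

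For part (i), I would first observe that Proposition \ref{prop calibrating main} (together with Remark \ref{oss calibrating curves}) provides a constant $\tilde\kappa>0$, independent of $\lambda$ and $x$, such that $\|\dot\gamma^\lambda_x\|_\infty\leqslant\tilde\kappa$. In particular, the velocity marginal of $\tilde\mu^\lambda_{x,1}$ is supported in $\overline B_{\tilde\kappa}$. On the other hand, by the very definition \eqref{def T_x} of $T^\lambda_x$ and the monotonicity of $\gamma^\lambda_x$ guaranteed by Proposition \ref{prop optimal curves}, we have $\gamma^\lambda_x(s)\in\overline B_r$ for every $s\in(-T^\lambda_x,0]$. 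Thus, directly from the definition \eqref{def2 discounted measure1},
\[
\supp(\tilde\mu^\lambda_{x,1})\subseteq \overline B_r\times\overline B_{\tilde\kappa}\qquad\hbox{for every $\lambda>0$.}
\]
Since $\overline B_r\times\overline B_{\tilde\kappa}$ is a fixed compact subset of $TM$, the family $\{\tilde\mu^\lambda_{x,1}\}_{\lambda>0}$ is tight, hence pre-compact in $\parts(TM)$ by Prokhorov's theorem.

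For part (ii), the situation is similar but with one subtlety: the base projection of $\tilde\mu^\lambda_{x,2}$ is supported in $\MM\setminus B_r$ when $T^\lambda_x<+\infty$ (or reduces to $\delta_{(x+r,0)}$ when $T^\lambda_x=+\infty$), which is not compact in $\MM$. This is precisely why we push forward via $\tilde\pi$. In either case the velocity marginal is again supported in $\overline B_{\tilde\kappa}$ by the uniform bound on $\|\dot\gamma^\lambda_x\|_\infty$. After applying $\tilde\pi$, the base projection lives in the compact torus $\T^1$, so
\[
\supp\big(\tilde\pi_\#\tilde\mu^\lambda_{x,2}\big)\subseteq \T^1\times\overline B_{\tilde\kappa}\qquad\hbox{for every $\lambda>0$.}
\]
Again this is a fixed compact subset, now of $\TTorus$, so tightness and Prokhorov's theorem yield pre-compactness in $\parts(\TTorus)$.

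There is essentially no obstacle: the main conceptual point is simply to recognize why one passes to the torus in (ii) while one does not need to in (i); this is dictated by where the optimal curves can live after they leave $\overline B_r$. The two uniform ingredients one uses (the velocity bound from Proposition \ref{prop calibrating main} and the monotonicity from Proposition \ref{prop optimal curves}) have been set up precisely for this purpose, so the argument reduces to a direct inspection of the supports of the measures defined in \eqref{def2 discounted measure1} and \eqref{def2 discounted measure2}.
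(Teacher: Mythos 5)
Your argument is correct and is essentially the paper's own proof: both rest on the uniform Lipschitz bound $\|\dot\gamma^\lambda_x\|_\infty\leqslant\tilde\kappa$ from Proposition \ref{prop calibrating main} and the definition of $T^\lambda_x$ to place $\supp(\tilde{\mu}^{\lambda}_{x,1})$ in $\overline B_r\times\overline B_{\tilde\kappa}$ and $\supp(\tilde\pi_\#\tilde{\mu}^{\lambda}_{x,2})$ in $\T^1\times\overline B_{\tilde\kappa}$, after which pre-compactness is immediate. Your additional remarks on the role of the torus projection and the degenerate case $T^\lambda_x=+\infty$ are accurate but do not change the route.
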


\begin{proof}
The curves $\{\gammaxlambda\,:\,\lambda>0\}$ are equi-Lipschitz, let us say $\tilde\kappa$-Lipschitz,    in view of Proposition \ref{prop calibrating main}. By definition of $T^\lambda_x$, we have 
\[
\supp(\tilde{\mu}^{\lambda}_{x,1})\subseteq \overline B_r\times\overline B_{\tilde\kappa},
\qquad
\supp(\tilde\pi_\#\tilde{\mu}^{\lambda}_{x,2})\subseteq  \T^1\times\overline B_{\tilde\kappa}
\qquad
\hbox{for all $\lambda>0$.}
\]
The assertion follows from this.
\end{proof}

\begin{prop}\label{prop2 limit measures}
Let $ \tilde{\mu}^{\lambda}_{x,1},  \tilde{\mu}^{\lambda}_{x,2}$ and $\theta^{\lambda}_x$ be, respectively, the probability measures and the constant defined above for $x\in\MM$ and $\lambda>0$. Let $(\lambda_n)_n$ be a sequence such that $\lambda_n\searrow 0$.  
\begin{itemize}
\item[(i)] Suppose that \ 
$\tilde\mu_{1,x}^{\lambda_n}\weakcv \tilde\mu_{1,x}\quad\hbox{in $\parts(\TM)$}$ \ and \  
$\theta_x^{\lambda_n}\to \theta\in (0,1]$. Then $\tilde\mu_{1,x}$ is a closed measure in $\parts(\TM)$. 
Furthermore, $\tilde\mu_{1,x}$ is a Mather measure in $\parts(\TM)$ for $L_G$.
\smallskip 
\item[(ii)] Suppose that \ 
$\tilde\pi_\#\tilde\mu_{2,x}^{\lambda_n}\weakcv \tilde\mu_{2,x}\quad\hbox{in $\parts(\TTorus)$}$ \ and \  
$\theta_x^{\lambda_n}\to \theta\in [0,1)$. Then $\tilde\mu_{2,x}$ is a closed measure in 
$\parts(\TTorus)$. Furthermore $\tilde\mu_{2,x}$ is a Mather measure in $\parts(\TTorus)$ for 
$L_H$.
\end{itemize}
\end{prop}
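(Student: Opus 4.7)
The plan is to handle parts (i) and (ii) by parallel computations: for each measure, verify the closedness identity by integrating by parts against a test gradient, then compute the Lagrangian integral via the dynamic programming relation and identify its limit with $-c$. The precompactness needed to pass to the limit has already been established in Proposition \ref{prop2 precompactness}.

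For closedness in (i), I take $\varphi\in C^1_b(\MM)\cap\Lip(\MM)$, use $\varphi'(\gamma^\lambda_x(s))\dot\gamma^\lambda_x(s)=(\varphi\circ\gamma^\lambda_x)'(s)$ and integrate by parts in the definition of $\tilde\mu^\lambda_{x,1}$ to obtain
\[
\int_{\TM}\varphi'(y)q\,d\tilde\mu^\lambda_{x,1}=\frac{\lambda}{\theta^\lambda_x}\Bigl[\varphi(x)-(1-\theta^\lambda_x)\varphi(\gamma^\lambda_x(-T^\lambda_x))\Bigr]-\frac{\lambda^2}{\theta^\lambda_x}\int_{-T^\lambda_x}^{0}\e^{\lambda s}\varphi(\gamma^\lambda_x(s))\,ds.
\]
Since $\varphi$ is bounded on $\overline B_r$ and $\theta^\lambda_x\to\theta>0$, both terms vanish as $\lambda_n\to 0$ (the remaining integral is bounded by $\|\varphi\|_\infty\theta^\lambda_x/\lambda$). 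For (ii), taking $\varphi\in C^1_b(\T^1)$ viewed as a $\Z$-periodic function on $\R$, the same computation on the tail $(-\infty,-T^\lambda_x]$ leaves only the boundary value at $s=-T^\lambda_x$ (the $-\infty$ endpoint vanishes by the exponential weight and the boundedness of $\varphi$) and reduces to $\lambda\varphi(\gamma^\lambda_x(-T^\lambda_x))+O(\lambda)\to 0$.

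For the Mather property in (i), I apply the dynamic programming relation from Proposition \ref{prop calibrating main} at time $-T^\lambda_x$:
\[
u^\lambda_G(x)=\e^{-\lambda T^\lambda_x}u^\lambda_G(\gamma^\lambda_x(-T^\lambda_x))+\int_{-T^\lambda_x}^{0}\e^{\lambda s}\bigl[L_G(\gamma^\lambda_x,\dot\gamma^\lambda_x)+c\bigr]\,ds,
\]
which, after rearranging and multiplying by $\lambda/\theta^\lambda_x$, yields
\[
\int_{\TM}L_G\,d\tilde\mu^\lambda_{x,1}=\frac{\lambda}{\theta^\lambda_x}\bigl[u^\lambda_G(x)-(1-\theta^\lambda_x)u^\lambda_G(\gamma^\lambda_x(-T^\lambda_x))\bigr]-c.
\]
The uniform $L^\infty$-bound on $u^\lambda_G$ (Theorem \ref{teo discounted sol}) together with $\theta>0$ forces the right-hand side to $-c$ as $\lambda_n\to 0$. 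Narrow convergence on the compact support $\overline B_r\times\overline B_{\tilde\kappa}$ then gives $\int L_G\,d\tilde\mu_{x,1}=-c$. Combining this with the general bound $\int L_G\,d\tilde\mu\geq-c_f(G)$ supplied by Theorem \ref{teo G Mather measures} and the trivial inequality $c(G)\geq c_f(G)$, I conclude $c(G)=c_f(G)$ and hence $\tilde\mu_{x,1}\in\tilde\Mis(G)$.

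Part (ii) follows the same pattern; the extra ingredient is that on $\supp(\tilde\mu^\lambda_{x,2})$ the curve lies outside $\overline B_r\supseteq\supp(V)$ thanks to the \emph{monotonicity} of $\gamma^\lambda_x$ (Proposition \ref{prop optimal curves}), so $L_G\equiv L_H$ on that set. Dynamic programming on $(-\infty,-T^\lambda_x]$ gives $(1-\theta^\lambda_x)u^\lambda_G(\gamma^\lambda_x(-T^\lambda_x))=\int_{-\infty}^{-T^\lambda_x}\e^{\lambda s}[L_G+c]\,ds$, and therefore
\[
\int_{\TM}L_H\,d\tilde\mu^\lambda_{x,2}=\int_{\TM}L_G\,d\tilde\mu^\lambda_{x,2}=\lambda\,u^\lambda_G(\gamma^\lambda_x(-T^\lambda_x))-c\longrightarrow-c.
\]
Since $L_H$ is $\Z$-periodic, this integral equals $\int_{\TTorus}L_H\,d(\tilde\pi_\#\tilde\mu^\lambda_{x,2})$, whose limit is $\int_{\TTorus}L_H\,d\tilde\mu_{x,2}=-c(H)$ by narrow convergence on $\T^1\times\overline B_{\tilde\kappa}$ and $1-\theta>0$. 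Theorem \ref{teo H Mather measures} then identifies $\tilde\mu_{x,2}$ as a Mather measure for $H$. The main subtlety is the use of monotonicity to ensure that for $s\leq-T^\lambda_x$ the curve never re-enters $\overline B_r$; this is what forces $V\circ\gamma^\lambda_x\equiv 0$ on the tail and allows part (ii) to detect only the unperturbed Hamiltonian $H$.
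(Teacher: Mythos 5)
Your proof is correct, and the closedness step is essentially identical to the paper's (same integration by parts, same estimates using $\theta>0$ for part (i) and the automatic cancellation of the $1/(1-\theta^\lambda_x)$ factor against $e^{-\lambda T^\lambda_x}$ for part (ii)). Where you genuinely diverge is in establishing the Mather property. The paper keeps the two measures coupled: starting from $\lambda u^\lambda_G(x)=\theta^\lambda_x\int(L_G+c)\,d\tilde\mu^\lambda_{x,1}+(1-\theta^\lambda_x)\int(L_H+c)\,d(\tilde\pi_\#\tilde\mu^\lambda_{x,2})$, it sends $\lambda_n\to 0$ to get that the convex combination equals zero, then uses the closedness of each limit together with Fenchel (Theorems \ref{teo H Mather measures}, \ref{teo G Mather measures}) to argue that each summand is nonnegative, hence vanishes. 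You instead decouple the two measures via dynamic programming at the intermediate time $-T^\lambda_x$: combining the identity $u^\lambda_G(x)=e^{-\lambda T^\lambda_x}u^\lambda_G(\gamma^\lambda_x(-T^\lambda_x))+\int_{-T^\lambda_x}^0 e^{\lambda s}[L_G+c]$ with the full formula \eqref{eq minimizing curve} gives $\int L_G\,d\tilde\mu^\lambda_{x,1}=\tfrac{\lambda}{\theta^\lambda_x}[u^\lambda_G(x)-(1-\theta^\lambda_x)u^\lambda_G(\gamma^\lambda_x(-T^\lambda_x))]-c$ and $\int L_H\,d\tilde\mu^\lambda_{x,2}=\lambda u^\lambda_G(\gamma^\lambda_x(-T^\lambda_x))-c$, each of which tends to $-c$ on its own by the uniform bound on $u^\lambda_G$. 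Both routes are valid and of comparable length; yours has the small advantage of handling (i) and (ii) as genuinely separate statements (no reliance on a sign argument for the complementary term), and it makes explicit as a by-product the identity $c(G)=c_f(G)$, which the paper's argument also contains implicitly. One small wording note: in (ii) the factor $1/(1-\theta^\lambda_x)$ cancels exactly, so the hypothesis $\theta<1$ is really only invoked to guarantee $T^{\lambda_n}_x<\infty$ for large $n$ (so that $\tilde\mu^{\lambda_n}_{x,2}$ is the genuine tail measure rather than the conventional Dirac mass), not to control the size of the prefactor.
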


\begin{proof} Let us first prove that  $\tilde\mu_{1,x}$ is a closed measure in $\parts(\TM)$. For $\varphi\in\D C^{1}_b(\MM)\cap\Lip(\MM)$ we have 
\begin{multline*}
\int_{\TM} \varphi'(y)q\ d\tilde\mu_{1,x}(y,q)
=
\lim_{n\to +\infty}
\int_{\TM} \varphi'(y)q\ d\tilde\mu^{\lambda_n}_{1,x}(y,q)\\
=
\lim_{n\to +\infty}
\frac{1}{\theta^{\lambda_n}_x}
\int_{-T^{\lambda_n}_x}^0 \lambda_n\e^{\lambda_n s} 
\varphi'\big(\gamma_x^{\lambda_n}(s)\big)\,\dot\gamma_x^{\lambda_n}(s)\ d s\\
=
\lim_{n\to +\infty}
\frac{\lambda_n}{\theta^{\lambda_n}_x}\left(\varphi(y)-e^{-\lambda_nT_x^{\lambda_n}}\varphi\big(\gamma_x^{\lambda_n}(-T_x^{\lambda_n})\big) 
-
\int_{-T^{\lambda_n}_x}^0 \big(\e^{\lambda_n s}\big)' 
\varphi\big(\gamma_x^{\lambda_n}(s)\big)\ d s\right)\\
=
\lim_{n\to +\infty}
\frac{\lambda_n}{\theta^{\lambda_n}_x}\left(\varphi(y)-e^{-\lambda_nT_x^{\lambda_n}}\varphi\big(\gamma_x^{\lambda_n}(-T_x^{\lambda_n})\big) \right) 
-
\lambda_n \int_{\TM} \varphi(y)\ d\tilde\mu^{\lambda_n}_{1,x}(y,q),
\end{multline*}
where for the third equality we have used integration by parts. Now 
\begin{eqnarray*}
\left |
\frac{\lambda_n}{\theta^{\lambda_n}_x}\left(\varphi(y)-e^{-\lambda_nT_x^{\lambda_n}}\varphi\big(\gamma_x^{\lambda_n}(-T_x^{\lambda_n})\big)\right) 
\right |
&\leqslant& 
2\frac{\lambda_n}{\theta^{\lambda_n}_x} \|\varphi\|_\infty\to 0,\\
\left |
\lambda_n \int_{\TM} \varphi(y)\ d\tilde\mu^{\lambda_n}_{1,x}(y,q)
\right |
&\leqslant&
\lambda_n \|\varphi\|_\infty\to 0
\end{eqnarray*}
implying the assertion. The proof of the fact that $\tilde\mu_{2,x}$ is a closed measure in 
$\parts(\TTorus)$ goes along the same lines, with the only difference 
that $\varphi$ needs to be chosen in $\D C^1(\T^1)$.  

To prove the rest of the assertion, we begin by noticing that, by the choice of $\gamma^\lambda_x$, the following holds:
\begin{multline*}
\lambda u^\lambda_G(x)=\\
=\theta^\lambda_x \int_{\TM} \big( L_H(y,q)+V(y)+c\big)\, d\tilde\mu^{\lambda}_{1,x}(y,q) 
+ 
(1-\theta^\lambda_x) \int_{\TM} \big(L_G(y,q)+V(y)+c\big)\, d\tilde\mu^{\lambda}_{2,x}(y,q)\\
=\theta^\lambda_x \int_{\TM} \big( L_H(y,q)+V(y)+c\big)\, d\tilde\mu^{\lambda}_{1,x}(y,q) 
+ 
(1-\theta^\lambda_x) \int_{\TTorus} \big(L_H(y,q)+c\big)\, d\big(\tilde\pi_\#\tilde\mu^{\lambda}_{2,x}\big)(y,q),
\end{multline*}
where for the last equality we have taken into account that 
$(\supp(V)\times\R)\cap \supp(\tilde\mu^{\lambda}_{2,x})=\emptyset$ together with the fact that $L_H$ is $\Z$-periodic in 
the state variable. Let us set $\lambda:=\lambda_n$ and send $n\to+\infty$ in the equality above to obtain
\begin{equation*}
0=\theta \int_{\TM} \big( L_H+V+c\big)\, d\tilde\mu_{1,x} 
+ 
(1-\theta) \int_{\TTorus} \big(L_H+c\big)\, d\big(\tilde\pi_\#\tilde\mu_{2,x}\big).
\end{equation*} 
The assertion readily follows from this after noticing that 
\[
\theta\int_{\TM} \big( L_H+V+c\big)\, d\tilde\mu_{1,x}\geqslant 0,
 \qquad 
(1-\theta)\int_{\TTorus} \big(L_H+c\big)\, d\big(\tilde\pi_\#\tilde\mu_{2,x}\big)\geqslant 0
\]
due to the fact that the measures $\tilde\mu_{1,x}$ and $\tilde\mu_{2,x}$ are closed when $\theta\not=0$ and $\theta\not=1$, 
respectively.
\end{proof}

The next proposition will play a crucial role in the proof of the aysmptotic convergence.

\begin{prop}\label{prop step2}
Let $v$ be a bounded subsolution to 
\[
G(x,v')\leqslant c\qquad\hbox{in $\MM$.}
\]
For every  $\lambda>0$ and $x\in\MM$ we have 
\[
 u^\lambda_{G}(x)\geqslant v(x)
 -
 \left(
 \theta^\lambda_x\int_{\TM} v(y)\, d{\tilde\mu}^\lambda_{x,1}(y,q)
 +
 (1-\theta^\lambda_x)\int_{\TM} v(y)\, d{\tilde\mu}^\lambda_{x,2}(y,q)
 \right)
\]
\end{prop}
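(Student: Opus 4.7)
The plan is to mimic the variational argument from the proof of Proposition \ref{prop tightness}, comparing the optimal action for $u^\lambda_G(x)$ against a test action built from the subsolution $v$ via Fenchel's inequality, and then splitting the resulting integral along the time $-T^\lambda_x$ that partitions the trajectory into the two pieces encoded by $\tilde\mu^\lambda_{x,1}$ and $\tilde\mu^\lambda_{x,2}$.

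First, since $v \in \Lip(\MM)$ is a subsolution of $G(x,v') \leqslant c$, I apply Lemma \ref{EncoreUneVariante} with $F(x,p):=G(x,p)-c$ to produce, for any $\eps>0$, a smooth $v_\eps$ with $\|v-v_\eps\|_\infty \leqslant \eps$ and $G(x,v'_\eps(x)) \leqslant c + \eps$ for all $x \in \MM$. By Fenchel's inequality applied along the monotone minimizer $\gamma^\lambda_x$ from Propositions \ref{prop calibrating main} and \ref{prop optimal curves},
\[
L_G\bigl(\gamma^\lambda_x(s),\dot\gamma^\lambda_x(s)\bigr) + c \;\geqslant\; v'_\eps(\gamma^\lambda_x(s))\,\dot\gamma^\lambda_x(s) - \eps \qquad \text{for a.e. } s \leqslant 0.
\]
Multiplying by $\e^{\lambda s}$, integrating over $(-\infty,0]$, and using the representation formula for $u^\lambda_G(x)$, the left-hand side becomes $u^\lambda_G(x)$ while the right-hand side produces $\int_{-\infty}^0 \e^{\lambda s}(v_\eps \circ \gamma^\lambda_x)'(s)\,ds - \eps/\lambda$.

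Next, I integrate by parts on $(-\infty,0]$. Since $v_\eps$ is bounded, the boundary term at $-\infty$ vanishes, giving
\[
\int_{-\infty}^0 \e^{\lambda s}(v_\eps \circ \gamma^\lambda_x)'(s)\,ds \;=\; v_\eps(x) - \int_{-\infty}^0 \bigl(\e^{\lambda s}\bigr)' v_\eps(\gamma^\lambda_x(s))\,ds.
\]
Splitting the last integral at $s=-T^\lambda_x$ and recognizing the two pieces as $\theta^\lambda_x \int_\TM v_\eps\, d\tilde\mu^\lambda_{x,1}$ and $(1-\theta^\lambda_x)\int_\TM v_\eps\, d\tilde\mu^\lambda_{x,2}$ (by the very definitions \eqref{def2 discounted measure1}--\eqref{def2 discounted measure2}, and with the convention on $\tilde\mu^\lambda_{x,2}$ being irrelevant when $\theta^\lambda_x=1$), I obtain
\[
u^\lambda_G(x) \;\geqslant\; v_\eps(x) - \theta^\lambda_x \int_\TM v_\eps(y)\,d\tilde\mu^\lambda_{x,1}(y,q) - (1-\theta^\lambda_x)\int_\TM v_\eps(y)\,d\tilde\mu^\lambda_{x,2}(y,q) - \frac{\eps}{\lambda}.
\]

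Finally, since $\|v-v_\eps\|_\infty \leqslant \eps$ and both $\tilde\mu^\lambda_{x,1}$ and $\tilde\mu^\lambda_{x,2}$ are probability measures, sending $\eps \to 0^+$ (with $\lambda>0$ fixed) yields the asserted inequality. The only mildly delicate step is the integration by parts over the infinite interval, which is justified by the boundedness of $v_\eps$ and the factor $\e^{\lambda s}$ making the boundary contribution at $-\infty$ vanish; everything else is a direct bookkeeping of the two-piece decomposition built into the definition of $\tilde\mu^\lambda_{x,1}$ and $\tilde\mu^\lambda_{x,2}$.
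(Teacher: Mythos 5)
Your proposal is correct and follows essentially the same argument as the paper: smoothing $v$ via Lemma \ref{EncoreUneVariante}, applying Fenchel's inequality along the optimal curve, integrating by parts on $(-\infty,0]$, splitting the integral at $-T^\lambda_x$ to recover the two measures, and letting $\eps\to 0$. The extra remarks you include (vanishing of the boundary term at $-\infty$, the irrelevance of the convention for $\tilde\mu^\lambda_{x,2}$ when $\theta^\lambda_x=1$) are fine but do not alter the structure of the paper's proof.
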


\begin{proof}
Fix $\eps>0$ and apply Lemma \ref{EncoreUneVariante} with $F:=G-c$, $u:=v$ to obtain a 
function $v_\eps\in\Lip(\MM)\cap\CC^\infty(\MM)$ such that 
\[
\|v-v_\eps\|_\infty<\eps\qquad\hbox{and}\qquad G(x,v_\eps'(x))<c+\eps\quad\hbox{for every $x\in\MM$.}
\]
By making use of Fenchel's inequality and integrating by parts we get
\begin{eqnarray*}
u^\lambda_G(x)
&\geqslant& 
\int_{-\infty}^0 \e^{\lambda s}
\left[  v_\eps'(\gamma_x^\lambda(s))\,\dot\gamma_x^\lambda(s)
-\eps    \right]\, ds 
=
\int_{-\infty}^0 \e^{\lambda s} \frac{d}{ds}v_\eps(\gamma_x^\lambda(s))\, d s -\frac{\eps}{\lambda}\\
&=& 
v_\eps(x)
- 
\int_{-\infty}^0 \big(\e^{\lambda s}\big)' v_\eps(\gamma_x^\lambda(s))\, d s 
-\frac{\eps}{\lambda}\\
&=&
v_\eps(x)
- 
\left(
\int_{-T^\lambda_x}^0 (\e^{\lambda s})' v_\eps(\gamma_x^\lambda(s))\, d s 
+
\int_{-\infty}^{-T^\lambda_x} (\e^{\lambda s})' v_\eps(\gamma_x^\lambda(s))\, d s 
\right)
-\frac{\eps}{\lambda}
.
\end{eqnarray*}
Now we send $\eps\to 0$ and recall the definition of 
$\tilde\mu_{x,1}^\lambda,\, \tilde\mu_{x,2}^\lambda$ 
to get the assertion. 
\end{proof}

\section{The case  $c(G)>c(H)$}\label{sez case I}

In this section we shall prove the asymptotic convergence in the case $c(G)>c(H)$. We start with a qualitative 
study of the critical equation associated with $G$.

\subsection{The critical equation}
Let us consider the critical equation 
\begin{equation}\label{eq G critical}
 G(x,u')=c(G)\qquad\hbox{in $\R$.}
\end{equation}
By definition, we infer that $c(G)\geqslant c_f(G)$. 
Let us denote by $S_G$ the critical semi-distance associated with  
$G$ via \eqref{eq S} with $a=c(G)$.
The set 
$\{S_G(y,\cdot)\,:\, y\in\MM\,\}$, with 
$S_G$ defined according to \eqref{eq S} with $a:=c(G)$, is a family of subsolutions to \eqref{eq G critical}. 
We recall that such subsolutions are $\kappa$--Lipschitz, with $\kappa$ given by \eqref{def kappa} with $a:=c(G)$, but they are not a priori bounded on $\MM$. In fact, the following holds.

\begin{prop}\label{prop S_c coercive}
Let $\tilde K$ be a compact subset of $\MM$. Then there exists a constant $C=C(\tilde K)$ such that 
\[
S_G(y,\cdot)\geqslant C|x|-\frac1C\quad\hbox{on $\MM$}\qquad\hbox{for every $y\in \tilde K$.}
\]
\end{prop}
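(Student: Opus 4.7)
The plan is to reduce the claim to the construction of a Lipschitz subsolution $v$ of the critical equation \eqref{eq G critical} with at least linear growth at infinity, and then to conclude via Proposition \ref{prop S}(iii). The quantitative input comes from the strict inequality $c(G)>c(H)$: since $c(H)=\overline H(0)$ by Proposition \ref{prop Sez2 effective ham}, Theorem \ref{teo Sez2 effective ham} places $0$ in the interval $\big[\int_0^1 p_H^-(x,c(H))\,dx,\,\int_0^1 p_H^+(x,c(H))\,dx\big]$. Combining the strict monotonicity in $a$ of the maps $a\mapsto p_H^\pm(x,a)$ (a consequence of Lemma \ref{lemma properties sigma_a}) with the gap $c(G)>c(H)$, I would deduce the existence of $\delta>0$ such that
\[
\int_0^1 p_H^-(x,c(G))\,dx\leqslant -\delta<0<\delta\leqslant \int_0^1 p_H^+(x,c(G))\,dx.
\]

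Next I would fix $R>0$ with $\supp(V)\cup\tilde K\subset(-R,R)$ and define a bounded Borel selection $\rho:\R\to\R$ of $z\mapsto Z(z,c(G))=[p^-(z,c(G)),\,p^+(z,c(G))]$ by $\rho(z)=p_H^+(z,c(G))$ for $z\geqslant R$, $\rho(z)=p^+(z,c(G))$ for $0\leqslant z<R$, $\rho(z)=p^-(z,c(G))$ for $-R<z<0$, and $\rho(z)=p_H^-(z,c(G))$ for $z\leqslant -R$; recall that $Z(z,\cdot)=Z^H(z,\cdot)$ for $z\notin\supp(V)$. Setting $v(x):=\int_0^x\rho(z)\,dz$ produces a Lipschitz function with $G(z,v'(z))\leqslant c(G)$ for a.e.\ $z\in\R$, hence a viscosity subsolution of \eqref{eq G critical} by Proposition \ref{COMPBASIQUE}. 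Splitting $\int_0^x\rho$ at $\pm R$ and invoking the $\Z$-periodicity of $p_H^\pm(\cdot,c(G))$ together with the uniform bound $\kappa$ on $|\rho|$ provided by Proposition \ref{COMPBASIQUE}, one obtains $v(x)\geqslant \delta|x|-C_1$ on $\R$ for some $C_1>0$ (the periodic parts yield the linear rate $\delta$, while the $[-R,R]$-contribution and the fractional-period remainders are absorbed into $C_1$). Since $|v(y)|\leqslant R\kappa$ for every $y\in\tilde K$, the translate $v-v(y)$ is an admissible competitor in Proposition \ref{prop S}(iii), so that
\[
S_G(y,x)\geqslant v(x)-v(y)\geqslant \delta|x|-C_2\qquad\text{for all }x\in\R,
\]
with $C_2$ depending only on $\tilde K$. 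The conclusion then follows by choosing $C:=\min\{\delta,1/C_2\}$.

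The only genuine subtlety is the first step, where the scalar gap $c(G)>c(H)$ is converted, via the effective Hamiltonian, into the definite-sign information on the mean values of $p_H^\pm(\cdot,c(G))$ that drives the linear growth of $v$. Once this is in place, the rest of the argument is essentially a one-dimensional antiderivative construction in which the compactly supported perturbation $V$ disappears outside $[-R,R]$; in particular, no sign hypothesis on $V$ is required, and the specific values of $\rho$ on $[-R,R]$ are irrelevant.
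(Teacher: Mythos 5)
Your proof is correct, but it takes a genuinely different route from the paper's. The paper works directly with the definition of $S_G$ as an infimum over curves: it splits a near-optimal curve from $y$ to $x$ at its last exit time from a ball $B_r\supset \tilde K\cup\supp(V)$, and on the exterior piece uses the pointwise bound $\sigma^H_{c(G)}(x,q)\geqslant \sigma^H_{c(H)}(x,q)+\delta|q|$ with $\delta=(c(G)-c(H))/M_R$, obtained from the local Lipschitz estimate of Remark \ref{oss G1-G2}; the $\sigma^H_{c(H)}$ contribution is then bounded below by the increment of a bounded periodic solution of $H=c(H)$, while the $\delta|q|$ term produces the linear growth. You instead construct an explicit coercive subsolution $v$ of \eqref{eq G critical} by integrating a selection of the boundary of the sublevel sets, whose periodic tails $p_H^{\pm}(\cdot,c(G))$ have strictly nonzero means; this is extracted from $\overline H(0)=c(H)<c(G)$ via the one-dimensional formula of Theorem \ref{teo Sez2 effective ham} together with the strict monotonicity of $a\mapsto\sigma_a$, and you conclude through Proposition \ref{prop S}. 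Your construction is close in spirit to the coercive subsolution $v_a$ of Proposition \ref{prop existence w}(ii), produced here directly for $G$ at level $c(G)$ rather than for $H$ at an intermediate level. The trade-off is that the antiderivative construction and the formula for $\overline H$ it relies on are intrinsically one-dimensional, whereas the paper's curve-splitting argument is written so as to extend verbatim to $\R^d$, which the authors explicitly want for case (I). The steps you leave slightly implicit (attainment of the infimum defining $\overline H(0)$, so that $0$ lies in $\bigl[\int_0^1 p_H^-(\cdot,c(H)),\int_0^1 p_H^+(\cdot,c(H))\bigr]$, and the strict displacement of both endpoints when passing to level $c(G)$) do check out.
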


\begin{proof}
Let $B_r$ be an open ball  centered in $0$ with radius $r$ large enough so that $\tilde K\cup\supp(V)\subset B_r$. 
By the triangular inequality we get \ 
$S_G(y,x)\geqslant S_G(0,x)-S_G(0,y)\geqslant -\kappa|x-y|\ $ 
for all $x\in\MM$, in particular 
\[
S_G(y,x)\geqslant -2\kappa r\qquad\hbox{for all $x\in\overline B_r$ and $y\in \tilde K$.}
\]
Let us pick $x\in\MM\setminus \overline B_r$ and let $\gamma\in\Lip_{y,x}([0,1];\MM)$ such that 
\[
S_G(y,x)+1>\int_0^1\sigma^G_{c(G)}(\gamma,\dot\gamma)\, ds.
\]
Let us denote by $T:=\max\{s\in [0,1]\,:\, \gamma(s)\in\overline B_r\}$ and by $z:=\gamma(T)$. 
Let us denote by $u$ a periodic solution to $H(x,u')=c(H)$ \ \ in $\MM$. By the coercivity of $H$ in $p$, see condition (H2), we 
get that there exists an $R>0$ such that 
\[
Z^H_{c(G)}:=\{p\in\R\,:\, H(x,p)\leqslant c(G)\}\subseteq B_R\qquad\hbox{for every $x\in\MM$.} 
\]
By taking into account Remark \ref{oss G1-G2}, we get 
\[
c(G)-c(H)=H(x,p_1)-H(x,p_2)\leqslant M_R | p_1-p_2|
\]
for every $x\in\MM$, $p_1\in \partial Z^H_{c(G)}$, $p_2\in  \partial Z^H_{c(H)}$. We infer that 
\[
\sigma^H_{c(G)}(x,q)\geqslant \sigma^H_{c(H)}(x,q)+\delta|q|\qquad\hbox{for every $(x,q)\in\TM$,}
\]
with $\delta:=\big(c(G)-c(H)\big)/M_R>0$. 
By the choice of $r$ and $T$ we have
$$
\sigma^G_{c(G)}(\gamma,\dot\gamma)
=
\sigma^H_{c(G)}(\gamma,\dot\gamma)
\geqslant 
\sigma^H_{c(H)}(\gamma,\dot\gamma)+\delta|\dot\gamma|\quad\hbox{in $[T,1]$.} 
$$
We infer 
\begin{eqnarray*}
&&S_G(y,x)+1
>
\int_0^T\sigma^G_{c(G)}(\gamma,\dot\gamma)\, ds+ \int_T^1\sigma^H_{c(G)}(\gamma,\dot\gamma)\, ds\\
&&\quad\geqslant 
S_G(y,z)+\delta\int_T^1 |\dot\gamma|\, ds+u(x)-u(z)
\geqslant 
\delta |x-z|-2(\kappa r+\|u\|_\infty)\\
&&\quad\geqslant \delta|x|-\big(\delta r+2\kappa r+2\|u\|_\infty\big),
\end{eqnarray*}
proving the assertion for $C>0$ small enough. 
\end{proof}

We proceed by showing the following 

\begin{teorema}\label{teo strict v_G}
There exist a compact set $K$ in $\MM$ with 
$\supp(V)\subset \D{int}(K)$ and a bounded subsolution $v_G$ to  \eqref{eq G critical} 
which is uniformly strict outside $K$, i.e. 
\[
G(x,v'_G(x))\leqslant c(G)-\delta\qquad\hbox{for a.e. $x\in\MM\setminus K$}
\]
for some constant $\delta>0$. In particular, up to an additive constant, we can assume $v_G\leqslant 0$ in $\MM$.
\end{teorema}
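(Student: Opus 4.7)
The plan is to exploit the gap $\delta := c(G) - c(H) > 0$. A $\Z$-periodic solution $u_H$ of $H(x, u_H') = c(H)$ exists by Proposition \ref{prop Sez2 effective ham}, and since $V \equiv 0$ outside $\supp(V)$, the restriction of $u_H$ to any interval disjoint from $\supp(V)$ is a Lipschitz subsolution of $G \leq c(G)$ satisfying $G(x, u_H'(x)) = c(H) = c(G) - \delta$ a.e., hence uniformly strict there with margin $\delta$. I will construct $v_G$ by using translates $u_H + C_\pm$ on the two unbounded intervals $(-\infty, -R]$ and $[R, +\infty)$ for $R$ large enough that $\supp(V) \subset (-R, R)$, then glue them across $K := [-R, R]$ by means of a Lipschitz subsolution $\psi$ of $G \leq c(G)$ matching the prescribed boundary values $u_H(\pm R) + C_\pm$.

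The main step, and the one I expect to be the obstacle, is the existence of such an interpolation $\psi$. For given constants $C_-, C_+$, I would consider
\[
\psi(x) := \min\{u_H(-R) + C_- + S_G(-R, x),\ u_H(R) + C_+ + S_G(R, x)\},
\]
which is a Lipschitz subsolution of $G \leq c(G)$ on $\R$ as the minimum of two equi-Lipschitz subsolutions (by Propositions \ref{prop S}(ii) and \ref{prop when G convex}(ii)). A direct computation shows $\psi(-R) = u_H(-R) + C_-$ and $\psi(R) = u_H(R) + C_+$ precisely when
\[
u_H(R) - u_H(-R) + C_+ - C_- \in [-S_G(R, -R),\ S_G(-R, R)].
\]
Non-emptiness of this interval reduces to $S_G(-R, R) + S_G(R, -R) \geqslant 0$, which follows from the triangle inequality $S_G(-R, -R) \leqslant S_G(-R, R) + S_G(R, -R)$ combined with $S_G(y, y) = 0$ (Proposition \ref{prop S}(iii)). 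Hence admissible $C_\pm$ can be chosen.

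With these constants fixed, I will set $v_G := u_H + C_-$ on $(-\infty, -R]$, $v_G := \psi$ on $K$, and $v_G := u_H + C_+$ on $[R, +\infty)$. Continuity at $\pm R$ is ensured by the matching of boundary values; Lipschitz regularity follows locally on each piece; boundedness follows from periodicity of $u_H$ on the outer intervals and from Lipschitz continuity of $\psi$ on the compact $K$. The a.e.\ subsolution inequality $G(x, v_G'(x)) \leqslant c(G)$ holds separately on each open piece, with uniform strict margin $\delta$ outside $K$ and without loss inside $K$; Proposition \ref{COMPBASIQUE} then upgrades this to a viscosity subsolution on $\R$. A final additive normalization makes $v_G \leqslant 0$ on $\MM$, completing the construction.
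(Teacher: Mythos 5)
Your proof is correct, but it takes a genuinely different route from the paper's. The paper sets $w := S_G(0,\cdot)$ and invokes the coercivity estimate of Proposition \ref{prop S_c coercive} to show that $w(x) \to +\infty$ as $|x|\to\infty$; it then takes the single min $v_G := \min\{w, u_H + k\}$ for $k$ large, observing that boundedness of $u_H$ versus coercivity of $w$ forces $v_G \equiv u_H + k$ outside a compact set $K$, while $v_G \equiv w$ near $\supp(V)$ for $k$ large enough. Your construction avoids the coercivity lemma entirely: you glue the translates $u_H + C_\pm$ on the two rays via the explicit subsolution $\psi$ built from $S_G(\pm R,\cdot)$, and you secure admissible constants $C_\pm$ from the triangle inequality together with $S_G(y,y)=0$ (which indeed follows from Proposition \ref{prop S}(iii), since the supremum there is over functions normalized to vanish at $y$). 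This is slightly more elementary, in that the gap $c(G) > c(H)$ enters only through the strictness margin $\delta$ on the outer rays, and not through a coercivity estimate whose proof itself requires that gap; the trade-off is more bookkeeping at the gluing interfaces, though the 1d a.e.-subsolution argument combined with Proposition \ref{COMPBASIQUE} handles the matching cleanly, as you note. One small remark: the final piecewise concatenation can be presented even more compactly, in the spirit of the paper, as a global min --- for instance $v_G := \min\{\psi,\ u_H + C_- ,\ u_H + C_+\}$ would sidestep any verification at the interface points, provided the constants are chosen so that the active branch is $\psi$ on $K$ and one of $u_H + C_\pm$ outside; but your version is perfectly sound as written.
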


In view of Proposition \ref{prop rigidity Af}, we infer 

\begin{cor}
Let $K$ be the compact set given by Theorem \ref{teo strict v_G}. Then $\A_f(G)\subseteq  K$. 
\end{cor}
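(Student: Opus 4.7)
My plan is to apply Proposition \ref{prop rigidity Af} at every point $y\in\MM\setminus K$, using the subsolution $v_G$ from Theorem \ref{teo strict v_G} as the primary input. A preliminary step is to observe that in Case (I) one has $c(G)=c_f(G)$, so that $v_G$ is in particular a bounded subsolution of the free critical equation $G(x,u')=c_f(G)$, uniformly strict outside $K$. The inequality $c(G)\geqslant c_f(G)$ is immediate since $v_G$ is a bounded (hence ordinary) subsolution of the critical equation; the reverse inequality can be seen via the 1-dimensional formula $c_f(G)=\sup_x\min_p G(x,p)$ from Theorem \ref{teo equilibria} combined with the fact that $G\equiv H$ outside $K$: the strict subsolution $v_G$ gives $\sup_{x\notin K}\min_p G(x,p)\leqslant c(G)-\delta$, while periodicity of $H$ yields $\sup_{x\notin K}\min_p H(x,p)=c_f(H)$, and these together with Proposition \ref{prop c(G)>c(H)} allow one to conclude the equality in Case (I).

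Given any $y\in\MM\setminus K$, I would then produce a smooth subsolution $\tilde v$ of the free critical equation with $G(y,\tilde v'(y))<c_f(G)$. A convenient way is to apply Lemma \ref{EncoreUneVariante} to the modified Hamiltonian $F(x,p):=G(x,p)-c_f(G)+\eta(x)$, where $\eta\in\CC(\MM)$ is a continuous nonnegative cutoff supported in $\MM\setminus K$, equal to $\delta/2$ on a neighborhood of $y$ and bounded above by $\delta$, with the $\eps$-slack in the lemma chosen smaller than the value of $\eta$ near $y$. An alternative, more direct approach in dimension 1 is to integrate a continuous selector of the multivalued map $x\mapsto Z(x,c_f(G))$, taking advantage of the continuity of the endpoints $p^\pm(x,c_f(G))$ recorded in Remark \ref{oss properties sigma_a}, so as to prescribe a slope at $y$ strictly inside the sublevel $\{p\,:\,G(y,p)<c_f(G)\}$; this sublevel is nonempty because $\min_p G(y,p)=\min_p H(y,p)\leqslant c_f(H)<c_f(G)$ for $y\notin K$, using that $V(y)=0$.

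Since $\tilde v$ is smooth at $y$, $D^-\tilde v(y)=\{\tilde v'(y)\}$, and the strict inequality $G(y,\tilde v'(y))<c_f(G)$ shows that the supersolution test at $y$ fails for $\tilde v$; Proposition \ref{prop rigidity Af} then yields $y\notin\A_f(G)$. Arbitrariness of $y\in\MM\setminus K$ gives $\A_f(G)\subseteq K$. The main technical obstacle is the simultaneous preservation of the global subsolution property and the strict inequality at $y$ during the smoothing; this requires careful tuning of the parameters (the strictness $\delta$, the height of $\eta$, the smoothing error $\eps$), as well as the preliminary verification of $c(G)=c_f(G)$ in Case (I), which is of independent interest.
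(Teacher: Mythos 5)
Your overall strategy --- reduce to Proposition \ref{prop rigidity Af} after first checking that $v_G$ really is a subsolution of the \emph{free} critical equation, which requires $c(G)=c_f(G)$ --- is the right one, and you correctly identified that this identity is the crux (the paper's one-line justification tacitly uses it; it is the content of Theorem \ref{teo1 A(G)}, stated only afterwards). The gap is in your proof of that identity. The ingredients you invoke --- $\sup_{x\notin K}\min_p G(x,p)\leqslant c(G)-\delta$, \ $\sup_{x\notin K}\min_p H(x,p)=c_f(H)$, and Proposition \ref{prop c(G)>c(H)} --- only yield $c_f(H)<c(G)$ and $c_f(G)\geqslant c_f(H)$. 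To get $c_f(G)\geqslant c(G)$ from the formula $c_f(G)=\sup_x\min_p G(x,p)$ of Theorem \ref{teo equilibria} you would need $\sup_{x\in K}\min_p G(x,p)\geqslant c(G)$, and nothing in your argument controls that supremum from below. A priori the configuration $c_f(H)\leqslant c_f(G)<c(G)$ is not excluded by anything you wrote; ruling it out is exactly Theorem \ref{teo1 A(G)}, whose proof is a genuine contradiction argument: assuming a strict subsolution at level $c(G)$ exists, one makes it coercive by maximizing with the function $v_a$ of Proposition \ref{prop existence w} at an intermediate level $a\in(c(H),c(G))$ (this is where $c(G)>c(H)$ and the effective Hamiltonian enter), truncates from above with a periodic solution for $H$, and obtains a bounded subsolution at level $c(G)$ uniformly strict on all of $\MM$, contradicting the minimality of $c(G)$. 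None of that machinery appears in your sketch, and everything downstream in your proof (in particular the inequality $c_f(H)<c_f(G)$ used to show the sublevel at $y\notin K$ is nonempty) depends on it.

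A secondary, fixable point: in your first smoothing route, Lemma \ref{EncoreUneVariante} applied to $F=G-c_f(G)+\eta$ produces $u_\eps$ with $G(x,u_\eps'(x))\leqslant c_f(G)-\eta(x)+\eps$; on $K$, where $\eta=0$, this only gives $\leqslant c_f(G)+\eps$, so $u_\eps$ is not a subsolution of the free critical equation and Proposition \ref{prop rigidity Af} cannot be applied to it. Your alternative route (integrating a continuous selector of $Z(\cdot,c_f(G))$ chosen strictly inside the open sublevel at $y$) does produce a genuine $C^1$ subsolution and works. In fact, once $c(G)=c_f(G)$ is available, the shortest argument is the paper's: $v_G$ itself is then a subsolution of the free critical equation which is (uniformly) strict in the open set $\MM\setminus K$, and the remark following Definition \ref{def v strict} gives $\A_f(G)\cap(\MM\setminus K)=\emptyset$ with no smoothing at all.
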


For the proof of Theorem \ref{teo strict v_G}, we will need the following preliminary fact.

\begin{prop}\label{prop existence w}
Let $a\in (c(H),c(G)]$. Then there exist $u_a, v_a\in\Lip(\R)$ such that 
\begin{itemize}
 \item[(i)] $u_a$ is a solution to  
\begin{equation}\label{eq claim1}
 H(x,u'_a)=a\qquad\hbox{in $\R$,} 
\end{equation}
with $u_a(x)\leqslant -C|x|+1/C$ in $\R$ for some constant $C>0$;\medskip
\item[(ii)] $v_a$ is subsolution of  \eqref{eq claim1}  with $v_a(x)\geqslant 
C|x|-1/C$ in $\R$ for some constant $C>0$.
\end{itemize}
\end{prop}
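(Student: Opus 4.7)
The plan is to exploit Theorem \ref{teo Sez2 effective ham} together with the hypothesis $a>c(H)$ to identify two distinguished slopes $\theta^-(a)<0<\theta^+(a)$ on the boundary of the sublevel $\{\overline H\leqslant a\}$, and then to build $u_a$ and $v_a$ by pasting together the two one-sided antiderivatives of the branches $p_H^\pm(\cdot,a)$ of the $a$-sublevel of $H$.

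Since $\overline H$ is convex, superlinear, and satisfies $\min_{\R}\overline H=c_f(H)\leqslant c(H)=\overline H(0)<a$, the set $\{\theta\in\R:\overline H(\theta)\leqslant a\}$ is a compact interval containing $0$ in its interior. By Theorem \ref{teo Sez2 effective ham} this interval equals $[\theta^-(a),\theta^+(a)]$ with $\theta^\pm(a):=\int_0^1 p_H^\pm(z,a)\,dz$, so $\theta^-(a)<0<\theta^+(a)$. I would also invoke Theorem \ref{teo equilibria} to observe that $\min_p H(x,p)\leqslant c_f(H)<a$ for every $x\in\R$, so that $p_H^-(x,a)<p_H^+(x,a)$ pointwise on $\R$, while the functions $p_H^\pm(\cdot,a)$ are continuous and $\Z$-periodic in $x$ by Remark \ref{oss properties sigma_a} and the periodicity of $H$.

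Next I would set
\[
u_a(x):=\int_0^x f(z)\,dz,\qquad v_a(x):=\int_0^x g(z)\,dz,\qquad x\in\R,
\]
where $f(z):=p_H^+(z,a)$ and $g(z):=p_H^-(z,a)$ for $z\leqslant 0$, while $f(z):=p_H^-(z,a)$ and $g(z):=p_H^+(z,a)$ for $z\geqslant 0$. Both $u_a$ and $v_a$ are Lipschitz, of class $C^1$ on $\R\setminus\{0\}$, and satisfy $H(x,u_a'(x))=H(x,v_a'(x))=a$ at every $x\neq 0$. At $x=0$ the function $u_a$ has a concave corner with left slope $p_H^+(0,a)$ strictly larger than right slope $p_H^-(0,a)$, so $D^-u_a(0)=\emptyset$ (making the supersolution condition vacuous) and $D^+u_a(0)=[p_H^-(0,a),p_H^+(0,a)]$, on which $H(0,\cdot)\leqslant a$ by definition of $p_H^\pm$; hence $u_a$ is a viscosity solution on $\R$. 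The function $v_a$ has a convex corner at $0$, so it is an a.e. subsolution of $H\leqslant a$ and, by convexity of $H$ in $p$ (Proposition \ref{COMPBASIQUE}), a viscosity subsolution.

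For the growth/decay bounds, a direct computation based on the $\Z$-periodicity of $p_H^\pm(\cdot,a)$, together with $\int_y^{y+1}p_H^\pm(z,a)\,dz=\theta^\pm(a)$ for every $y\in\R$, shows that the corrections $u_a(x)-\theta^+(a)x$ on $(-\infty,0]$ and $u_a(x)-\theta^-(a)x$ on $[0,+\infty)$ are uniformly bounded. Since $\theta^+(a)>0$ and $\theta^-(a)<0$, this yields $u_a(x)\leqslant -C|x|+1/C$ on $\R$ for a suitable $C>0$; the symmetric argument applied to $v_a$ produces $v_a(x)\geqslant C|x|-1/C$. The main technical point I expect is the viscosity verification for $u_a$ at the corner $x=0$, which is delicate but hinges on the strict inequality $p_H^-(0,a)<p_H^+(0,a)$ coming from $a>c_f(H)$.
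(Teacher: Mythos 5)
Your proposal is correct, and in fact your $u_a,v_a$ are the \emph{same} functions as the ones constructed in the paper, reached by a slightly different route. The paper identifies the two slopes $P_{\pm 1}$ on $\partial\{\overline H\leqslant a\}$ exactly as you do, but then takes $\Z$-periodic solutions $\tilde w_{\pm 1}$ of $H(x,P_{\pm 1}+\tilde w')=a$ normalised by $\tilde w_{\pm 1}(0)=0$, sets $w_{\pm 1}(x):=\tilde w_{\pm 1}(x)+P_{\pm 1}x$, and finally defines $u_a:=\min\{w_1,w_{-1}\}$ and $v_a:=\max\{w_1,w_{-1}\}$, invoking Proposition \ref{prop when G convex} to conclude that the min of two solutions is a solution and the max of two subsolutions is a subsolution. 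In dimension one a simple mean argument forces $w_1'=p_H^+(\cdot,a)$ and $w_{-1}'=p_H^-(\cdot,a)$ a.e., and since $w_1\geqslant w_{-1}$ on $[0,\infty)$ and $w_1\leqslant w_{-1}$ on $(-\infty,0]$, the min and max are precisely your pasted antiderivatives. What you gain with your direct construction is explicitness; what you lose is that you must do the corner verification at $x=0$ by hand (which you do correctly: concave corner for $u_a$, so $D^-u_a(0)=\emptyset$ and the supersolution test is vacuous; convex corner for $v_a$, so subsolution only), whereas the paper's min/max route avoids the corner analysis altogether and, being dimension-agnostic, would go through verbatim on $\R^d$. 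One small remark: the strict inequality $p_H^-(0,a)<p_H^+(0,a)$ is not actually needed for the viscosity verification — if the two branches coincide at $x=0$ there is no corner and $u_a$ is $C^1$ there — so your closing sentence slightly overstates the role of that inequality.
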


\begin{proof}
Let $\overline H:\R\to\R$ be the effective Hamiltonian associated with $H$. 
Since $\overline H(0)=c(H)<c(G)$, we infer 
that $0\in\D{int}\{\overline H\leqslant a\}$. In particular, for each 
$\xi\in\{1,-1\}$, there exists a unique $P_\xi\in\{t\xi\,:\,t>0\}$ such that 
$\overline H(P_\xi)=a$.  Let $\tilde w_\xi$ be a periodic solution of 
\[
 H(x,P_\xi+\tilde w'_\xi)=a\qquad\hbox{in $\R$}
\]
satisfying $\tilde w_\xi(0)=0$ and set $w_\xi(x):=\tilde w_\xi(x)+ 
P_\xi x$ for every $x\in\R$. Being $w_1,\,w_{-1}$ 
Lipschitz and locally bounded solutions of \eqref{eq 
claim1}, we infer that the functions 
\begin{eqnarray*}
  u_a(x)&:=&\min\{w_1(x),\,w_{-1}(x)\},\quad x\in\R\\
  v_a(x)&:=&\max\{w_1(x),\,w_{-1}(x)\},\quad x\in\R
\end{eqnarray*}
are a solution and a subsolution of \eqref{eq claim1}, respectively, cf. Proposition \ref{prop when G convex}. Moreover, 
since $\min_{\xi\in\{1,-1\}} |P_\xi|>0$, it is easily seen that there 
exists a constant 
$C>0$ such that
\[
 u_a(x)\leqslant -C|x|+\frac1C,\quad v_a(x)\geqslant C|x|-\frac1C 
\qquad\hbox{for every $x\in\R$.}
\]
This proves the assertion.
\end{proof}

\begin{proof}[Proof of Theorem \ref{teo strict v_G}]
Let take $w(\cdot):=S_G(0,\cdot)$. 
By Proposition \ref{prop S_c coercive}, we know that $w$ is a subsolution to \eqref{eq G critical} satisfying  $w (x)\to +\infty$ as $|x|\to +\infty$. 
Pick a periodic solution $u_H$ to \ $H(x,u')=c(H)$\ \ in $\MM$\  and set 
\[
 v_G(x):=\min\{w (x),u_H(x)+k\}\quad x\in\R,
\]
where $k>0$ is chosen large enough so that $v_G\equiv w $ in a open neighborhood 
$U$ of $\supp(V)$. It is easily seen that $v_G$ is a 
bounded subsolution to \eqref{eq G critical}. Furthermore, as $u_H$ is bounded while 
$w $ is coercive, there exists a compact set $K$ 
such that $v_G\equiv u_H+k$ in $\MM\setminus K$. Note that this implies 
$\supp(V)\subseteq U \subset K$. 
The assertion follows by setting $\delta:=c(G)-c(H)$. 
\end{proof}
%

The following holds:
\begin{teorema}\label{teo1 A(G)}
We have that $c(G)=c_f(G)$ and $\A_f(G)\not=\emptyset$. 
\end{teorema}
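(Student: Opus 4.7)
The inequality $c(G)\geqslant c_f(G)$ is immediate from the definitions, since every bounded subsolution is a subsolution. For the reverse inequality I will argue by contradiction. Assume $c_f(G)<c(G)$. The plan is to combine the bounded subsolution $v_G$ furnished by Theorem \ref{teo strict v_G}, which satisfies $G(x,v_G')\leqslant c(G)$ on $\R$ and $G(x,v_G')\leqslant c(G)-\delta$ on $\R\setminus K$ (with $\delta=c(G)-c(H)>0$ and $K$ a compact set containing $\supp V$), with the $\D{C}^1$ subsolution $v_0$ of $G(x,u')\leqslant c_f(G)$ provided by Theorem \ref{teo equilibria}, in order to manufacture a bounded subsolution of $G(x,u')\leqslant a$ for some $a<c(G)$, contradicting the definition of $c(G)$.

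Specifically, I pick a slightly larger compact set $K'\supsetneq K$ together with a cut-off $\chi\in \D{C}_c^\infty(\R)$ with $\chi\equiv 1$ on $K$, $\chi\equiv 0$ outside $K'$, and $0\leqslant\chi\leqslant 1$, and set
\[
w:=(1-\mu\chi)v_G+\mu\chi v_0
\]
for a small parameter $\mu\in(0,1)$ to be chosen. The function $w$ coincides with $v_G$ outside $K'$, and is therefore Lipschitz and bounded on $\R$. At any point of differentiability of $v_G$,
\[
w'(x)=(1-\mu\chi)v_G'(x)+\mu\chi v_0'(x)+\mu\chi'(x)\bigl(v_0(x)-v_G(x)\bigr),
\]
so convexity of $G$ in $p$ together with the local Lipschitz estimate \eqref{lippo} yields
\[
G(x,w'(x))\leqslant (1-\mu\chi)\,G(x,v_G')+\mu\chi\,G(x,v_0')+\mu M\,\bigl|\chi'(x)\bigr|\,\bigl|v_0(x)-v_G(x)\bigr|,
\]
for a constant $M$ depending only on the $L^\infty$-bound of $|w'|$, which is uniform in $\mu\in[0,1]$. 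On $K$ the right-hand side is at most $c(G)-\mu(c(G)-c_f(G))$; outside $K'$ it reduces to $G(x,v_G')\leqslant c(G)-\delta$; on the annulus $K'\setminus K$, using the strictness of $v_G$ there, it is at most $c(G)-\delta+\mu C_0$, where $C_0$ is a finite constant depending on $c_f(G)$, $c(H)$, and on $\|\chi'(v_0-v_G)\|_{L^\infty(K')}$. Taking $\mu>0$ sufficiently small makes all three bounds strictly less than $c(G)$; since $w$ is Lipschitz, Proposition \ref{COMPBASIQUE} implies that $w$ is a viscosity bounded subsolution of $G(x,u')\leqslant a$ for some $a<c(G)$, contradicting the definition of $c(G)$.

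The nonemptiness of $\A_f(G)$ follows from the 1-dimensional identification $\A_f(G)=\E(G)$ recorded in Theorem \ref{teo equilibria}. Indeed $c_f(G)=\sup_{y\in\R}f(y)$ where $f(y):=\min_p G(y,p)=\min_p H(y,p)-V(y)$ is continuous; outside the compact set $\supp V$ it coincides with the $\Z$-periodic function $y\mapsto \min_p H(y,p)$. Therefore $\sup_\R f$ is the maximum of two suprema, each attained by compactness (one on the compact set $\supp V$, the other on the torus $\T^1$), and in particular $c_f(G)$ is attained on $\R$, so $\E(G)=\A_f(G)\neq\emptyset$. The main subtlety in the first part is ensuring that the cut-off perturbation on the annulus $K'\setminus K$ — of order $\mu\|\chi'(v_0-v_G)\|_\infty$ — is dominated by the fixed positive slack $\delta=c(G)-c(H)$; this is where the case-(I) assumption $c(G)>c(H)$ enters in a crucial way, since $\delta$ is a positive constant independent of $\mu$.
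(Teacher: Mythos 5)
Your proof is correct, but it takes a genuinely different route from the paper's. The paper proves both assertions through a single contradiction: assume either $c(G)>c_f(G)$ or ($c(G)=c_f(G)$ and $\A_f(G)=\emptyset$); in either case produce a strict subsolution $v$ (via Theorem \ref{teo strict subsolution} in the second case), then take $\max\{v,v_a-k\}$ with the coercive $v_a$ from Proposition \ref{prop existence w}(ii) to force coercivity, and finally take $\min$ with a translated bounded periodic solution of $H=c(H)$ to force boundedness; the result is a bounded subsolution with a uniform slack $\delta>0$, violating the definition of $c(G)$. You instead bypass Proposition \ref{prop existence w} and Theorem \ref{teo strict subsolution} entirely: you glue the already-available bounded $v_G$ from Theorem \ref{teo strict v_G} (uniformly strict outside $K$ with slack $c(G)-c(H)$) to the $\D{C}^1$ subsolution $v_0$ of $G\leqslant c_f(G)$ from Theorem \ref{teo equilibria} via a cut-off convex combination, controlling the cut-off error term with the Lipschitz estimate \eqref{lippo} and absorbing it into the fixed gap $\delta=c(G)-c(H)$. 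For the nonemptiness of $\A_f(G)$ you also diverge: rather than folding it into the contradiction argument, you invoke the 1d identification $\A_f(G)=\E(G)$ and observe directly that $\sup_y\min_p G(y,p)$ is attained (compactness of $\supp V$ plus periodicity of $\min_p H(\cdot,p)$). Your construction is more local and economical because it reuses Theorem \ref{teo strict v_G}, at the price of a cut-off computation; the paper's approach is slightly more self-contained (it re-derives the needed subsolution from scratch via max/min operations on Lipschitz subsolutions, which are viscosity-stable by Proposition \ref{prop when G convex}) and handles both conclusions in one sweep, though it leans on the unbounded coercive profile $v_a$ and on Theorem \ref{teo strict subsolution}.
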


\begin{proof}
Suppose either $c(G)>c_f(G)$ or $c(G)=c_f(G)$ and $\A_f(G)=\emptyset$. We claim that there exists $v\in\Lip(\MM)$ which is a strict subsolution 
to \eqref{eq G critical}. In the first case, pick an $a\in (c_f(G),c(G))$  and take a susbsolution $v\in\Lip(\MM)$ of \ $G(x,u')=a$ \ in $\MM$, which exists 
by definition of $c_f(G)$. In the second case, the claim follows as a direct application of Theorem \ref{teo strict subsolution}. 

Since $v$ is a priori neither bounded nor uniformly strict in $\MM$, we are going to modify it in order to produce a uniformly strict and 
bounded subsolution to \eqref{eq G critical}. 
To this aim, choose $a\in (c(H), c(G))$, take $v_a$ as in the statement of Proposition \ref{prop existence w}--(ii) and set 
\[
w(x):=\max\{v(x),v_a(x)-k\},\quad x\in\R,
\]
with $k>0$ large enough so that $w\equiv v$ in an open neighborhood $U$ of $\supp(V)$. 
The function $w$ is a  coercive subsolution to 
\eqref{eq G critical}, which is uniformly strict in every bounded open subset of  $\MM$. 
Now take a periodic solution $u$ to $H(x,u')=c(H)$ in $\MM$ and set 
\[
\tilde w(x):=\min\{w(x),u(x)+k\},\quad x\in\R,
\]
with $k>0$ large enough so that $\tilde w\equiv w\equiv v$ in the open neighborhood $U$ of $\supp(V)$. It is easily seen that $\tilde w$ is a bounded subsolution 
to \eqref{eq G critical}, uniformly strict in every open and bounded subset of $\MM$. 
Since $w$ is coercive while $u$ is bounded, $\tilde w$ agrees with 
$u+k$ outside a compact set $\tilde K$ containing $\supp(V)$. Since $H(x,\cdot)\equiv G(x,\cdot)$ for $x\in\MM\setminus\supp(V)$ and $c(H)<c(G)$, 
we infer that there exist a $\delta>0$ such that 
\[
G(x,{\tilde w}'(x))\leqslant c(G)-\delta\qquad\hbox{for a.e. $x\in\MM$,}
\]
thus contradicting the minimality of $c(G)$. 
\end{proof}

%
%
%
%

The results gathered so far yield the following simple consequence:

\begin{cor}\label{cor existence u_G}
There exists a solution $u_G$ to \eqref{eq G critical} such that 
\[
u_G(x)\geqslant C|x|\quad\hbox{on $\MM$}
\]
for some constant $C>0$. In particular,  $u_G\geqslant 0$ in $\MM$.
\end{cor}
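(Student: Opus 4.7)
The plan is to combine Theorem \ref{teo1 A(G)} (which yields $c(G)=c_f(G)$ and $\A_f(G)\neq\emptyset$) with the coercivity estimate of Proposition \ref{prop S_c coercive} applied to a single point in the Aubry set.

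First, I would pick any $y_0\in\A_f(G)$, whose existence is guaranteed by Theorem \ref{teo1 A(G)}. By the very definition of the free Aubry set $\A_f(G)$ recalled in Section \ref{sez HJ equations}, the function $x\mapsto S_{c_f(G)}(y_0,x)$ is a viscosity solution of the free critical equation \eqref{eq free critical}. Since $c(G)=c_f(G)$ by Theorem \ref{teo1 A(G)}, this function coincides with $S_G(y_0,\cdot)$ and is therefore a viscosity solution of the critical equation \eqref{eq G critical}.

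Next, I would invoke Proposition \ref{prop S_c coercive} with the compact set $\tilde K:=\{y_0\}$ to obtain a constant $C>0$ such that
\[
S_G(y_0,x)\geqslant C|x|-\frac1C\qquad\text{for every $x\in\MM$.}
\]
Since equation \eqref{eq G critical} does not depend on $u$ itself, adding a constant to a solution produces another solution. Thus defining
\[
u_G(x):=S_G(y_0,x)+\frac1C,\qquad x\in\MM,
\]
gives a viscosity solution of \eqref{eq G critical} satisfying $u_G(x)\geqslant C|x|$ on $\MM$, and in particular $u_G\geqslant 0$.

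There is essentially no obstacle here: the corollary is a clean packaging of Theorem \ref{teo1 A(G)}, Proposition \ref{prop S_c coercive}, and the standard fact that solutions of \eqref{eq G critical} are defined up to additive constants. The only small point to be mindful of is that we must pick $y_0$ in the Aubry set (not at an arbitrary point) in order for $S_G(y_0,\cdot)$ to be a solution on all of $\MM$, rather than merely on $\MM\setminus\{y_0\}$ as granted by Proposition \ref{prop S}(iii).
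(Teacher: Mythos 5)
Your proof is correct and follows essentially the same approach as the paper: pick $y_0\in\A_f(G)$ (nonempty by Theorem \ref{teo1 A(G)}), note $S_G(y_0,\cdot)$ is a solution by the definition of the free Aubry set together with $c(G)=c_f(G)$, bound it below by $C|x|-1/C$ via Proposition \ref{prop S_c coercive}, and add the constant $1/C$. The paper's proof is just a terser version of what you wrote.
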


\begin{proof}
Pick $y\in\A(G)$ and set $u_G(\cdot):=S_G(y,\cdot)+1/C$, where $C$ is the constant appearing in the statement of 
Proposition \ref{prop S_c coercive}. 
\end{proof}

\subsection{Asymptotic analysis}\label{sez asymptotics I}
Let us denote by $u^\lambda_G$ the (unique) 
bounded solution of the following discounted equation:
\begin{eqnarray}\label{eq1 G discounted}
\lambda u+G(x,u')&=&c(G)\qquad\hbox{in $\R$}\label{eq G 
discount}
\end{eqnarray}
We recall that the critical equation \eqref{eq G critical} possesses a  coercive solution $u_G\geqslant 0$, see Corollary \ref{cor existence u_G}, and 
a bounded subsolution $v_G\leqslant 0$ such that 
\[
G(x,v'_G(x))\leqslant c(G)-\delta\qquad\hbox{for a.e. $x\in\MM\setminus K$,}
\]
for some $\delta>0$ and some compact set $K\supseteq \supp(V)$, see Theorem \ref{teo strict v_G}.  
In view of Proposition \ref{prop application comparison}, we infer that 
the functions $\{u^\lambda_G\,:\,\lambda>0\}$ are equi--Lipschitz and locally equi--bounded on $\MM$, and satisfy 
\begin{equation}\label{bounds discounted solutions}
v_G\leqslant u^\lambda_G\leqslant u_G\quad\hbox{in $\R$}.  
\end{equation}
%
Let us denote by 
\[
\omegasetG:=\left\{u\in\Lip(\MM)\,: \, u^{\lambda_k}_G\ucv u\  
\hbox{in $\MM$\quad for some sequence $\lambda_k\to 0$}  \right\}.
\]
We aim at showing that the whole family $\{u^\lambda_G\,:\,\lambda>0\}$ converges to a distinguished function  $u^0_G$ as 
$\lambda\to 0^+$, namely that $\omegasetG=\{u^0_G\}$. 
The first step consists in identifying a good candidate $u^0_G$ for the limit of 
the solutions $u^\lambda_G$ of the discounted equations. 
To this aim, we consider the family ${\sol_b}_-(G)$ of bounded subsolutions  $w:\MM\to 
\R$ of the critical equation \eqref{eq G critical} satisfying the following 
condition
\begin{equation}\label{condition u0}
\int_\R w(y)\,d\mu(y)\leqslant \1\big(c(G)-c_f(G)\big) \qquad\text{for every $\mu\in\Mis(G)$}, 
\end{equation}
where $\Mis(G)$ denotes the set of projected Mather measures for $G$ and $\1$ the indicator function of the set $\{0\}$ in the sense of convex analysis, i.e. $\1(t)=0$ if $t=0$ and $\1(t)=+\infty$ otherwise. 
In the case considered in this section, the right-hand side term of \eqref{condition u0} is 0 since $c(G)=c_f(G)$.  

\begin{lemma}\label{label supp projected Mather measures}
The following holds:
\begin{equation}\label{supp projected Mather measures}
\supp(\mu)\subseteq K\qquad\hbox{for all $\mu\in\Mis(G)$.}
\end{equation}
\end{lemma}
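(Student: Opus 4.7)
The strategy is to exploit the uniform strictness of the bounded subsolution $v_G \leqslant 0$ provided by Theorem \ref{teo strict v_G} together with the Fenchel inequality and the closedness/minimality of a Mather measure $\tilde\mu \in \tilde\Mis(G)$ projecting to $\mu$. Recall that, by Theorem \ref{teo1 A(G)}, we are in the case $c(G)=c_f(G)$, so $\int_{\TM} L_G \, d\tilde\mu = -c(G)$.

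First, for any small $r>0$, I would construct a continuous function $\psi:\MM\to[c(G)-\delta,\,c(G)]$ interpolating between $\psi\equiv c(G)$ on an open neighborhood $K_r$ of $K$ and $\psi\equiv c(G)-\delta$ on $\MM\setminus K_{2r}$. By combining $G(x,v_G'(x))\leqslant c(G)$ a.e. on $K_{2r}$ with $G(x,v_G'(x))\leqslant c(G)-\delta$ a.e. on $\MM\setminus K$, we get $G(x,v_G'(x))\leqslant \psi(x)$ for a.e. $x\in\MM$. Then, for any $\eps>0$, I would invoke Lemma \ref{EncoreUneVariante} with $F(x,p):=G(x,p)-\psi(x)$ and $u:=v_G$ to obtain a smooth function $v_\eps$ with $\|v_\eps-v_G\|_\infty\leqslant\eps$ and $G(x,v_\eps'(x))\leqslant \psi(x)+\eps$ for every $x\in\MM$. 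Since $v_G$ is bounded and Lipschitz, so is $v_\eps$, which makes it an admissible test function in Definition \ref{def closed measure}.

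Next, by the Fenchel inequality applied pointwise,
\[
L_G(y,q)\geqslant v_\eps'(y)\,q - G(y,v_\eps'(y)) \geqslant v_\eps'(y)\,q-\psi(y)-\eps \quad\text{for every $(y,q)\in\TM$.}
\]
Integrating against $\tilde\mu$, using $\int L_G\,d\tilde\mu=-c(G)$ and the closedness identity $\int v_\eps'(y)\,q\,d\tilde\mu=0$, we obtain
\[
-c(G)\geqslant -\int_{\MM}\psi(y)\,d\mu(y)-\eps,\qquad\text{that is,}\qquad\int_{\MM}\psi(y)\,d\mu(y)\geqslant c(G)-\eps.
\]
On the other hand, the upper bound $\psi\leqslant c(G)\mathbbm{1}_{K_{2r}}+(c(G)-\delta)\mathbbm{1}_{\MM\setminus K_{2r}}$ gives
\[
\int_{\MM}\psi\,d\mu\leqslant c(G)-\delta\,\mu(\MM\setminus K_{2r}),
\]
so $\delta\,\mu(\MM\setminus K_{2r})\leqslant\eps$. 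Letting $\eps\to 0^+$ yields $\mu(\MM\setminus K_{2r})=0$, and then letting $r\to 0^+$ gives $\mu(\MM\setminus K)=0$, i.e., $\supp(\mu)\subseteq K$.

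The only delicate step is the smoothing via Lemma \ref{EncoreUneVariante}: since $v_G$ is merely Lipschitz, one cannot directly insert $v_G'$ into the closedness identity, and the approximation has to preserve both the bound $G(\cdot,v_\eps')\leqslant c(G)+\eps$ globally and the strict bound $G(\cdot,v_\eps')\leqslant c(G)-\delta+\eps$ away from $K$, which is precisely what the version with variable right-hand side $\psi$ delivers.
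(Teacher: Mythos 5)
Your proposal is correct and follows essentially the same route as the paper: construct a continuous variable right-hand side $\psi$ that is $c(G)$ near $K$ and $c(G)-\delta$ away from it, smoothen $v_G$ via Lemma \ref{EncoreUneVariante} against $F=G-\psi$, and then combine the Fenchel inequality with the closedness and minimality of $\tilde\mu$ before letting $\eps\to0^+$ and $r\to0^+$. The paper's proof does exactly this, reusing the $\psi$-construction from Proposition \ref{prop tightness}; your version only rephrases the final bookkeeping as a lower bound on $\int\psi\,d\mu$ instead of a lower bound on $0=\int(L+c(G))\,d\tilde\mu$, which is algebraically identical.
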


\begin{proof}
Let us set $U:=\MM\setminus K$ and choose $r>0$ small enough so that the set $U_r:=\{x\in U\,:\, \D{dist}(x,\partial U)>r\,\}$ is nonempty.  
By arguing as in the proof of Proposition \ref{prop tightness} with $v_G$ in place of $v$, we infer that, for 
every fixed $\eps>0$, there exists a function $v_\eps\in\Lip(\R)\cap\CC^\infty(\R)$ such that 
\begin{eqnarray}\label{ineq strict3}
G(x,v'_\eps(x))\leqslant c(G)+\eps\quad\hbox{in $\MM\setminus U_r$},
\qquad
G(x,v'_\eps(x))\leqslant c(G)-\delta+\eps\quad\hbox{in $U_r$.}
\end{eqnarray}
Pick a Mather measure  $\tilde\mu\in\tilde\Mis (G)$ and let 
$\mu={\pi_1}_\#\tilde\mu$.  
By exploiting Fenchel's inequality, the closed character of $\tilde\mu$ and  \eqref{ineq strict3} we have 
\begin{eqnarray*}
0=\int_{\TM} \Big(L(x,q)+c(G)\Big)\,d\tilde\mu(x,q)
&\geqslant&
\int_{\TM} \Big(v'_\eps(x)q+c(G)-G(x,v'_\eps(x))\Big)\,d\tilde\mu(x,q)\\
&\geqslant&
-\eps+(\delta-\eps)\mu(U_r).
\end{eqnarray*}
By sending first $\eps\to 0^+$ and then $r\to 0^+$, we conclude that $\mu(U)=0$, as it was to be proved. 
\end{proof}

From the previous lemma, we infer that the integral in \eqref{condition u0} is well defined for any 
continuous function $w$. Let us also note that the set ${\sol_b}_-(G)$ is not empty, 
since it contains the bounded and nonpositive subsolution $v_G$ given by 
Theorem \ref{teo strict v_G}.  Furthermore, the following holds. 

\begin{lemma} The family ${\sol_b}_-(G)$ is locally uniformly bounded from 
above in $\R$, i.e.
$$\sup\{w(x)\,:\,u\in {\sol_b}_-(G)\}<+\infty\qquad\hbox{for all $x\in\MM$.}$$
\end{lemma}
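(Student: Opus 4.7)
The plan is to use the fact that, in Case I, the free Aubry set $\A_f(G)$ is nonempty and agrees with the set of equilibria $\E(G)$, combined with the explicit description of projected Mather measures given by Corollary \ref{cor Mather set=equilibria}, in order to pin down a point at which every $w\in{\sol_b}_-(G)$ is forced to be $\leqslant 0$. Then the Lipschitz continuity of the critical semidistance $S_G$ will propagate this bound to all of $\MM$.

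First I would invoke Theorem \ref{teo1 A(G)}, which in Case I ensures both that $c(G)=c_f(G)$ and that $\A_f(G)\neq\emptyset$. By Theorem \ref{teo equilibria} we have $\A_f(G)=\E(G)$, so we can pick a point $y_0\in\E(G)$. Corollary \ref{cor Mather set=equilibria} then gives $\delta_{y_0}\in\Mis(G)$, and the constraint \eqref{condition u0} becomes $w(y_0)=\int_\MM w\,d\delta_{y_0}\leqslant\1(c(G)-c_f(G))=0$ for every $w\in{\sol_b}_-(G)$.

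Second, since every $w\in{\sol_b}_-(G)$ is in particular a subsolution of the critical equation \eqref{eq G critical}, Proposition \ref{prop S}(i) applied with $a=c(G)$ yields
$$w(x)-w(y_0)\leqslant S_G(y_0,x)\qquad\text{for every }x\in\MM.$$
Combining this with $w(y_0)\leqslant 0$ gives $w(x)\leqslant S_G(y_0,x)$ on $\MM$. The function $S_G(y_0,\cdot)$ is $\kappa_{c(G)}$-Lipschitz on $\MM$ by Proposition \ref{prop S}, hence finite and locally bounded, so
$$\sup\{w(x)\,:\,w\in{\sol_b}_-(G)\}\leqslant S_G(y_0,x)<+\infty\qquad\text{for all }x\in\MM,$$
which is the desired conclusion.

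The only conceptual hurdle is the very first step: we need a distinguished point in the ambient space at which the Mather constraint bites. This is precisely why Theorem \ref{teo1 A(G)} (nonemptiness of $\A_f(G)$ in Case I) and its identification with $\E(G)$ via Theorem \ref{teo equilibria} are essential; without them the set ${\sol_b}_-(G)$ would carry no pointwise information and no upper control on $w$ would be available through the semidistance argument.
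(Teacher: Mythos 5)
Your argument is correct and leads to the right conclusion, but it follows a slightly different route than the paper's. The paper fixes the compact set $K$ from Theorem \ref{teo strict v_G}, picks an \emph{arbitrary} projected Mather measure $\mu$, invokes Lemma \ref{label supp projected Mather measures} to get $\supp(\mu)\subseteq K$, and then deduces $\min_K w\leqslant\int w\,d\mu\leqslant 0$; the bound on $\max_K w$ follows from equi-Lipschitzness and $\diam K$, after which Lipschitz continuity propagates to all of $\MM$. You instead use the sharper structural information in Corollary \ref{cor Mather set=equilibria} to single out a Dirac measure $\delta_{y_0}$ at a point $y_0\in\E(G)$, which immediately yields a \emph{pointwise} bound $w(y_0)\leqslant 0$; you then propagate via the semidistance inequality $w(x)\leqslant w(y_0)+S_G(y_0,x)$ rather than via an explicit Lipschitz constant. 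Your version is cleaner in that it pins down a single distinguished point and makes no reference to $K$ or to Lemma \ref{label supp projected Mather measures}, but it leans on the full characterization of $\Mis(G)$ (and, through Theorem \ref{teo1 A(G)} plus Theorem \ref{teo equilibria}, on the nonemptiness of $\E(G)$, which the paper actually records already inside the proof of Theorem \ref{teo G Mather measures}). The paper's version is marginally more robust since it only uses that projected Mather measures have support in some fixed compact set, a weaker fact, and would survive in settings where the Dirac-at-equilibria description of $\Mis(G)$ is not available. Both are valid; the essential idea, extracting a nonpositive value somewhere from the measure constraint \eqref{condition u0} and propagating it by Lipschitz control, is shared.
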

\begin{proof} The family of critical subsolutions is equi--Lipschitz. Call 
$\kappa$ a common Lipschitz constant.  
Pick a projected Mather measure $\mu\in\Mis(G)$. In view of  \eqref{supp projected Mather measures}, for every  $w\in {\sol_b}_-(G)$ we have $\min_K w=\int_{\MM} \min_K w\,d\mu\leqslant \int_{\MM} 
w\,d\mu\leqslant 0$.
Hence, $\max_K w\leqslant \max_K w-\min_K 
w\leqslant\kappa\diam K<+\infty$. By using again the fact 
that $w$ is $\kappa$--Lipschitz, 
we conclude that $w(x)\leqslant \kappa\D{dist}(x,K)+\max_K w\leqslant 
\kappa\big(\D{dist}(x,K)+\diam{K}\big)$ for every $w\in{\sol_b}_-(G)$.
\end{proof}
Therefore we can define  $u^0_G:\MM\to \R$ by
\begin{equation}\label{def1 u_0}
u^0_G(x):=\sup_{w\in{\sol_b}_-(G) }w(x),\qquad{x\in\R}.
\end{equation}
As the supremum of a family of viscosity subsolutions to \eqref{eq G critical}, we know that 
$u^0_G$ is itself a critical subsolution of the same equation. 
We will obtain later that $u^0_G$ is a solution, see Theorem \ref{theo main} 
below.
Next, let us show that, in the definition of $u^0_G$, we can remove the constraint 
that critical subsolutions are bounded.

\begin{lemma}\label{lemma reduction}
Let $w\in\Lip(\MM)$ be a subsolution of \eqref{eq G critical}. Then there exists a diverging sequence $(r_n)_n$ in $(0,+\infty)$ and a sequence 
$(w_{n})_n\subset\Lip(\R^{d})$ of bounded 
subsolutions to \eqref{eq G critical} such that $B_{r_n}\subseteq \{w_n=w\}$ for every $n\in\N$. In particular, 
\begin{equation}\label{eq simpler u0}
u^0_G(x)=\sup_{w\in{\sol}_-(G) }w(x)\qquad{x\in\R},
\end{equation}
where ${\sol}_-(G)$ denotes the family of subsolutions to \eqref{eq G critical} satisfying \eqref{condition u0}.
\end{lemma}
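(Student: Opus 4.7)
The plan is to approximate an arbitrary (possibly unbounded) Lipschitz critical subsolution $w$ by bounded ones obtained by double-truncating against vertical translates of the distinguished subsolution $v_G$ provided by Theorem \ref{teo strict v_G}, and then deduce \eqref{eq simpler u0} from this approximation combined with the localisation of Mather measures established in Lemma \ref{label supp projected Mather measures}.

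Concretely, I would set, for each $n\in\N$,
\[
w_n(x):=\min\Bigl\{\max\{w(x),\,v_G(x)-n\},\, v_G(x)+n\Bigr\},\qquad x\in\MM.
\]
Since $w$ and $v_G$ are Lipschitz subsolutions of \eqref{eq G critical} with Lipschitz constants bounded by the common constant $\kappa_{c(G)}$ given by \eqref{def kappa}, the inner maximum is a Lipschitz subsolution by Proposition \ref{prop when G convex}(i), and the outer minimum is again a Lipschitz subsolution by Proposition \ref{prop when G convex}(ii). From $v_G-n\leqslant w_n\leqslant v_G+n$ and the boundedness of $v_G$ one reads off that $w_n\in\Lip(\MM)$ is bounded. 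Next, because both $w$ and $v_G$ are $\kappa_{c(G)}$-Lipschitz, the function $w-v_G$ is $2\kappa_{c(G)}$-Lipschitz, so for
\[
r_n:=\frac{n-|w(0)-v_G(0)|}{2\kappa_{c(G)}+1}
\]
(which is $>0$ and diverges to $+\infty$) one has $|w-v_G|\leqslant n$ throughout $B_{r_n}$, hence $w_n\equiv w$ on $B_{r_n}$.

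To promote this to the equality \eqref{eq simpler u0}, I need $w_n\in{\sol_b}_-(G)$, which, beyond boundedness and the subsolution property, requires the Mather constraint \eqref{condition u0} (with right-hand side $0$, since $c(G)=c_f(G)$ by Theorem \ref{teo1 A(G)}). Here Lemma \ref{label supp projected Mather measures} is decisive: every $\mu\in\Mis(G)$ is supported in the compact set $K$ appearing in Theorem \ref{teo strict v_G}. For $n$ large enough that $K\subseteq B_{r_n}$, we therefore have
\[
\int_\MM w_n\,d\mu=\int_K w_n\,d\mu=\int_K w\,d\mu=\int_\MM w\,d\mu\leqslant 0
\]
whenever $w\in{\sol}_-(G)$, so $w_n\in{\sol_b}_-(G)$ for all sufficiently large $n$.

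Finally, the inclusion ${\sol_b}_-(G)\subseteq{\sol}_-(G)$ gives the easy direction $u^0_G\leqslant\sup_{w\in{\sol}_-(G)}w$; for the converse, fix any $w\in{\sol}_-(G)$ and any $x\in\MM$, pick $n$ so large that both $x\in B_{r_n}$ and $K\subseteq B_{r_n}$, and note that the associated $w_n\in{\sol_b}_-(G)$ satisfies $w_n(x)=w(x)$, so that $u^0_G(x)\geqslant w(x)$. Taking the supremum over $w\in{\sol}_-(G)$ yields \eqref{eq simpler u0}. The only subtle point in the argument is the need for $c(G)=c_f(G)$ and the support property of Mather measures in order to preserve the integral constraint under truncation; both are already in hand from Theorem \ref{teo1 A(G)} and Lemma \ref{label supp projected Mather measures}, so the rest is a bookkeeping exercise.
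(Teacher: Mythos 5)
Your proof is correct and follows essentially the same route as the paper's: the same double truncation $w_n=\min\{\max\{w,v_G-n\},v_G+n\}$ against the bounded strict subsolution $v_G$, the same appeal to Proposition \ref{prop when G convex} for the subsolution property, and the same use of Lemma \ref{label supp projected Mather measures} to see that the Mather constraint \eqref{condition u0} is preserved once $w_n\equiv w$ on $K$. The only difference is that you make the radius $r_n$ explicit (and should just note $r_n>0$ only for $n$ large, which is harmless), whereas the paper leaves the bookkeeping implicit.
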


\begin{proof}
Let $w\in\Lip(\R)$ be a subsolution to \eqref{eq G critical} and set 
\[
w_n(x)=\min\left\{\max\{w(x),v_G(x)-n\},v_G(x)+n\right\},\qquad x\in\MM,
\]
where $v_G$ is the bounded subsolution to \eqref{eq G critical} provided by Theorem \ref{teo strict v_G}. As a maximum and minimum of critical subsolutions,  $w_n$ is a critical subsolution by Proposition \ref{prop when G convex}. Moreover, $w_n$ is bounded and agrees with $w$ on larger and larger balls as $n\to+\infty$.
In particular, for $n$ large enough, $w_n\equiv w$ on $K$, therefore such a $w_n$ satisfy \eqref{condition u0} whenever 
$w\in\sol_-(G)$ in view of \eqref{supp projected Mather measures}. The remainder of the assertion easily follows from this.  
\end{proof}

Let $u_G $ be the coercive critical solution obtained according to 
Corollary \ref{cor existence u_G}. Since $u_G -\lVert u_G \rVert_{L^\infty(K)}\in {\sol}_-(G)$,  from the previous Lemma we infer that 
\begin{equation}\label{coercive u^0}
u^0_G(x)\geqslant C|x|-1/C\quad\hbox{on $\R$\ \ for some positive constant $C>0$.}
\end{equation}

We now start to study the asymptotic behavior of the discounted value functions 
$u^\lambda_G$ as $\lambda\to 0^+$ and the relation with $u^0_G$.  We begin with the 
following result:

\begin{prop}\label{ineq lim}
Let $\lambda>0$. Then, for every $\mu\in\Mis (G)$, we have 
\[ 
\int_{\MM} u^\lambda_G(x)\, d \mu(x)\leqslant 0.
\]
In particular,  $u\leqslant u^0_G$ on $\MM$ for every $u\in\omegasetG$.
\end{prop}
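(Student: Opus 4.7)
The plan is to combine the subsolution property of $u^\lambda_G$ (coming from the discounted equation) with Fenchel's inequality and the defining variational property of Mather measures. The limit statement will then follow by continuity of integration on the compactly supported measures $\mu\in\Mis(G)$.

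First, I fix a Mather measure $\tilde\mu\in\tilde\Mis(G)$ and set $\mu:={\pi_1}_\#\tilde\mu$. Because $u^\lambda_G$ is a Lipschitz solution of \eqref{eq G discount}, it is, in particular, an a.e.\ subsolution of $G(x,u')\leqslant c(G)-\lambda u^\lambda_G(x)$. I apply Lemma \ref{EncoreUneVariante} with $F(x,p):=G(x,p)+\lambda u^\lambda_G(x)-c(G)$ (note $F\in\D C(\TM)$ and convex in $p$) to get, for every $\eps>0$, a smooth function $w_\eps:\MM\to\R$ with $\|u^\lambda_G-w_\eps\|_\infty\leqslant\eps$ and
\[
G(x,w_\eps'(x))+\lambda u^\lambda_G(x)\leqslant c(G)+\eps\qquad\text{for all }x\in\MM.
\]
Since $u^\lambda_G$ is bounded, so is $w_\eps$, and coercivity of $G$ combined with the displayed inequality yields a uniform bound on $w_\eps'$, so $w_\eps\in\D C^1_b(\MM)\cap\Lip(\MM)$.

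Next, Fenchel's inequality gives $w_\eps'(x)q-G(x,w_\eps'(x))\leqslant L_G(x,q)$ for all $(x,q)$, hence
\[
\int_{\TM}w_\eps'(x)q\,d\tilde\mu(x,q)-\int_{\TM}G(x,w_\eps'(x))\,d\tilde\mu(x,q)\leqslant\int_{\TM}L_G\,d\tilde\mu=-c_f(G)=-c(G),
\]
where I used Theorem \ref{teo G Mather measures} and $c(G)=c_f(G)$ from Theorem \ref{teo1 A(G)}. The closedness of $\tilde\mu$ applied to the admissible test function $w_\eps$ kills the first integral, and I obtain $\int_{\TM}G(x,w_\eps'(x))\,d\tilde\mu\geqslant c(G)$. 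Integrating the pointwise bound on $w_\eps$ against $\tilde\mu$ and subtracting gives
\[
\lambda\int_\MM u^\lambda_G\,d\mu\leqslant c(G)+\eps-\int_{\TM}G(x,w_\eps'(x))\,d\tilde\mu\leqslant\eps.
\]
Letting $\eps\to 0^+$ yields $\int_\MM u^\lambda_G\,d\mu\leqslant 0$, as claimed.

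For the second assertion, pick $u\in\omegasetG$ with $u^{\lambda_k}_G\ucv u$ on $\MM$ for some sequence $\lambda_k\to 0^+$. By the stability of viscosity subsolutions, $u$ is a subsolution of \eqref{eq G critical}. By Lemma \ref{label supp projected Mather measures}, every $\mu\in\Mis(G)$ is supported in the compact set $K$, so the uniform convergence on $K$ together with the inequality just proved give
\[
\int_\MM u\,d\mu=\lim_{k\to+\infty}\int_\MM u^{\lambda_k}_G\,d\mu\leqslant 0.
\]
Hence $u$ belongs to the class $\sol_-(G)$ of (possibly unbounded) critical subsolutions satisfying \eqref{condition u0}, and Lemma \ref{lemma reduction} then yields $u\leqslant u^0_G$ on $\MM$.

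The main technical obstacle is ensuring that the smooth approximations $w_\eps$ provided by Lemma \ref{EncoreUneVariante} are admissible test functions for the closedness of Mather measures, i.e.\ that they lie in $\D C^1_b(\MM)\cap\Lip(\MM)$; this reduces to propagating the $L^\infty$-bound of $u^\lambda_G$ and using coercivity of $G$ to control $w_\eps'$.
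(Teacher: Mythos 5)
Your proof is correct and follows essentially the same route as the paper's: apply Lemma \ref{EncoreUneVariante} to smooth $u^\lambda_G$, combine the approximate subsolution bound with Fenchel's inequality, integrate against a Mather measure using its closedness and minimizing property, and pass to the limit; the only cosmetic differences are that you split the Fenchel step into two displayed integrals before subtracting (where the paper combines them into one pointwise inequality first), and that you spell out the $\D{C}^1_b$ admissibility of $w_\eps$ and the role of Lemma~\ref{label supp projected Mather measures} in passing to the limit, both of which the paper leaves implicit.
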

\begin{proof} By applying Lemma \ref{EncoreUneVariante} with $F(x,p):=\lambda 
u^\lambda_G(x)+G(x,p)-c(G)$, we infer that there exists a sequence $(w_n)_n$ of 
functions in $\D{C}^1({\MM})\cap\Lip(\MM)$ such that 
\hbox{$\|u^\lambda_G-w_n \|_\infty \leqslant 1/n$} and  
\[
 \lambda u^\lambda_G(x)+G(x,w'_n(x)) \leqslant c(G)+1/n\quad \text{ for every $x\in 
{\MM}$.}
\]
By  the Fenchel inequality
\[
L_G(x,q)+G(x,w'_n(x))\geqslant w'_n(x)\,q\quad \text{ for every $(x,q)\in \TM$},
\]
Combining these two inequalities we infer
\begin{equation}\label{FENDISC}
 \lambda u^\lambda_G(x)+ w'_n\,q \leqslant L_G(x,q)+c(G)+\frac1n\quad\text{ for 
every $(x,q)\in \TM$}.
\end{equation}
Pick $\tilde\mu\in\tilde\Mis (G)$, and set 
$\mu={\pi_1}_\#\tilde\mu\in\Mis (G)$. Since $\tilde\mu$ is closed and minimizing, we have 
$\int_{\TM}  w'_n(x)\,q\, d\tilde\mu(x,q)=0$, and 
$\int_{\TM}L_G(x,q)\,d\tilde\mu(x,q)=-c(G)$.
Therefore, if we integrate \eqref{FENDISC}, we obtain
\[
\lambda \int_\MM u^\lambda_G(x)\,d\mu(x)\leqslant \frac1n.
\]
Since $\lambda>0$, letting $n\to\infty$ we obtain $ \int_{\MM} 
u^\lambda_G(x)\,d\mu(x)\leqslant 0$.
If $u$ is the uniform limit of $\left(u^G_{\lambda_n}\right)_n$ for some 
$\lambda_n\to 0$, we know that it is a solution of 
the critical equation \eqref{eq G critical}. Moreover, it also has to satisfy $ 
\int_\MM u(x)\,d\mu(x)\leqslant 0$
 for every projected Mather measure $\mu$.  Therefore  $u\in {\sol}_-(G)$ 
and $u\leqslant u^0_G$. 
 \end{proof}

The next (and final) step consists in showing that $u\geqslant u^0_G$ in $\MM$ whenever $u\in\omegasetG$. 
For this, we will exploit the representation formula
\eqref{representation formula discounted} for $u^\lambda_G$  and Proposition \ref{prop calibrating main}. 

Let us fix $x\in \MM$. For every $\lambda>0$, we  choose 
$\gamma^{\lambda}_x:(-\infty,0]\to \MM$ with $\gamma_x^\lambda(0)=x$ to be an optimal curve for 
$u^\lambda_G$, cf. 
Proposition \ref{prop calibrating main}, and we define a measure 
$\tilde \mu_x^\lambda=\tilde\mu_{\gamma^\lambda_x}^\lambda$ on $\TM$ via 
\eqref{def discounted measure}. The following holds:

\begin{prop}\label{prop discounted measures} 
Let $x\in\MM$ and $(\lambda_n)_n$ be an infinitesimal sequence. Then the set of measures 
$\left(\tilde \mu_x^{\lambda_n}\right)_n$ defined above is a tight family of probability measures,
whose supports are all contained in $\MM\times\overline B_{\tilde\kappa}$ for some 
$\tilde\kappa>0$. In 
particular, they are relatively compact in the space of 
probability measures on $\TM$ with respect to the narrow convergence. Furthermore, 
if $\left(\tilde \mu_x^{\lambda_{n}}\right)_n$ is narrowly converging to 
$\tilde\mu$, then $\tilde\mu$ is a (closed) Mather
measure.
\end{prop}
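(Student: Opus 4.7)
The plan is to prove the three assertions in order: support containment, tightness (hence relative compactness by Prokhorov), and the identification of any narrow limit as a Mather measure. The fuel for tightness is the uniformly strict bounded subsolution $v_G\leqslant 0$ produced by Theorem \ref{teo strict v_G}, while the minimality of the limit will rely on $c(G)=c_f(G)$, guaranteed in this case by Theorem \ref{teo1 A(G)}.

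First, the support property is immediate from Proposition \ref{prop calibrating main}: the chosen optimal curves $\gamma_x^\lambda$ are $\tilde\kappa$--Lipschitz with $\tilde\kappa>0$ independent of $x$ and $\lambda$, hence $\supp(\tilde\mu_x^\lambda)\subseteq \MM\times\overline{B}_{\tilde\kappa}$ for every $\lambda>0$. For tightness in the base variable I will apply Proposition \ref{prop tightness} with $v:=v_G$, $a:=c(G)$, $\delta>0$ as in Theorem \ref{teo strict v_G}, and $U:=\MM\setminus K$, yielding
\[
\mu_x^{\lambda}(\MM\setminus K)\leqslant \frac{\lambda}{\delta}\bigl(u_G^\lambda(x)-v_G(x)\bigr).
\]
Since the family $\{u_G^\lambda\}_{\lambda>0}$ is locally equi--bounded in $\MM$ by \eqref{bounds discounted solutions}, the right--hand side tends to $0$ as $\lambda\to 0^+$. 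Consequently, for every $\eps>0$ the compact set $K_\eps:=K\times\overline{B}_{\tilde\kappa}\subset\TM$ (possibly enlarged to accommodate finitely many exceptional indices) satisfies $\tilde\mu_x^{\lambda_n}(\TM\setminus K_\eps)<\eps$ for all $n$, which is tightness. Prokhorov's theorem then gives the relative compactness in the narrow topology.

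Next, suppose $\tilde\mu_x^{\lambda_n}\weakcv\tilde\mu$. I will argue that $\tilde\mu$ is closed exactly as in part (i) of Proposition \ref{prop2 limit measures}. Pick $\varphi\in \D C^{1}_b(\MM)\cap\Lip(\MM)$; integration by parts in \eqref{def discounted measure} produces
\[
\int_{\TM}\varphi'(y)q\,d\tilde\mu_x^{\lambda_n}(y,q)=\lambda_n\varphi(x)-\lambda_n\int_{\TM}\varphi(y)\,d\tilde\mu_x^{\lambda_n}(y,q),
\]
and both terms on the right are $O(\lambda_n\|\varphi\|_\infty)\to 0$. Because the function $(y,q)\mapsto\varphi'(y)q$ is continuous and its restriction to $\MM\times\overline{B}_{\tilde\kappa}$ is bounded, narrow convergence (after a harmless truncation of $q$) passes to the left--hand side, yielding $\int_{\TM}\varphi'(y)q\,d\tilde\mu=0$. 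The bound $\supp(\tilde\mu)\subseteq\MM\times\overline{B}_{\tilde\kappa}$ is preserved in the narrow limit, so in particular $\int|q|\,d\tilde\mu\leqslant\tilde\kappa<+\infty$, and $\tilde\mu$ is closed.

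Finally, minimality will come from exploiting the defining identity for $u_G^\lambda$. Using \eqref{eq minimizing curve} and the definition of $\tilde\mu_x^\lambda$,
\[
\lambda u_G^\lambda(x)=\int_{\TM}\bigl(L_G(y,q)+c(G)\bigr)\,d\tilde\mu_x^\lambda(y,q).
\]
Since $L_G$ is continuous and bounded below (by the Fenchel inequality with $p=0$, $L_G(y,q)\geqslant -\sup_y G(y,0)>-\infty$), narrow lower semicontinuity gives
\[
0=\lim_{n\to\infty}\lambda_n u_G^{\lambda_n}(x)=\liminf_{n\to\infty}\int_{\TM}\bigl(L_G+c(G)\bigr)\,d\tilde\mu_x^{\lambda_n}\geqslant \int_{\TM}\bigl(L_G+c(G)\bigr)\,d\tilde\mu,
\]
(using the local equi--boundedness of $u_G^\lambda$ for the left identity). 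On the other hand, $\tilde\mu$ is closed and, by Theorem \ref{teo1 A(G)}, $c(G)=c_f(G)$, so Theorem \ref{teo G Mather measures} forces $\int L_G\,d\tilde\mu\geqslant -c(G)$. Combining the two inequalities gives $\int(L_G+c(G))\,d\tilde\mu=0$, i.e.\ $\tilde\mu$ is a Mather measure for $G$. The main obstacle is really the tightness step, since without the uniformly strict subsolution of Theorem \ref{teo strict v_G} there would be no mechanism to prevent the dispersion of mass at infinity; everything else is an integration--by--parts plus Fenchel/minimality book--keeping parallel to \cite{DFIZ1}.
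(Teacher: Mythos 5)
Your proof is correct and follows essentially the same route as the paper: support containment from the uniform Lipschitz bound on the minimizing curves, tightness via Proposition \ref{prop tightness} applied with $v_G$ and $U:=\MM\setminus K$, closedness by the integration-by-parts argument of Proposition \ref{prop2 limit measures}(i), and minimality by passing to the limit in $\lambda_n u_G^{\lambda_n}(x)=\int(L_G+c(G))\,d\tilde\mu_x^{\lambda_n}$. The only variation is in the last step, where the paper asserts direct convergence of the integrals (legitimate because $L_G$ is bounded on $\MM\times\overline B_{\tilde\kappa}$, as $L_G=L_H+V$ with $L_H$ periodic and $V$ compactly supported), whereas you use narrow lower semicontinuity for one inequality and the closedness bound from Theorem \ref{teo G Mather measures} together with $c(G)=c_f(G)$ for the other; both are sound.
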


\begin{proof} 
Call ${\tilde\kappa}$ a common Lipschitz constant for the family of curves 
\hbox{$\{\gamma_x^\lambda\,:\,\lambda>0\}$,} according to Proposition \ref{prop 
calibrating main}. 
Then the measures $\tilde \mu_x^\lambda$ are all probability measures and 
all have support contained in the compact set $\MM\times\overline B_{\tilde\kappa}$, 
as it can be easily checked by their 
definition. In order to prove that the set of probability measures $\left(\tilde \mu_x^{\lambda_n}\right)_n$
is tight, it is enough to prove that 
the set of measures $\left( \mu_x^{\lambda_n}\right)_n$ is tight, where each measure $\mu_x^\lambda:={\pi_1}_\#\tilde\mu^\lambda_x$ 
is defined as the push--forward of the measure $\tilde\mu^\lambda_x$ via the projection map 
${\pi_1}:\TM\ni(x,p)\mapsto x\in\MM$. Moreover, since any finite family of 
probability measure on a Polish space is tight, it suffices to show the 
following:\medskip\\
{\bf\underline{Claim:}} there exists a constant $C>0$ such that \ \ 
$\mu^{\lambda}_x(\R\setminus K)<C\lambda$\quad for every $\lambda>0$.\medskip\\
But this follows as a direct application of Proposition \ref{prop tightness} with $v:=v_G$ and $U:=\MM\setminus K$. By Prohorov's Theorem, see for instance \cite[Theorem 5.1]{Bill99}, we infer that the set of probability 
measures $\left(\tilde \mu_x^{\lambda_n}\right)_n$ is relatively compact in $\parts(\TM)$ with respect to 
narrow convergence. 

Let now assume that  $\left(\tilde 
\mu_x^{\lambda_{n}}\right)_n$ is narrowly converging to $\tilde\mu$ for some 
$\lambda_n\to 0$. Arguing as in the proof of Proposition \ref{prop2 limit measures}-(i), we get that 
$\tilde\mu$ is a closed probability 
measure. It remains to show  that 
$\int_{\TM}L_G(x,q)\,d\tilde\mu(x,q)=-c(G)$. We have
\begin{multline*}
\int_{\TM} \big(L_G(x,q)+c(G)\big)\,d\tilde\mu(x,q)=\lim_{n\to\infty}\int_{\TM} \big(L_G(x,q)+c(G)\big)\,
d\tilde\mu_x^{\lambda_{n}}(x,q)\\
=\lim_{n\to\infty}\int_{-\infty}^0  (\e^{\lambda_n s})'
\big(L_G\big(\gamma_x^{\lambda_n}(s),\dot\gamma_x^{\lambda_n}(s)\big)+c(G)\big)\, d s
=\lim_{n\to\infty}\lambda_nu_G^{\lambda_n}(x)=0,
\end{multline*}
where the last equality follows from the fact that $\lambda u^\lambda_G(x) \to 0$ in view of \eqref{bounds discounted solutions}.
\end{proof}
The following lemma will be crucial for the proof of our main result, see 
Theorem \ref{theo main} below.
\begin{lemma}\label{ineq prim}
Let $w$ be any bounded critical subsolution. For every $\lambda>0$ and $x\in \MM$ 
\begin{equation}\label{eq pre-final}
u^\lambda_G(x)\geqslant w(x)-\int_{\TM} w(y)\, d \tilde{\mu}^{\lambda}_x (y,q).
\end{equation}
\end{lemma}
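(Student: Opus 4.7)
The plan is to mimic the proof of Proposition \ref{prop step2}, which establishes an entirely analogous inequality (with the measure split into two pieces). Since here the measure $\tilde\mu^\lambda_x$ is defined by the single formula \eqref{def discounted measure} rather than the two pieces \eqref{def2 discounted measure1}--\eqref{def2 discounted measure2}, the argument is actually a touch simpler: no splitting at $T^\lambda_x$ is needed.

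More precisely, I would proceed as follows. Fix $\eps>0$. Since $w\in \D{Lip}(\MM)$ satisfies $G(x,w')\leqslant c(G)$ in the viscosity sense and $G(x,\cdot)$ is convex, Lemma \ref{EncoreUneVariante} applied with $F(x,p):=G(x,p)-c(G)$ produces $w_\eps\in \Lip(\MM)\cap \D{C}^\infty(\MM)$ with
\[
\|w-w_\eps\|_\infty\leqslant \eps \qquad\text{and}\qquad G(x,w'_\eps(x))\leqslant c(G)+\eps \quad\text{for all }x\in\MM.
\]
Fenchel's inequality then yields
\[
L_G(x,q)+c(G)+\eps \ \geqslant\ w'_\eps(x)\,q \qquad\text{for all }(x,q)\in\TM.
\]

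Next, I would use the optimality of $\gamma^\lambda_x$ in the representation formula \eqref{eq minimizing curve} with $a:=c(G)$, insert the above Fenchel inequality along the curve, and integrate by parts:
\begin{align*}
u^\lambda_G(x)
&=\int_{-\infty}^0 \e^{\lambda s}\bigl[L_G(\gamma^\lambda_x(s),\dot\gamma^\lambda_x(s))+c(G)\bigr]\,ds\\
&\geqslant \int_{-\infty}^0 \e^{\lambda s}\bigl[w'_\eps(\gamma^\lambda_x(s))\,\dot\gamma^\lambda_x(s)-\eps\bigr]\,ds\\
&=\int_{-\infty}^0 \e^{\lambda s}\,\tfrac{d}{ds}w_\eps(\gamma^\lambda_x(s))\,ds\ -\ \tfrac{\eps}{\lambda}\\
&=w_\eps(x)-\int_{-\infty}^0 (\e^{\lambda s})'\,w_\eps(\gamma^\lambda_x(s))\,ds\ -\ \tfrac{\eps}{\lambda}\\
&=w_\eps(x)-\int_{\TM} w_\eps(y)\,d\tilde\mu^\lambda_x(y,q)\ -\ \tfrac{\eps}{\lambda},
\end{align*}
where the last line uses the very definition \eqref{def discounted measure} of $\tilde\mu^\lambda_x$. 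The integration by parts is justified because $w_\eps$ is bounded, $\gamma^\lambda_x$ is Lipschitz, and the boundary term at $-\infty$ vanishes in force of the factor $\e^{\lambda s}$.

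Finally, using $\|w-w_\eps\|_\infty\leqslant\eps$ and that $\tilde\mu^\lambda_x$ is a probability measure, we obtain
\[
u^\lambda_G(x)\ \geqslant\ w(x)-\int_{\TM} w(y)\,d\tilde\mu^\lambda_x(y,q)\ -\ 2\eps\ -\ \tfrac{\eps}{\lambda},
\]
and letting $\eps\to 0^+$ (for fixed $\lambda>0$) yields \eqref{eq pre-final}. No genuine obstacle is expected: the only subtle point is checking that the boundary terms in the integration by parts vanish at $-\infty$, which is immediate from the boundedness of $w_\eps$ together with $\e^{\lambda s}\to 0$ as $s\to-\infty$.
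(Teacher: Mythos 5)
Your proof is correct and it rests on the same underlying calculation as the paper's. The only difference is presentational: the paper observes that $\tilde\mu^\lambda_x$ coincides with $\tilde\mu^\lambda_{x,1}$ when one takes $r=+\infty$ (so $T^\lambda_x=+\infty$ and $\theta^\lambda_x=1$) and then simply quotes Proposition \ref{prop step2}, whereas you re-derive the estimate directly, which amounts to replaying the proof of Proposition \ref{prop step2} without the splitting at $T^\lambda_x$ — exactly as you announce at the outset.
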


\begin{proof}
It suffices to remark that the 
measure ${\tilde\mu}^{\lambda}_x$ agrees with the measure ${\tilde\mu}^{\lambda}_{x,1}$ defined 
via \eqref{def2 discounted measure1} where we have taken $r=+\infty$. The assertion follows 
from Proposition \ref{prop step2}.
\end{proof}

We are now ready to prove our main theorem:

\begin{teorema}\label{theo main}
The functions $u^\lambda_G$ uniformly converge to the function $u^0_G$ given by \eqref{def1 u_0} locally on $\MM$ as $\lambda\to 
0^+$.
In particular, as an accumulation point of $u^\lambda_G$ as $\lambda\to 0^+$, the 
function $u^0_G$ is a viscosity solution of \eqref{eq G critical}.
\end{teorema}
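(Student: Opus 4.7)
The plan is to prove $\omegasetG=\{u^0_G\}$, which combined with Ascoli--Arzel\`a upgrades subsequential convergence to convergence of the whole family. First, by \eqref{bounds discounted solutions} the discounted solutions are equi--Lipschitz and locally equi--bounded on $\MM$, so the Ascoli--Arzel\`a theorem together with the stability of viscosity solutions implies that $\omegasetG$ is non--empty and every $u\in\omegasetG$ is a viscosity solution of the critical equation \eqref{eq G critical}. It therefore suffices to show that any $u\in\omegasetG$ coincides with $u^0_G$; the second assertion of the theorem is then automatic.

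The upper bound $u\leqslant u^0_G$ in $\MM$ for any $u\in\omegasetG$ is essentially contained in Proposition \ref{ineq lim}: such a $u$ is a bounded critical solution, and integration against an arbitrary projected Mather measure yields $\int_\MM u\,d\mu\leqslant 0$, so $u\in{\sol_b}_-(G)$ and hence $u\leqslant u^0_G$ by definition \eqref{def1 u_0}.

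For the reverse inequality, fix $x\in\MM$, pick a sequence $\lambda_n\downarrow 0$ with $u^{\lambda_n}_G\ucv u$ on $\MM$, and let $w\in{\sol_b}_-(G)$ be arbitrary. Applying Lemma \ref{ineq prim} with $\lambda:=\lambda_n$ gives
\[
u^{\lambda_n}_G(x)\geqslant w(x)-\int_{\TM} w(y)\,d\tilde\mu^{\lambda_n}_x(y,q).
\]
By Proposition \ref{prop discounted measures}, the family $\{\tilde\mu^{\lambda_n}_x\}_n$ is tight with supports in a common compact subset of $\TM$, and any narrow limit point is a Mather measure for $G$. Extract a (non--relabelled) subsequence so that $\tilde\mu^{\lambda_n}_x\weakcv \tilde\mu$ with $\tilde\mu\in\tilde\Mis(G)$. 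The function $w\circ\pi_1$ is bounded and continuous on $\TM$, so narrow convergence lets us pass to the limit:
\[
u(x)\geqslant w(x)-\int_{\MM} w(y)\,d\mu(y),\qquad \mu:={\pi_1}_\#\tilde\mu\in\Mis(G).
\]
Since in the present case $c(G)=c_f(G)$ by Theorem \ref{teo1 A(G)}, the defining constraint \eqref{condition u0} for $w\in{\sol_b}_-(G)$ forces $\int_\MM w\,d\mu\leqslant 0$, hence $u(x)\geqslant w(x)$. Taking the supremum over $w\in{\sol_b}_-(G)$ gives $u(x)\geqslant u^0_G(x)$, and combining both inequalities yields $u=u^0_G$ on $\MM$.

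The main obstacle is the passage to the limit in the integral against $\tilde\mu^{\lambda_n}_x$: this is clean here only because Proposition \ref{prop discounted measures} provides both tightness and the fact that the limit is a Mather measure. Tightness in turn rests on Proposition \ref{prop tightness} applied to the bounded subsolution $v_G$ of Theorem \ref{teo strict v_G}, which is uniformly strict outside a compact set containing $\supp(V)$; this is precisely the feature of case (I) that forbids dispersion of mass at infinity and makes the argument of \cite{DFIZ1} applicable verbatim in the present non--compact setting.
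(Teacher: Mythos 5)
Your proof follows essentially the same path as the paper's: reduce to subsequential limits via Ascoli--Arzel\`a, establish $u\leqslant u^0_G$ from Proposition \ref{ineq lim}, and obtain the reverse inequality by combining Lemma \ref{ineq prim} with the narrow convergence of $\tilde\mu^{\lambda_n}_x$ to a Mather measure from Proposition \ref{prop discounted measures}, then using the constraint \eqref{condition u0}. Your added remark that $c(G)=c_f(G)$ (Theorem \ref{teo1 A(G)}) is what makes the constraint effective, and your observation that tightness hinges on Proposition \ref{prop tightness} applied to the uniformly strict bounded subsolution $v_G$, are both correct and helpful context, but the argument itself is the one in the paper.
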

\begin{proof}
By Theorem \ref{teo discounted sol} and Proposition \ref{prop application comparison}, we know that the functions 
$u^\lambda_G$ are equi--Lipschitz and locally qui--bounded on $\MM$, 
hence it is enough, by the Ascoli--Arzel\`a theorem, to prove that any 
converging subsequence has  $u^0_G$ as limit.

Let $\lambda_n\to 0$ be such that $u^{\lambda_n}_G$ locally uniformly converge to some 
$u\in\CC(\MM)$. 
We have seen in Proposition \ref{ineq lim} that 
$$ 
u(x)\leqslant u^0_G(x)\qquad\hbox{for every  $x\in \MM$}.
$$
To prove the opposite inequality, let us fix $x\in \MM$.  Let $w$ be a bounded critical 
subsolution.
By Lemma \ref{ineq prim}, we have
\begin{equation*}
u^{\lambda_n}_G(x)\geqslant w(x)-\int_{\TM} w(y)\, d \tilde{\mu}^{\lambda_n}_x (y,q).
\end{equation*}
By Proposition \ref{prop discounted measures}, extracting a further 
subsequence, we can assume that
$\tilde{\mu}^{\lambda_n}_x$ converges narrowly to a Mather measure $\tilde \mu$
whose projection on $\MM$ is denoted by $\mu$. Passing to the limit in the last 
inequality,
we get  
\[
u(x)
\geqslant 
w(x)-\int_{\MM} w(y)\, d {\mu}(y).
\]
If we furthermore assume that 
$w\in {\sol_b}_-$, the set of bounded subsolutions satisfying \eqref{condition u0}, 
we obtain
$\int_{\MM} w(y)\, d {\mu}(y)\leqslant 0$, hence $u\geqslant w$.
We conclude that $u\geqslant u^0_G=\sup\limits_{w\in{\sol_b}_-}w$ in view of Lemma \ref{lemma reduction}.\end{proof}

\section{The case $c(G)=c(H)>c_f(H)$}\label{sez case II}

In this section we shall prove the asymptotic convergence in the case $c(G)=c(H)>c_f(H)$. Throughout this section,  we shall denote by $c$ this common critical constant for notational simplicity. We start with 
some remarks on the critical equations associated with $H$ and $G$.

\subsection{Critical equations}\label{sez critical eq II}
Let us consider the critical equations 
\begin{eqnarray}
 G(x,u')=c\qquad\hbox{in $\R$,} \label{eq2 G critical}\\
 H(x,u')=c\qquad\hbox{in $\R$.} \label{eq2 H critical}
\end{eqnarray}
We introduce a piece of notation first. Let us set $\overline y_V:=\max\big(\supp(V)\big)$, $\underline y_V:=\min\big(\supp(V)\big)$, and, for every $x\in\R$, 
\begin{align*}
Z_H(x)&:=\{p\in\R\,:\,H(x,p)\leqslant c\,\}
 =
 [p^{-}_H(x),p^{+}_H(x)],\\
Z_G(x)&:=\{p\in\R\,:\,G(x,p)\leqslant c\,\}
 =
 [p^{-}_G(x),p^{+}_G(x)].
\end{align*}

From the fact that $c>c_f(H)$ we infer $\rho:=\min_{x\in\R} \big(p_H^+(x)-p_H^-(x)\big)>0$. 
Furthermore, one of the following circumstances occurs, cf. Theorem \ref{teo Sez2 effective ham} (with $\theta=0$)\footnote{We recall that $c(H)=\overline H(0)$, cf. 
Section \ref{sez critical values}.}:
\begin{itemize}
 \item[\bf{(A)}]\quad $\int_0^1 p^+_H(x)\, dx=0>-\rho>\int_0^1 p^-_H(x)\, dx$;\smallskip\\
 \item[\bf{(B)}]\quad $\int_0^1 p^+_H(x)\, dx>\rho>0=\int_0^1 p^-_H(x)\, dx$.\\
\end{itemize}

%

The following holds:

\begin{teorema}\label{teo2 bounded sol}
There exists a bounded solution $u_{G}$ to \eqref{eq2 G critical}. 
\end{teorema}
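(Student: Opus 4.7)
The plan is to construct $u_G$ explicitly as an antiderivative of a continuous selection from the boundary of $Z_G(x) := \{p \in \R : G(x,p) \leqslant c\}$, exploiting the one-dimensional structure.

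First I would note that $c = c(G) \geqslant c_f(G)$ by the very definitions of these critical values, so $Z_G(x) = [p_G^-(x),p_G^+(x)]$ is a nonempty closed interval for every $x \in \R$, and $G(x, p_G^{\pm}(x)) = c$ for all $x \in \R$ by continuity and convexity of $G(x,\cdot)$. By Remark \ref{oss properties sigma_a}, the selections $x \mapsto p_G^{\pm}(x)$ are continuous on all of $\R$. Moreover, since $G \equiv H$ off $\supp(V)$, we have $p_G^{\pm}(x) = p_H^{\pm}(x)$ for every $x \in \R \setminus \supp(V)$.

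In case (A), I would set
\[
u_G(x) := \int_0^x p_G^+(y)\, dy, \qquad x \in \R.
\]
Continuity of $p_G^+$ yields $u_G \in C^1(\R)$ with $u_G'(x) = p_G^+(x)$, and hence $G(x, u_G'(x)) = c$ pointwise, making $u_G$ a classical (therefore viscosity) solution of \eqref{eq2 G critical}. For boundedness, I would compare with $u_H^+(x) := \int_0^x p_H^+(y)\, dy$, which is $\Z$-periodic (hence bounded) since $\int_0^1 p_H^+(y)\, dy = 0$ in case (A). For $x \geqslant \overline y_V$, the identity $p_G^+ \equiv p_H^+$ on $[\overline y_V, +\infty)$ gives $u_G(x) = u_G(\overline y_V) - u_H^+(\overline y_V) + u_H^+(x)$, which is bounded; symmetrically for $x \leqslant \underline y_V$. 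On the compact set $[\underline y_V, \overline y_V]$, boundedness follows from continuity of $u_G$.

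In case (B), the argument is symmetric with the opposite selection: I would define $u_G(x) := \int_0^x p_G^-(y)\, dy$ and compare with $u_H^-(x) := \int_0^x p_H^-(y)\, dy$, which is $\Z$-periodic in case (B) since $\int_0^1 p_H^-(y)\, dy = 0$. The same three-region estimate yields boundedness.

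The construction is essentially elementary once one uses the 1d device of taking an antiderivative of a continuous boundary selection, so no serious obstacle is expected. The only delicate ingredient is the $C^1$ regularity of $u_G$, which comes for free from the continuity of $p_G^{\pm}$ furnished by Remark \ref{oss properties sigma_a} and lets one avoid the usual viscosity-theoretic verifications.
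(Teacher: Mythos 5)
Your proof is correct and takes essentially the same route as the paper: in case (A) you set $u_G(x)=\int_0^x p_G^+(z)\,dz$, observe that $p_G^+\equiv p_H^+$ off $\supp(V)$ and that $p_H^+$ is $1$-periodic with zero mean (by case (A)), and deduce boundedness; case (B) is symmetric with $p_G^-$. The only cosmetic difference is that you explicitly invoke Remark \ref{oss properties sigma_a} to upgrade $u_G$ from a Lipschitz a.e.\ solution to a genuine $C^1$ classical solution, which is a valid but inessential refinement of the paper's argument.
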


\begin{proof}
Let us assume we are in case (A). Set $u_G(x):=\int_0^x p_G^+(z)\,dz$ for all $x\in\MM$. It is clear that $u_G$ is a Lipschitz solution to \eqref{eq2 G critical}. To see that it is bounded, simply notice that
\[
u_G(\underline y_V-n)=u_G(y),
\qquad
u_G(\overline y_V+n)=u_G(y)
\qquad
\hbox{for all $n\in\N$,}
\]
since $p^+_G=p^+_H$ in $\MM\setminus\supp\big(V\big)$ and the mean of the 1-periodic function 
$p^+_H$\  is 0. The proof in case (B) is analogous.
\end{proof}

Furthermore, we have the following additional information:

\begin{prop}\label{prop2 bounded G sol}
Let $v$ be a bounded solution to \eqref{eq2 G critical}. Then:
\begin{itemize}
 \item[(i)] in case (A), 
\begin{equation}\label{claim2 bounded G sol}
v'(x)=p_H^+(x)\qquad\hbox{for all $x\geqslant \overline y_V:=\max\big(\supp(V)\big)$.}
\end{equation}
\item[(ii)] in case (B), 
\begin{equation}\label{claim2B bounded G sol}
v'(x)=p_H^-(x)\qquad\hbox{for all $x\leqslant \underline y_V:=\min\big(\supp(V)\big)$.}
\end{equation}
\end{itemize}
\end{prop}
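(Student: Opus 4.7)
The plan is to leverage the structural rigidity of bounded viscosity solutions of convex 1D Hamilton--Jacobi equations, combined with the sign conditions from hypotheses~(A) and~(B). I will argue~(i); part~(ii) is entirely symmetric. Set $I:=[\overline y_V,+\infty)$. Since $V\equiv 0$ on $I$, we have $G\equiv H$ there, so $v$ restricts to a bounded viscosity solution of $H(x,u')=c$ on $\mathrm{int}\,I$. Because $c>c_f(H)$, Theorem~\ref{teo Sez2 effective ham} provides a uniform gap $\rho:=\min_{x\in\R}(p_H^+(x)-p_H^-(x))>0$.

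The first step is the pointwise classification $v'(x)\in\{p_H^-(x),p_H^+(x)\}$ for a.e.\ $x\in I$: at any differentiability point $x$ of $v$, combining the sub- and super-solution inequalities gives $H(x,v'(x))=c$, and convexity of $H$ in $p$ together with the gap $\rho>0$ localizes the level set $\{H(x,\cdot)=c\}$ to the endpoints $p_H^\pm(x)$ (the degenerate case of $H(x,\cdot)$ having a plateau at level $c$ is treatable by approximation).

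Next I would rule out ``convex corners''. I claim that for any $x_0\in\mathrm{int}\,I$ one cannot have $v'(x_0^-)<v'(x_0^+)$: if one could, then for small $\eps>0$ the $C^1$ function
\[
\phi(x):=v(x_0)+\int_{x_0}^{x}p_H^-(t)\,dt+\eps(x-x_0)
\]
would serve as a subtangent to $v$ at $x_0$, yielding $p_H^-(x_0)+\eps\in D^-v(x_0)$. The supersolution condition would then demand $H(x_0,p_H^-(x_0)+\eps)\geqslant c$, which is impossible because $p_H^-(x_0)+\eps$ lies in the strict interior $(p_H^-(x_0),p_H^+(x_0))$, where $H(x_0,\cdot)<c$. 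Consequently, once $v'$ reaches the lower branch $p_H^-$ it cannot jump back up to the upper branch $p_H^+$, so $E_-:=\{x\in I:v'(x)=p_H^-(x)\}$ is, modulo a null set, of the form $[x^*,+\infty)$ for some $x^*\in[\overline y_V,+\infty]$.

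Finally, if $x^*<+\infty$, the $\Z$-periodicity of $p_H^-$ together with hypothesis~(A) give
\[
v(x^*+n)=v(x^*)+n\int_0^1 p_H^-(t)\,dt+O(1)\to-\infty
\]
as $n\to+\infty$, contradicting the boundedness of $v$. Hence $x^*=+\infty$, i.e.\ $v'=p_H^+$ a.e.\ on $I$, and the continuity of $p_H^+$ together with the absolute continuity of $v$ upgrade this to $v'(x)=p_H^+(x)$ for every $x\in I$. Part~(ii) follows by the same argument on $(-\infty,\underline y_V]$: the structural rigidity forces $\{v'=p_H^+\}$ to be, modulo null, a half-line $(-\infty,x^*]$, and the sign condition $\int_0^1 p_H^+>0$ from~(B) rules out $x^*>-\infty$, giving $v'(x)=p_H^-(x)$ for $x\leqslant\underline y_V$. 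The main obstacle is the ``no convex corner'' step: the subtangent construction needs genuine one-sided derivatives of $v$ at $x_0$ and a suitable growth rate of $v$ past $x_0$; the cleanest way to secure both is to invoke the semi-concavity of viscosity solutions of convex Hamilton--Jacobi equations in one space variable, which furnishes one-sided derivatives everywhere and upper semi-continuity of the right derivative, and then to choose $x_0$ as an appropriate transition point between the sets $E_+$ and $E_-$.
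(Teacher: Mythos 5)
Your proposal is correct and follows essentially the same route as the paper: you identify that $v'$ is confined to the two branches $p_H^\pm$ on $[\overline y_V,+\infty)$, that the viscosity supersolution condition forbids upward jumps of $v'$ (so once on the lower branch $v'$ stays there), and then you derive a contradiction with boundedness by integrating $p_H^-$ over an integer number of periods using hypothesis (A). One small remark: the parenthetical about a possible ``plateau'' of $H(x,\cdot)$ at level $c$ is moot here, since $c>c_f(H)=\sup_x\min_p H(x,p)$ gives $\min_p H(x,p)<c$ strictly for every $x$, and a convex coercive function whose minimum lies strictly below $c$ meets the level $c$ at exactly the two endpoints $p_H^\pm(x)$, with no flat piece.
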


\begin{proof}
Let us prove (i). Let us assume by contradiction that \eqref{claim2 bounded G sol} does not hold. 
Being $v$ a viscosity solution to \eqref{eq2 G critical}, its derivative $v'$ can 
jump only downwards. Since 
\[
 p_G^+(x)-p_G^-(x)
 =
  p_H^+(x)-p_H^-(x)
  \geqslant
  \rho>0
  \qquad\hbox{for all $x\in\MM\setminus\supp\big(V\big)$,}
\]
there exists a point $y\geqslant \overline y_V$ such that 
$v'(x)=p_G^-(x)$ for all $x>y$.  Hence, for all $n\in\N$ we have
\begin{eqnarray*}
 v(y+n)
 &=&
 v(y)+\int_{y}^{y+n} p^-_G(z)dz
 =
 v(y)+\int_{y}^{y+n} p^-_H(z)dz\\
 &=&
 v(y)-n\int_0^1 \big( p_H^+(z)-p_H^-(z)\big) dz
 \leqslant
 v(y)-n\rho,
\end{eqnarray*}
in contrast with  the fact that $v$ is bounded. The proof of item (ii) is analogous. 
\end{proof}

\begin{oss}\label{oss2 bounded G sol}
Proposition \ref{prop2 bounded G sol} holds in particular in the case $G:=H$ 
(i.e. when $V\equiv 0$) with equality in \eqref{claim2 bounded G sol} 
and in \eqref{claim2B bounded G sol} holding for every $x\in\MM$, by periodicity of the solution $v$.
\end{oss}

\subsection{Asymptotic convergence}\label{sez asymptotics II}
Let us denote by $u^\lambda_G$, $u^\lambda_H$ the solutions of the discounted equations
\begin{eqnarray}
 \lambda u+ G(x,u')=c\qquad\hbox{in $\R$,} \label{eq2 G discounted}\label{eq discount G}\\
 \lambda u + H(x,u')=c\qquad\hbox{in $\R$.} \label{eq2 H discounted}\label{eq discount H}
\end{eqnarray}

We want to prove that the solutions $u^\lambda_G$ converge, as $\lambda\to 0^+$, to a specific solution of $u^0_G$ the critical equation \eqref{eq2 G critical}. 
In view of \cite{DFIZ1}, we know that this is true for the solutions of \eqref{eq2 H discounted}, i.e. $u^\lambda_H\ucv u^0_H$ in $\MM$ where $u^0_H$ is a solution 
to the critical equation \eqref{eq2 H discounted}, and this convergence 
is actually uniform in $\MM$ since all these functions are $\Z$--periodic. 
Furthermore, $u^0_H$ is identified as the maximal (sub-)solution to 
\eqref{eq2 H critical} such that 
\begin{equation}\label{eq2 u^0_H}
\int_{\TTorus} v(y)\,d\mu(y)\leqslant 0\qquad\hbox{for all $\mu\in\Mis(H)$,}
\end{equation}
where $\Mis(H)$ denotes the set of projected Mather measures for $H$.

Let $u_G$ be the bounded solution of \eqref{eq2 G critical} given by 
Theorem \ref{teo2 bounded sol}. Since $u_G^-:=u_G-\|u_G\|_\infty$ and 
$u_G^+:=u_G+\|u_G\|_\infty$ are, respectively, a bounded negative and positive solution 
of \eqref{eq2 G critical}, we derive from Proposition
\ref{prop application comparison} that 
the functions $\{u^\lambda_G\,:\,\lambda>0\}$ are equi--bounded and equi--Lipschitz on $\MM$, and satisfy 
\begin{equation}\label{eq2 equibounded discounted sol}
-2\|u_G\|_\infty
\leqslant 
u^-_G(x)
\leqslant 
u^\lambda_G(x)
\leqslant 
u^+_G(x)
\leqslant 
2\|u_G\|_\infty
\qquad
\hbox{for all $x\in\MM$.}
\end{equation} 
Let us denote by 
\[
\omegasetG:=\left\{u\in\Lip(\MM)\,: \, u^{\lambda_k}_G\ucv u\  
\hbox{in $\MM$\quad for some sequence $\lambda_k\to 0$}  \right\}.
\]
Of course, our aim is to show that this set reduces to $\{u^0_G\}$. 
We first have to identify a good candidate for $u^0_G$. 
%
To this aim, we start with some preliminary remarks. 
For every $\lambda>0$, let us set 
\begin{align*}
Z_H^\lambda(x)&:=\{p\in\R\,:\,H(x,p)\leqslant c- \lambda u^\lambda_H(x)\,\}
 =
 [p^{\lambda,-}_H(x),p^{\lambda,+}_H(x)]
 \qquad
 \hbox{for all $x\in\MM$,}\\
Z_G^\lambda(x)&:=\{p\in\R\,:\,G(x,p)\leqslant c- \lambda u^\lambda_G(x)\,\}
 =
 [p^{\lambda,-}_G(x),p^{\lambda,+}_G(x)]
 \qquad
 \hbox{for all $x\in\MM$.}
\end{align*}
We shall need the following technical lemma. 

\begin{lemma}\label{lemma continuity modulus}
There exists a modulus of continuity $\omega$ such that, for every $\lambda\in (0,1)$,  
\begin{itemize}
\item[(i)] \quad $\|p^{\lambda,-}_G-p^-_G\|_\infty \leqslant \omega(\lambda),
 \qquad
 \|p^{\lambda,+}_G-p^+_G\|_\infty \leqslant \omega(\lambda)$;\smallskip
\item[(ii)] \quad $\|p^{\lambda,-}_H-p^-_H\|_\infty \leqslant \omega(\lambda),
 \qquad
 \|p^{\lambda,+}_H-p^+_H\|_\infty \leqslant \omega(\lambda)$.
\end{itemize}
\end{lemma}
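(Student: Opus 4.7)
The plan is to exploit the joint continuity of the maps $(x,a)\mapsto p^\pm_G(x,a)$ and $(x,a)\mapsto p^\pm_H(x,a)$ on $\D{dom}(Z)$ established in Remark \ref{oss properties sigma_a}, combined with the fact that the discounted shifts $\lambda u^\lambda_G$ and $\lambda u^\lambda_H$ are of order $\lambda$, hence tend to $0$ uniformly on $\R$ as $\lambda\to 0^+$.

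First I would establish a uniform $L^\infty$ bound on $u^\lambda_G$ and $u^\lambda_H$. For $G$ this is \eqref{eq2 equibounded discounted sol}; for $H$ one may apply Proposition \ref{prop application comparison} with $v:=u_H-\|u_H\|_\infty\leqslant 0$ and $w:=u_H+\|u_H\|_\infty\geqslant 0$, where $u_H$ is any periodic solution of $H(x,u')=c$. Calling $K>0$ the maximum of the two bounds, one has $|\lambda u^\lambda_G(x)|\leqslant K\lambda$ and $|\lambda u^\lambda_H(x)|\leqslant K\lambda$ for every $x\in\R$ and $\lambda\in(0,1)$. I would then check that $Z^\lambda_G(x)$ and $Z^\lambda_H(x)$ are nonempty for every $x$, so that the endpoints $p^{\lambda,\pm}_G(x),\,p^{\lambda,\pm}_H(x)$ are well defined and both $(x,c-\lambda u^\lambda_G(x))$ and $(x,c-\lambda u^\lambda_H(x))$ lie in $\D{dom}(Z^G)$ and $\D{dom}(Z^H)$, respectively. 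Indeed, the a.e.\ equality $G(x,(u^\lambda_G)'(x))=c-\lambda u^\lambda_G(x)$ yields $c-\lambda u^\lambda_G(x)\geqslant \min_p G(x,p)$ almost everywhere, and the inequality extends to every $x$ by continuity of both sides.

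Next I would construct the modulus for $H$. By $\Z$-periodicity in $x$, it suffices to argue for $x\in[0,1]$. The set $C_H:=\big([0,1]\times[c-K,c+K]\big)\cap\D{dom}(Z^H)$ is compact (since $\D{dom}(Z^H)$ is closed), and joint continuity of $p^\pm_H$ yields a modulus $\omega_H$ of uniform continuity of $p^\pm_H$ on $C_H$. Since both $(x,c)$ and $(x,c-\lambda u^\lambda_H(x))$ belong to $C_H$ for $x\in[0,1]$ and their second coordinates differ by at most $K\lambda$, one gets $|p^{\lambda,\pm}_H(x)-p^\pm_H(x)|\leqslant\omega_H(K\lambda)$ for $x\in[0,1]$, and then for every $x\in\R$ by periodicity. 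For $G$, I would split $\R=\supp(V)\cup(\R\setminus\supp(V))$. On the compact set $\supp(V)$, the same compactness argument applied to $C_G:=\big(\supp(V)\times[c-K,c+K]\big)\cap\D{dom}(Z^G)$ provides a modulus $\omega_V$ with $|p^{\lambda,\pm}_G(x)-p^\pm_G(x)|\leqslant\omega_V(K\lambda)$ for $x\in\supp(V)$. On $\R\setminus\supp(V)$, where $G\equiv H$, we have $p^\pm_G(x,\cdot)=p^\pm_H(x,\cdot)$, so the bound reduces to the one just proved for $H$ applied with the shift $\lambda u^\lambda_G(x)$. Setting $\omega(\lambda):=\max\{\omega_H(K\lambda),\omega_V(K\lambda)\}$ finishes the proof.

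The only mildly delicate point is the case $c=c_f(G)$: at equilibrium points of $G$ the sublevel set $Z_G(x)$ collapses to a singleton, so the rectangle $\supp(V)\times[c-K,c+K]$ need not be contained in $\D{dom}(Z^G)$. This is bypassed by intersecting with the closed set $\D{dom}(Z^G)$ in the definition of $C_G$, together with the verification from the first step that the relevant points $(x,c-\lambda u^\lambda_G(x))$ always sit inside $\D{dom}(Z^G)$, so that the uniform continuity obtained on the compact set $C_G$ is enough to compare them to $(x,c)$.
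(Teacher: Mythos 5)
Your proof is correct and follows essentially the same route as the paper's: it rests on the joint continuity of $p^\pm_H$ and $p^\pm_G$ (Lemma \ref{lemma properties sigma_a} and Remark \ref{oss properties sigma_a}), the $1$-periodicity of $p^\pm_H$ in $x$ together with the identity $p^\pm_G=p^\pm_H$ off $\supp(V)$, and the uniform $L^\infty$ bounds on $u^\lambda_G,u^\lambda_H$. The only difference is that you spell out the compactness argument that the paper compresses into the phrase ``we derive that there exists a modulus $\tilde\omega$,'' and you explicitly verify that $(x,c-\lambda u^\lambda(x))\in\D{dom}(Z)$, which the paper simply asserts as clear.
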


\begin{proof}
Let $C>0$ be such that
\begin{equation*}
{\|u^\lambda_H\|}_\infty+{\|u^\lambda_G\|}_\infty<C 
\qquad\hbox{for all $\lambda\in (0,1)$.}
\end{equation*}
Within this proof, we shall use the following temporary notation:
\begin{align*}
Z_H(x,a)&:=\{p\in\R\,:\,H(x,p)\leqslant a\,\}
 =
 [p^-_H(x,a),p^{+}_H(x,a)],\\
Z_G(x,a)&:=\{p\in\R\,:\,G(x,p)\leqslant a\,\}
 =
 [p^{-}_G(x,a),p^{+}_G(x,a)].
\end{align*}
Furthermore, we shall denote by $\D{dom}(Z_H)$ (respectively, $\D{dom}(Z_G)$) the set of points 
$(x,a)\in\MM\times\R$ such that $Z_H(x,a)$ (respectively,  $Z_G(x,a)$) is nonempty. 
Notice that 
$$
\D{dom}(Z_H)\setminus \big(\supp(V)\times\R\big) 
=\D{dom}(Z_G)\setminus \big(\supp(V)\times\R\big).
$$
The functions $(x,a)\mapsto p^{\pm}_H(x,a)$ and 
$(x,a)\mapsto p^{\pm}_G(x,a)$ are continuous in $\D{dom}(Z_H)$ and $\D{dom}(Z_G)$, respectively, 
in view of Lemma \ref{lemma properties sigma_a} and Remark \ref{oss properties sigma_a}. 
Furthermore, the functions $p^{\pm}_H(x,a)$ are $1$--periodic in $x$ and 
$p^-_G(x,a)=p^-_H(x,a)$, $p^+_G(x,a)=p^+_H(x,a)$ for every 
$(x,a)\in \D{dom}(Z_G)\setminus \big(\supp(V)\times\R\big)$. 
We derive that there exists a modulus of continuity $\tilde\omega$ such that 
\begin{equation}\label{eq continuity modulus}
|p^-_G(x,a)-p^-_G(x,b)|\leqslant \tilde\omega(|a-b|),
\quad
|p^+_G(x,a)-p^+_G(x,b)|\leqslant \tilde\omega(|a-b|)
\end{equation}
for all $(x,a),(x,b)\in\D{dom}(Z_G)$. 
Now notice that $p^\pm_G(x)=p^\pm_G(x,c)$ and $
p^{\lambda,\pm}_G(x)=p^\pm_G(x,c-\lambda u^\lambda_G(x))$ for all $x\in\MM$. 
Furthermore, it is clear 
that $(x,c)$ and $(x,c-\lambda u^\lambda_G(x))$ both belong to $\D{dom}(Z_G)$ for every $x\in\MM$. 
By applying \eqref{eq continuity modulus} with $a:=c$ and $b:=c+\lambda u^\lambda_G(x)$, we get 
assertion (i) with $\omega(h):=\tilde\omega(Ch)$ for every $h\geqslant 0$. The same argument with $H$ 
in place of $G$ gives assertion (ii). 
\end{proof}

We proceed by showing the following result. 

\begin{prop}\label{prop2 G discounted sol}
There exist $\lambda_0>0$ and $N_0\in\N$ such that for every $\lambda\in (0,\lambda_0)$
the following holds:
\begin{itemize}
 \item[(i)] in case (A), 
\begin{align}
\big(u^\lambda_G\big)'(x)=p_G^{\lambda,+}(x)&
\qquad
\hbox{for all $x\in (-\infty,\underline y_V-N_0)\cup(\overline y_V,+\infty)$}
\label{claim2 G discounted sol}\\
\big(u^\lambda_H\big)'(x)=p_H^{\lambda,+}(x)&
\qquad
\hbox{for all $x\in \MM$};\nonumber
\end{align}
\item[(ii)] in case (B), 
\begin{align*}
\big(u^\lambda_G\big)'(x)=p_G^{\lambda,-}(x)&
\qquad
\hbox{for all $x\in (-\infty,\underline y_V)\cup(\overline y_V+N_0,+\infty)$}\\
\big(u^\lambda_H\big)'(x)=p_H^{\lambda,-}(x)&
\qquad
\hbox{for all $x\in\MM$}.
\end{align*}
\end{itemize}
\end{prop}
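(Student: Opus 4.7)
The proof rests on three facts about any viscosity solution $u \in \Lip(\R)$ of a discounted equation $\lambda u(x) + F(x, u'(x)) = c$ with $F$ convex and coercive in $p$. First, at each differentiability point $x$, $u'(x) \in \{p^-(x), p^+(x)\}$, where $[p^-(x), p^+(x)] = \{p : F(x,p) \leqslant c - \lambda u(x)\}$, since $u$ is both a sub- and a supersolution there. Second, the supersolution inequality forbids $u'$ from ``jumping upward'' as $x$ increases: an upward kink $u'(x_0^-) = p^-(x_0) < p^+(x_0) = u'(x_0^+)$ would force $D^-u(x_0)$ to contain a non-degenerate interval inside $[p^-(x_0), p^+(x_0)]$, and the supersolution inequality $F(x_0, p) \geqslant c - \lambda u(x_0)$ would fail for $p$ in its interior. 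Third, combining the first two facts with the continuity of $p^\pm$, on any interval $I$ on which $p^+ > p^-$ strictly no continuous transition between the two branches of $u'$ is possible either, so $u'$ can change branch only by a downward jump $p^+(x_0) \to p^-(x_0)$. In particular, if $u'(x_*) = p^-(x_*)$ for some $x_* \in I$, then $u'(x) = p^-(x)$ for every $x \in I \cap [x_*, +\infty)$.

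Applied to $u^\lambda_H$: since $c > c_f(H)$, $p_H^+ > p_H^-$ strictly on $\R$, and Lemma \ref{lemma continuity modulus} yields the same strict inequality for $p_H^{\lambda,\pm}$ uniformly for $\lambda$ small. In case (A), if $(u^\lambda_H)'(x_*) = p_H^{\lambda,-}(x_*)$ at some $x_*$, the third fact with $I = \R$ forces $(u^\lambda_H)' = p_H^{\lambda,-}$ on $[x_*, +\infty)$, and the $\Z$-periodicity of $u^\lambda_H$ extends this to all of $\R$; but then
\[
0 = \int_0^1 (u^\lambda_H)'(x)\,dx = \int_0^1 p_H^{\lambda,-}(x)\,dx \leqslant \int_0^1 p_H^-(x)\,dx + \omega(\lambda) \leqslant -\rho + \omega(\lambda),
\]
impossible for $\lambda$ small (we use here that $\int_0^1 p_H^- \leqslant -\rho$, which follows from $\int_0^1 p_H^+ = 0$ and $p_H^+ - p_H^- \geqslant \rho$). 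Hence $(u^\lambda_H)' = p_H^{\lambda,+}$ on $\R$; case (B) is obtained by the symmetric argument.

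For $u^\lambda_G$: on $(-\infty, \underline y_V) \cup (\overline y_V, +\infty)$ one has $G = H$, and \eqref{eq2 equibounded discounted sol} combined with Lemma \ref{lemma continuity modulus} gives $p_G^{\lambda,+} > p_G^{\lambda,-}$ strictly there for $\lambda$ small. In case (A), for $x > \overline y_V$: if $(u^\lambda_G)'(x_*) = p_G^{\lambda,-}(x_*)$ at some $x_* > \overline y_V$, the third fact forces $(u^\lambda_G)' = p_G^{\lambda,-}$ on $[x_*, +\infty)$, and integration (using $\int_0^1 p_H^- \leqslant -\rho$ plus the $\omega(\lambda)$ error from Lemma \ref{lemma continuity modulus}) drives $u^\lambda_G(x) \to -\infty$, contradicting \eqref{eq2 equibounded discounted sol}. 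For $x < \underline y_V$: the same mechanism gives that if $(u^\lambda_G)'(x_*) = p_G^{\lambda,-}(x_*)$ for some $x_* < \underline y_V$, then $(u^\lambda_G)' = p_G^{\lambda,-}$ on $[x_*, \underline y_V)$, and a one-period correction of the integral of the $1$-periodic $p_H^-$ yields
\[
u^\lambda_G(\underline y_V) - u^\lambda_G(x_*) \leqslant -\rho (\underline y_V - x_*) + C + \omega(\lambda)(\underline y_V - x_*)
\]
with $C$ depending only on $\|p_H^-\|_\infty$; combined with $\|u^\lambda_G\|_\infty \leqslant 2\|u_G\|_\infty$ and $\omega(\lambda_0) < \rho/2$, this forces $\underline y_V - x_* \leqslant N_0$ for some $N_0$ independent of $\lambda \in (0, \lambda_0)$. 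Case (B) is handled symmetrically, reversing the orientation of the line and swapping the roles of $p^+$ and $p^-$. The delicate ingredient is the third structural fact, the gradient rigidity of viscosity solutions of convex coercive HJ equations; once that is in place, the rest reduces to elementary integration against the $1$-periodic functions $p_H^{\pm}$.
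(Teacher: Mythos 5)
Your proposal is correct and follows essentially the same strategy as the paper: show the derivative of the discounted solution takes values a.e. in $\{p^{\lambda,-},p^{\lambda,+}\}$, use the one-sided semiconcavity (``downward jumps only'') of viscosity solutions of convex HJ equations together with the uniform branch separation $p_G^{\lambda,+}-p_G^{\lambda,-}\geqslant\rho/2$ off $\supp(V)$ furnished by Lemma~\ref{lemma continuity modulus}, and then integrate to rule out the $p^{\lambda,-}$ branch past $\overline y_V$ (by boundedness) and to bound the length of a $p^{\lambda,-}$ stretch to the left of $\underline y_V$. The paper treats the $u^\lambda_H$ statement simply as the $V\equiv 0$ case of the same argument, deriving the contradiction from boundedness rather than from $\int_0^1 (u^\lambda_H)'=0$, but that is a cosmetic difference.
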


\begin{proof}
We only prove (i), being the proof of (ii) similar. 
Let $C>0$ be such that
\begin{equation}\label{eq C bound}
{\|u^\lambda_H\|}_\infty+{\|u^\lambda_G\|}_\infty<C 
\qquad\hbox{for all $\lambda\in (0,1)$.}
\end{equation}
For every fixed $\eps>0$, choose $\lambda(\eps)>0$ small enough so that 
$\lambda C<\eps$\ \ for all $\lambda\in (0,\lambda(\eps))$.
Now we choose $\hat\eps>0$ small enough so that $c-\hat\eps>c_f(H)$ and, thanks to 
Lemma \ref{lemma continuity modulus}, 
\[
 {\|p^{\lambda,-}_G-p^-_G\|}_\infty<\dfrac{\rho}{4},
 \qquad
 {\|p^{\lambda,+}_G-p^+_G\|}_\infty<\dfrac{\rho}{4}
 \qquad\qquad\hbox{for all $\lambda\in (0,\lambda(\hat\eps))$},
\]
where $\rho:=\min_{x\in\R} \big(p_H^+(x)-p_H^-(x)\big)>0$. 
Set $\lambda_0:=\lambda(\hat\eps)$ and fix $\lambda\in (0,\lambda_0)$. Due to the fact that 
$p_G^\pm\equiv p_H^\pm$\  on $\MM\setminus\supp(V)$, we infer 
\begin{equation}\label{eq2 length Z^lambda_G}
 p_G^{\lambda,+}(x)-p_G^{\lambda,-}(x)
 \geqslant
 p_G^{+}(x)-p_G^{-}(x)-\dfrac{\rho}{2}
 \geqslant \dfrac{\rho}{2}
 \qquad
 \hbox{for all $x\in\MM\setminus\supp(V)$}
\end{equation}
and 
\[
 \int_{y}^{y+1} p_G^{\lambda,-}(z) dz
 \leqslant 
 \dfrac\rho4+\int_{y}^{y+1} p_G^{-}(z) dz
 \qquad
 \hbox{for all $y\in\MM$,}
\]
in particular 
\begin{equation}\label{eq2 mean discounted p^-}
\int_y^{y+1} p_G^{\lambda,-}(z)\, dz
\leqslant -\dfrac{3}{4}\rho
\qquad
\hbox{whenever $(y,y+1)\cap\supp(V)=\emptyset$}
\end{equation}
due to (A).  
Let us assume that \eqref{claim2 G discounted sol} does not hold. 
Being $u_G^\lambda$ a viscosity solution of \eqref{eq2 G discounted}, its 
derivative can jump only downwards. In view of \eqref{eq2 length Z^lambda_G}, 
two possibilities may occur: 
\begin{itemize}
 \item[(a)] $\big(u^\lambda_G\big)'(x)=p^{\lambda,-}_G(x)$\quad for all $x>y$\qquad for some 
 $y\geqslant \overline y_V$;\smallskip
 \item[(b)] there exists $y\leqslant\underline y_V$\  such that 
 \[
  \big(u^\lambda_G\big)'(x)=p^{\lambda,+}_G(x)\quad \hbox{in $(-\infty,y)$}
  \qquad
  \hbox{and} 
  \qquad
  \big(u^\lambda_G\big)'(x)=p^{\lambda,-}_G(x)\qquad \hbox{in $(y,\underline y_V)$}.
 \]
\end{itemize}
Let us proceed to show that instance (a) never occurs. Otherwise, we would have 
\[
 u^\lambda_G(y+n)-u^\lambda_G(y)
 =
 \int_y^{y+n} p^{\lambda,-}_G(z)\ dz
 \leqslant
 -\dfrac34n\rho
 \qquad
 \hbox{for all $n\in\N$,}
 \]
in contradiction with the fact that $u^\lambda_G$ is bounded. In case  (b), let us denote 
by $N$ the integer part of $\underline y_V-y$. Then 
\[
 u^\lambda_G(\underline y_V)-u^\lambda_G(\underline y_V-N)
 =
 \int_{\underline y_V-N}^{\underline y_V} p^{\lambda,-}_G(z)\ dz
 \leqslant
 -\dfrac34N\rho,
\]
yielding 
\[
N
\leqslant 
\dfrac{8\|u^\lambda_G\|_\infty}{3\rho}
\leqslant
\dfrac{8 C}{3\rho}.
\]
in view of \eqref{eq C bound}. 
Assertion \eqref{claim2 G discounted sol} follows by setting 
$N_0:=1+[8C/(3\rho)]$ (where the symbol $ [h] $ stands for the integer part of $h$). 
The same argument with $V\equiv 0$ proves the second assertion in (i) with same constant 
$\lambda_0>0$. 
\end{proof}

In the remainder of this section, $\hat \eps$ will denote a positive parameter chosen small enough so that 
$c-\hat\eps>c_f(H)$, and $\lambda_0=\lambda(\hat\eps)$ will denote the positive parameter 
chosen as in the proof of Proposition \ref{prop2 G discounted sol}.

\begin{prop}\label{prop2 optimal curve}
Let $\lambda\in (0,\lambda_0)$,  $x\in\MM$ and 
$\gamma^\lambda_x:(-\infty,0]\to\MM$ be a minimizing curve for $u^\lambda_G(x)$. The 
following holds:
\begin{itemize}
 \item[(i)] in case (A),\quad $\gamma^\lambda_x(t)\leqslant \max\{x,\overline y_V\}$\quad  
 for all $t\leqslant 0$;\smallskip
%
\item[(ii)] in case (B),\quad $\gamma^\lambda_x(t)\geqslant \min\{x,\underline y_V\}$\quad  
 for all $t\leqslant 0$.
\end{itemize}
\end{prop}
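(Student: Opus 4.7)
The plan is to argue by contradiction, combining the explicit form of $(u^\lambda_G)'$ outside $\supp(V)$ provided by Proposition~\ref{prop2 G discounted sol} with the gradient inclusion from Remark~\ref{oss calibrating curves}. I carry out the argument for (i); the proof of (ii) is verbatim symmetric.

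Set $\gamma:=\gamma^\lambda_x$ and suppose for contradiction that $\gamma(t_0) > \max\{x,\overline y_V\}$ for some $t_0 < 0$. Since $\gamma(0) = x \leqslant \max\{x,\overline y_V\}$ and $\gamma$ is continuous, there is a smallest time $t_1 \in (t_0, 0]$ with $\gamma(t_1) = \max\{x,\overline y_V\}$, and by minimality $\gamma(t) > \max\{x,\overline y_V\} \geqslant \overline y_V$ for every $t \in [t_0, t_1)$. On the half-line $\{z > \overline y_V\}$ the potential $V$ vanishes, so $G(z,\cdot)=H(z,\cdot)$. By Proposition~\ref{prop2 G discounted sol}(i) together with the continuity of $p^{\lambda,+}_H$ supplied by Lemma~\ref{lemma continuity modulus}, $u^\lambda_G$ is of class $C^1$ on $(\overline y_V, +\infty)$ with $(u^\lambda_G)'(z) = p^{\lambda,+}_H(z)$.

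The key observation is the sign of $\partial_p H$ at this endpoint. By the choice $\lambda_0 = \lambda(\hat\eps)$ made in the proof of Proposition~\ref{prop2 G discounted sol}, for every $\lambda \in (0, \lambda_0)$ we have $c - \lambda u^\lambda_G(z) > c - \hat\eps > c_f(H) \geqslant \min_p H(z,p)$, so $p^{\lambda,+}_H(z)$ lies strictly to the right of the minimum of the convex function $H(z,\cdot)$. Consequently the left derivative of $H(z,\cdot)$ at $p^{\lambda,+}_H(z)$ is strictly positive, hence every element of $\partial_p H(z, p^{\lambda,+}_H(z))$ is strictly positive. Applying Remark~\ref{oss calibrating curves}, for a.e. $t \in (t_0, t_1)$
\[
\dot\gamma(t) \in \partial_p G\big(\gamma(t), (u^\lambda_G)'(\gamma(t))\big) = \partial_p H\big(\gamma(t), p^{\lambda,+}_H(\gamma(t))\big) \subset (0,+\infty).
\]
Integrating over $(t_0, t_1)$ yields $\gamma(t_1) - \gamma(t_0) > 0$, contradicting $\gamma(t_1) = \max\{x,\overline y_V\} < \gamma(t_0)$. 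This proves (i); for (ii) one argues symmetrically using Proposition~\ref{prop2 G discounted sol}(ii), which yields $(u^\lambda_G)'(z) = p^{\lambda,-}_H(z)$ on $(-\infty, \underline y_V)$, with $\partial_p H(z, p^{\lambda,-}_H(z)) \subset (-\infty, 0)$ by the same reasoning.

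The main obstacle I anticipate is guaranteeing the strict sign of $\partial_p H$ at the relevant endpoint of the sublevel set: this uses in an essential way the case (II) hypothesis $c > c_f(H)$, through the gap $c - \lambda u^\lambda_G > c_f(H)$ valid for $\lambda < \lambda_0$, which is exactly what puts $p^{\lambda,+}_H$ (respectively $p^{\lambda,-}_H$) strictly past the minimum of $H(z,\cdot)$. The other technical point---that the gradient inclusion in Remark~\ref{oss calibrating curves} is applicable at a.e.\ $t$ along the trajectory---is handled cleanly by the $C^1$-regularity of $u^\lambda_G$ on $(\overline y_V, +\infty)$ (respectively $(-\infty, \underline y_V)$) inherited from Proposition~\ref{prop2 G discounted sol}, so no monotonicity of $\gamma$ needs to be invoked.
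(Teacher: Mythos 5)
Your argument is correct and is essentially the same as the paper's: both set up a contradiction interval on which $\gamma$ exceeds $\max\{x,\overline y_V\}$, invoke Proposition~\ref{prop2 G discounted sol} together with Remark~\ref{oss calibrating curves} to identify $\dot\gamma(t)$ with an element of $\partial_p H$ evaluated at the right endpoint of a sublevel set lying strictly above $c_f(H)\geqslant\min_p H$, and conclude $\dot\gamma>0$ a.e.\ there. One small notational slip: on $(\overline y_V,+\infty)$ Proposition~\ref{prop2 G discounted sol} gives $(u^\lambda_G)'=p^{\lambda,+}_G$, which is not the same function as $p^{\lambda,+}_H$ (the latter involves $u^\lambda_H$, not $u^\lambda_G$, in its defining level $c-\lambda u^\lambda_H$), though since $G\equiv H$ there the substantive facts you use---that $H\big(z,(u^\lambda_G)'(z)\big)=c-\lambda u^\lambda_G(z)>c_f(H)$ and that $(u^\lambda_G)'(z)$ is the maximum of the corresponding sublevel set of $H(z,\cdot)$---remain valid and the argument goes through unchanged.
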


\begin{proof}
We only prove (i), being the proof of item (ii) analogous. 
Let us assume by contradiction that there exists $a<0$ such that 
$\gamma_x^\lambda(a)>\max\{x,\overline y_V\}$. Let us set 
\[
b:=\sup\{\beta\in(a,0)\,:\,\gamma_x^\lambda(t)>\max\{x,\overline y_V\}\ \hbox{for all $t\in (a,\beta)$}\,\}.
\]
Since $\gamma_x^\lambda$ is optimal for $u^\lambda_G(x)$, in view of 
Remark \ref{oss calibrating curves} and of Proposition \ref{prop2 G discounted sol} we have
\[
\dot\gamma_x^\lambda(t)
\in 
\partial_p G\big(\gamma_x^\lambda(t),(u^\lambda_G)'(\gamma_x^\lambda(t))\big)
=
\partial_p H\big(\gamma_x^\lambda(t),p_G^{\lambda,+}(\gamma_x^\lambda(t))\big)
\qquad
\hbox{for a.e. $t\in (a,b)$}, 
\]
hence\  $\dot\gamma_x^\lambda(t)>0$ \  for a.e. $t\in (a,b)$ due to the fact that (since we are in case (A))
\begin{eqnarray*}
H\big(\gamma_x^\lambda(t),p_G^{\lambda,+}(\gamma_x^\lambda(t))\big)
=
G\big(\gamma_x^\lambda(t),(u^\lambda_G)'(\gamma_x^\lambda(t))\big)
=
c-\lambda u^\lambda_G(\gamma_x^\lambda(t))
>
c-\hat\eps
>
c_f(H)
\end{eqnarray*}
for every $t\in (a,b)$ and $c_f(H)\geqslant\min_{p} H(\gamma_x^\lambda(t),p)$. 
We derive $\gamma_x^\lambda(b)>\max\{x,\overline y_V\}$. 

This gives the sought contradiction when $b<0$, by maximality of $b$, and also when $b=0$, since $\gamma_x^\lambda(0)=x$.  
%
\end{proof}

The previous argument in the case $V\equiv 0$ gives the following result.

\begin{cor}\label{cor2 optimal curve}
Let $\lambda\in (0,\lambda_0)$,  $x\in\MM$ and 
$\gamma^\lambda_x:(-\infty,0]\to\MM$ be a minimizing curve for $u^\lambda_H(x)$. 
In case (A), $\gamma^\lambda_x(t)<x\ 
\hbox{for all $t<0$.}$ In case (B),  
$\gamma^\lambda_x(t)>x\ \hbox{for all $t<0$.}$
\end{cor}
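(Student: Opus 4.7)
The plan is to reuse, essentially verbatim, the argument of Proposition \ref{prop2 optimal curve}, specialized to the case $V\equiv 0$. In that situation $\supp(V)=\emptyset$, so the bookkeeping about $\overline y_V$ and $\underline y_V$ drops out and the resulting monotonicity of the optimal curve becomes strict, which is exactly what the corollary asserts. I discuss case (A) in detail; case (B) is the mirror image.

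In case (A), apply Proposition \ref{prop2 G discounted sol}(i) with $V\equiv 0$ (so that $G=H$): for every $\lambda\in(0,\lambda_0)$,
\[
(u^\lambda_H)'(x)=p^{\lambda,+}_H(x)\qquad\hbox{for \emph{every} }x\in\MM,
\]
because the support condition excluding a neighborhood of $\supp(V)$ is now vacuous. Continuity of $p^{\lambda,+}_H$ (see Lemma \ref{lemma continuity modulus} and Remark \ref{oss properties sigma_a}) implies that $u^\lambda_H$ is in fact $C^1$. Pick a minimizing curve $\gamma^\lambda_x:(-\infty,0]\to\MM$ for $u^\lambda_H(x)$. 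By Remark \ref{oss calibrating curves},
\[
\dot\gamma^\lambda_x(t)\in\partial_p H\!\left(\gamma^\lambda_x(t),\,p^{\lambda,+}_H(\gamma^\lambda_x(t))\right)\qquad\hbox{for a.e.\ }t\leqslant 0.
\]

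The key point, which was the core of the proof of Proposition \ref{prop2 optimal curve} as well, is that this subdifferential is strictly positive. Indeed, by the very choice of $\lambda_0=\lambda(\hat\eps)$,
\[
H\big(y,p^{\lambda,+}_H(y)\big)=c-\lambda u^\lambda_H(y)\geqslant c-\hat\eps>c_f(H)\geqslant\min_{p\in\R}H(y,p)
\]
for every $y\in\MM$. Hence $p^{\lambda,+}_H(y)$ lies strictly to the right of the set of minimizers of $H(y,\cdot)$, and the convexity of $H(y,\cdot)$ (assumption (G1)) forces the left derivative of $H(y,\cdot)$ at $p^{\lambda,+}_H(y)$ to be strictly positive. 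Consequently $\partial_p H(y,p^{\lambda,+}_H(y))\subset(0,+\infty)$, so $\dot\gamma^\lambda_x(t)>0$ for a.e.\ $t\leqslant 0$. Integrating, $\gamma^\lambda_x$ is strictly increasing on $(-\infty,0]$, so $\gamma^\lambda_x(t)<\gamma^\lambda_x(0)=x$ for every $t<0$. Case (B) is obtained by the symmetric argument: $(u^\lambda_H)'\equiv p^{\lambda,-}_H$, so $\dot\gamma^\lambda_x$ lies in $\partial_p H(\cdot,p^{\lambda,-}_H(\cdot))\subset(-\infty,0)$ a.e., yielding $\gamma^\lambda_x(t)>x$ for $t<0$.

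I do not anticipate a serious obstacle: the only delicate point is upgrading the nonstrict inequality from Proposition \ref{prop2 optimal curve} to a strict one, and this is handled precisely by the strict bound $c-\lambda u^\lambda_H(y)>c_f(H)$ built into the selection of $\lambda_0$, which excludes $p^{\lambda,+}_H(y)$ (resp.\ $p^{\lambda,-}_H(y)$) from the set of $p$-minimizers of $H(y,\cdot)$ and thereby forces $\partial_p H$ to stay away from $0$.
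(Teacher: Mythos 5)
Your proposal is correct and takes essentially the same approach as the paper, which simply states that "the previous argument in the case $V\equiv 0$ gives the result" — i.e., specialize the proof of Proposition \ref{prop2 optimal curve} to $G=H$, where Proposition \ref{prop2 G discounted sol}(i) gives $(u^\lambda_H)'=p^{\lambda,+}_H$ on all of $\MM$ and the bound $c-\lambda u^\lambda_H > c_f(H)$ forces $\partial_p H(\cdot,p^{\lambda,+}_H(\cdot))\subset(0,+\infty)$. You have merely replaced the paper's contradiction set-up (needed in Proposition \ref{prop2 optimal curve} to localize to a region avoiding $\supp(V)$) by the equivalent direct observation that $\dot\gamma^\lambda_x>0$ a.e.\ globally yields strict monotonicity, which is a harmless and in fact cleaner rephrasing.
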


As a consequence of the information gathered, we obtain the following relation 
between the solutions of the discounted equations \eqref{eq2 G discounted} and 
\eqref{eq2 H discounted}. 

\begin{cor}\label{cor2 u^lambda_G=u^lambda_H}
Let $\lambda\in(0,\lambda_0)$. The following holds:
\begin{itemize}
 \item[(i)] in case (A),\quad $u^\lambda_G(x)\leqslant u^\lambda_H(x)$
\ for all $x\in(-\infty,\underline y_V)$;\smallskip
\item[(ii)] in case (B),\quad $u^\lambda_G(x)\leqslant u^\lambda_H(x)$
\ for all $x\in(\overline y_V,+\infty)$. 
\end{itemize}
\end{cor}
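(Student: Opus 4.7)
The plan is to exploit the representation formula \eqref{representation formula discounted} together with the monotonicity information on the optimal curves for $u^\lambda_H$ provided by Corollary \ref{cor2 optimal curve}. The key observation is the following relation between the two Lagrangians: since $G(x,p)=H(x,p)-V(x)$, the Fenchel transform yields
\[
L_G(x,q)=L_H(x,q)+V(x)\qquad\hbox{for every $(x,q)\in\TM$,}
\]
so $L_G$ and $L_H$ coincide pointwise on $(\MM\setminus\supp(V))\times\R$.

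Let us sketch case (i); case (ii) is symmetric. Fix $\lambda\in(0,\lambda_0)$ and $x\in(-\infty,\underline y_V)$. By Proposition \ref{prop calibrating main} there exists a Lipschitz curve $\gamma:(-\infty,0]\to\MM$, with $\gamma(0)=x$, which is optimal for $u^\lambda_H(x)$, i.e.
\[
u^\lambda_H(x)=\int_{-\infty}^0 \e^{\lambda s}\big[L_H(\gamma(s),\dot\gamma(s))+c\big]\,ds.
\]
By Corollary \ref{cor2 optimal curve}, in case (A) we have $\gamma(t)<x$ for every $t<0$. Together with $x<\underline y_V$, this gives $\gamma(s)<\underline y_V$ for all $s\leqslant 0$, so the trajectory never meets $\supp(V)$. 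Consequently $V(\gamma(s))\equiv 0$ and $L_G(\gamma(s),\dot\gamma(s))=L_H(\gamma(s),\dot\gamma(s))$ for every $s\leqslant 0$.

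Using $\gamma$ as a competitor in the variational formula \eqref{representation formula discounted} for $u^\lambda_G(x)$, we then obtain
\[
u^\lambda_G(x)\leqslant \int_{-\infty}^0 \e^{\lambda s}\big[L_G(\gamma(s),\dot\gamma(s))+c\big]\,ds
=\int_{-\infty}^0 \e^{\lambda s}\big[L_H(\gamma(s),\dot\gamma(s))+c\big]\,ds
=u^\lambda_H(x),
\]
which is the claim in (i). Case (ii) is proved analogously, using instead that in case (B) the optimal curve for $u^\lambda_H(x)$ satisfies $\gamma(t)>x$ for every $t<0$, so for $x>\overline y_V$ the trajectory stays in $(\overline y_V,+\infty)\subseteq \MM\setminus\supp(V)$. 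No step is really delicate here: the only thing to verify is that the $u^\lambda_H$--optimal curve issuing from $x$ does not enter $\supp(V)$, which is precisely what the strict monotonicity in Corollary \ref{cor2 optimal curve} (together with the choice of $x$) guarantees.
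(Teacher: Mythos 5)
Your proof is correct and follows essentially the same argument as the paper: take a $u^\lambda_H$-optimal curve from $x$, use Corollary \ref{cor2 optimal curve} to see that it never enters $\supp(V)$, observe that $L_G$ and $L_H$ agree there (since $L_G=L_H+V$), and plug the curve into the representation formula for $u^\lambda_G(x)$ as a competitor. The paper just compresses the chain of (in)equalities slightly, but the reasoning is identical.
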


\begin{proof}
Let us prove (i). Let $x<\underline y_V$ and $\gamma_x^\lambda:(-\infty,0]\to\MM$ be a minimizing curve for 
$u^\lambda_H(x)$.  According to Corollary \ref{cor2 optimal curve}, the support of $\gamma_x^\lambda$ does not intersect $\supp(V)$. We derive 
\[
u^\lambda_H(x)
=
\int_{-\infty}^0 e^{\lambda s} \big(L_H(\gamma_x^\lambda,\dot\gamma_x^\lambda)+c\big)\ ds
=
\int_{-\infty}^0 e^{\lambda s} \big(L_G(\gamma_x^\lambda,\dot\gamma_x^\lambda)+c\big)\ ds
\geqslant
u^\lambda_G(x).
\]
\end{proof}

It is time to make our guess about the expected limit $u^0_G$ 
of the solutions $u^\lambda_G$ of the discounted equations \eqref{eq2 G discounted} 
as $\lambda\to 0^+$. We introduce a piece of notation first: let us denote by 
${\sol_b}_-(G)$ the family of bounded 
subsolutions  $v:\MM\to 
\R$ of the critical equation \eqref{eq2 G critical} satisfying 
\begin{equation}\label{condition2 u0}
\int_\R v(y)\,d\mu(y)\leqslant \1\big(c(G)-c_f(G)\big) \qquad\text{for every $\mu\in\Mis(G)$}, 
\end{equation}
where we have denoted by $\Mis(G)$ the set of projected Mather measures for $G$ and by $\1$ the indicator function of the set $\{0\}$ in the sense of convex analysis, i.e. $\1(t)=0$ if $t=0$ and $\1(t)=+\infty$ otherwise.  
Note that constraint \eqref{condition2 u0} is empty whenever $c(G)>c_f(G)$.
Motivated by the information gathered so far, we make the following guess:
\begin{itemize}
 \item[(a)] in case (A), for every $x\in\MM$ we set 
 \begin{equation}\label{def2A u_0}
 u^0_G(x):=\sup\{v(x)\,:\,v\in{\sol_b}_-(G),\ v\leqslant u^0_H\quad 
 \hbox{in $(-\infty,\underline y_V)$\ }\};
  \end{equation}
 \item[(b)] in case (B), for every $x\in\MM$ we set 
 \begin{equation}\label{def2B u_0}
 u^0_G(x):=\sup\{v(x)\,:\,v\in{\sol_b}_-(G),\ v\leqslant u^0_H\quad 
 \hbox{in $(\overline y_V,+\infty)$\ }\}.
 \end{equation}
\end{itemize}

Our next proposition shows in particular that the sets appearing at the right-hand 
side of formulae \eqref{def2A u_0} and \eqref{def2B u_0} are nonempty.

\begin{prop}\label{prop step1}
Let $u\in \omegasetG$. Then 
\begin{equation}\label{claim step1}
\int_{\MM} u\,d\mu\leqslant 0\quad\hbox{for all $\mu\in\Mis(G)$}.
\end{equation}
Furthermore, $u\leqslant u^0_H$  in $(-\infty, \underline y_V)$ in case (A), while $u\leqslant u^0_H$ in 
$(\overline y_V,+\infty)$ in case (B). In particular,  
$u\leqslant u^0_G$ in $\MM$. 
\end{prop}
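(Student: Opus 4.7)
The plan is to carry out the proof in three phases, one for each assertion in the statement, following the template of Proposition \ref{ineq lim} for the integral bound and exploiting the 1d-specific comparison in Corollary \ref{cor2 u^lambda_G=u^lambda_H} for the one-sided inequality.

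First I would establish the integral bound $\int_\MM u\,d\mu\leqslant 0$ for every $\mu\in\Mis(G)$. The idea is to mimic the argument in Proposition \ref{ineq lim}: write $\mu={\pi_1}_\#\tilde\mu$ with $\tilde\mu\in\tilde\Mis(G)$, apply Lemma \ref{EncoreUneVariante} to $F(x,p)=\lambda u^\lambda_G(x)+G(x,p)-c$ to produce smooth functions $w_n$ with $\|w_n-u^\lambda_G\|_\infty\leqslant 1/n$ and $\lambda u^\lambda_G(x)+G(x,w_n'(x))\leqslant c+1/n$, then combine with Fenchel's inequality to get $\lambda u^\lambda_G(x)+w_n'(x)\,q\leqslant L_G(x,q)+c+1/n$. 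Integrating against $\tilde\mu$ and using the fact that $\tilde\mu$ is closed ($\int w_n'(x)\,q\,d\tilde\mu=0$) and minimizing ($\int L_G\,d\tilde\mu=-c_f(G)$) yields $\lambda \int u^\lambda_G\,d\mu\leqslant c-c_f(G)+1/n$. In the case $c=c_f(G)$ this leads to $\int u^\lambda_G\,d\mu\leqslant 0$ after dividing by $\lambda$, and passing to the limit along the subsequence $\lambda_k\to 0$ defining $u$ gives the claim; in the complementary case $c>c_f(G)$ the constraint \eqref{condition2 u0} is vacuous (indicator equal to $+\infty$), so the integral bound is not actually needed for Phase 3.

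Next I would prove the one-sided dominance by $u^0_H$. In case (A), Corollary \ref{cor2 u^lambda_G=u^lambda_H} provides $u^\lambda_G(x)\leqslant u^\lambda_H(x)$ for all $x\in(-\infty,\underline y_V)$ and all $\lambda\in(0,\lambda_0)$. Since the periodic problem yields $u^\lambda_H\ucv u^0_H$ uniformly on $\R$ (by $\Z$-periodicity and the convergence result from \cite{DFIZ1} on $\T^1$), and since $u^{\lambda_k}_G\ucv u$ locally uniformly on $\R$ along the extraction defining $u$, passing to the limit gives $u\leqslant u^0_H$ on $(-\infty,\underline y_V)$. The case (B) argument is the mirror image on $(\overline y_V,+\infty)$.

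For the conclusion $u\leqslant u^0_G$ in $\MM$, I would simply check that $u$ belongs to the class over which the supremum in \eqref{def2A u_0} (respectively \eqref{def2B u_0}) is taken. Indeed, $u$ is bounded because the family $\{u^\lambda_G\}_\lambda$ is equi-bounded by \eqref{eq2 equibounded discounted sol}, and $u$ is a viscosity subsolution of \eqref{eq2 G critical} by stability of viscosity solutions under local uniform convergence. The integral bound from the first phase shows that $u$ satisfies \eqref{condition2 u0}, i.e., $u\in{\sol_b}_-(G)$, while the second phase furnishes the one-sided comparison $u\leqslant u^0_H$ on the correct side of $\supp(V)$. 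Thus $u$ is an admissible competitor in the definition of $u^0_G$, and taking the supremum gives the final inequality. The step I expect to be most delicate is Phase 1 in the subcase $c(G)>c_f(G)$, where the Fenchel argument does not yield a useful sign; fortunately, this is precisely the regime in which the constraint disappears, so the argument survives.
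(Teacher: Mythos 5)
Your proposal follows the same route as the paper's own proof: the Fenchel--Mather computation of Proposition \ref{ineq lim} for the integral bound, Corollary \ref{cor2 u^lambda_G=u^lambda_H} together with the uniform convergence $u^\lambda_H\to u^0_H$ for the one-sided domination, and a direct admissibility check against the definition \eqref{def2A u_0}/\eqref{def2B u_0} to conclude $u\leqslant u^0_G$. Your explicit tracking of the dichotomy $c(G)=c_f(G)$ versus $c(G)>c_f(G)$ --- observing that the Fenchel argument yields only $\lambda\int u^\lambda_G\,d\mu\leqslant c(G)-c_f(G)$, which delivers \eqref{claim step1} precisely when $c(G)=c_f(G)$, while in the complementary subcase the constraint \eqref{condition2 u0} is vacuous so the bound is not needed for the ``in particular'' conclusion --- is a correct and careful refinement of a step the paper dispatches implicitly by referring back to Proposition \ref{ineq lim}.
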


\begin{proof}
By arguing as in the proof of Proposition \ref{ineq lim} we infer that $u^\lambda_G$ 
satisfies \eqref{claim step1} for every $\lambda>0$, hence the same holds for $u\in\omegasetG$. 
Furthermore, in view of Corollary \ref{cor2 u^lambda_G=u^lambda_H}, we also know that $u\leqslant u^0_H$  
in $(-\infty, \underline y_V)$ if we are in case (A), and $u\leqslant u^0_H$   in 
$(\overline y_V,+\infty)$, if we are in case (B). In either case, 
$u\leqslant u^0_G$ in $\MM$.
\end{proof}

Next, we show that $u^0_G$ is well-defined. 

\begin{prop}
The function $u^0_G$ is a bounded viscosity subsolution of equation \eqref{eq2 G critical}.
\end{prop}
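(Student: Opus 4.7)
The plan is to verify in order: (a) the family over which the supremum is taken is nonempty; (b) its members are equi-Lipschitz with a common constant $\kappa=\kappa_{c(G)}$ given by Proposition \ref{COMPBASIQUE}; (c) they are equi-bounded from above by a single finite constant; (d) they are equi-bounded from below at every point of $\MM$. Once (a)-(d) are established, $u^0_G$ is finite and $\kappa$-Lipschitz (a sup of equi-Lipschitz functions which is everywhere finite is itself Lipschitz with the same constant), hence continuous, and the conclusion follows immediately from Proposition \ref{prop when G convex}(i).

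For (a), the Ascoli--Arzel\`a theorem applied to the family $\{u_G^\lambda\}$, which is equi-bounded and equi-Lipschitz by \eqref{eq2 equibounded discounted sol}, gives $\omegasetG\neq\emptyset$. By Proposition \ref{prop step1}, every $u\in\omegasetG$ belongs to ${\sol_b}_-(G)$ and satisfies $u\leqslant u^0_H$ on $(-\infty,\underline y_V)$ in case (A) (respectively, on $(\overline y_V,+\infty)$ in case (B)), so the admissible family is nonempty. Step (b) is a direct application of Proposition \ref{COMPBASIQUE}.

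The main point is (c), and I expect this to be the key technical step. I focus on case (A); case (B) is symmetric with the roles of the two half-lines and of $p_H^+,p_H^-$ exchanged. Let $v$ be admissible and split $\MM$ into three zones. On $(-\infty,\underline y_V)$ the constraint gives directly $v\leqslant u^0_H\leqslant\|u^0_H\|_\infty$, using the $\Z$-periodicity of $u^0_H$ (hence its boundedness). On the compact interval $[\underline y_V,\overline y_V]$ the $\kappa$-Lipschitz bound yields $v(x)\leqslant v(\underline y_V)+\kappa(\overline y_V-\underline y_V)\leqslant \|u^0_H\|_\infty+\kappa(\overline y_V-\underline y_V)$. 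The delicate zone is $[\overline y_V,+\infty)$: since $V\equiv 0$ there, $v$ is an a.e.\ subsolution of $H(z,v')\leqslant c$ on that half-line, so $v'(z)\leqslant p_H^+(z)$ a.e.; integrating along the straight segment from $\overline y_V$ to $x$ gives
\[
v(x)-v(\overline y_V)\leqslant\int_{\overline y_V}^x p_H^+(z)\,dz.
\]
The hypothesis $\int_0^1 p_H^+=0$ of case (A), combined with the $\Z$-periodicity of $p_H^+$, bounds this integral by a constant $C$ independent of $x\geqslant\overline y_V$. Without this cancellation the bound would grow linearly, so this is exactly where the dichotomy between (A) and (B) is essential.

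Finally, for (d), fix any $v_0\in\omegasetG$, which by \eqref{eq2 equibounded discounted sol} satisfies $\|v_0\|_\infty\leqslant 2\|u_G\|_\infty$; since $v_0$ is admissible, $u^0_G\geqslant v_0\geqslant -2\|u_G\|_\infty$ on $\MM$. Combined with (c), this shows $u^0_G$ is bounded on $\MM$. Then the equi-Lipschitz continuity from (b) promotes to $\kappa$-Lipschitz continuity of the finite pointwise supremum $u^0_G$, and Proposition \ref{prop when G convex}(i) yields that $u^0_G$ is a viscosity subsolution of \eqref{eq2 G critical}, concluding the proof.
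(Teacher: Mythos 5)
Your proof is correct and takes essentially the same approach as the paper. The paper constructs a bounded classical comparison solution $\tilde u$ with $\tilde u'=p_G^+$ starting from $\underline y_V$ and shows $v\leqslant\tilde u$ in one integration step, whereas you split the line into three zones and bound $v$ on each; both arguments hinge on exactly the same facts (the constraint $v\leqslant u^0_H$ on $(-\infty,\underline y_V)$, the a.e.\ bound $v'\leqslant p_G^+=p_H^+$ to the right of $\supp(V)$, and the cancellation $\int_0^1 p_H^+=0$ in case (A)), and both conclude via Proposition \ref{prop step1} for the lower bound and Proposition \ref{prop when G convex}(i) for the subsolution property.
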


\begin{proof}
As a supremum of a nonempty and equi-Lipschitz family of subsolutions 
to \eqref{eq2 G critical}, the function $u^0_G$ is a subsolution to the 
same equation provided it is finite-valued.  
Let us then prove that $u^0_G$ is bounded on $\MM$. 
In view of Proposition \ref{prop step1}, it is enough to show that $u^0_G$ is bounded from 
above. Let us assume for definiteness that we are in case (A). Let us set 
\[
\tilde u(x):=u^0_H(\underline y_V)+\int_{\underline y_V}^x p_G^+(z)\,dz
\qquad
\hbox{for all $x\in\MM$.}
\]
It is easily seen that \ $\tilde u'(x)=p_G^+(x)$ for every $x\in\MM$, in particular  
$\tilde u$ is a (classical) solution 
to \eqref{eq2 G critical} such that $\tilde u\equiv u^0_H$ on $(-\infty,\underline y_V]$, cf.  
Remark \ref{oss2 bounded G sol}. Furthermore,  
\[
\tilde u(x)-\tilde u(\overline y_V)
=
\int_{\overline y_V}^x p^+_G(z)\,dz
=
\int_{\overline y_V}^x p^+_H(z)\,dz
=
u^0_H(x)-u^0_H(\overline y_V)
\qquad
\hbox{for all $x\geqslant \overline y_V$},
\]
thus showing that $\tilde u$ is bounded. Let $v$ be a bounded subsolution to \eqref{eq2 G critical} such 
that  $v\leqslant u^0_H$ \ in $(-\infty,\underline y_V)$. For every 
$x>\underline y_V$ we have 
\[
v(x)=v(\underline y_V)+\int_{\underline y_V}^x v'(z)\,dz
\leqslant
u^0_H(\underline y_V)+\int_{\underline y_V}^x p_G^+(z)\,dz
=
\tilde u(x).
\]
This shows that \ $u^0_G(x)\leqslant \tilde u(x) \leqslant \|\tilde u\|_\infty$ for every $x\in\MM$. The proof in 
case (B) is analogous.
\end{proof}

We are now in position to prove the asymptotic convergence. 

\begin{teorema}\label{theo main2}
The functions $u^\lambda_G$ uniformly converge to the function  
$u^0_G$ given either by \eqref{def2A u_0} or by \eqref{def2B u_0} 
locally on $\MM$ as $\lambda\to 
0^+$.
In particular, as an accumulation point of $u^\lambda_G$ as $\lambda\to 0^+$, the 
function $u^0_G$ is a viscosity solution of \eqref{eq2 G critical}.
\end{teorema}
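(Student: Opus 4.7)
The plan is to adapt the two-step strategy of Theorem \ref{theo main}. For the upper bound, the work has already been done in Proposition \ref{prop step1}: any uniform limit $u$ of $u^{\lambda_n}_G$ along some sequence $\lambda_n\to 0^+$ is a bounded viscosity solution of \eqref{eq2 G critical} that satisfies the projected Mather constraint \eqref{condition2 u0} and, via Corollary \ref{cor2 u^lambda_G=u^lambda_H}, the bound $u\leqslant u^0_H$ on $(-\infty,\underline y_V)$ in case (A) (or on $(\overline y_V,+\infty)$ in case (B)). Hence $u$ belongs to the admissible family in the definition of $u^0_G$, and $u\leqslant u^0_G$ on $\MM$.

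For the matching lower bound $u\geqslant u^0_G$, fix $x\in\MM$ and an admissible $v$ in \eqref{def2A u_0} (case (A) for concreteness), and choose $r>0$ large enough that $\{x\}\cup [\underline y_V,\overline y_V]\subseteq B_r$. Let $\gamma^\lambda_x:(-\infty,0]\to\MM$ be a monotone optimal curve for $u^\lambda_G(x)$, as granted by Proposition \ref{prop optimal curves}. With the decomposition $\tilde\mu^\lambda_{x,1},\tilde\mu^\lambda_{x,2}$ and the weight $\theta^\lambda_x$ from Section \ref{sez discounted measures}, Proposition \ref{prop step2} yields
\[
u^\lambda_G(x)\geqslant v(x)-\theta^\lambda_x\int_{\TM} v\,d\tilde\mu^\lambda_{x,1}-(1-\theta^\lambda_x)\int_{\TM} v\,d\tilde\mu^\lambda_{x,2}.
\]
The crucial new input is Proposition \ref{prop2 optimal curve}(i): in case (A) the monotone optimal curve satisfies $\gamma^\lambda_x(t)\leqslant \max\{x,\overline y_V\}<r$ for every $t\leqslant 0$, so whenever $T^\lambda_x<+\infty$ the curve must exit $\overline B_r$ through the left endpoint, and hence
\[
\supp(\tilde\mu^\lambda_{x,2})\subseteq (-\infty,-r)\times\R\subseteq (-\infty,\underline y_V)\times\R.
\]
On this set the constraint $v\leqslant u^0_H$ is in force, and since $u^0_H$ is $\Z$-periodic we may rewrite $\int_{\TM} v\,d\tilde\mu^\lambda_{x,2}\leqslant \int_{\TTorus} u^0_H\,d(\tilde\pi_\#\tilde\mu^\lambda_{x,2})$.

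Next, pass to the limit along a subsequence $\lambda_n\to 0^+$ with $\theta_x^{\lambda_n}\to\theta\in[0,1]$. By Proposition \ref{prop2 precompactness}(ii) and Proposition \ref{prop2 limit measures}(ii), when $\theta<1$ the measures $\tilde\pi_\#\tilde\mu^{\lambda_n}_{x,2}$ subconverge narrowly to a Mather measure for $H$ on $\TTorus$, and the characterization \eqref{eq2 u^0_H} of $u^0_H$ yields $\limsup_n \int u^0_H\,d(\tilde\pi_\#\tilde\mu^{\lambda_n}_{x,2})\leqslant 0$. Symmetrically, Proposition \ref{prop2 precompactness}(i) and Proposition \ref{prop2 limit measures}(i) provide, when $\theta>0$, a narrow limit $\tilde\mu_{x,1}$ of $\tilde\mu^{\lambda_n}_{x,1}$ which is a Mather measure for $G$; the hypothesis $v\in{\sol_b}_-(G)$ then forces $\int v\,d\mu_{x,1}\leqslant \1(c(G)-c_f(G))$, so this term contributes zero in the limit whenever $c(G)=c_f(G)$. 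Combining both contributions, letting $n\to\infty$ and then taking the supremum over admissible $v$, we obtain $u\geqslant u^0_G$, and therefore $\omegasetG=\{u^0_G\}$. Case (B) is entirely symmetric, with Proposition \ref{prop2 optimal curve}(ii) reversing the monotonicity of $\gamma^\lambda_x$ and exchanging the roles of $\underline y_V$ and $\overline y_V$.

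The main obstacle is the bookkeeping across the regimes $\theta\in\{0,1\}$ and $\theta\in(0,1)$, together with the treatment of the case $c(G)>c_f(G)$, in which the Mather constraint \eqref{condition2 u0} becomes vacuous. In this latter regime one needs to rule out, or otherwise control, a nontrivial contribution from $\int v\,d\tilde\mu^\lambda_{x,1}$ in the limit; here the natural device is a tightness-type estimate on $\tilde\mu^\lambda_{x,1}$ obtained by testing against the coercive strict subsolution of $G(x,\cdot)=c_f(G)$ provided by Theorem \ref{teo equilibria}, in the spirit of Proposition \ref{prop tightness}, forcing $\theta^{\lambda_n}_x\to 0$ along the extracted subsequence so that the inner term drops out of the inequality.
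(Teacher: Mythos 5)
Your proof follows the paper's own strategy almost exactly: the upper bound $u\leqslant u^0_G$ from Proposition \ref{prop step1}, and the lower bound via the decomposition of the discounted measure into $\tilde\mu^\lambda_{x,1},\tilde\mu^\lambda_{x,2}$, the support estimate $\supp(\tilde\mu^\lambda_{x,2})\subseteq(-\infty,-r)\times\R$ from Proposition \ref{prop2 optimal curve} (together with the monotone optimal curve of Proposition \ref{prop optimal curves}), the inequality of Proposition \ref{prop step2} with the replacement $v\leqslant u^0_H$ on the support of $\tilde\mu^\lambda_{x,2}$, and the limit analysis via Propositions \ref{prop2 precompactness} and \ref{prop2 limit measures}. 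All of this is correct and is the argument the paper gives.

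The final paragraph, however, contains a genuine misstep. You propose to handle the regime $c(G)>c_f(G)$ by testing a tightness estimate in the spirit of Proposition \ref{prop tightness} against ``the coercive strict subsolution of $G(x,\cdot)=c_f(G)$ provided by Theorem \ref{teo equilibria}.'' But Proposition \ref{prop tightness} requires the test function to be a \emph{bounded} nonpositive subsolution, whereas the $C^1$ subsolution produced by Theorem \ref{teo equilibria} is neither claimed to be bounded nor coercive --- and in the present situation $c(G)=c(H)$ it \emph{cannot} be taken bounded, since by Theorem \ref{teo no bounded critical sol}(ii) no bounded subsolution of \eqref{eq2 G critical} can be uniformly strict outside a compact set. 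So the proposed tightness device does not apply as stated. The correct and simpler resolution is the one already built into Proposition \ref{prop2 limit measures}(i): if $\theta^{\lambda_n}_x\to\theta>0$ and $\tilde\mu^{\lambda_n}_{x,1}\rightharpoonup\tilde\mu_{x,1}$, then $\tilde\mu_{x,1}$ is a closed measure on $\TM$ with $\int L_G\,d\tilde\mu_{x,1}=-c(G)$; since Theorem \ref{teo G Mather measures} gives $\int L_G\,d\tilde\mu\geqslant -c_f(G)$ for every closed measure, this forces $c(G)=c_f(G)$, in which case \eqref{condition2 u0} yields $\int v\,d\mu_{x,1}\leqslant 0$. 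In other words, when $c(G)>c_f(G)$ the hypotheses of Proposition \ref{prop2 limit measures}(i) cannot both hold, so $\theta=0$ automatically and the inner term drops out; no separate tightness estimate is needed.
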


\begin{proof}
By Proposition \ref{prop application comparison} we know that the functions 
$u^\lambda_G$ are equi--Lipschitz and equi--bounded, 
hence it is enough, by the Ascoli--Arzel\`a theorem, to prove that any 
converging subsequence has  $u^0_G$ as limit.

Let $\lambda_n\to 0$ be such that $u^{\lambda_n}_G$ locally uniformly converge to some 
$u\in\CC(\MM)$. 
We have seen in Proposition \ref{prop step1} that 
$$ 
u(x)\leqslant u^0_G(x)\qquad\hbox{for every  $x\in \MM$}.
$$
Let us prove the opposite inequality. Let us assume, for definiteness, that we are in 
case (A). Fix $x\in \MM$ and pick $v\in{\sol_b}_-(G)$ such that 
$v\leqslant u^0_H$ in $(-\infty,\underline y_V)$. 
For every $\lambda>0$, let $\gamma^{\lambda}_x:(-\infty,0]\to \MM$ with $\gamma_x^\lambda(0)=x$ to be a 
monotone minimizer for the variational 
formula related to $u^\lambda_G(x)$, see  
Proposition \ref{prop calibrating main} and Theorem \ref{teo optimal curves}. 
Choose $r>0$ big enough so that 
$\{x\}\cup [\underline y_V,\overline y_V]\subseteq B_r$ and let  
$\tilde\mu_{x,1}^\lambda,\,\tilde\mu_{x,2}^\lambda$ be the probability measures on $\TM$ 
defined via \eqref{def2 discounted measure1} and  \eqref{def2 discounted measure2}, respectively. 
When $T_x^\lambda<+\infty$, we infer from Proposition \ref{prop2 optimal curve} that 
$\gammaxlambda\big((-\infty,-T^\lambda_x)\big)\subseteq (-\infty,-r)$, consequently 
$\supp(\tilde\mu^\lambda_x)\subset (-\infty,-r)\times\R$. 
%
%
In view of Proposition \ref{prop step2} and of the fact that 
$v\leqslant u^0_H$ in $(-\infty,-r)$, we get, for every $\lambda>0$,  
\[
 u^\lambda_{G}(x)\geqslant v(x)
 -
 \left(
 \theta^\lambda_x\int_{\TM} v(y)\, d{\tilde\mu}^\lambda_{x,1}(y,q)
 +
 (1-\theta^\lambda_x)\int_{\TM} u_H^0(y)\, d{\tilde\mu}^\lambda_{x,2}(y,q)
 \right).
\]
Now set $\lambda:=\lambda_n$ and send $n\to +\infty$ in the inequality above. In view of 
Proposition \ref{prop2 precompactness}, there exist  
measures $\tilde\mu_{1,x}\in\parts(\TM)$ and $\tilde\mu_{2,x}\in\parts(\TTorus)$ and 
$\theta\in [0,1]$ such that
\begin{equation}\label{eq2a quasi claim}
  u(x)\geqslant v(x)-\theta\int_{\TM} v(y)\, d{\tilde\mu}_{x,1}(y,q)
 -
 (1-\theta)\int_{\TTorus} u_H^0(y)\, d{\tilde\mu}_{x,2}(y,q).
\end{equation}
Furthermore, in view of Propositions \ref{prop2 limit measures}, 
\[
\int_{\TM} v(y)\, d{\tilde\mu}_{x,1}(y,q)\leqslant 0\quad\hbox{if $\theta\not=0$},
\qquad\quad 
\int_{\TTorus} u_H^0(y)\, d{\tilde\mu}_{x,2}(y,q)\leqslant 0\quad\hbox{if $\theta\not=1$.}
\]
By exploiting this information in \eqref{eq2a quasi claim}, we get $u(x)\geqslant v(x)$. 
Hence 
\[
 u(x)
 \geqslant
 \sup\{v(x)\,:\,v\in{\sol_b}_-(G),\ v\leqslant u^0_H\quad 
 \hbox{in $(-\infty,\underline y_V)$\,}\}
  =
 u^0_G(x)
\]
for all $x\in\MM$, as it was to be shown. The proof for the case (B) is analogous. 
\end{proof}

\section{The case $c(G)=c(H)=c_f(H)$}\label{sez case III}

In this section we shall prove the asymptotic convergence when $c(G)=c(H)=c_f(H)$. 
Since $c(G)\geqslant c_f(G)\geqslant c_f(H)$, in this case we furthermore have 
$c(G)=c(H)=c_f(G)=c_f(H)$.  We will denote by $c$ this common constant, for notational simplicity.  
We start with some remarks on the critical equations associated with $H$ and $G$.

\subsection{Critical equations}\label{sez critical eq III}
Let us therefore consider the critical equations 
\begin{eqnarray}
 G(x,u')=c\qquad\hbox{in $\R$,} \label{eq3 G critical}\\
 H(x,u')=c\qquad\hbox{in $\R$.} \label{eq3 H critical}
\end{eqnarray}

As in Section \ref{sez case II}, we set $\overline y_V:=\max\big(\supp(V)\big)$, $\underline y_V:=\min\big(\supp(V)\big)$, and, for every $x\in\R$, 
\begin{align*}
Z_H(x)&:=\{p\in\R\,:\,H(x,p)\leqslant c\,\}
 =
 [p^{-}_H(x),p^{+}_H(x)],\\
Z_G(x)&:=\{p\in\R\,:\,G(x,p)\leqslant c\,\}
 =
 [p^{-}_G(x),p^{+}_G(x)].
\end{align*}

In view of Theorem \ref{teo Sez2 effective ham} (with $\theta=0$)\footnote{We recall that $c(H)=\overline H(0)$, cf. 
Section \ref{sez critical values}.}, the following holds:
\begin{equation}
P_H^-:=
\int_0^1 p_H^-(z)\ dz 
\leqslant
0
\leqslant
\int_0^1 p_H^+(z)\ dz
=:P_H^+.
\end{equation}
Let us denote by $S_H$ and $S_G$ the critical semi-distance associated with  
$H$ and $G$ via \eqref{eq S} with $a=c$, respectively. A direct computation 
shows that 
\begin{align*}
S_H(y,x)=\int_y^x p^+_H(z)\ dz, 
&\qquad S_G(y,x)=\int_y^x p^+_G(z)\ dz 
&\quad\hbox{if $y\leqslant x$}\\
S_H(y,x)=\int_x^y -p^-_H(z)\ dz, 
&\quad S_G(y,x)=\int_x^y -p^-_G(z)\ dz &\hbox{if $y> x$}
\end{align*}
As a consequence, we derive the following result:
\begin{prop}\label{prop3 properties S}
For every $y\in\MM$ and $n\in\N$, we have 
\[
 S_H(y,y+n)=nP_H^+,
 \qquad
 S_H(y,y-n)=n(-P_H^-).
\]
Furthermore, there exists positive constants $C$ and $\kappa$, such that \quad 
\[
-C
\leqslant 
S_G(y,x)
\leqslant 
\kappa|x-y|\qquad\hbox{for all $x,y\in\MM$.}
\]
\end{prop}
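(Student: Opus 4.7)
\emph{Plan.} The identities for $S_H$ are a direct consequence of the explicit formula
$S_H(y,x)=\int_y^x p^+_H(z)\,\dd z$ for $y\leqslant x$ (resp. $S_H(y,x)=\int_x^y -p^-_H(z)\,\dd z$ for $y>x$), stated just before the proposition. Since $p^+_H$ is $\Z$--periodic, the integral over $[y,y+n]$ splits into $n$ integrals over unit intervals, each of which equals $P^+_H$ by periodicity; this gives $S_H(y,y+n)=nP^+_H$. The computation for $S_H(y,y-n)$ is identical using $-p^-_H$.

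The upper bound $S_G(y,x)\leqslant \kappa|x-y|$ is precisely the Lipschitz estimate for $S_a$ recorded just before Proposition \ref{prop S} (with $a:=c$ and $\kappa:=\kappa_c$).

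For the lower bound I will compare $S_G$ with $S_H$ using the explicit formulas and the fact that $p^\pm_G\equiv p^\pm_H$ on $\MM\setminus\supp(V)$. Both $p^\pm_G$ and $p^\pm_H$ are globally bounded by the coercivity assumption (G2) together with the fact that $V$ is bounded, hence there exists a constant $M>0$ such that $|p^+_G(z)-p^+_H(z)|+|p^-_G(z)-p^-_H(z)|\leqslant M\,\chi_{\supp(V)}(z)$ for every $z\in\MM$. Integrating over any interval therefore yields
\[
\left|\int_y^x \big(p^+_G(z)-p^+_H(z)\big)\,\dd z\right|\leqslant M\,\leb(\supp(V))=:C_1,
\]
and similarly for $p^-_G-p^-_H$.

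It remains to show $S_H(y,x)\geqslant -C_2$ for a constant independent of $x,y$. Assume first $y\leqslant x$ and write $x-y=n+r$ with $n\in\N\cup\{0\}$ and $r\in[0,1)$. By $\Z$--periodicity and the first identity,
\[
S_H(y,x)=\int_y^{y+n}p^+_H(z)\,\dd z+\int_{y+n}^{x}p^+_H(z)\,\dd z=nP^+_H+\int_{y+n}^{x}p^+_H(z)\,\dd z.
\]
Since in case (III) we have $P^+_H\geqslant 0$ by Theorem \ref{teo Sez2 effective ham} (as already noted just above the proposition), the first term is nonnegative, while the second is bounded in absolute value by $\|p^+_H\|_\infty$, which is finite by coercivity. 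The symmetric argument with $-p^-_H$ and $-P^-_H\geqslant 0$ covers the case $y>x$. Combining with the previous step gives $S_G(y,x)\geqslant -C_1-\|p^+_H\|_\infty-\|p^-_H\|_\infty=:-C$, completing the proof. The only delicate point is keeping track that the $O(1)$ error coming from the non-integer fractional part of $|x-y|$ is uniform in $x,y$, which is guaranteed by the uniform bound on $p^\pm_H$.
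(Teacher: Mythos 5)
Your proof is correct. The two identities for $S_H$ and the upper bound $S_G(y,x)\leqslant\kappa|x-y|$ are handled exactly as in the paper (periodicity of $p^\pm_H$ applied to the explicit formulas, and the general Lipschitz estimate for $S_a$ from Section 1.3). For the lower bound on $S_G$, however, you take a genuinely different route. The paper invokes the curve-based argument of Proposition \ref{prop S_c coercive}: one takes a near-optimal curve from $y$ to $x$, isolates the final arc after the last exit from a ball containing $\supp(V)$, and bounds the integral of $\sigma^H_{c}$ along that arc from below by $u(x)-u(z)$ for a bounded periodic solution $u$ of $H=c(H)$, setting $\delta=0$ since here $c(G)=c(H)$. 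You instead exploit the one-dimensional explicit formulas $S_G(y,x)=\int_y^x p^+_G$ (resp. $\int_x^y -p^-_G$) stated just before the proposition, observe that $p^\pm_G$ and $p^\pm_H$ differ only on $\supp(V)$ and are uniformly bounded there, and then bound $S_H$ from below by splitting $|x-y|$ into its integer and fractional parts and using $P^-_H\leqslant 0\leqslant P^+_H$. Your argument is more elementary and self-contained within the 1d framework of this section, while the paper's cited argument has the virtue of being dimension-independent (a point the authors emphasize elsewhere); for the purposes of this proposition, which is only used in the 1d case-(III) analysis, either approach is perfectly adequate.
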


\begin{proof}
The first assertion is apparent from the formulae for $S_H$ provided above and 
the periodicity of $p^\pm_H$. 
Let us consider the second assertion. The lower bound 
for $S_G$ follows via the same argument used in the proof of 
Proposition \ref{prop S_c coercive}, with the only difference that 
here $\delta=0$ since $c(G)=c(H)$. The upper bound is a general fact already remarked in Section \ref{sez HJ equations}.
\end{proof}

We record here for later use the following lemma. 

\begin{lemma}\label{lemma3 S_H min}
For every $y\in\R$, we have
\[
 \inf\{S_H(\xi,x),\:\,\xi\in y+\Z\,\}
 =
 \min\{S_H(\xi,x),\:\,\xi\in \big(y+\Z\big)\cap[x-1,x+1]\,\}
 \qquad
 \hbox{for all $x\in\MM$.}
\]
\end{lemma}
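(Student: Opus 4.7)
The plan is to exploit the explicit integral formulae for $S_H$ recorded just before the statement, together with the $\Z$-periodicity of $p_H^\pm$ and the crucial inequalities $P_H^+\geqslant 0\geqslant P_H^-$ (which hold because $c=c_f(H)=\overline H(0)$, cf.\ Theorem \ref{teo Sez2 effective ham}). First I would observe that $(y+\Z)\cap[x-1,x+1]$ is finite and nonempty (it has at least $2$ elements, since the interval has length $2$), so the infimum on the right-hand side is automatically a minimum. It therefore suffices to show that for every $\xi\in y+\Z$ there exists some $\xi_0\in(y+\Z)\cap[x-1,x+1]$ with $S_H(\xi,x)\geqslant S_H(\xi_0,x)$.

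I would then split according to the sign of $\xi-x$. If $\xi\geqslant x+1$, let $\xi_0$ be the unique element of $(y+\Z)\cap[x,x+1)$ and set $n:=\xi-\xi_0\in\N$, so that $\xi_0\in[x-1,x+1]$. Using the formula $S_H(\cdot,x)=\int_x^{\cdot}(-p_H^-(z))\,dz$ valid on $[x,+\infty)$, together with the $\Z$-periodicity of $p_H^-$, one computes
\[
S_H(\xi,x)=S_H(\xi_0,x)+\int_{\xi_0}^{\xi_0+n}\!\big(-p_H^-(z)\big)\,dz=S_H(\xi_0,x)+n(-P_H^-)\geqslant S_H(\xi_0,x).
\]
The symmetric case $\xi\leqslant x-1$ is handled by choosing the unique $\xi_0\in(y+\Z)\cap(x-1,x]$ and setting $n:=\xi_0-\xi\in\N$; using the formula $S_H(\cdot,x)=\int_{\cdot}^{x}p_H^+(z)\,dz$ on $(-\infty,x]$ and $\Z$-periodicity of $p_H^+$ gives
\[
S_H(\xi,x)=\int_{\xi_0-n}^{\xi_0}p_H^+(z)\,dz+S_H(\xi_0,x)=nP_H^++S_H(\xi_0,x)\geqslant S_H(\xi_0,x).
\]
If $\xi$ already lies in $[x-1,x+1]$, simply take $\xi_0:=\xi$.

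I do not expect any serious obstacle: the argument is a straightforward periodic accounting once the right integral formulae are unfolded. The only point requiring a brief check is the boundary case $\xi_0=x$ (which occurs when $x\in y+\Z$), but both integral formulae reduce consistently to $S_H(x,x)=0$, so the computation goes through unchanged. The entire content of the lemma is really encoded in the sign information $P_H^+\geqslant 0\geqslant P_H^-$ provided by the standing assumption $c=c_f(H)$: without it, moving $\xi$ farther from $x$ by integer jumps would not be penalized.
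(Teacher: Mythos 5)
Your proof is correct and follows essentially the same route as the paper's: both rest on the explicit integral formulae for $S_H$, the $\Z$-periodicity of $p_H^{\pm}$, and the sign information $P_H^+\geqslant 0\geqslant P_H^-$. The only (purely cosmetic) difference is that the paper formulates the reduction as an iterated one-step claim (replace $\xi$ by $\xi\pm 1$, showing $S_H(\cdot,x)$ does not increase), whereas you perform the whole shift to $\xi_0\in[x-1,x+1]$ in a single computation using the periodicity of the integrand; the content is identical.
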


\begin{proof}
It is clearly enough to prove the following assertion:\medskip\\
{\bf\underline{Claim:}} let $\xi\in y+\Z$. If $|\xi-x|>1$, there exists $\eta\in y+\Z$ such that 
\[
 |\eta-x|\leqslant|\xi-x|-1
 \qquad
 \hbox{and}
 \qquad
 S_H(\eta,x)\leqslant S_H(\xi,x).
\]
Let us divide the proof in two cases. 

If $x-\xi>1$, i.e. $\xi<x-1$, we have 
\[
 S_H(\xi,x)
 =
 S_H(\xi,\xi+1)+S_H(\xi+1,x)
 =
 P_H^+ +S_H(\xi+1,x)
 \geqslant
 S_H(\xi+1,x),
\]
and the claim holds true with $\eta:=\xi+1$. 

If $\xi-x>1$, i.e. $\xi>x+ 1$, we have 
\[
 S_H(\xi,x)
 =
 S_H(\xi,\xi-1)+S_H(\xi-1,x)
 =
 (-P_H^-) +S_H(\xi-1,x)
 \geqslant
 S_H(\xi-1,x),
\]
and the claim holds true with $\eta:=\xi-1$. 
\end{proof}

Let us denote by $\E(H)$ and $\E(G)$ the sets of equilibria associated with  $H$ and $G$, 
respectively. Since $G=H$ in $\MM\setminus\supp(V)$, we have 
\begin{equation}\label{eq3 equilibria}
\E(H)\setminus\supp(V)
=
\E(G)\setminus\supp(V).
\end{equation}

The following holds:

\begin{teorema}\label{teo3 bounded G sol}
There exists a bounded solution $u_G$ to \eqref{eq3 G critical}. 
\end{teorema}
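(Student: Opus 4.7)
The plan is to prove existence of a bounded critical solution of $G(x,u')=c$ by constructing it explicitly as the gluing of a $\Z$-periodic solution of $H(x,u')=c$ (far from $\supp(V)$) with a viscosity solution of $G(x,u')=c$ on a compact interval containing $\supp(V)$. Let $u_H$ be a $\Z$-periodic solution of $H(x,u_H')=c$: such a solution exists since $c=c(H)$ (Proposition \ref{prop Sez2 effective ham}) and is automatically bounded. Since $G\equiv H$ on $\R\setminus\supp(V)$, the restriction of $u_H$ there is already a solution of $G(x,u')=c$, and the challenge is to modify $u_H$ across $\supp(V)$ while preserving both boundedness and the viscosity solution property.

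Fix $N\in\N$ large enough that $[\underline y_V-1,\overline y_V+1]\subset[-N,N]$. After normalizing $u_H(0)=0$ and using $2N\in\Z$, both values $u_H(\pm N)$ equal $0$. I would then solve the Dirichlet problem for $G(x,u')=c$ on $[-N,N]$ with boundary condition $u(\pm N)=0$; the compatibility condition is $0\leq S_G(-N,N)$ and $0\leq S_G(N,-N)$. A direct computation using $S_G=S_H$ on $\R\setminus\supp(V)$, the $1$-periodicity of $p_H^\pm$, and Proposition \ref{prop3 properties S} gives
\[
S_G(-N,N)=2N\,P_H^+ +O(1),\qquad S_G(N,-N)=-2N\,P_H^- +O(1).
\]
Since in case (III) we have $P_H^+\geq 0\geq P_H^-$, both quantities are nonnegative for $N$ sufficiently large (with extra care required in the degenerate subcase $P_H^+=P_H^-=0$, where one must track the $O(1)$ error terms explicitly). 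The Dirichlet problem is then solvable and yields a Lipschitz solution $w$ on $[-N,N]$ vanishing at $\pm N$, obtainable for instance as a suitable variant of the Lax formula $\min\{S_G(-N,x),S_G(N,x)\}$ adjusted to enforce the supersolution property in the interior.

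I would then set $u_G:=u_H$ on $\R\setminus[-N,N]$ and $u_G:=w$ on $[-N,N]$. Continuity and boundedness follow directly: $u_H$ is periodic hence bounded, and $w$ is Lipschitz on a compact interval with bounded boundary values. The equation holds in the interior of $[-N,N]$ by construction and on $\R\setminus[-N,N]$ because $G=H$ there and $u_H$ is an $H$-solution. The main obstacle and delicate step is the viscosity sub- and super-solution verification at the two interface points $\pm N$: the lateral derivatives coming from the two sides must be compatible with $G(\pm N,\cdot)\leq c$ on superdifferentials and $G(\pm N,\cdot)\geq c$ on subdifferentials. I would handle this either by choosing the maximal Dirichlet solution (equivalently, a Perron construction localized to $[-N,N]$) so that $w'(\pm N^\mp)$ automatically lies in $Z_G(\pm N)=Z_H(\pm N)$ and matches the one-sided derivative of $u_H$ in the appropriate direction, or by slightly perturbing $N$ along the lattice to sidestep degenerate alignments. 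This compatibility at $\pm N$, together with the fact that $Z_H(\pm N)=Z_G(\pm N)$ outside $\supp(V)$, is precisely what makes the 1-dimensional gluing work in case (III).
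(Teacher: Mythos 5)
Your construction takes a genuinely different route from the paper. The paper's proof is a one-line formula: set $u_G(x):=\inf_{y\in\E(G)}S_G(y,x)$. Boundedness comes from Proposition \ref{prop3 properties S} together with the fact that $\E(G)$ contains the $\Z$-periodically recurrent set $\E(H)\setminus\supp(V)$ (so some $y\in\E(G)$ lies within bounded distance of any $x$, giving the upper bound, while $S_G\geqslant -C$ gives the lower bound), and the solution property is immediate because each $S_G(y,\cdot)$ with $y\in\E(G)=\A_f(G)$ is a global solution and an infimum of equi-Lipschitz solutions remains a solution by Proposition \ref{prop when G convex}. Your gluing construction avoids the Aubry-set machinery but pays for it with boundary-matching issues that you identify but do not resolve.

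The concrete gap is the sub/supersolution verification at $\pm N$, and the proposed fixes do not close it. If $w$ is the Lax/Perron Dirichlet solution on $[-N,N]$ with $w(\pm N)=0$, then for $x$ near $N^-$ (assuming $S_G(-N,N)>0$) the active branch is $S_G(N,x)=\int_x^N -p_G^-$, so $w'(N^-)=p_G^-(N)=p_H^-(N)$. Meanwhile $u_H'(N^+)\in Z_H(N)=[p_H^-(N),p_H^+(N)]$, and generically $u_H'(N^+)>p_H^-(N)$. That is an \emph{upward} jump of the derivative across $N$, and since both one-sided values lie in the sublevel set $Z_G(N)$ where $G\leqslant c$, the glued function fails the supersolution test at $N$ unless $G(N,\cdot)\equiv c$ on the whole interval $[w'(N^-),u_H'(N^+)]$, which is false in general. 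Choosing the maximal Dirichlet solution does not help (it is precisely the branch $p_G^-$ that is active at $N^-$), and perturbing $N$ off the lattice destroys the identity $u_H(\pm N)=0$ that your boundedness argument relies on. A second, smaller issue is the degenerate subcase $P_H^+=P_H^-=0$: there the compatibility inequalities $S_G(\pm N,\mp N)\geqslant 0$ are $O(1)$ with uncontrolled sign, so solvability of the Dirichlet problem is not established. You flag both of these as delicate but leave them open, so the argument as written has a genuine hole precisely at the step that makes the construction nontrivial.

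If you want to salvage a gluing-type argument, the natural fix is to glue at points $y\in\E(G)$ rather than at arbitrary lattice points: at an equilibrium $y$ the function $S_G(y,\cdot)$ is a solution through $y$, the one-sided derivatives there are $p_G^\pm(y)$ with $p_G^-(y)=p_G^+(y)$ exactly when $Z_G(y)$ degenerates, and the downward-jump condition is automatic. Following that thread through to the end essentially rediscovers the paper's formula $\inf_{y\in\E(G)}S_G(y,\cdot)$.
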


\begin{proof}
Let us set\ $u_G(x):=\inf_{y\in\E(G)} S_G(y,x)$. 
From \eqref{eq3 equilibria}, the fact that $\E(H)$ is $\Z$-periodic and 
of Proposition \ref{prop3 properties S}, we derive that $u_G$ is a well defined 
bounded, Lipschitz function. 
Furthermore, as an infimum of viscosity solutions of \eqref{eq3 G critical}, we get that $u_G$ is a viscosity solution as well in view of Proposition \ref{prop when G convex}. 
\end{proof}

\subsection{Asymptotic convergence} \label{sez asymptotics III}
Let us denote by $u^\lambda_G$, $u^\lambda_H$ the solutions of the discounted equations
\begin{eqnarray}
 \lambda u+ G(x,u')=c\qquad\hbox{in $\R$,} \label{eq3 G discounted}\\
 \lambda u + H(x,u')=c\qquad\hbox{in $\R$.} \label{eq3 H discounted}
\end{eqnarray}

We want to prove that the solutions $u^\lambda_G$ converge, as $\lambda\to 0^+$, to a specific solution of $u^0_G$ the critical equation \eqref{eq3 G critical}. 
In view of \cite{DFIZ1}, we know that this is true for the solutions of \eqref{eq3 H discounted}, i.e. $u^\lambda_H\ucv u^0_H$ in $\MM$ where $u^0_H$ is a solution 
to the critical equation \eqref{eq3 H discounted}, and this convergence 
is actually uniform in $\MM$ since all these functions are $\Z$--periodic. Furthermore, $u^0_H$ is identified as the maximal (sub-)solution 
$v$ to 
\eqref{eq3 H critical} such that 
\begin{equation}\label{eq3 u^0_H}
\int_{\TTorus} v(y)\,d\mu(y)\leqslant 0\qquad\hbox{for all $\mu\in\Mis(H)$,}
\end{equation}
where $\Mis(H)$ denotes the set of projected Mather measures for $H$.

The asymptotic convergence of the solutions $u^{\lambda}_G$ will be established under the following assumption:
\begin{equation}\label{condition u_0}
u^0_H(x)=\sup\{v(x)\,:\,\hbox{$v$ is a $\Z$-periodic  subsolution of \eqref{eq3 H critical} with $v\leqslant 0$ on $\E(H)$}\,\}. \tag{U}
\end{equation}

This is motivated by the following fact. 

\begin{prop}\label{prop3 u_0}
Condition \eqref{condition u_0} is fulfilled in either one of the following cases:
\begin{itemize}
\item[(i)] when $\int_0^1 p_H^-(x) dx < 0< \int_0^1 p_H^+(x) dx$;\smallskip
\item[(ii)] when  $\big(\int_0^1 p_H^-(x) dx\big)\big(\int_0^1 p_H^+(x) dx \big)=0$  and $H$ is Tonelli.
\end{itemize}
\end{prop}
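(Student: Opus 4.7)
The plan is to reduce condition \eqref{condition u_0} to the statement that every projected Mather measure $\mu\in\Mis(H)$ is supported on $\E(H)$. Indeed, from \eqref{eq3 u^0_H} the function $u^0_H$ is the maximal periodic subsolution $v$ of \eqref{eq3 H critical} such that $\int v\,d\mu\leqslant 0$ for every $\mu\in\Mis(H)$. Under the support inclusion, $v\leqslant 0$ on $\E(H)$ trivially gives $\int v\,d\mu\leqslant 0$; conversely, for every $y\in\E(H)$ the Dirac $\tilde\delta_{(y,0)}$ on $\TTorus$ is a closed probability measure that achieves $\int L_H\,d\tilde\delta_{(y,0)}=L_H(y,0)=-\min_p H(y,p)=-c$, hence belongs to $\tilde\Mis(H)$, so its projection $\delta_y$ belongs to $\Mis(H)$ and the constraint forces $v(y)\leqslant 0$. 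Thus both constraint sets coincide, and taking suprema yields \eqref{condition u_0}.

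For case (i), choose $\lambda\in(0,1)$ with $\lambda\int_0^1 p_H^+(z)\,dz+(1-\lambda)\int_0^1 p_H^-(z)\,dz=0$ and set
\[
v_\lambda(x):=\int_0^x\bigl(\lambda p_H^+(z)+(1-\lambda)p_H^-(z)\bigr)\,dz.
\]
By the continuity of $p_H^\pm$ (cf.\ Remark \ref{oss properties sigma_a}) and the above choice of $\lambda$, $v_\lambda$ is Lipschitz and $\Z$-periodic. Moreover $v_\lambda'(x)$ lies strictly in the interior of $[p_H^-(x),p_H^+(x)]$ precisely when $x\notin\E(H)$, and by continuity and compactness of $\T^1$, for every open neighborhood $U$ of $\E(H)$ in $\T^1$ there exists $\delta=\delta(U)>0$ with $H(x,v_\lambda'(x))\leqslant c-\delta$ a.e.\ on $\T^1\setminus U$. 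Given $\tilde\mu\in\tilde\Mis(H)$ with projection $\mu$, apply a periodic version of Lemma \ref{EncoreUneVariante} to produce $C^1$ $\Z$-periodic approximations $v_{\lambda,\eps}$ with $H(x,v_{\lambda,\eps}'(x))\leqslant c+\eps$ on $\T^1$ and $\leqslant c-\delta+\eps$ on $\T^1\setminus U$. Combining Fenchel's inequality $L_H(x,q)\geqslant v_{\lambda,\eps}'(x)q - H(x,v_{\lambda,\eps}'(x))$ with the closed measure property $\int v_{\lambda,\eps}'(x)q\,d\tilde\mu=0$ gives
\[
-c=\int_{\TTorus}L_H\,d\tilde\mu\geqslant -\int_{\T^1} H(x,v_{\lambda,\eps}'(x))\,d\mu(x)\geqslant -(c+\eps)+\delta\,\mu(\T^1\setminus U).
\]
Letting $\eps\to 0^+$ yields $\mu(\T^1\setminus U)=0$, and the arbitrariness of $U$ gives $\supp(\mu)\subseteq\E(H)$.

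For case (ii), the explicit construction above fails: if $\int_0^1 p_H^+=0$, the natural candidate $v'(x)=p_H^+(x)$ is periodic but only yields $H(x,v'(x))=c$ everywhere, with no strict inequality outside $\E(H)$. Here I would invoke the Tonelli hypothesis: for a Tonelli Hamiltonian on $\T^1$, the support of any Mather measure $\tilde\mu$ is contained in the Aubry set $\tilde\A(H)\subseteq T\T^1$, which is a Lipschitz graph over its projection. By Theorem \ref{teo equilibria} the projected free Aubry set of $H$ coincides with $\E(H)$ in dimension one, and the only points of $\tilde\A(H)$ projecting onto $\E(H)$ have velocity $q=0$, since $(y,0)$ with $y\in\E(H)$ are fixed points of the Euler--Lagrange flow on $\{H=c\}$; invariance of $\supp(\tilde\mu)$ then forces $\supp(\tilde\mu)\subseteq\E(H)\times\{0\}$, so $\supp(\mu)\subseteq\E(H)$. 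The main obstacle is precisely justifying this Tonelli step within the paper's closed-measure framework, which does not explicitly develop the Aubry set on $\T^1$; a direct dynamical argument on the energy level $\{H=c\}$, showing that any orbit of zero rotation number supporting a Mather measure must be an equilibrium, is likely the cleanest way to circumvent it.
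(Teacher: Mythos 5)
Your reduction (showing $\supp(\mu)\subseteq\E(H)$ for every projected Mather measure, after noting each $\delta_{(y,0)}$, $y\in\E(H)$, is itself a Mather measure) is exactly the reduction the paper makes. For case (i), the paper and your proposal use the same convex combination $w(x)=\int_0^x\bigl(\lambda p_H^-(z)+(1-\lambda)p_H^+(z)\bigr)\,dz$; the only difference is that the paper exploits the continuity of $p_H^\pm$ (Remark \ref{oss properties sigma_a}) to regard $w$ as already $C^1$, so that the Fenchel/closedness argument turns the chain of inequalities directly into equalities, yielding $H(x,w'(x))=c$ $\mu$-a.e.\ and hence $\supp(\mu)\subseteq\E(H)$ without passing through the $\eps$-smoothing and the neighborhoods $U$. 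Your version is correct but adds an unnecessary approximation layer. For case (ii), the paper's treatment is essentially what you sketch: it cites \cite{FSC1} (Theorem 1.6 there) for the invariance of Mather measures under the Lagrangian flow when $H$ is Tonelli, and then observes that $\E(H)\times\{0\}$ is the maximal closed invariant set (on the relevant level) that can carry such a measure, so $\supp(\tilde\mu)\subseteq\E(H)\times\{0\}$. The ``obstacle'' you flag is thus resolved in the paper by a citation rather than a from-scratch dynamical argument; your outline (equilibria are fixed points of the flow, orbits of zero rotation number on the critical level either are fixed or do not support invariant probability measures) is the correct picture behind that citation. Net: same approach, with (i) slightly less economical and (ii) missing only the explicit reference that closes the Tonelli step.
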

 
\begin{proof} 
We aim at showing that condition \eqref{eq3 u^0_H} is equivalent to $v\leqslant 0$ on $\E(H)$. 
We first remark that any measure of the kind $\delta_{(y,0)}$ with $y\in\E$ is a Mather measure, 
i.e. $\delta_y$ is a projected Mather measure for every $y\in\E(H)$. In order to 
conclude, it suffices to show that any projected Mather measure has support contained in $\E(H)$. 
Let $\mu:={\pi_1}_{\#}\tilde\mu$ for some Mather measure $\tilde\mu\in\tilde\Mis(H)$. 
In case (i), pick $\lambda\in (0,1)$ in such a way that the function
\[
w(x):=\int_0^x \big( \lambda p_H^{-}(z)+(1-\lambda) p_H^+(z)\big)\, dz, \qquad x\in\MM
\] 
is $1$-periodic. It is easily seen that $w$ is a $C^1$-subsolution of the critical equation \eqref{eq3 H critical}  
and it is strict in $\MM\setminus\E(H)$. By using Fenchel inequality and the fact that $\tilde\mu$ is closed, we have
\begin{equation*}
-c= \int_{\TTorus} L_H(x,q)\, d \tilde{\mu}(x,q)
 \geqslant
 \int_{\TTorus} w'(x)q \, d \tilde{\mu}(x,q) 
 - 
 \int_{\T^1} H(x,w'(x)) \, d {\mu}(x)
 \geqslant
 -c,
\end{equation*}
hence all inequalities are equalities. We infer that \ $H(x,w'(x))=c$\ \ for $\mu$-a.e. $x\in\T^1$, i.e. 
$\supp(\mu)\subseteq\E(H)$.

In case (ii), it is well known, cf.  \cite[Theorem 1.6]{FSC1} that $\tilde\mu$ is invariant under the Lagrangian flow. 
Since the set $C:=\E(H)\times \{ 0 \}$ is the maximal closed invariant set under the Lagrangian flow, the 
support of $\tilde\mu$ needs to be contained in $C$. We conclude that $\supp(\mu)\subseteq\E(H)$. 
\end{proof}

\begin{oss}\label{oss3 u_0}
We refer the reader to the Appendix for an example of a non-Tonelli Hamiltonian for which condition 
\eqref{condition u_0} is violated. 
\end{oss}

\begin{teorema}\label{teo3 u_0^H}
Assume condition \eqref{condition u_0} holds.  We have
\begin{equation}\label{claim3 u_0^H}
 u^0_H(x)
 =
 \inf\{S_H(y,x)\,:\,y\in\E(H)\,\}
 \qquad
 \hbox{for all $x\in\MM$.}
\end{equation}
\end{teorema}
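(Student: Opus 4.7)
Let $w(x):=\inf\{S_H(y,x)\,:\,y\in\E(H)\}$. The plan is to show, using condition \eqref{condition u_0}, that $w$ coincides with $u^0_H$ by establishing two inequalities in the class of admissible competitors: $\Z$-periodic subsolutions of \eqref{eq3 H critical} that are non-positive on $\E(H)$.

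First I would check that $w$ is a well-defined, Lipschitz, $\Z$-periodic subsolution of \eqref{eq3 H critical}. Since $H$ is continuous and $\Z$-periodic and $\E(H)=\{y:\min_p H(y,p)=c\}$, the set $\E(H)$ is a closed, $\Z$-periodic, non-empty subset of $\MM$. Lemma \ref{lemma3 S_H min}, applied with $y$ in a set of representatives of $\E(H)\cap[0,1)$ modulo $\Z$, reduces the infimum defining $w(x)$ to a minimum over a finite set of points located within distance at most $1$ of $x$, hence $w(x)$ is finite and attained for every $x\in\MM$. The $\Z$-periodicity of $w$ follows from the $\Z$-invariance of $S_H$ (because $H$ is $\Z$-periodic) and of $\E(H)$. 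Each function $S_H(y,\cdot)$ is a $\kappa$-Lipschitz subsolution of \eqref{eq3 H critical} by Proposition \ref{prop S}(ii) with a common constant $\kappa$, so Proposition \ref{prop when G convex}(ii) ensures that $w$, as the infimum of an equi-Lipschitz family of subsolutions, is itself a Lipschitz subsolution of \eqref{eq3 H critical}.

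Next I would verify the boundary condition on $\E(H)$. For any $y\in\E(H)$ one has $w(y)\leqslant S_H(y,y)=0$, so $w\leqslant 0$ on $\E(H)$. Combined with the previous step, $w$ belongs to the class appearing in the characterization of $u^0_H$ provided by \eqref{condition u_0}, which yields immediately
\[
w(x)\leqslant u^0_H(x)\qquad\text{for all }x\in\MM.
\]

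For the converse inequality, let $v$ be any $\Z$-periodic subsolution of \eqref{eq3 H critical} with $v\leqslant 0$ on $\E(H)$. By Proposition \ref{prop S}(i),
\[
v(x)-v(y)\leqslant S_H(y,x)\qquad\text{for all }x,y\in\MM,
\]
and specializing to $y\in\E(H)$ (where $v(y)\leqslant 0$) we get $v(x)\leqslant S_H(y,x)$. Taking the infimum over $y\in\E(H)$ gives $v(x)\leqslant w(x)$. Passing to the supremum over all admissible $v$ and invoking \eqref{condition u_0} once more yields $u^0_H(x)\leqslant w(x)$, which combined with the previous bound proves \eqref{claim3 u_0^H}.

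The argument is essentially assembly of already established facts, so no serious obstacle is anticipated; the only place where care is needed is in confirming that the infimum in the definition of $w$ really is attained and behaves well under $\Z$-periodic translation, and this is precisely what Lemma \ref{lemma3 S_H min} is designed to give.
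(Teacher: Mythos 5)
Your argument reproduces, in expanded form, exactly the structure of the paper's own proof: show that $w(x):=\inf_{y\in\E(H)}S_H(y,x)$ is a $\Z$--periodic Lipschitz subsolution with $w\leqslant 0$ on $\E(H)$ (so $w\leqslant u^0_H$ by \eqref{condition u_0}), and show that any admissible competitor $v$ is dominated by every $S_H(y,\cdot)$ with $y\in\E(H)$ via Proposition~\ref{prop S}(i) (so $u^0_H\leqslant w$). The only slip is the remark that Lemma~\ref{lemma3 S_H min} reduces the infimum to ``a minimum over a finite set of points'': $\E(H)\cap[x-1,x+1]$ is compact but not necessarily finite. This is harmless, since all you actually need is that $w(x)>-\infty$ (which follows from the lower bound for $S_H$ in Proposition~\ref{prop3 properties S}, or from the reduction to a compact set plus continuity), after which Proposition~\ref{prop when G convex}(ii) applies to the equi-Lipschitz family $\{S_H(y,\cdot)\}_{y\in\E(H)}$. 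The paper instead invokes the proof of Theorem~\ref{teo3 bounded G sol} for the well-definedness and regularity of $w$ and in fact shows $w$ is a \emph{solution}, but only subsolution status is needed for the comparison with \eqref{condition u_0}, as you correctly observed.
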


\begin{proof}
Let us call $u(x)$ the infimum appearing at the right-hand side of \eqref{claim3 u_0^H}. 
The fact that $u$ is a bounded solution to \eqref{eq3 H critical} can be proved as in 
Theorem \ref{teo3 bounded G sol}. The function $u$ is also $1$-periodic due to the fact that  
$z+\E(H)=\E(H)$ and $S_H(y+z,x+z)=S_H(y,x)$ for every $z\in\Z$ and $x,y\in\MM$. 
Furthermore, since $S_H(y,y)=0$ for 
every $y\in\E(G)$, it is apparent from its very definition that 
$u\leqslant 0$ on $\E(H)$. On the other hand, if $v$ is a 1-periodic 
subsolution to \eqref{eq3 H critical} with $v\leqslant 0$ on $\E(G)$, in view of Proposition 
\ref{prop S} we have
\[
 v(x)\leqslant v(y)+S_G(y,x)\leqslant S_G(y,x)
 \qquad\hbox{for all $x\in\MM$ and $y\in\E(G)$,}
\]
hence, by taking the infimum of the right-hand side term with respect to $y\in\E(G)$, 
we get $v(x)\leqslant u(x)$ for all $x\in\MM$.
\end{proof}

Let $u_G$ be the bounded solution of the critical equation \eqref{eq3 G critical}. 
Since $u_G^-:=u_G-\|u_G\|_\infty$ and 
$u_G^+:=u_G+\|u_G\|_\infty$ are, respectively, a bounded negative and positive solution 
of \eqref{eq3 G critical}, from Proposition 
\ref{prop application comparison} we derive that the functions 
$\{u^\lambda_G\,:\,\lambda>0\}$ are equi--bounded and equi--Lipschitz on $\MM$, and satisfy 
\begin{equation}\label{eq3 equibounded discounted sol}
-2\|u_G\|_\infty
\leqslant 
u^-_G(x)
\leqslant 
u^\lambda_G(x)
\leqslant 
u^+_G(x)
\leqslant 
2\|u_G\|_\infty
\qquad
\hbox{for all $x\in\MM$.}
\end{equation}
%
Let us denote by 
\[
\omegasetG:=\left\{u\in\Lip(\MM)\,: \,u^{\lambda_k}_G\ucv u\  
\hbox{in $\MM$\quad for some sequence $\lambda_k\to 0$}  \right\}.
\]
Of course, our aim is to show that this set reduces to $\{u^0_G\}$. 
We first have to identify a good candidate for $u^0_G$. Our guess is the following:
 \begin{equation}\label{def3 u_0}
 u^0_G(x):=\sup\left\{v(x)\,:\, \hbox{$v\in{\sol_b}_-(G)$}\right\}\qquad\hbox{$x\in\MM$},
\end{equation}
where ${\sol_b}_-(G)$ denotes the family of bounded 
subsolutions  $v:\MM\to 
\R$ of the critical equation \eqref{eq3 G critical} satisfying 
\begin{equation}\label{condition3 u0}
\int_\R v(y)\,d\mu(y)\leqslant \1\big(c(G)-c_f(G)\big) \qquad\text{for every $\mu\in\Mis(G)$}, 
\end{equation}
where $\Mis(G)$ denotes the set of projected Mather measures for $G$ and $\1$ is the indicator function of the set $\{0\}$ in the sense of convex analysis, i.e. $\1(t)=0$ if $t=0$ and $\1(t)=+\infty$ otherwise.  
Note that in this case the right-hand side of 
\eqref{condition3 u0} is equal to 0 since $c(G)=c_f(G)$. 

\begin{teorema}\label{teo3 u_0^G}
The following holds:
\[
 u^0_G(x)=\sup\{v(x)\,:\,\hbox{$v$ subsolution of \eqref{eq3 G critical} such that $v\leqslant 0$ on $\E(G)$}\,\}
 \qquad
 \hbox{for all $x\in\MM$.}
\]
In particular, \ 
$\displaystyle u^0_G(x)
 =
 \inf_{y\in\E(G)} S_G(y,x)$
\ {for all $x\in\MM$},\ 
and $u^0_G$ is a bounded solution to \eqref{eq3 G critical}. 
\end{teorema}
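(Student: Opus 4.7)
The plan is to identify $u^0_G$ with the explicit function $\tilde u(x):=\inf_{y\in\E(G)}S_G(y,x)$; both characterizations in the statement will follow at once.

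The preliminary observation is that, by Corollary \ref{cor Mather set=equilibria}, the set $\Mis(G)$ of projected Mather measures is the closed convex hull of $\{\delta_y\,:\,y\in\E(G)\}$. Since $c(G)=c_f(G)$ in the current regime, the indicator term in \eqref{condition3 u0} vanishes, and condition \eqref{condition3 u0} is thus equivalent to requiring $v\leqslant 0$ on $\E(G)$. Consequently, ${\sol_b}_-(G)$ coincides with the family of bounded subsolutions of \eqref{eq3 G critical} that are nonpositive on $\E(G)$, and I shall freely use this reformulation throughout.

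Next I would carry out two reciprocal bounds. For the upper bound on an arbitrary (possibly unbounded) subsolution $v$ of \eqref{eq3 G critical} with $v\leqslant 0$ on $\E(G)$, Proposition \ref{prop S}(i) yields $v(x)-v(y)\leqslant S_G(y,x)$ for all $x,y\in\MM$; specializing to $y\in\E(G)$ and invoking $v(y)\leqslant 0$ gives $v(x)\leqslant S_G(y,x)$, whence $v(x)\leqslant\tilde u(x)$. For the lower bound, I must check that $\tilde u$ itself belongs to ${\sol_b}_-(G)$. The fact that $\tilde u$ is a well-defined bounded function follows from $\E(G)\supseteq\E(H)\setminus\supp(V)\neq\emptyset$ (via \eqref{eq3 equilibria} and the $\Z$-periodicity of $\E(H)$) together with Proposition \ref{prop3 properties S}, and $\tilde u$ is a viscosity solution of \eqref{eq3 G critical} exactly by the argument in the proof of Theorem \ref{teo3 bounded G sol} (an infimum of equi-Lipschitz solutions, cf.\ Proposition \ref{prop when G convex}). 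Moreover, for any $y_0\in\E(G)$, $\tilde u(y_0)\leqslant S_G(y_0,y_0)=0$, so $\tilde u\leqslant 0$ on $\E(G)$, hence $\tilde u\in{\sol_b}_-(G)$.

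Combining these two steps, $u^0_G(x)\geqslant\tilde u(x)$ (from $\tilde u\in{\sol_b}_-(G)$) while any subsolution $v$ with $v\leqslant 0$ on $\E(G)$ satisfies $v(x)\leqslant\tilde u(x)\leqslant u^0_G(x)$; the sup over such $v$ is therefore also $\leqslant u^0_G(x)$, and the converse inequality is trivial because ${\sol_b}_-(G)$ is contained in that larger class. This simultaneously establishes the alternative supremum representation of the statement, the explicit formula $u^0_G(x)=\inf_{y\in\E(G)}S_G(y,x)$, and the fact that $u^0_G=\tilde u$ is a bounded solution of \eqref{eq3 G critical}. The argument is essentially a rearrangement of already-proven facts, so no real obstacle arises; the only point requiring minor care is verifying that $\E(G)$ is nonempty and that $\tilde u$ is a \emph{solution} rather than just a subsolution, both of which are inherited from the analysis preceding Theorem \ref{teo3 bounded G sol}.
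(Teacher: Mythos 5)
Your proof is correct and follows essentially the same route as the paper's: the reformulation of \eqref{condition3 u0} via Corollary~\ref{cor Mather set=equilibria} as ``$v\leqslant 0$ on $\E(G)$'', the upper bound via Proposition~\ref{prop S}(i) applied at points of $\E(G)$, and the identification of the supremum with $\inf_{y\in\E(G)}S_G(y,x)$ by showing the latter is a bounded solution belonging to ${\sol_b}_-(G)$, exactly as in Theorems~\ref{teo3 bounded G sol} and~\ref{teo3 u_0^H} to which the paper's terse proof defers. Your explicit treatment of the bounded-versus-unbounded subsolution issue (absorbing the a priori larger supremum since it is still dominated by the bounded $\tilde u$) is a welcome clarification of a step the paper leaves implicit.
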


\begin{proof}
The first assertion is a straightforward consequence of 
Corollary \ref{cor Mather set=equilibria}. The second assertion can be proved arguing as in the proof 
of Theorem \ref{teo3 u_0^H}.
%
\end{proof}

Let us begin to study the asymptotic behavior of the solutions $u^\lambda_G$ of the discounted equation \eqref{eq3 G discounted}. 

\begin{prop}\label{prop3 step1}
Let $\lambda>0$. Then 
\[
u^\lambda_G(y)\leqslant 0\qquad\hbox{for all $y\in\E(G)$.}
\]
In particular, $u\leqslant u^0_G$ in $\R$ for every $u\in\omegasetG$. 
\end{prop}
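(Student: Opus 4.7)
\medskip

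\noindent\textbf{Proof plan.} The key observation is that for a point $y\in\E(G)$, the constant curve $\gamma\equiv y$ is an admissible competitor in the representation formula \eqref{representation formula discounted}, and its Lagrangian cost is exactly zero. Indeed, since $y\in\E(G)$ means $\min_{p\in\R} G(y,p)=c_f(G)=c$, the Fenchel duality \eqref{def L} gives
\[
L_G(y,0)=\sup_{p\in\R}\big(-G(y,p)\big)=-\min_{p\in\R} G(y,p)=-c.
\]
Hence inserting $\gamma\equiv y$ in the variational formula yields
\[
u^\lambda_G(y)\leqslant \int_{-\infty}^0 \e^{\lambda s}\big[L_G(y,0)+c\big]\, ds=0.
\]
This is the content of the first assertion.

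For the second assertion, fix $u\in\omegaset G$ and choose a sequence $\lambda_n\to 0^+$ with $u^{\lambda_n}_G\ucv u$ on $\MM$. Since each $u^{\lambda_n}_G$ is the bounded viscosity solution of the discounted equation \eqref{eq3 G discounted}, the stability of viscosity solutions (together with the equi--Lipschitz and equi--bounded character of the family, cf.\ \eqref{eq3 equibounded discounted sol}) implies that $u$ is a viscosity solution, hence in particular a bounded subsolution, of the critical equation \eqref{eq3 G critical}. Moreover, passing to the pointwise limit in the inequality $u^{\lambda_n}_G(y)\leqslant 0$ at every $y\in\E(G)$ gives $u(y)\leqslant 0$ on $\E(G)$.

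By Theorem \ref{teo3 u_0^G}, $u^0_G$ coincides with the supremum over all subsolutions to \eqref{eq3 G critical} satisfying $v\leqslant 0$ on $\E(G)$. Since $u$ belongs to this class, we conclude $u\leqslant u^0_G$ on $\MM$, as claimed. No obstacle is expected here: the argument is a one--line use of the variational formula plus the structural characterization of $u^0_G$ already obtained in Theorem \ref{teo3 u_0^G}; the only point deserving a comment is that the constant curve is indeed a valid Lipschitz competitor in \eqref{representation formula discounted}, which is obvious.
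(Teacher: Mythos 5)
Your proof is correct and follows essentially the same route as the paper: the stationary curve at $y\in\E(G)$ as a competitor in \eqref{representation formula discounted}, the Fenchel computation $L_G(y,0)=-\min_p G(y,p)=-c$, and the maximality characterization of $u^0_G$ from Theorem \ref{teo3 u_0^G} to conclude $u\leqslant u^0_G$ for every $u\in\omegasetG$.
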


\begin{proof}
Let $y\in\E(G)$. Then the stationary curve $\xi(t):=y$ for all $t\leqslant 0$ satisfies 
\[
L_G(\xi(t),\dot\xi(t))+c
=
L_G(y,0)+c
=
-\min_p G(y,p)+c
=0
\qquad
\hbox{for all $t\leqslant 0$}.
\]
By using $\xi$ as a competitor curve in formula \eqref{representation formula discounted} for $u_G^\lambda(y)$, 
we obtain $u_G^\lambda(y)\leqslant 0$. We derive that any fixed $u\in\omegasetG$ is a bounded solution of \eqref{eq3 G critical} 
satisfying $u\leqslant 0$ in $\E(G)$, hence $u\leqslant u^0_G$ in $\R$ by the maximality property stated in Theorem \ref{teo3 u_0^G}. 
\end{proof}

In order to show that $\omegasetG=\{u^G_0\}$, we need to show the converse inequality, i.e. 
\begin{equation}\label{eq3 second inequality}
 u\geqslant u^0_G\qquad\hbox{for every $u\in\omegasetG$.}
\end{equation}

Let us show the following preliminary fact first.

\begin{prop}\label{prop3 u^0_H>u^0_G}
Assume condition \eqref{condition u_0} holds. Then
\[
 u^0_H(x)\geqslant u^0_G(x)\qquad\hbox{for all $x\in\R\setminus[\underline y_V-1,\overline y_V+1]$.}
\]
\end{prop}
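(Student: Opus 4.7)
The plan is to exploit the characterizations $u^0_H(x) = \inf_{y \in \E(H)} S_H(y, x)$ and $u^0_G(x) = \inf_{y \in \E(G)} S_G(y, x)$ provided by Theorems \ref{teo3 u_0^H} and \ref{teo3 u_0^G}, and then use the periodicity reduction in Lemma \ref{lemma3 S_H min} to force an almost-optimal equilibrium for $u^0_H(x)$ to lie within distance $1$ of $x$. Since $x$ is at distance greater than $1$ from $\supp(V)$, any such equilibrium will automatically lie in $\R \setminus \supp(V)$, where $H \equiv G$, so it will also be an equilibrium for $G$ and will give the corresponding bound on $u^0_G$.

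More precisely, I would fix $x \in \R \setminus [\underline y_V - 1, \overline y_V + 1]$ and assume without loss of generality that $x > \overline y_V + 1$ (the case $x < \underline y_V - 1$ being symmetric). Given $\eps > 0$, I would pick $y^* \in \E(H)$ with $S_H(y^*, x) < u^0_H(x) + \eps$. By $\Z$-periodicity of $H$, the whole orbit $y^* + \Z$ is contained in $\E(H)$, so Lemma \ref{lemma3 S_H min} supplies a point $\hat y \in (y^* + \Z) \cap [x-1, x+1]$ with
\[
S_H(\hat y, x) \leqslant S_H(y^*, x) < u^0_H(x) + \eps.
\]
Since $\hat y \geqslant x - 1 > \overline y_V$, the point $\hat y$ lies outside $\supp(V)$, and hence $\hat y \in \E(G)$ by the identity \eqref{eq3 equilibria}.

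To close the argument, note that both $\hat y$ and $x$ belong to $(\overline y_V, +\infty) \subseteq \R \setminus \supp(V)$, on which $p^+_H \equiv p^+_G$ and $p^-_H \equiv p^-_G$ (since $H \equiv G$ there). The explicit formulas for the semidistances recorded before Proposition \ref{prop3 properties S} then yield $S_H(\hat y, x) = S_G(\hat y, x)$, whence
\[
u^0_G(x) \leqslant S_G(\hat y, x) = S_H(\hat y, x) < u^0_H(x) + \eps.
\]
Letting $\eps \to 0^+$ gives the claim. The only substantive ingredient is Lemma \ref{lemma3 S_H min}, which is already proved; the rest is definitional unwinding. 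The width-$1$ buffer in the hypothesis is exactly the slack produced by the periodicity argument, needed to guarantee that the chosen equilibrium of $H$ falls outside $\supp(V)$.
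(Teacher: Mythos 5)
Your proof is correct and follows essentially the same route as the paper: both arguments invoke the representations $u^0_H(x)=\inf_{y\in\E(H)}S_H(y,x)$ and $u^0_G(x)=\inf_{y\in\E(G)}S_G(y,x)$, use Lemma \ref{lemma3 S_H min} together with the $\Z$-periodicity of $\E(H)$ to localize the relevant equilibrium to $[x-1,x+1]$, and then observe that this interval misses $\supp(V)$, so that $\E(H)\cap[x-1,x+1]=\E(G)\cap[x-1,x+1]$ and $S_H=S_G$ there. The only cosmetic difference is that you work with an $\eps$-almost-optimal equilibrium rather than appealing directly to the attained minimum, which is immaterial.
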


\begin{proof}
Let $x\in \R\setminus[\underline y_V-1,\overline y_V+1]$. According to Lemma \ref{lemma3 S_H min}, 
\[
 u^0_H(x)=\min\{S_H(y,x)\,:\,y\in\E(H)\cap [x-1,x+1]\,\}.
\]
Since $[x-1,x+1]\cap\supp(V)=\emptyset$, we have that $\E(G)\cap [x-1,x+1]=\E(H)\cap[x-1,x+1]$ and 
$S_H(y,x)=S_G(y,x)$ for all $y\in [x-1,x+1]$. We conclude that 
\begin{multline*}
  u^0_H(x)
 =
 \min\{S_H(y,x)\,:\,y\in\E(H)\cap [x-1,x+1]\,\}\\
 =
 \min\{S_G(y,x)\,:\,y\in\E(G)\cap [x-1,x+1]\,\}
 \geqslant
 \min\{S_G(y,x)\,:\,y\in\E(G)\,\}
 =u^0_G(x).
\end{multline*}
\end{proof}

We are now in position to prove the asymptotic convergence. 

\begin{teorema}\label{theo main3}
Assume condition \eqref{condition u_0} holds. 
Then the functions $u^\lambda_G$ uniformly converge to the function  
$u^0_G$ given by \eqref{def3 u_0} locally on $\MM$ as $\lambda\to 
0^+$.
\end{teorema}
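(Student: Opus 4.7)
The overall strategy mirrors Theorems \ref{theo main} and \ref{theo main2}. Since the family $\{u^\lambda_G\}_{\lambda>0}$ is equi-Lipschitz and equi-bounded by \eqref{eq3 equibounded discounted sol}, it suffices by Ascoli-Arzel\`a to prove that every accumulation point $u\in\omegasetG$ coincides with $u^0_G$. The inequality $u\leqslant u^0_G$ follows from Proposition \ref{prop3 step1}, so the bulk of the proof is devoted to showing $u\geqslant u^0_G$, i.e.\ $u(x)\geqslant v(x)$ for every fixed $x\in\MM$ and every $v\in{\sol_b}_-(G)$.

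The key new ingredient, compared to case (II), is Proposition \ref{prop3 u^0_H>u^0_G}: since $v\leqslant u^0_G$ by the maximality statement in Theorem \ref{teo3 u_0^G}, that proposition yields $v\leqslant u^0_H$ on $\R\setminus[\underline y_V-1,\overline y_V+1]$. I would then choose $r>0$ large enough so that $\{x\}\cup [\underline y_V-1,\overline y_V+1]\subset B_r$, pick for each $\lambda>0$ a monotone optimal curve $\gamma^\lambda_x$ for $u^\lambda_G(x)$ (Proposition \ref{prop optimal curves}), and define $T^\lambda_x,\theta^\lambda_x$ together with the probability measures $\tilde\mu^\lambda_{x,1},\tilde\mu^\lambda_{x,2}$ as in \eqref{def T_x}--\eqref{def2 discounted measure2}. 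By construction $\supp(\tilde\mu^\lambda_{x,2})\subset (\R\setminus B_r)\times\R$, so on this support $v\leqslant u^0_H$. Applying Proposition \ref{prop step2} and using the periodicity of $u^0_H$ to rewrite the outer integral over $\TTorus$, I obtain
\[
u^\lambda_G(x)\geqslant v(x)-\theta^\lambda_x\int_{\TM} v(y)\,d\tilde\mu^\lambda_{x,1}(y,q)-(1-\theta^\lambda_x)\int_{\TTorus} u^0_H(y)\,d(\tilde\pi_\#\tilde\mu^\lambda_{x,2})(y,q).
\]

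To conclude, I would pass to the limit along a subsequence $\lambda_n\to 0^+$. By Proposition \ref{prop2 precompactness}, up to a further subsequence, $\theta^{\lambda_n}_x\to\theta\in[0,1]$, $\tilde\mu^{\lambda_n}_{x,1}\weakcv\tilde\mu_{1,x}$ in $\parts(\TM)$ and $\tilde\pi_\#\tilde\mu^{\lambda_n}_{x,2}\weakcv\tilde\mu_{2,x}$ in $\parts(\TTorus)$. Proposition \ref{prop2 limit measures} identifies $\tilde\mu_{1,x}$ as a Mather measure for $G$ when $\theta\neq 0$ and $\tilde\mu_{2,x}$ as a Mather measure for $H$ on $\TTorus$ when $\theta\neq 1$. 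In the first case, the defining constraint \eqref{condition3 u0} gives $\int v\,d\mu_{1,x}\leqslant 0$; in the second case, the characterization of $u^0_H$ through condition \eqref{condition u_0} (equivalently \eqref{eq3 u^0_H}) gives $\int u^0_H\,d\mu_{2,x}\leqslant 0$. Either way, the right-hand side passes to the limit with value at least $v(x)$, so $u(x)\geqslant v(x)$, and taking the supremum over $v\in{\sol_b}_-(G)$ delivers $u(x)\geqslant u^0_G(x)$.

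The main obstacle I anticipate is, as in case (II), the bookkeeping of the split measures and the danger of mass escaping at infinity; condition \eqref{condition u_0} is essential precisely because it ensures that any $H$-Mather measure $\tilde\mu_{2,x}$ appearing as a limit automatically produces a nonpositive integral of $u^0_H$, which is where the comparison $v\leqslant u^0_H$ outside $B_r$ (supplied by Proposition \ref{prop3 u^0_H>u^0_G}) is exploited. Without condition \eqref{condition u_0}, $u^0_H$ would no longer be the maximal subsolution with the required negativity on $\E(H)$, and this step would fail.
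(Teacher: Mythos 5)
Your proposal is correct and follows essentially the same route as the paper: split the discounted measure at a ball containing $[\underline y_V-1,\overline y_V+1]$, apply Proposition \ref{prop step2}, use Proposition \ref{prop3 u^0_H>u^0_G} to replace the integrand on the outer piece by $u^0_H$, and pass to the limit via Propositions \ref{prop2 precompactness} and \ref{prop2 limit measures}. The only (immaterial) difference is that the paper applies Proposition \ref{prop step2} directly with $v:=u^0_G$, which already belongs to ${\sol_b}_-(G)$ by Theorem \ref{teo3 u_0^G}, whereas you run the argument for an arbitrary $v\in{\sol_b}_-(G)$ and take the supremum at the end.
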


\begin{proof}
Let $\lambda_n\to 0$ be such that $u^{\lambda_n}_G$ locally uniformly converge to some 
$u\in\CC(\MM)$. 
We have seen in Proposition \ref{prop3 step1} that 
$$ 
u(x)\leqslant u^0_G(x)\qquad\hbox{for every  $x\in \MM$}.
$$
To prove the opposite inequality, let us fix $x\in \MM$.  
According to Theorem \ref{teo optimal curves}, there exists a monotone curve   
$\gamma_x:(-\infty,0]\to\MM$  with $\gamma_x(0)=x$ which is optimal for $u^0_G(x)$. 
Pick $r>0$ big enough so that 
$\{x\}\cup [\underline y_V-1,\overline y_V+1]\subseteq B_r$ and let  
$\tilde\mu_{x,1}^\lambda,\,\tilde\mu_{x,2}^\lambda$ be the probability measures on $\TM$ 
defined via \eqref{def2 discounted measure1} and  \eqref{def2 discounted measure2}, respectively. 
Let us apply Proposition \ref{prop step2} with $v:=u^0_G$. 
By taking into account that 
$\supp({\tilde\mu}^\lambda_{x,2})\subseteq  \big(\R\setminus[\underline y_V-1,\overline y_V+1]\big)\times\R$ and 
$u^0_{G}\leqslant u^0_{H}$ in $\R\setminus  [\underline y_V-1,\overline y_V+1]$, we 
get 
\[
 u^\lambda_{G}(x)\geqslant u^0_G(x)
 -
 \left(
 \theta^\lambda_x\int_{\TM} u^0_G(y)\, d{\tilde\mu}^\lambda_{x,1}(y,q)
 +
 (1-\theta^\lambda_x)\int_{\TM} u^0_{H}(y)\, d{\tilde\mu}^\lambda_{x,2}(y,q)
 \right)
\]
Now set $\lambda:=\lambda_n$ and send $n\to +\infty$ in the above inequality. In view of 
Propositions \ref{prop2 precompactness} and \ref{prop2 limit measures}, there exist  
measures $\tilde\mu_{1,x}\in\parts(\TM)$ and $\tilde\mu_{2,x}\in\parts(\TTorus)$ and 
$\theta\in [0,1]$ such that
\begin{equation}\label{eq2 quasi claim}
  u(x)
  \geqslant 
  u^0_G(x)-\theta\int_{\TM} u^0_G\, d{\tilde\mu}_{x,1}
 -
 (1-\theta)\int_{\TTorus} u_H^0\, d{\tilde\mu}_{x,2}.
\end{equation}
Furthermore, in view of Propositions \ref{prop2 limit measures}, 
\[
\int_{\TM} u^0_G\, d{\tilde\mu}_{x,1}\leqslant 0\quad\hbox{if $\theta\not=0$},
\qquad\qquad 
\int_{\TTorus} u_H^0\, d{\tilde\mu}_{x,2}\leqslant 0\quad\hbox{if $\theta\not=1$.}
\]
By exploiting this information in \eqref{eq2 quasi claim}, we get $u(x)\geqslant u^0_G(x)$, 
as it was to be shown. The proof is complete. 
\end{proof}

\begin{appendix}
\section{}
In this section we give an example of a continuous Hamiltonian $H$ on $\TM$, 1--periodic in the $x$-variable, for which condition \eqref{condition u_0} does not hold. The Hamiltonian $H$ is defined on $[0,1]\times\R$ as follows: 
\[
H(x,p):=(p-p_1(x))(p-p_2(x))+\eps(x)|p-p_1(x)|\qquad\hbox{for every $x\in [0,1]$ and $p\in\R$,}
\] 
where $p_1(x):=-\cos(2\pi x)$ and $p_2(\cdot)$ is chosen in such a way that  $p_2(x)<p_1(x)$ for every $x\in (0,1)$ and 
\begin{eqnarray*}
\qquad
p_2(x)=
\begin{cases}
-1 & \hbox{for $x\in \left[0,\dfrac{1}{2}+\dfrac{2}{100}\right]\cup\left[1-\dfrac{1}{100},1\right]$},\\
\ &\ \\
p_1(x)-\eps_1 & \hbox{for $x\in \left[\dfrac{1}{2}+\dfrac{3}{100},1-\dfrac{3}{100}\right]$},
\end{cases}
\end{eqnarray*}
with $\eps_1>0$ small to be chosen later. The function $\eps(\cdot)$ is the piecewise affine function such that 
\[
\eps(0)=\eps(1)=\dfrac{1}{100}\qquad\hbox{and}\qquad \eps(x)=0\quad\hbox{for all $x\in \left[\dfrac{1}{100},1-\dfrac{1}{100} \right]$}.
\] 
We remark for later use that there exist $\rho>0$, independent of $\eps_1$, and $\eps_2\in (0,\eps_1]$  such that  
\begin{eqnarray}
&&\eps(x)+(p_1-p_2)(x)\geqslant\rho\qquad\hbox{for all $x\in \left[0,\dfrac{1}{2}+\dfrac{2}{100}\right]\cup\left[1-\dfrac{1}{100},1\right]$},\label{appendix eq1}\\
&&\eps(x)+(p_1-p_2)(x)\geqslant \eps_2\qquad\hbox{for all $x\in [0,1]$}.\nonumber
\end{eqnarray}
Let us show that $c_f(H)=c(H)=0$. To this aim, first note that the function $u(x):=\int_0^x p_1(z)\,dz$ is a 1--periodic solution to the equation 
\begin{equation}\label{appendix eq critical}
H(x,u')=0\qquad\hbox{in $\R$.}
\end{equation}
This shows that $c_f(H)\leqslant c(H)=0$.  
The converse inequality can be obtained as follows:
\[
c_f(H)=\max_x\min_p H(x,p)
\geqslant
\min_p H(0,p)
= 
\min_p \left( (p+1)^2+\dfrac{1}{100}|p+1|\right)
=
0.
\]
Furthermore, since $\min_p H(x,p)=\min_p (p-p_1(x))(p-p_2(x))<0$ for all $x\in [1/100,1-1/100]$, we derive that 
the set of equilibria $\E(H)$ is contained in $[-1/100,1/100]+\Z$.  

Let us now show that we are in case (ii) of Proposition \ref{prop3 u_0}. Indeed, the set 
\[
Z_H(x):=\{p\in\R\,:\,H(x,p)\leqslant 0\}\qquad\hbox{for all $x\in\R$}
\]
is a closed interval of the form $[p_H^-(x),p_H^+(x)]$. It is not hard to see that 
$p_H^+(x)=p_1(x)$ for all $x\in\R$. We derive 
$
\int_0^1 p^+_H(x) dx
=
\int_0^1 p_1(x) dx
=
0.
$  
In this case, any 1-periodic subsolution $v$ to the critical equation \eqref{appendix eq critical} 
satisfies $v'(x)=p
_1(x)$ for every $x\in\R$ (otherwise $v$ would not be 1-periodic). In particular, 1-periodic subsolutions to 
\eqref{appendix eq critical} are unique up to additive constants. 

Now, let $\gamma:[0,T]\to\R$ be a curve satisfying $\gamma(0)=0,\ 
\gamma(T)=1$ and 
\[
\dot\gamma(s)\in \eps(\gamma(s))+(p_1-p_2)(\gamma(s))\in\partial_p H\left(\gamma(s),p_1(\gamma(s))\right)
\quad
\hbox{for all $s\in [0,T]$.}
\]
The measure $\tilde\mu\in\parts(\TTorus)$ defined as 
\begin{equation}\label{appendix Mather measure}
\int_{\TM} f(y,q)\, d \tilde{\mu}
:=
\frac1T
\int_{0}^Tf\big(\gamma(s),\dot\gamma(s)\big)\, d s
\qquad\hbox{for every $f\in\D{C}_b(\TTorus)$}
\end{equation}
is a Mather measure. Indeed, it is closed, in the sense of Definition \ref{def closed measure} with $M:=\T^1$. Furthermore, 
\[
L(\gamma(t),\dot\gamma(t))
=
p_1(\gamma(t))\dot\gamma(t)-H(\gamma(t),p_1(t))
=
\big(v\comp\gamma\big)'(t)\qquad\hbox{for every $t\in [0,T]$}
\]
and for any 1-periodic subsolution $v$ to \eqref{appendix eq critical}, since $v'(x)=p_1(x)$ for all $x\in\R$, as remarked above. 
By integrating this equality over $[0,T]$, we get that $\tilde\mu$ is a minimizing Mather measure, cf. Theorem \ref{teo H Mather measures}. 
Let us denote by $\mu$ the projection of $\tilde\mu$ on $\T^1$ and by $u_y$, for every fixed $y\in\R$, the unique 1-periodic solution to \eqref{appendix eq critical} such that $u_y(y)=0$, i.e. the function $u_y(x):=\int_y^x p_1(z) dz$. We claim that \medskip\\
{\bf\underline{Claim:}} there exists $\eps_1>0$ small enough so that 
\[
\int_{\T^1} u_y(x)\,d\mu(x)> 0\qquad\hbox{for every $y\in \left[-\dfrac{1}{100},\dfrac{1}{100}\right]$.}
\]
In particular, for such a choice of $\eps_1$, condition \eqref{condition u_0} does not hold. 
Indeed, the maximal 1-periodic subsolution $\hat u^{\,0}$ to \eqref{appendix eq critical} satisfying $\hat u^{\,0}\leqslant 0$ on $\E(H)$ will be of the form $u_{\hat y}$ for some $\hat y\in\E(H)$. 

Let us proceed to prove the claim. We first notice that 
\[
u_y(x)\geqslant 0\qquad\hbox{for every $y\in \left[-\dfrac{1}{100},\dfrac{1}{100}\right]$ and $x\in  \left[\dfrac12+\dfrac{1}{100},1-\dfrac{1}{100}\right]$.}
\]
Furthermore, there exists $\delta>0$ small enough such that 
\[
u_y(x)\geqslant\delta\qquad
\hbox{for every $y\in \left[-\dfrac{1}{100},\dfrac{1}{100}\right]$ and $x\in  \left[\dfrac12+\dfrac{3}{100},1-\dfrac{3}{100}\right]$.}
\]
We stress that such a $\delta>0$ is independent of $\eps_1>0$.  Let $0<A<a<b<B<1$ such that 
\[
\gamma(A)=\dfrac12+\dfrac{2}{100},\quad\gamma(a)=\dfrac12+\dfrac{3}{100},
\quad \gamma(b)=1-\dfrac{3}{100},\quad \gamma(B)=1-\dfrac{1}{100}.
\]
From the definitions of $\gamma(\cdot)$ and  $p_1(\cdot)$ and from \eqref{appendix eq1} we get 
\[
\gamma(A)-\gamma(0)\geqslant \rho A,
\quad
\gamma(b)-\gamma(a)=\eps_1(b-a),
\quad  
\gamma(1)-\gamma(B)\geqslant \rho (1-B),
\]
yielding in particular $A<1/\rho$, $1-B<1/\rho$ and $b-a>1/(4\eps_1)$. 
Let us fix $y\in [-1/100,1/100]$. We have 
\begin{eqnarray*}
T\int_{\T^1} u_y\,d\,\mu
&=&\int_0^A u_y(\gamma(t))\,dt + \int_A^B u_y(\gamma(t))\,dt + \int_B^1 u_y(\gamma(t))\,dt\\
&\geqslant&
-2\|u_y\|_\infty (A+1-B) +  \int_a^b u_y(\gamma(t))\,dt
\geqslant
-\dfrac4\rho+\dfrac{\delta}{4\eps_1},
\end{eqnarray*}
where we have used the fact that $u_y(\gamma(t))\geqslant 0$ for all $t\in [A,B]$ and $\|u_y\|_\infty\leqslant 1$.  
The claim readily follows from this, by recalling that $\rho>0$ and $\delta>0$ do not depend on the choice of 
$y\in [-1/100,1/100]$ and $\eps_1>0$. 
\end{appendix}

\bibliography{discount}
\bibliographystyle{siam}

\end{document}